\newtheorem{theorem}{Theorem}[section]
\newtheorem{lemma}[theorem]{Lemma}
\newtheorem{proposition}[theorem]{Proposition}
\newtheorem{corollary}[theorem]{Corollary}
\newtheorem{exAux}[theorem]{Example}
\newenvironment{example}{\begin{exAux} \rm}{\end{exAux}}
\newtheorem{Def}[theorem]{Definition}
\newenvironment{definition}{\begin{Def} \rm}{\end{Def}}
\newtheorem{Note}[theorem]{Note}
\newenvironment{note}{\begin{Note} \rm}{\end{Note}}
\newtheorem{Problem}[theorem]{Problem}
\newtheorem{Rem}[theorem]{Remark}
\newtheorem{Not}[theorem]{Notation}
\newtheorem{Conj}[theorem]{Conjecture}
\newtheorem{Ass}[theorem]{Assumption}
\newenvironment{assumption}{\begin{Ass} \rm}{\end{Ass}}
\newenvironment{proof}{\medskip\noindent{\bf Proof.\ }}{\qed\medskip}
\newcommand{\qed}{\hfill\mbox{$\Box$\qquad\qquad}}
\newcommand{\A}{{\sf A}}
\newcommand{\B}{{\sf B}}
\newcommand{\W}{{\sf W}}
\newcommand{\I}{{\sf I}}
\newcommand{\J}{{\sf J}}
\newcommand{\E}{{\sf E}}
\newcommand{\bH}{{\sf H}}
\renewcommand{\k}{{\sf k}}
\newcommand{\m}{{\sf m}}
\renewcommand{\a}{{\sf a}}
\renewcommand{\b}{{\sf b}}
\renewcommand{\c}{{\sf c}}
\newcommand{\bR}{{\sf R}}
\newcommand{\bS}{{\sf S}}
\newcommand{\M}{{\sf M}}
\newcommand{\p}{{\sf p}}
\newcommand{\q}{{\sf q}}
\newcommand{\T}{{\sf T}}
\newcommand{\pU}{{\sf U}}
\newcommand{\V}{{\sf V}}
\newcommand{\F}{\mathbb{F}}
\newcommand{\C}{\mathbb{C}}
\newcommand{\Mat}{\text{\rm Mat}_X(\C)}
\newcommand{\vphi}{\varphi}
\renewcommand{\th}{\theta}
\newcommand{\gen}[1]{\langle #1 \rangle}
\newif\ifDRAFT
\begin{document}

\title{Leonard pairs, spin models, and distance-regular graphs}

\author{Kazumasa Nomura and Paul Terwilliger}

\date{}
\maketitle

\begin{quote}
\begin{center}
{\bf Abstract}
\end{center}
\small
A Leonard pair is an ordered pair of diagonalizable linear maps on a finite-dimensional
vector space, that each act on an eigenbasis for the other one in an irreducible tridiagonal fashion.
In the present paper we consider a type of Leonard pair,
said to have spin.
The notion of a spin model was introduced by V.F.R. Jones to construct 
link invariants.
A spin model is a symmetric matrix over $\C$
that satisfies two conditions, called the type II and type III conditions.
It is known that a spin model $\W$ is contained in a certain finite-dimensional algebra
$N(\W)$, called the Nomura algebra.
It often happens that a spin model $\W$ satisfies  $\W \in \M \subseteq N(\W)$,
where $\M$ is the Bose-Mesner algebra of a distance-regular graph $\Gamma$;
in this case we say that $\Gamma$ affords $\W$.
If $\Gamma$ affords a spin model,
then each irreducible module for every Terwilliger algebra of $\Gamma$ takes a certain form,
recently described by Caughman, Curtin, Nomura,  and Wolff.
In the present paper we show that the converse is true;
if each irreducible module for every Terwilliger algebra of $\Gamma$ takes this form,
then $\Gamma$ affords a spin model.
We explicitly construct this spin model when $\Gamma$ has
$q$-Racah type.
The proof of our main result relies heavily on the theory of spin Leonard pairs.
\end{quote}

\section{Introduction}
\label{sec:intro}

The notion of a Leonard pair was introduced by the second author in \cite{T:Leonard}.
A Leonard pair is roughly described as follows.  
Let $V$ denote a vector space with finite positive dimension.
A  Leonard pair on $V$ is an ordered pair of diagonalizable linear maps $A: V \to V$ and
$A^* : V \to V$ that
each act on an eigenbasis for the other one in an irreducible tridiagonal fashion.
The Leonard pairs were classified up to isomorphism in \cite{T:Leonard};
the isomorphism classes correspond to a family of orthogonal polynomials consisting of
the $q$-Racah polynomials and their relatives in the terminating branch of the
Askey scheme \cite{T:survey}.
See \cite{T:survey, T:Leonard, T:qRacah, NT:affine, NT:lineartrans} 
for more information about Leonard pairs.

In the present paper we will consider a type of Leonard pair, said to have spin \cite{C:spinLP}.
The spin condition is described as follows.
Let $A, A^*$ denote a Leonard pair on $V$.
A Boltzmann pair  for $A,A^*$ is an ordered pair of invertible linear maps
$W : V \to V$ and $W^* : V \to V$
such that  $W A = A W$ and $W^* A^* = A^* W^*$ and
$W A^* W^{-1} = (W^*)^{-1} A W^*$.
The Leonard pair $A,A^*$ is said to have spin whenever there exists a
Boltzmann pair for $A,A^*$.
In \cite{C:spinLP} the spin Leonard pairs are classified up to isomorphism
and their Boltzmann pairs are described.

Next we summarize how Leonard pairs come up in algebraic graph theory.
Let  $\Gamma$ denote a distance-regular graph \cite[p.~126]{BCN} with vertex set $X$
and adjacency matrix $\A \in \Mat$.
Let $\M$ denote the subalgebra of $\Mat$ generated by $\A$.
By \cite[Theorem 2.6.1]{BCN} the algebra $\M$ has a basis $\{\A_i\}_{i=0}^D$
where $D$ is the diameter of $\Gamma$ \cite[p.~432]{BCN} and $\A_i$ is the $i^\text{th}$ distance-matrix
of $\Gamma$ \cite[p.~127]{BCN} for $0 \leq i \leq D$.
The algebra $\M$ is called the Bose-Mesner algebra of $\Gamma$;
it is a useful tool in the study of $\Gamma$  \cite{BI, BCN}.
To carry out this study in more detail,
it is helpful to bring in another algebra called the subconstituent algebra or Terwilliger algebra.
This algebra is described as follows.
For $x \in X$ and $0 \leq i \leq D$,
the diagonal matrix $\E^*_i = \E^*_i (x) \in \Mat$ is the projection onto the $i^\text{th}$ subconstituent
of $\Gamma$ with respect to $x$ \cite[p.~434]{BCN}.
The matrices $\{\E^*_i\}_{i=0}^D$ form a basis for a commutative subalgebra $\M^* = \M^*(x)$
of $\Mat$, called the dual Bose-Mesner algebra of $\Gamma$ with respect to $x$.
The Terwilliger algebra $\T = \T(x)$ is the subalgebra of $\Mat$ generated by $\M$ and $\M^*$ 
\cite{T:subconst1}.
The algebra $\T$ is semi-simple \cite[Lemma 3.4]{T:subconst1},
so it is natural to investigate the irreducible $\T$-modules.
For an irreducible $\T$-module $U$, there is a condition called thin;
this means that for the action of  $\M$ and $\M^*$ on $U$ the eigenspaces all
have dimension $1$.
Most of the known distance-regular graphs with large diameter satisfy 
an algebraic condition called $Q$-polynomial \cite[Section 4.1.E]{BCN};
if $\Gamma$ is $Q$-polynomial then
$\M^*$ contains a certain matrix $\A^*$ called the dual adjacency matrix of $\Gamma$
with respect to $x$ \cite[p.\ 379]{T:subconst1}.
In this case $\A, \A^*$ acts on each thin irreducible $\T$-module
as a Leonard pair \cite[Lemmas 3.9, 3.12]{T:subconst1}.

Motivated by state models in statistical mechanics,
V.F.R. Jones \cite{Jones} introduced the notion of a spin model
to construct link invariants.
The construction is roughly described as follows. 
A link is a finite collection of mutually disjoint closed curves in
$3$-dimensional Euclidean space. 
A link diagram is a projection of a link onto a plane.
For a link diagram $L$ and a square matrix $\W$ over $\C$ with all entries nonzero,
Jones defined a scalar $Z_\W(L) \in \C$.
The function $Z_\W$ from the set of all link diagrams to $\C$
gives a link invariant, 
provided that $\W$ satisfies two conditions called type II and type III;
these are given in \cite{Jones} and  \eqref{eq:type2}, \eqref{eq:type3} below.
The matrix $\W$ is called a spin model whenever it is  type II and type III.

In \cite{Jones} Jones gave three examples of spin models; 
the Potts model, the square model, and the odd cyclic model.
In \cite{Jaeger1} F. Jaeger observed that each of these examples is
contained in the Bose-Mesner algebra of a distance-regular graph $\Gamma$.
Specifically,
for the Potts model $\Gamma$ is a complete graph,
for the square model $\Gamma$ is the $4$-cycle,
and for the odd cyclic model $\Gamma$ is an odd cycle.
Jaeger then found a new spin model contained in the Bose-Mesner algebra 
of the Higman-Sims graph \cite{Jaeger1}.
Motivated by this result,
other authors found new spin models contained in the Bose-Mesner algebra
of  the even cycle \cite{BB},
the Hadamard graph \cite{N:Hadamard},
and the double cover of the Higman-Sims graph \cite{Mune}.

For the rest of this section let $\Gamma$ denote a distance-regular graph with vertex set $X$
and Bose-Mesner algebra $\M$.
For a spin model $\W$ contained in $\M$,
it often happens that $\W$ and $\M$ are related by the following condition.
For $b,c \in X$ let $\text{\bf u}_{b,c}$ denote the column vector with entries indexed by $X$
and $y$-entry $\W(b,y) \W(c,y)^{-1}$ for $y \in X$.
The condition is that $\text{\bf u}_{b,c}$ is a common eigenvector of $\M$ for all $b,c \in X$.
In this case we say that {\em $\Gamma$ affords $\W$}.
Each of the above mentioned spin models \cite{Jones, Jaeger1, BB, N:Hadamard, Mune}
is afforded by the associated distance-regular graph;
this will be demonstrated in Section \ref{sec:example} below.

Next we describe how spin models are related to spin Leonard pairs.
Assume that $\Gamma$ affords a spin model $\W$.
It was shown in \cite[Lemma 5.1]{CN:hyper} that $\Gamma$ satisfies a condition
called formally self-dual,
which implies that $\Gamma$ is $Q$-polynomial \cite[p.\ 49]{BCN}.
Pick $x \in X$ and write $\A^*=\A^*(x)$, $\M^* = \M^*(x)$,  $\T = \T(x)$.
Define $\W^* \in \M^*$ that has $(y,y)$-entry $|X|^{1/2} \W(x,y)^{-1}$ for $y \in X$.
Let $U$ denote a thin irreducible $\T$-module.
Then $\A, \A^*$ act on $U$ as a spin Leonard pair,
and  $\W,\W^*$ act on $U$ as a Boltzmann pair  for this Leonard pair;
see Proposition \ref{prop:C2} and Lemma \ref{lem:spinLP3} below.
Consequently the intersection numbers of $U$ \cite[Definition 11.1]{T:survey}
are given by the formulas in \cite[Theorem 8.5]{CW}.
Another consequence for $U$ is that its endpoint \cite[Lemma 3.12]{T:subconst1}
is equal to its dual endpoint \cite[Lemma 3.9]{T:subconst1}.

Now we describe our main result.
Assume that our distance-regular graph $\Gamma$ is formally self-dual and has
$q$-Racah type \cite[Definition 5.1]{TZ}.
Assume that for all $x \in X$ each irreducible $\T(x)$-module $U$ is thin 
with matching endpoint and dual-endpoint, and the intersection numbers of $U$
are given by the formulas in \cite[Theorem 8.5]{CW}.
Then there exists a spin model afforded by $\Gamma$;
this spin model is explicitly constructed.
Our main result is Theorem \ref{thm:main3}.
We comment on the nature of this result. 
We have not discovered any new spin model to date. 
What we have shown, is that a new spin model would result from
the discovery of a distance-regular graph with the right sort of
irreducible $\T$-modules.

The proof of our main result relies heavily on the theory of spin Leonard pairs.
In the first half of the paper we develop this theory,
and obtain several new results that may be of independent interest;
for instance Theorem \ref{thm:WtiinvAi} and Lemmas \ref{lem:spinLP0}, \ref{lem:new}.
The paper is organized as follows.

\medskip

\begin{center}  
 CONTENTS
\end{center}

\begin{quote}
1.   Introduction  \dotfill  $\; 1$    \\
2.  Leonard pairs and Leonard systems  \dotfill  $\; 3$   \\
3. The map $\pi$  \dotfill  $\; 8$  \\  
4.   The map $\rho$   \dotfill  $\; 11$  \\   
5.  Boltzmann pairs and spin Leonard pairs  \dotfill $\; 18$  \\
6.  Leonard pairs of $q$-Racah type \dotfill  $\; 24$ \\
7.  Type II matrices and spin models  \dotfill $\; 29$  \\
8.  Hadamard matrices and spin models  \dotfill $\, 30$\\
9.   Distance-regular graphs     \dotfill  $\; 31$  \\
10.  Formally self-dual distance-regular graphs  \dotfill  $\; 39$  \\ 
11.  From a spin model to spin Leonard pairs  \dotfill  $\; 41$  \\
12.   From a spin model to spin Leonard pairs; the $q$-Racah case  \dotfill $\; 44$  \\
13.  From spin Leonard pairs to a spin model     \dotfill  $\; 48$  \\
14.  From spin Leonard pairs to a spin model; the $q$-Racah case  \dotfill  $\; 51$  \\ 
15.   Examples   \dotfill  $\; 54$  \\
References \dotfill  $\; 55$
\end{quote}

\section{Leonard pairs and Leonard systems}
\label{sec:LP}

We now begin our formal argument.
In this section we recall the notion of a Leonard pair.
We use the following terms.
A square matrix is said to be {\em tridiagonal} whenever
each nonzero entry lies on either the diagonal, the subdiagonal,
or the superdiagonal.
A tridiagonal matrix is said to be {\em irreducible} whenever
each entry on the subdiagonal is nonzero and each entry on
the superdiagonal is nonzero.
Let $\F$ denote a field.

\begin{definition}  \cite[Definition 1.1]{T:Leonard}
\label{def:LP}    \samepage
\ifDRAFT {\rm def:LP}. \fi
Let $V$ denote a vector space over $\F$ with finite positive dimension.
By a {\em Leonard pair} on $V$ we mean an ordered pair
of $\F$-linear maps $A: V \to V$ and $A^* : V \to V$ 
that satisfy the following {\rm (i), (ii)}.
\begin{itemize}
\item[\rm (i)]
There exists a basis for $V$ with respect to which the matrix representing $A$
is irreducible tridiagonal and the matrix representing $A^*$ is diagonal.
\item[\rm (ii)]
There exists a basis for $V$ with respect to which the matrix representing $A^*$
is irreducible tridiagonal and the matrix representing $A$ is diagonal.
\end{itemize}
\end{definition}

\begin{note}
According to a common notational convention,
for a matrix $A$ its conjugate-transpose is denoted by $A^*$.
We are not using this convention.
In a Leonard pair $A,A^*$ the linear maps $A,A^*$ are arbitrary subject to (i) and (ii) above.
\end{note}

\begin{note}
Referring to Definition \ref{def:LP},
let $A,A^*$ denote a Leonard pair on $V$.
Then $A^*,A$ is a Leonard pair on $V$.
\end{note}

We refer the reader to \cite{T:survey} for background on Leonard pairs.

From now until the end of Section \ref{sec:qRacah}, fix an integer $d \geq 0$ and
let $V$ denote a vector space over $\F$ with dimension $d+1$.
Let $\text{\rm End}(V)$ denote the $\F$-algebra consisting of the
$\F$-linear maps $V \to V$.
We denote by $I$ the identity element of $\text{\rm End}(V)$.
For $A \in \text{\rm End}(V)$ let $\gen{A}$ denote the subalgebra
of $\text{\rm End}(V)$ generated by $A$.
By \cite[Corollary 2.133]{Rot} the center of $\text{\rm End}(V)$
consists of the elements $\alpha I$ $(\alpha \in \F)$.

\begin{lemma} {\rm \cite[Corollary 5.6]{T:survey}}
\label{lem:generate}    \samepage
\ifDRAFT {\rm lem:generate}. \fi
Let $A,A^*$ denote a Leonard pair on $V$.
Then $A$, $A^*$ together generate the algebra $\text{\rm End}(V)$.
\end{lemma}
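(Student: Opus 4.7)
The plan is to exhibit a basis of matrix units of $\text{End}(V)$ inside the subalgebra generated by $A$ and $A^*$, by combining idempotents of $A^*$ with powers of $A$.

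First, invoke condition (ii) of Definition \ref{def:LP} to pick a basis $v_0,v_1,\dots,v_d$ of $V$ with respect to which $A^*$ is represented by an irreducible tridiagonal matrix and $A$ is represented by a diagonal matrix. Swapping the roles (using the symmetry noted just after Definition \ref{def:LP}), take instead a basis $v_0,\dots,v_d$ in which $A^*$ is diagonal and $A$ is irreducible tridiagonal. A standard preliminary fact from Leonard pair theory, which I would record or cite, is that the eigenvalues $\th^*_0,\dots,\th^*_d$ of $A^*$ are mutually distinct; this is forced by the existence of an eigenbasis for $A^*$ on which $A$ acts irreducibly tridiagonally. Let $E^*_i \in \text{End}(V)$ denote the projection onto the $\th^*_i$-eigenspace of $A^*$. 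By Lagrange interpolation each $E^*_i$ is a polynomial in $A^*$, so $E^*_i \in \gen{A,A^*}$, where $\gen{A,A^*}$ denotes the subalgebra of $\text{End}(V)$ generated by $A$ and $A^*$.

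Next, let $\{E_{ij}\}_{0 \le i,j \le d}$ denote the basis of matrix units of $\text{End}(V)$ with respect to $v_0,\dots,v_d$, so that $E_{ij} v_k = \delta_{jk} v_i$. In this basis $E^*_i = E_{ii}$. For any $n \ge 0$ and any $i,j$ one has
\[
E^*_i \, A^n \, E^*_j \;=\; (A^n)_{ij} \, E_{ij},
\]
where $(A^n)_{ij}$ denotes the $(i,j)$-entry of $A^n$ in the chosen basis. The key combinatorial step is to show that when $n = |i-j|$, the scalar $(A^n)_{ij}$ is nonzero. Since $A$ is irreducible tridiagonal, $(A^n)_{ij}$ is a sum over length-$n$ walks in the path graph on $\{0,1,\dots,d\}$, with each walk contributing the product of the corresponding sub/super-diagonal entries of $A$. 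For $n = |i-j|$ there is a unique monotone walk from $i$ to $j$, and by irreducibility its weight is a nonzero product; no other walks have length $n$, so no cancellation occurs, and $(A^n)_{ij} \neq 0$.

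Combining these observations, for every pair $(i,j)$ the matrix unit $E_{ij}$ is a nonzero scalar multiple of $E^*_i A^{|i-j|} E^*_j$, hence lies in $\gen{A,A^*}$. Since $\{E_{ij}\}$ spans $\text{End}(V)$, this yields $\gen{A,A^*} = \text{End}(V)$. The only nontrivial step is the nonvanishing of $(A^n)_{ij}$ for $n = |i-j|$; this is the main obstacle, but is straightforward once one uses the monotone-walk interpretation afforded by the irreducible tridiagonal shape.
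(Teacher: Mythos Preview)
Your argument is correct. The paper does not supply its own proof of this lemma; it simply cites \cite[Corollary 5.6]{T:survey}. Your approach---producing all matrix units $E_{ij}$ as nonzero scalar multiples of $E^*_i A^{|i-j|} E^*_j$ in a basis where $A^*$ is diagonal and $A$ is irreducible tridiagonal---is essentially the standard one, and is in the spirit of the basis results proved in the cited survey (for instance, the paper later invokes \cite[Corollary 5.8]{T:survey} for the closely related fact that $\{E_i E^*_0\}_{i=0}^d$ is a basis for $\text{End}(V) E^*_0$).

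Two minor expository points. First, the opening paragraph is slightly tangled: you invoke condition (ii), then immediately ``swap roles'' to get $A^*$ diagonal and $A$ irreducible tridiagonal, which is just condition (i) directly. Second, for the multiplicity-freeness of $A^*$ you can cite the fact the paper already records just above Definition~\ref{def:LS} (from \cite[Lemma 1.3]{T:Leonard}) rather than re-derive it. Neither point affects the validity of the argument.
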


For an $\F$-algebra $\cal A$, by an {\em automorphism} of $\cal A$ we mean
an $\F$-algebra isomorphism ${\cal A} \to {\cal A}$.
By an {\em antiautomorphism} of $\cal A$ we mean an $\F$-linear bijection
$\tau : {\cal A} \to {\cal A}$ such that
$(Y Z)^\tau = Z^\tau Y^\tau$ for all $Y, Z \in {\cal A}$.

\begin{lemma}  {\rm\cite[Corollary 7.126]{Rot}  (Skolem-Noether) }
\label{lem:SN}   \samepage
\ifDRAFT {\rm lem:SN}. \fi
A map $\sigma : \text{\rm End}(V) \to \text{\rm End}(V)$ is an automorphism of
$\text{\rm End}(V)$
if and only if there exists an invertible $K \in \text{\rm End}(V)$ such that
$Y^\sigma = K Y K^{-1}$ for all $Y \in \text{\rm End}(V)$.
\end{lemma}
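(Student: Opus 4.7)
The plan is to treat the two implications separately. For the ``if'' direction, which is essentially routine, I would fix invertible $K \in \text{\rm End}(V)$ and verify that the map $\sigma : Y \mapsto KYK^{-1}$ is $\F$-linear, multiplicative via the telescoping identity $K(YZ)K^{-1} = (KYK^{-1})(KZK^{-1})$, and bijective with two-sided inverse $Y \mapsto K^{-1}YK$.

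For the nontrivial ``only if'' direction, I would invoke the standard module-theoretic argument underlying Skolem--Noether. Fixing a basis identifies $\text{\rm End}(V)$ with the matrix algebra $\text{\rm Mat}_{d+1}(\F)$; this algebra is simple, and $V$ is, up to isomorphism, its unique simple left $\text{\rm End}(V)$-module. Given an automorphism $\sigma$, define the twisted module $V^{(\sigma)}$ to be $V$ as an $\F$-vector space, endowed with the action $Y \cdot v := Y^\sigma v$. Because $\sigma$ is an algebra homomorphism, $V^{(\sigma)}$ is indeed a left $\text{\rm End}(V)$-module, and because $\sigma$ is bijective, a subspace of $V^{(\sigma)}$ is an $\text{\rm End}(V)$-submodule if and only if it is an $\text{\rm End}(V)$-submodule of $V$; hence $V^{(\sigma)}$ is simple.

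Since $V$ and $V^{(\sigma)}$ are simple $\text{\rm End}(V)$-modules of the same $\F$-dimension $d+1$, they are isomorphic. Pick a module isomorphism $K : V \to V^{(\sigma)}$; regarded as an $\F$-linear map $V \to V$ it is an invertible element of $\text{\rm End}(V)$. The module condition $K(Yv) = Y \cdot (Kv) = Y^\sigma(Kv)$, which holds for all $Y \in \text{\rm End}(V)$ and $v \in V$, rewrites inside $\text{\rm End}(V)$ as $KY = Y^\sigma K$, yielding $Y^\sigma = K Y K^{-1}$ as desired.

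The main obstacle is the underlying representation-theoretic input: that $\text{\rm Mat}_{d+1}(\F)$ is simple and admits a unique simple left module up to isomorphism. Once this is granted, the construction of $K$ is essentially forced. As a consistency check, I would note that $K$ is unique up to a nonzero scalar: any other invertible $K'$ implementing $\sigma$ satisfies $K^{-1}K' \cdot Y = Y \cdot K^{-1}K'$ for all $Y \in \text{\rm End}(V)$, so $K^{-1}K'$ lies in the center of $\text{\rm End}(V)$ and equals $\alpha I$ for some $\alpha \in \F$, by the centrality remark recorded just before the lemma.
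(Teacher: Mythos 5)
Your proof is correct, and it reproduces the standard module-theoretic argument for Skolem--Noether over $\text{\rm End}(V)$ (simplicity of the matrix algebra, uniqueness of its simple module, twisting $V$ by $\sigma$ to produce the conjugating element $K$). The paper itself offers no proof here---it states the lemma with a citation to Rotman---so there is nothing internal to compare against; your argument is exactly the kind of self-contained proof one would expect, and the closing remark on uniqueness of $K$ up to scalar is a nice, correct bonus that matches the centrality observation recorded just before the lemma in the paper.
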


\begin{lemma} {\rm   \cite[Theorem 6.1]{T:survey} }
\label{lem:dagger}    \samepage
\ifDRAFT {\rm lem:dagger}. \fi
Let $A,A^*$ denote a Leonard pair on $V$.
Then there exists a unique antiautomorphism $\dagger$ of $\text{\rm End}(V)$
such that $A^\dagger = A$ and $A^{* \dagger} = A^*$.
Moreover $(Y^\dagger)^\dagger = Y$ for all $Y \in \text{\rm End}(V)$.
The map $\dagger$ fixes every element in $\gen{A}$ and every element in $\gen{A^*}$.
\end{lemma}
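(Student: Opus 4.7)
The plan is to prove existence by explicit construction in the basis of Definition \ref{def:LP}(i), and then derive uniqueness, involution, and the fixed-set claim as formal consequences of Lemma \ref{lem:generate}.

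For existence, I would work in the basis in which $A$ is represented by an irreducible tridiagonal matrix $(A_{ij})$ with subdiagonal entries $A_{i,i-1}=b_i$ ($1\le i\le d$) and superdiagonal entries $A_{i,i+1}=c_i$ ($0\le i\le d-1$), and $A^*$ is represented by a diagonal matrix. Let $\tau$ denote matrix transpose with respect to this basis; this is an antiautomorphism of $\text{\rm End}(V)$ satisfying $(A^*)^\tau=A^*$ because $A^*$ is diagonal. The only obstruction is that $A^\tau$ has the sub- and superdiagonal entries of $A$ swapped. I would correct this by solving the two-term recursion $k_{i+1}=k_i b_{i+1}/c_i$ with $k_0=1$; the irreducibility clause forces $b_i,c_i\ne 0$, so all $k_i$ are nonzero and $K:=\text{\rm diag}(k_0,\ldots,k_d)$ is invertible. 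A direct entrywise check then yields $KA^\tau K^{-1}=A$. Setting $Y^\dagger := KY^\tau K^{-1}$ produces an antiautomorphism of $\text{\rm End}(V)$ (composition of $\tau$ with the inner automorphism $Y\mapsto KYK^{-1}$), with $A^\dagger=A$ by construction and $A^{*\dagger}=A^*$ because the diagonal matrices $K$ and $A^*$ commute.

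For uniqueness: every antiautomorphism is a bijection whose inverse is again an antiautomorphism, so if $\dagger_1,\dagger_2$ both satisfy the conclusion, then $\dagger_1\circ\dagger_2^{-1}$ is an automorphism of $\text{\rm End}(V)$ fixing both $A$ and $A^*$, hence the identity by Lemma \ref{lem:generate}. The involution statement follows by the same argument applied to $\dagger\circ\dagger$, which is an automorphism fixing $A$ and $A^*$. For the fixed-set claim, since $\dagger$ is an antiautomorphism fixing $A$ and since powers of $A$ commute, a trivial induction gives $(A^i)^\dagger=A^i$ for all $i\ge 0$, so $\dagger$ fixes every element of $\gen{A}$; the same argument handles $\gen{A^*}$.

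The only real obstacle is the explicit production of $K$, and even there the work is routine: it reduces to a first-order scalar recursion whose denominators are nonvanishing precisely by the irreducibility hypothesis on the tridiagonal representation of $A$. Once $\dagger$ is in hand, the three remaining conclusions are essentially forced by the generation statement in Lemma \ref{lem:generate}.
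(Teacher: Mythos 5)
Your proof is correct and complete. The paper gives no proof of its own here; it simply cites \cite[Theorem 6.1]{T:survey}. Your construction—transpose with respect to a Definition~\ref{def:LP}(i) basis, then conjugate by the diagonal matrix $K$ that satisfies $k_{i+1}=k_i b_{i+1}/c_i$ to restore $A$—is the standard argument for this result and is essentially what appears in the cited reference; the remaining claims (uniqueness, $\dagger^2=\mathrm{id}$, and $\dagger$ fixing $\gen{A}$ and $\gen{A^*}$) then follow formally from Lemma~\ref{lem:generate} exactly as you argue.
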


We recall the notion of an isomorphism for Leonard pairs.
Let $A,A^*$ denote a Leonard pair on $V$.
Let $V'$ denote a vector space over $\F$ with dimension $d+1$,
and let $A', A^{*\prime}$ denote a Leonard pair on $V'$.
By an {\em isomorphism of Leonard pairs} from $A,A^*$ to $A', A^{*\prime}$
we mean an $\F$-algebra isomorphism
$\text{\rm End}(V) \to \text{\rm End}(V')$ that sends
$A \mapsto A'$ and $A^* \mapsto A^{* \prime}$.
The Leonard pairs $A,A^*$ and $A',A^{*\prime}$ are said to be {\em isomorphic}
whenever there exits an isomorphism of Leonard pairs
from $A,A^*$ to $A', A^{*\prime}$.
In this case the isomorphism is unique by Lemma \ref{lem:generate}.

We recall the self-dual Leonard pairs.
A Leonard pair $A,A^*$ is said to be {\em self-dual} whenever $A,A^*$ is
isomorphic to $A^*,A$.
In this case, the isomorphism of Leonard pairs from $A,A^*$ to $A^*,A$
is called the {\em duality} of $A,A^*$.

\begin{lemma}    \label{lem:s2}    \samepage
\ifDRAFT {\rm lem:s2}. \fi
Assume that $A,A^*$ is self-dual with duality $\sigma$.
Then $\sigma^2 = 1$.
\end{lemma}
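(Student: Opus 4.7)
The plan is to exploit the fact that $\sigma$ is an algebra automorphism of $\text{End}(V)$ together with the generation result in Lemma \ref{lem:generate}. Unpacking the definitions, $\sigma$ is an isomorphism of Leonard pairs from $A,A^*$ to $A^*,A$, so by definition $\sigma$ is an $\F$-algebra isomorphism $\text{End}(V) \to \text{End}(V)$ (that is, an automorphism of $\text{End}(V)$) satisfying $A^\sigma = A^*$ and $A^{*\sigma} = A$.

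First I would observe that $\sigma^2$ is again an automorphism of $\text{End}(V)$, as the composition of two automorphisms. Then I would compute its action on the two distinguished generators: $A^{\sigma^2} = (A^\sigma)^\sigma = (A^*)^\sigma = A$ and similarly $A^{*\sigma^2} = (A^{*\sigma})^\sigma = A^\sigma = A^*$. Thus $\sigma^2$ fixes both $A$ and $A^*$.

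Next I would invoke Lemma \ref{lem:generate} to conclude that $A$ and $A^*$ together generate the $\F$-algebra $\text{End}(V)$. Since $\sigma^2$ is an $\F$-algebra homomorphism that fixes a generating set, it fixes every element of $\text{End}(V)$, so $\sigma^2 = 1$.

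No serious obstacle is expected; the argument is essentially a one-line application of Lemma \ref{lem:generate} after the bookkeeping of what $\sigma$ does to the generators. The only point that requires any care is making sure one treats $\sigma$ as an algebra automorphism (guaranteed by the definition of isomorphism of Leonard pairs) rather than merely a linear map sending $A \mapsto A^*$ and $A^* \mapsto A$; that distinction is what lets the generation argument close.
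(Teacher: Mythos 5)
Your proof is correct and takes essentially the same route as the paper: observe that $\sigma^2$ fixes both $A$ and $A^*$, then invoke Lemma \ref{lem:generate} to conclude that $\sigma^2$ is the identity on all of $\text{End}(V)$. You just spell out the bookkeeping a bit more explicitly than the paper does.
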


\begin{proof}
By construction $\sigma^2$ fixes each of $A,A^*$.
By this and Lemma \ref{lem:generate}, $\sigma^2$ fixes every element
of $\text{\rm End}(V)$.
The result follows.
\end{proof}

When working with a Leonard pair, it is convenient to consider a closely related object
called a Leonard system \cite{T:Leonard}.
Before we define a Leonard system,
we recall a few concepts from linear algebra.

Let $\text{\rm Mat}_{d+1}(\F)$ denote the $\F$-algebra consisting of
the $d+1$ by $d+1$ matrices that have all entries in $\F$.
We index the rows and columns by $0,1,\ldots,d$.
Let $\{v_i\}_{i=0}^d$ denote a basis for $V$.
For $A \in \text{\rm End}(V)$ and $M \in \text{\rm Mat}_{d+1}(\F)$,
we say that {\em $M$ represents $A$ with respect to $\{v_i\}_{i=0}^d$} whenever
$A v_j = \sum_{i=0}^d M_{i,j} v_i$ for $0 \leq j \leq d$.
Let $A \in \text{End}(V)$.
For $\th \in \F$ define 
$V(\th) = \{ v \in V \,|\, A v = \th v\}$.
Observe that $V(\th)$ is a subspace of $V$.
The scalar $\th$ is called an {\em eigenvalue} of $A$ whenever $V(\th) \neq 0$.
In this case, $V(\th)$ is called the {\em eigenspace} of $A$ corresponding to $\th$.
We say that
$A$ is {\em diagonalizable}  whenever $V$ is spanned by the eigenspaces of $A$.
We say that $A$ is {\em multiplicity-free}
whenever $A$ is diagonalizable, and each eigenspace of $A$ has dimension one.
Assume that $A$ is multiplicity-free, and let
$\{V_i\}_{i=0}^d$ denote an ordering of the eigenspaces of $A$.
Then $V = \sum_{i=0}^d V_i$ (direct sum).
For $0 \leq i \leq d$ let $\th_i$ denote the eigenvalue of $A$ corresponding to $V_i$.
For $0 \leq i \leq d$ define $E_i \in \text{End}(V)$ such that
$(E_i - I)V_i = 0$ and $E_i V_j = 0$ if $j \neq i$ $(0   \leq j \leq d)$.
Thus $E_i$ is the projection onto $V_i$.
Observe that 
(i) $V_i = E_i V$ $(0 \leq i \leq d)$;
(ii) $E_i E_j = \delta_{i,j} E_i$ $(0 \leq i,j \leq d)$;
(iii) $I = \sum_{i=0}^d E_i$;
(iv) $A = \sum_{i=0}^d \th_i E_i$.
Also 
\begin{align}
   E_i &= \prod_{\begin{smallmatrix} 0 \leq j \leq d \\ j \neq i \end{smallmatrix}   }
           \frac{A-\th_j I}{\th_i - \th_j}    
   \qquad\qquad  (0 \leq i \leq d).                              \label{eq:Ei0}
\end{align}
We call $E_i$ the {\em primitive idempotent} of $A$ for $\th_i$ $(0 \leq i \leq d)$.
Observe that $\{A^i\}_{i=0}^d$ is a basis for the $\F$-vector space $\gen{A}$,
and $\prod_{i=0}^d (A-\th_i I) =0$.
Also observe that $\{E_i\}_{i=0}^d$ is a basis for the
$\F$-vector space $\gen{A}$.

Let $A,A^*$ denote a Leonard pair on $V$.
By \cite[Lemma 1.3]{T:Leonard} each of $A$, $A^*$ is multiplicity-free.
Let $\{E_i\}_{i=0}^d$ denote an ordering of the primitive idempotents of $A$.
For $0 \leq i \leq d$ pick  $0 \neq v_i \in E_i V$. 
Then $\{v_i\}_{i=0}^d$ is a basis for $V$.
The ordering $\{E_i\}_{i=0}^d$ is said to be {\em standard}
whenever the basis $\{v_i\}_{i=0}^d$ satisfies Definition \ref{def:LP}(ii).
A standard ordering of the primitive idempotents of $A^*$ is similarly defined.

\begin{definition}   {\rm \cite[Definition 1.4]{T:Leonard} }
 \label{def:LS}   \samepage
\ifDRAFT {\rm def:LS}. \fi
By a {\em Leonard system} on $V$  we mean a sequence
\begin{equation}
  \Phi = (A; \{E_i\}_{i=0}^d; A^*; \{E^*_i\}_{i=0}^d)     \label{eq:Phi}
\end{equation}
of elements in $\text{\rm End}(V)$
that satisfy the following (i)--(iii):
\begin{itemize}
\item[\rm (i)]
$A,A^*$ is a Leonard pair on $V$;
\item[\rm (ii)]
$\{E_i\}_{i=0}^d$ is a standard ordering of the primitive idempotents of $A$;
\item[\rm (iii)]
$\{E^*_i\}_{i=0}^d$ is a standard ordering of the primitive idempotents of $A^*$.
\end{itemize}
\end{definition}

For the rest of this section, we fix a Leonard system $\Phi$ as in \eqref{eq:Phi}.

\begin{lemma}       \label{lem:dagger2}   \samepage
\ifDRAFT {\rm lem:dagger2}. \fi
The antiautomorphism $\dagger$ from Lemma \ref{lem:dagger}
fixes each of $E_i$, $E^*_i$ for $0 \leq i \leq d$.
\end{lemma}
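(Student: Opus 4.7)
The plan is to observe that each primitive idempotent in question actually lies in one of the two commutative subalgebras that Lemma~\ref{lem:dagger} tells us $\dagger$ fixes pointwise, so the conclusion is immediate.

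First I would recall, from the discussion preceding Definition~\ref{def:LS}, that since $A,A^*$ is a Leonard pair each of $A$ and $A^*$ is multiplicity-free, and that formula \eqref{eq:Ei0} expresses each primitive idempotent $E_i$ of $A$ as a polynomial in $A$ (a Lagrange interpolation at the distinct eigenvalues $\th_j$ of $A$). In particular $E_i \in \gen{A}$ for $0 \leq i \leq d$. Symmetrically, writing the eigenvalues of $A^*$ as $\th^*_j$, the analog of \eqref{eq:Ei0} gives $E^*_i \in \gen{A^*}$ for $0 \leq i \leq d$.

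Next I would invoke the last sentence of Lemma~\ref{lem:dagger}, which asserts that $\dagger$ fixes every element of $\gen{A}$ and every element of $\gen{A^*}$. Applying this to $E_i$ and $E^*_i$ yields $E_i^\dagger = E_i$ and $E_i^{*\dagger} = E^*_i$ for $0 \leq i \leq d$, as desired.

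There is really no obstacle here; the lemma is a direct bookkeeping consequence of Lemma~\ref{lem:dagger} together with the fact that primitive idempotents of a multiplicity-free map are polynomials in that map. The only thing to be slightly careful about is to cite \eqref{eq:Ei0} (and its starred analog), which requires multiplicity-freeness of $A$ and $A^*$; that is guaranteed by \cite[Lemma 1.3]{T:Leonard} as noted in the paragraph just before Definition~\ref{def:LS}.
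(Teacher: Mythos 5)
Your proposal is correct and takes essentially the same approach as the paper, which simply cites Lemma~\ref{lem:dagger}; you have just spelled out the underlying reason, namely that $E_i \in \gen{A}$ and $E^*_i \in \gen{A^*}$ via the interpolation formula \eqref{eq:Ei0}, and that $\dagger$ fixes these subalgebras pointwise.
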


\begin{proof}
By Lemma \ref{lem:dagger}.
\end{proof}

We recall the notion of isomorphism for Leonard systems.
Let $V'$ denote a vector space over $\F$ with dimension $d+1$,
and let $\Phi' = (A'; \{E'_i\}_{i=0}^d; A^{*\prime}; \{E^{* \prime}_i\}_{i=0}^d)$
denote a Leonard system on $V'$.
By an {\em isomorphism of Leonard systems} from $\Phi$ to $\Phi'$
we mean an $\F$-algebra isomorphism
$\text{\rm End}(V) \to \text{\rm End}(V')$ that sends
$A \mapsto A'$, $A^* \mapsto A^{* \prime}$  and
$E_i \mapsto E'_i$, $E^*_i \mapsto E^{* \prime}_i$ for $0 \leq i \leq d$.
The Leonard systems $\Phi$ and $\Phi'$ are said to be {\em isomorphic}
whenever there exits an isomorphism of Leonard systems
from $\Phi$ to $\Phi'$.
In this case the isomorphism is unique.

Each of the following is a Leonard system on $V$:
\begin{align*}
  \Phi^* &= (A^*; \{E^*_i\}_{i=0}^d; A; \{E_i\}_{i=0}^d),
\\
  \Phi^{\downarrow} &= (A; \{E_i\}_{i=0}^d; A^*; \{E^*_{d-i}\}_{i=0}^d),
\\
  \Phi^{\Downarrow} &= (A; \{E_{d-i}\}_{i=0}^d; A^*; \{E^*_i\}_{i=0}^d).
\end{align*}
Moreover, for $\alpha, \beta, \alpha^*, \beta^* \in \F$ with $\alpha \alpha^* \neq 0$,
the sequence
\[
   ( \alpha A + \beta I; \{E_i\}_{i=0}^d; \alpha^* A^* + \beta^* I ; \{E^*_i\}_{i=0}^d)
\]
is a Leonard system on $V$. We call this Leonard system {\em an affine transformation of $\Phi$}.

For $0 \leq i \leq d$ let $\th_i$ (resp.\ $\th^*_i$) denote the eigenvalue of $A$ (resp.\ $A^*$)
corresponding to $E_i$ (resp.\ $E^*_i$).
We call $\{\th_i\}_{i=0}^d$ (resp.\ $\{\th^*_i\}_{i=0}^d$) the {\em eigenvalue sequence}
(resp.\ {\em dual eigenvalue sequence}) of $\Phi$.
Throughout this paper we often discuss the Leonard system $\Phi^*$ as well as $\Phi$.
The following notational convention will simplify this discussion.

\begin{definition}    \label{def:star}    \samepage
\ifDRAFT {\rm def:star}. \fi
Referring to our Leonard system $\Phi$,
for any object $\omega$ associated with $\Phi$,
let $\omega^*$ denote the corresponding object for $\Phi^*$.
\end{definition}

We recall the $\Phi$-standard basis.
Pick $0 \neq u \in E_0 V$.
By \cite[Lemma 10.2]{T:survey} the vectors $\{E^*_i u\}_{i=0}^d$
form a basis for $V$, said to be  {\em $\Phi$-standard}.
With respect to this basis the matrix representing $A$ is irreducible
tridiagonal,
and the matrix representing $A^*$ is diagonal with $(i,i)$-entry $\th^*_i$
for $0 \leq i \leq d$.
So with respect to the basis $\{E^*_i u\}_{i=0}^d$ the matrices representing
$A,A^*$ are
\begin{align}
A &:
 \begin{pmatrix}
   a_0 & b_0 & & & & \bf{0} \\
   c_1 & a_1 & b_1 \\
   & c_2 & \cdot & \cdot  \\
   && \cdot & \cdot & \cdot\\
  &&& \cdot & \cdot & b_{d-1} \\
  \bf{0} & & & & c_d & a_d
 \end{pmatrix},
&
A^* &:
 \begin{pmatrix}
   \th^*_0 & & & & & \bf{0}  \\
  & \th^*_1   \\
  && \th^*_2  \\
  & & & \cdot   \\
  & & & & \cdot   \\
 \bf{0} & & & & & \th^*_d
 \end{pmatrix},                                      \label{eq:matrixAAs}
\end{align}
where $\{c_i\}_{i=1}^d$, $\{a_i\}_{i=0}^d$, $\{b_i\}_{i=0}^{d-1}$ are scalars in $\F$
such that $b_{i-1} c_i \neq 0$ for $1 \leq i \leq d$.
The scalars  $\{c_i\}_{i=1}^d$, $\{a_i\}_{i=0}^d$, $\{b_i\}_{i=0}^{d-1}$ are uniquely
determined by $\Phi$, and called the {\em intersection numbers of $\Phi$}.
The sum of the vectors in a $\Phi$-standard basis is contained in $E_0 V$,
and therefore an eigenvector for $A$ with eigenvalue $\th_0$.
Consequently
\begin{align*}
  \th_0 &= c_i + a_i + b_i     &&   (0 \leq i \leq d),   
\end{align*}
where $c_0 = 0$ and $b_d = 0$.

We define some polynomials.
Let $\lambda$ denote an indeterminate,
and let $\F[\lambda]$ denote the $\F$-algebra consisting of the
polynomials in $\lambda$ with all coefficients in $\F$.
For $0 \leq i \leq d$ define the following polynomials in $\F[\lambda]$:
\begin{align*}
\tau_i (\lambda) &= (\lambda-\th_0)(\lambda-\th_1) \cdots (\lambda-\th_{i-1}),
\\
\eta_i (\lambda) &= (\lambda-\th_d)(\lambda-\th_{d-1}) \cdots (\lambda-\th_{d-i+1}).
\end{align*}

For a nonzero $u \in E^*_0 V$,
consider the vectors $\{\tau_i (A) u\}_{i=0}^d$.
By \cite[Section 21]{T:survey} these vectors form a basis for $V$.
This basis is called a {\em $\Phi$-split basis} for $V$.

\begin{lemma}  {\rm \cite[Theorem 3.2]{T:Leonard} }
\label{lem:split}    \samepage
\ifDRAFT {\rm lem:split}. \fi
There exist scalars $\{\vphi_i\}_{i=1}^d$ in $\F$ such that
with respect to a $\Phi$-split basis for $V$
the matrices representing $A$ and $A^*$ are
\begin{align*}
A &:\:
 \begin{pmatrix}
   \th_0 & & & & & \bf{0} \\
   1 & \th_1 \\
   & 1 & \th_2  \\
   && \cdot & \cdot \\
  &&& \cdot & \cdot  \\
  \bf{0} & & & & 1 & \th_d
 \end{pmatrix},
&
A^* &:
 \begin{pmatrix}
   \th^*_0 & \vphi_1 & & & & \bf{0}  \\
  & \th^*_1 & \vphi_2  \\
  && \th^*_2 & \cdot  \\
  & & & \cdot & \cdot  \\
  & & & & \cdot & \vphi_d  \\
 \bf{0} & & & & & \th^*_d
 \end{pmatrix}.
\end{align*}
The sequence $\{\vphi_i\}_{i=1}^d$ is uniquely determined by $\Phi$.
Moreover $\vphi_i \neq 0$ for $1 \leq i \leq d$.
\end{lemma}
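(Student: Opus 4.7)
The strategy is to work with the explicit $\Phi$-split basis $v_i := \tau_i(A)u$ ($0 \leq i \leq d$, with $0 \neq u \in E^*_0 V$) and verify each matrix form by direct computation, then argue nonvanishing by an irreducibility argument.

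For $A$, the action is essentially by definition: since $(A-\th_i I)\tau_i(A) = \tau_{i+1}(A)$, one has $A v_i = \th_i v_i + v_{i+1}$ for $0 \leq i \leq d-1$, while $\tau_{d+1}(A) = 0$ by Cayley--Hamilton (the $\th_j$ exhaust the spectrum of the multiplicity-free $A$), yielding $A v_d = \th_d v_d$. This produces the lower-bidiagonal shape with $\th_i$'s on the diagonal and $1$'s on the subdiagonal.

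For $A^*$ I would invoke the split decomposition of $V$ attached to $\Phi$. Define $U_i = (E^*_0 V + \cdots + E^*_i V) \cap (E_i V + \cdots + E_d V)$; the standard theory of Leonard systems (see \cite{T:Leonard} and \cite{T:survey}) gives $V = U_0 \oplus U_1 \oplus \cdots \oplus U_d$ with each $U_i$ one-dimensional, together with the raising/lowering properties $(A - \th_i I)U_i \subseteq U_{i+1}$ and $(A^* - \th^*_i I)U_i \subseteq U_{i-1}$ (with the convention $U_{-1} = U_{d+1} = 0$). Since $U_0 = E^*_0 V$, induction on $i$ yields $v_i \in U_i$, and each $v_i \neq 0$ because $\{v_j\}_{j=0}^d$ is known to be a basis \cite[Section 21]{T:survey}. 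The lowering relation then produces unique scalars $\vphi_i \in \F$ with $A^* v_i = \th^*_i v_i + \vphi_i v_{i-1}$ for $1 \leq i \leq d$, and $A^* v_0 = \th^*_0 v_0$, giving the stated upper-bidiagonal form and the uniqueness of $\{\vphi_i\}_{i=1}^d$.

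For the nonvanishing I would argue by contradiction. Suppose $\vphi_i = 0$ for some $1 \leq i \leq d$ and set $V' = \F v_i + \F v_{i+1} + \cdots + \F v_d$. From the lower-bidiagonal form of $A$ together with $A v_d = \th_d v_d$ one sees $A V' \subseteq V'$; from $A^* v_i = \th^*_i v_i$ (using $\vphi_i = 0$) and $A^* v_j \in \F v_j + \F v_{j-1} \subseteq V'$ for $j > i$ one sees $A^* V' \subseteq V'$. So $V'$ is a nonzero proper invariant subspace for $\langle A, A^* \rangle$, contradicting Lemma \ref{lem:generate}, which identifies $\langle A, A^* \rangle$ with $\text{\rm End}(V)$ and thus forces irreducible action on $V$. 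The main obstacle is really the input from the split decomposition: its existence and raising/lowering behavior are nontrivial pieces of Leonard-system structure theory, and once those are granted the rest is bookkeeping.
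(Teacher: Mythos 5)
The paper does not prove this lemma; it is quoted from \cite[Theorem 3.2]{T:Leonard} and used as a black box, so there is no ``paper proof'' to compare against. Your argument is a correct reconstruction of the standard proof. The computation for $A$ is immediate from $\tau_{i+1}(\lambda) = (\lambda-\th_i)\tau_i(\lambda)$ together with $\tau_{d+1}(A)=0$ (which the paper records just before Lemma \ref{lem:split}). For $A^*$ you invoke the split decomposition $V = U_0 \oplus \cdots \oplus U_d$ with the raising/lowering rules $(A-\th_i I)U_i \subseteq U_{i+1}$ and $(A^*-\th^*_i I)U_i \subseteq U_{i-1}$; once one grants these (they are the genuinely nontrivial input, established in \cite{T:Leonard, T:survey}), the induction $v_i \in U_i$ and the upper-bidiagonal form of $A^*$ follow exactly as you say. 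Your nonvanishing argument is the right one: if $\vphi_i = 0$ then $\mathrm{span}\{v_i,\dots,v_d\}$ is a proper nonzero subspace invariant under both $A$ and $A^*$, hence under $\gen{A,A^*}=\text{\rm End}(V)$ by Lemma \ref{lem:generate}, which is impossible since $\text{\rm End}(V)$ acts irreducibly on $V$. One remark on bookkeeping: you cite \cite[Section 21]{T:survey} for $\{v_i\}$ being a basis to conclude $v_i \neq 0$, but in fact the basis property is itself usually derived from the split decomposition (each $U_i$ is one-dimensional and $v_i$ is a nonzero vector in it), so if you are already assuming the split decomposition with $\dim U_i = 1$ you do not need the separate citation. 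Overall the proposal is sound, and it accurately flags where the real content lies.
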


Referring to Lemma \ref{lem:split},
the sequence $\{\vphi_i\}_{i=1}^d$ is called the {\em first split sequence of $\Phi$}.
Let $\{\phi_i\}_{i=1}^d$ denote the first split sequence of $\Phi^\Downarrow$.
We call $\{\phi_i\}_{i=1}^d$ the {\em second split sequence of $\Phi$}.
By the {\em parameter array} of $\Phi$
we mean the sequence
\begin{equation}
   (\{\th_i\}_{i=0}^d; \{\th^*_i\}_{i=0}^d; \{\vphi_i\}_{i=1}^d; \{\phi_i\}_{i=1}^d).   \label{eq:parray}
\end{equation}

\begin{lemma}  {\rm \cite[Theorem 1.9]{T:Leonard}}
 \label{lem:isomorphic}   \samepage
\ifDRAFT {\rm lem:isomorphic}. \fi
A Leonard system is uniquely determined up to isomorphism by its parameter array.
\end{lemma}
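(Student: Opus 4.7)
The plan is to use the $\Phi$-split basis from Lemma \ref{lem:split} to produce an explicit isomorphism of Leonard systems. Suppose $\Phi$ on $V$ and $\Phi'$ on $V'$ share the same parameter array \eqref{eq:parray}. First I would choose any $\Phi$-split basis $\{v_i\}_{i=0}^d$ for $V$ and any $\Phi'$-split basis $\{v'_i\}_{i=0}^d$ for $V'$. By Lemma \ref{lem:split}, in these respective bases the matrices representing $A$ and $A'$ both coincide with the lower bidiagonal matrix having diagonal $(\th_0,\th_1,\ldots,\th_d)$ and subdiagonal entries all equal to $1$, while the matrices representing $A^*$ and $A^{*\prime}$ both coincide with the upper bidiagonal matrix having diagonal $(\th^*_0,\th^*_1,\ldots,\th^*_d)$ and superdiagonal $(\vphi_1,\ldots,\vphi_d)$. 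These four matrices are identical on the two sides because $\Phi$ and $\Phi'$ have matching eigenvalue sequence, dual eigenvalue sequence, and first split sequence.

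Next I would define the $\F$-linear bijection $\sigma_0 : V \to V'$ by $v_i \mapsto v'_i$ for $0 \leq i \leq d$, and let $\sigma : \text{\rm End}(V) \to \text{\rm End}(V')$ be the induced $\F$-algebra isomorphism $X \mapsto \sigma_0 X \sigma_0^{-1}$. Since $A$ and $A'$ are represented by the same matrix in the chosen bases, $\sigma(A) = A'$; the same argument yields $\sigma(A^*) = A^{*\prime}$. To see that $\sigma$ also sends $E_i \mapsto E'_i$ and $E^*_i \mapsto E^{*\prime}_i$, I would apply $\sigma$ to the polynomial formula \eqref{eq:Ei0} expressing $E_i$ in terms of $A$ and the eigenvalues $\{\th_j\}_{j=0}^d$, together with the analogous formula for $E^*_i$ in terms of $A^*$ and $\{\th^*_j\}_{j=0}^d$; since both the generator and the scalar coefficients are preserved by $\sigma$, so are the idempotents. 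Thus $\sigma$ is an isomorphism of Leonard systems from $\Phi$ to $\Phi'$.

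I do not expect a substantive obstacle: once Lemma \ref{lem:split} is in hand, the argument reduces to matching two matrix presentations. The only point requiring a small comment is the fact that the $\Phi$-split basis is built from an arbitrary nonzero $u \in E^*_0 V$; rescaling $u$ multiplies every basis vector $\tau_i(A) u$ by the same scalar and hence leaves the representing matrices of $A$ and $A^*$ unchanged, so the construction above is independent of the choice of $u$ and $u'$. It is also worth noting that the second split sequence $\{\phi_i\}_{i=1}^d$ plays no role in this uniqueness statement; as the first split sequence of $\Phi^\Downarrow$ it is in fact already determined by the rest of the array, and its presence in \eqref{eq:parray} is geared toward the companion existence/classification theorem rather than the uniqueness direction proved here.
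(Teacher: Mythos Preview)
Your argument is correct and is essentially the standard proof of this result: use Lemma~\ref{lem:split} to put both Leonard systems into identical matrix form with respect to split bases, define the algebra isomorphism by change of basis, and then recover the primitive idempotents via the polynomial formula \eqref{eq:Ei0}. Note that the paper itself does not supply a proof here; it simply cites \cite[Theorem~1.9]{T:Leonard}, where the argument runs along the same lines you describe. Your closing remark that the second split sequence $\{\phi_i\}_{i=1}^d$ is redundant for the uniqueness direction is also accurate and worth keeping in mind.
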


\begin{lemma} {\rm  \cite[Theorem 23.5]{T:survey} }
\label{lem:intersection}    \samepage
\ifDRAFT {\rm lem:intersection}. \fi
The intersection numbers $\{c_i\}_{i=1}^d$, $\{b_i\}_{i=0}^{d-1}$ of $\Phi$
are determined by the parameter array in the following way:
\begin{align}
  c_i &= \phi_i \frac{\eta^*_{d-i} (\th^*_i) } { \eta^*_{d-i+1} (\th^*_{i-1}) }  
          && (1 \leq i \leq d),                                           \label{eq:ci0}
\\
  b_i &= \vphi_{i+1} \frac{ \tau^*_i (\th^*_i) } { \tau^*_{i+1}(\th^*_{i+1}) }
          && (0 \leq i \leq d-1).                                        \label{eq:bi0}
\end{align}
\end{lemma}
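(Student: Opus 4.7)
The plan is to derive both formulas by matching coefficients of $A$ acting on two bases of $V$ built from a common base vector $u\in E_0 V$, $u \neq 0$. Writing $v_k := E^*_k u$ for the $\Phi$-standard basis, the identity $A v_\ell = b_{\ell-1} v_{\ell-1} + a_\ell v_\ell + c_{\ell+1} v_{\ell+1}$ will be the left-hand side of both matchings.

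For $b_i$, I use the $\Phi^*$-split basis $w_j := \tau^*_j(A^*)\,u$. Since $A^* = \sum_\ell \th^*_\ell E^*_\ell$, one has the upper-triangular expansion
\[
w_j = \sum_{k=j}^{d} \tau^*_j(\th^*_k)\,v_k
\]
(the terms with $k < j$ vanish since $\tau^*_j(\th^*_k) = 0$ there). Lemma \ref{lem:split} applied to $\Phi^*$, combined with the standard fact that the first split sequence is preserved under $*$, gives $A w_k = \vphi_k w_{k-1} + \th_k w_k$ for $1 \le k \le d$. Equating the coefficient of $v_{k-1}$ on both sides: the right side contributes $\vphi_k \tau^*_{k-1}(\th^*_{k-1})$ (since $w_k$ has no $v_{k-1}$ component), while the left side contributes $\tau^*_k(\th^*_k)\,b_{k-1}$ (only the $\ell = k$ term in $\sum_\ell \tau^*_k(\th^*_\ell) A v_\ell$ yields $v_{k-1}$, via $b_{k-1}$). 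Reindexing $i = k-1$ gives the formula for $b_i$.

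For $c_i$, I repeat the argument with the $\Phi^{*\Downarrow}$-split basis $w'_j := \eta^*_j(A^*)\,u$, for which $w'_j = \sum_{k=0}^{d-j} \eta^*_j(\th^*_k)\,v_k$. Applying Lemma \ref{lem:split} to $\Phi^{*\Downarrow}$ gives $A w'_j = \vphi^{(\Phi^{*\Downarrow})}_j\,w'_{j-1} + \th_j w'_j$, where $\vphi^{(\Phi^{*\Downarrow})}_j$ is identified with $\phi_{d-j+1}$ by the transformation rules for parameter arrays under $*$ and $\Downarrow$. Equating the coefficient of $v_{d-j+1}$ on both sides (this component appears on the left only via $c_{d-j+1}$ in $A v_{d-j}$, and on the right only via $w'_{j-1}$) yields $\eta^*_j(\th^*_{d-j})\,c_{d-j+1} = \phi_{d-j+1}\,\eta^*_{j-1}(\th^*_{d-j+1})$. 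Reindexing $i = d-j+1$ produces the claimed formula for $c_i$.

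The main obstacle is correctly invoking the parameter-array transformation under the dihedral action on Leonard systems, in particular the identification $\vphi^{(\Phi^{*\Downarrow})}_j = \phi_{d-j+1}$. The rules I need are that $*$ preserves the indexing of $\vphi$ but reverses the indexing of $\phi$ (sending $\phi_i$ to $\phi_{d-i+1}$), while $\Downarrow$ swaps $\vphi$ and $\phi$ without further reindexing. These are standard in the Leonard system classification, but care is needed to track indices through the composition $*\Downarrow$ so that the final formula comes out with $\phi_i$ rather than some reindexed variant.
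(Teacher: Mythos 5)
Your proof is correct. The paper gives no internal proof of this lemma (it cites \cite[Theorem 23.5]{T:survey}); your argument of expanding the $\Phi^{*}$-split and $(\Phi^{*})^{\Downarrow}$-split bases in the $\Phi$-standard basis and matching coefficients against the bidiagonal action from Lemma~\ref{lem:split}, together with the standard parameter-array transformations $\varphi(\Phi^*)_j=\varphi_j$ and $\varphi((\Phi^*)^{\Downarrow})_j=\phi_{d-j+1}$, is the natural route, and the coefficient-matching identities $\tau^*_k(\th^*_k)\,b_{k-1}=\vphi_k\,\tau^*_{k-1}(\th^*_{k-1})$ and $\eta^*_j(\th^*_{d-j})\,c_{d-j+1}=\phi_{d-j+1}\,\eta^*_{j-1}(\th^*_{d-j+1})$ reindex exactly to \eqref{eq:bi0} and \eqref{eq:ci0}.
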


\begin{lemma}   \label{lem:parray}   \samepage
\ifDRAFT {\rm lem:parray}. \fi
The parameter array of $\Phi$ is determined by
the eigenvalue sequence, dual eigenvalue sequence,
and the $\{c_i\}_{i=1}^d$, $\{b_i\}_{i=0}^{d-1}$.
\end{lemma}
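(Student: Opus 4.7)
The plan is to solve the equations of Lemma \ref{lem:intersection} for $\vphi_i$ and $\phi_i$ in terms of the given data, and then verify that the resulting expressions are well-defined (i.e.\ no division by zero).

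First I would observe that since $A^*$ is multiplicity-free (by \cite[Lemma 1.3]{T:Leonard}), the scalars $\{\th^*_i\}_{i=0}^d$ are mutually distinct. Consequently, for $0 \leq i \leq d$,
\[
   \tau^*_i(\th^*_i) = \prod_{j=0}^{i-1} (\th^*_i - \th^*_j),
   \qquad
   \eta^*_{d-i}(\th^*_i) = \prod_{j=i+1}^d (\th^*_i - \th^*_j)
\]
are nonzero, and a similar remark applies to $\tau^*_{i+1}(\th^*_{i+1})$ and $\eta^*_{d-i+1}(\th^*_{i-1})$.

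Next, from the formula \eqref{eq:ci0} in Lemma \ref{lem:intersection} I would solve for
\[
   \phi_i = c_i\,\frac{\eta^*_{d-i+1}(\th^*_{i-1})}{\eta^*_{d-i}(\th^*_i)}
   \qquad (1 \leq i \leq d),
\]
and from \eqref{eq:bi0} I would solve for
\[
   \vphi_{i+1} = b_i\,\frac{\tau^*_{i+1}(\th^*_{i+1})}{\tau^*_i(\th^*_i)}
   \qquad (0 \leq i \leq d-1).
\]
By the preceding paragraph each of these expressions is well-defined. This recovers $\{\phi_i\}_{i=1}^d$ and $\{\vphi_i\}_{i=1}^d$ as functions of $\{\th^*_i\}_{i=0}^d$, $\{c_i\}_{i=1}^d$ and $\{b_i\}_{i=0}^{d-1}$, so together with the explicitly supplied $\{\th_i\}_{i=0}^d$ and $\{\th^*_i\}_{i=0}^d$ one has all four components of the parameter array \eqref{eq:parray}.

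There is no genuine obstacle here: the lemma is essentially a matter of inverting the formulas of Lemma \ref{lem:intersection}, with the only subtle point being the nonvanishing of the denominators, which follows from the multiplicity-freeness of $A^*$.
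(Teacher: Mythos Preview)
Your proposal is correct and takes essentially the same approach as the paper, which simply writes ``By Lemma \ref{lem:intersection}.'' You have merely spelled out the inversion of \eqref{eq:ci0} and \eqref{eq:bi0} explicitly, including the verification that the denominators are nonzero, which is exactly what the paper's one-line proof leaves to the reader.
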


\begin{proof}
By Lemma \ref{lem:intersection}.
\end{proof}

We recall the the self-dual Leonard systems.
The Leonard system $\Phi$ is said to be {\em self-dual}
whenever $\Phi$ is isomorphic to $\Phi^*$.
In this case, the isomorphism of Leonard systems from $\Phi$ to $\Phi^*$
is called the {\em duality} of $\Phi$.
We remark that if $\Phi$ is self-dual then so is the Leonard pair $A,A^*$,
and in this case the duality of $\Phi$ is the duality of $A,A^*$.

\begin{lemma}   {\rm \cite[Lemma 8.6, Proposition 8.7]{NT:affine} }
\label{lem:selfdualparam}    \samepage
\ifDRAFT {\rm lem:selfdualparam}. \fi
The Leonard system $\Phi$ is self-dual if and only if
$\th_i = \th^*_i$ for $0 \leq i \leq d$.
In this case, 
$\phi_i = \phi_{d-i+1}$ for $1 \leq i \leq d$.
\end{lemma}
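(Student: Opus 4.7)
The plan is to prove both directions by analyzing parameter arrays and invoking Lemma \ref{lem:isomorphic}, which identifies a Leonard system with its parameter array up to isomorphism.

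For the forward direction, assume $\Phi$ is self-dual with duality $\sigma$. By definition $\sigma$ is an $\F$-algebra automorphism of $\text{\rm End}(V)$ sending $A \mapsto A^*$, $A^* \mapsto A$, $E_i \mapsto E^*_i$ and $E^*_i \mapsto E_i$ for $0 \leq i \leq d$. Applying $\sigma$ to the spectral decomposition $A = \sum_{i=0}^d \th_i E_i$ yields $A^* = \sum_{i=0}^d \th_i E^*_i$. Comparing with $A^* = \sum_{i=0}^d \th^*_i E^*_i$ and using the linear independence of $\{E^*_i\}_{i=0}^d$, we conclude $\th_i = \th^*_i$ for $0 \leq i \leq d$. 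For the identity $\phi_i = \phi_{d-i+1}$, observe that $\sigma$ also maps $\Phi^\Downarrow$ to $(\Phi^*)^\Downarrow$, so these two Leonard systems are isomorphic and hence share the same parameter array by Lemma \ref{lem:isomorphic}. By definition $\{\phi_i\}_{i=1}^d$ is the first split sequence of $\Phi^\Downarrow$; a direct computation from the split-basis construction identifies the first split sequence of $(\Phi^*)^\Downarrow$ as $\{\phi_{d-i+1}\}_{i=1}^d$, and equating yields the claim.

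For the converse, assume $\th_i = \th^*_i$ for $0 \leq i \leq d$. We show that $\Phi$ and $\Phi^*$ have the same parameter array, whence $\Phi \cong \Phi^*$ by Lemma \ref{lem:isomorphic}. The eigenvalue sequence of $\Phi^*$ is $\{\th^*_i\}_{i=0}^d$ and its dual eigenvalue sequence is $\{\th_i\}_{i=0}^d$; by hypothesis these coincide with the corresponding sequences of $\Phi$. By Lemma \ref{lem:parray}, it remains to compare intersection numbers. Applying formulas \eqref{eq:ci0}, \eqref{eq:bi0} to both $\Phi$ and $\Phi^*$ and invoking $\th_i = \th^*_i$, the desired equality of intersection numbers reduces to an identity relating the split sequences of $\Phi$ to those of $\Phi^*$, which follows from the same split-basis bookkeeping alluded to above.

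The main obstacle is a precise description of how the first and second split sequences transform under the $*$- and $\Downarrow$-operations; this requires careful manipulation with the defining polynomials $\tau_i$, the $\Phi$-split basis, and the tridiagonal forms in Lemma \ref{lem:split}. Once this transformation law is established, it drives both the identity $\phi_i = \phi_{d-i+1}$ in the forward direction and the matching of intersection numbers in the converse, completing the proof.
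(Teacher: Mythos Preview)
The paper does not supply its own proof of this lemma; it is quoted from \cite[Lemma~8.6, Proposition~8.7]{NT:affine}. So there is no in-paper argument to compare against, and your proposal must stand on its own.

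Your forward implication that self-duality forces $\th_i=\th^*_i$ is clean and correct. The remaining steps, however, rest on a fact you explicitly flag as the ``main obstacle'' and never prove: the transformation law for the split sequences under the operations $*$ and $\Downarrow$. Concretely, what is needed is that the parameter array of $\Phi^*$ is $(\{\th^*_i\};\{\th_i\};\{\vphi_i\};\{\phi_{d-i+1}\})$. This is a genuine theorem (the $D_4$ action on parameter arrays, \cite[Theorem~1.11]{T:Leonard}), not a routine bookkeeping exercise with the split basis, and it is precisely the content that makes the lemma work. Without it, your assertion that the first split sequence of $(\Phi^*)^\Downarrow$ equals $\{\phi_{d-i+1}\}$ is unjustified, and your converse argument via intersection numbers becomes circular: equating $c_i(\Phi)$ with $c_i(\Phi^*)$ through \eqref{eq:ci0} requires knowing that the second split sequence of $\Phi^*$ is $\{\phi_{d-i+1}\}$, which is exactly what you are trying to establish.

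Once the $D_4$ transformation rule is in hand, both directions are immediate: if $\th_i=\th^*_i$ then $\Phi$ and $\Phi^*$ share eigenvalue sequence, dual eigenvalue sequence, and first split sequence, and the second split sequence is determined by these three (this is part of the classification \cite[Theorem~1.9]{T:Leonard}), so the parameter arrays agree and Lemma~\ref{lem:isomorphic} gives $\Phi\cong\Phi^*$; comparing second split sequences then yields $\phi_i=\phi_{d-i+1}$. Your outline is thus correct in spirit, but the proof is incomplete until you either prove or cite the parameter-array transformation under~$*$.
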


\section{The map $\pi$}
\label{sec:pi}

In this section, we introduce a certain map
$\pi :  \text{\rm End}(V) \to \F$ attached to a Leonard system.
Before defining this map, we briefly recall some basis facts about the trace function 
$\text{\rm tr} \! :  \text{\rm End}(V) \to \F$.
We have $\text{\rm tr} (Y Z) = \text{tr} (Z Y)$ for $Y$, $Z \in \text{\rm End}(V)$.

\begin{lemma}   \label{lem:trYsigma}    \samepage
\ifDRAFT {\rm lem:trYsigma}. \fi
Let $\sigma$ denote an automorphism of $\text{\rm End}(V)$.
Then  $\text{\rm tr}(Y^\sigma) = \text{\rm tr}(Y)$ for all $Y \in \text{\rm End}(V)$.
\end{lemma}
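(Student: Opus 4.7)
The plan is to combine two facts already in hand: the Skolem--Noether description of automorphisms of $\text{\rm End}(V)$ given in Lemma \ref{lem:SN}, and the cyclic property of the trace stated at the start of the section, namely $\text{\rm tr}(YZ) = \text{\rm tr}(ZY)$ for all $Y, Z \in \text{\rm End}(V)$.

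First, I would invoke Lemma \ref{lem:SN} to produce an invertible $K \in \text{\rm End}(V)$ such that $Y^\sigma = K Y K^{-1}$ for every $Y \in \text{\rm End}(V)$. Then, applying the cyclic property with the two factors $K Y$ and $K^{-1}$, I would compute
\[
\text{\rm tr}(Y^\sigma) \;=\; \text{\rm tr}(K Y K^{-1}) \;=\; \text{\rm tr}(K^{-1} K Y) \;=\; \text{\rm tr}(Y),
\]
which is the desired conclusion.

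There is essentially no obstacle here: the statement is an immediate two-line consequence of a structural input (every automorphism of the full matrix algebra is inner) and a routine computational input (trace is conjugation-invariant). The only thing to be careful about is that Skolem--Noether has been cited in the form of Lemma \ref{lem:SN} for all of $\text{\rm End}(V)$, not merely for a subalgebra, so the inner representation is available for the given $\sigma$ without further hypothesis.
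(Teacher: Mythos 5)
Your proof is correct and matches the paper's own proof essentially line for line: both invoke Lemma \ref{lem:SN} to write $Y^\sigma = KYK^{-1}$ and then apply the cyclic property of the trace. Nothing to add.
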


\begin{proof}
By Lemma \ref{lem:SN} there exists  an invertible $K \in \text{\rm End}(V)$ such that
$Y^\sigma = K Y K^{-1}$ for all $Y \in \text{\rm End}(V)$.
We have
$\text{\rm tr}(K Y K^{-1}) = \text{\rm tr}(K^{-1} K Y) = \text{\rm tr}(Y)$.
The result follows.
\end{proof}

\begin{lemma}   \label{lem:trYgamma}    \samepage
\ifDRAFT {\rm lem:trYgamma}. \fi
Let $\tau$ denote an antiautomorphism of $\text{\rm End}(V)$.
Then  $\text{\rm tr}(Y^\tau) = \text{\rm tr}(Y)$ for all $Y \in \text{\rm End}(V)$.
\end{lemma}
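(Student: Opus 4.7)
The plan is to reduce to the automorphism case that was just handled in Lemma \ref{lem:trYsigma}. The key observation is that the composition of two antiautomorphisms is an automorphism, and there is a canonical antiautomorphism readily available under which trace is known to be invariant, namely the transpose with respect to any chosen basis.

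First I would fix a basis $\{v_i\}_{i=0}^d$ for $V$, and let $t : \text{\rm End}(V) \to \text{\rm End}(V)$ be the transpose map with respect to this basis. A direct check (writing $(YZ)^t = Z^t Y^t$ at the matrix level) shows that $t$ is an $\F$-linear antiautomorphism of $\text{\rm End}(V)$. The classical identity $\text{\rm tr}(M^t) = \text{\rm tr}(M)$ for square matrices then gives $\text{\rm tr}(Y^t) = \text{\rm tr}(Y)$ for all $Y \in \text{\rm End}(V)$.

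Next I would form the composition $\sigma = t \circ \tau$. Since $\tau$ and $t$ are both $\F$-linear bijections, so is $\sigma$. To see that $\sigma$ is an automorphism, compute
\[
(YZ)^\sigma = \bigl((YZ)^\tau\bigr)^t = (Z^\tau Y^\tau)^t = (Y^\tau)^t (Z^\tau)^t = Y^\sigma Z^\sigma
\]
for all $Y, Z \in \text{\rm End}(V)$. By Lemma \ref{lem:trYsigma}, we obtain $\text{\rm tr}(Y^\sigma) = \text{\rm tr}(Y)$ for all $Y$. Finally, since $Y^\sigma = (Y^\tau)^t$ and trace is invariant under the transpose $t$, we conclude
\[
\text{\rm tr}(Y) = \text{\rm tr}(Y^\sigma) = \text{\rm tr}\bigl((Y^\tau)^t\bigr) = \text{\rm tr}(Y^\tau),
\]
which is the desired identity.

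There is essentially no hard step here; the only thing one has to be careful about is verifying that $t \circ \tau$ (and not $\tau \circ t$) comes out as an automorphism in the convention being used, and that the transpose is indeed an antiautomorphism rather than an automorphism. An equally clean alternative would be to invoke the fact that $\text{\rm tr}$ is the unique (up to scalar) $\F$-linear functional on $\text{\rm End}(V)$ that vanishes on all commutators, note that $\text{\rm tr} \circ \tau$ has the same vanishing property, conclude $\text{\rm tr} \circ \tau = c \cdot \text{\rm tr}$, and pin down $c = 1$ by evaluating at $I$ (using $\tau(I) = I$, which follows from $\tau(I) = \tau(I)^2$ and invertibility of $\tau(I)$); but the route via Lemma \ref{lem:trYsigma} is more in keeping with the flow of the section.
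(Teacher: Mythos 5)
Your proof is correct and takes essentially the same route as the paper: identify $\text{\rm End}(V)$ with matrices via a choice of basis, compose $\tau$ with the transpose antiautomorphism to obtain an automorphism, then invoke Lemma \ref{lem:trYsigma} together with trace-invariance under transpose. The extra alternative you sketch (trace as the unique functional vanishing on commutators) is a nice remark but not needed.
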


\begin{proof}
Without loss we may identify $\text{\rm End}(V)$ with $\text{\rm Mat}_{d+1}(\F)$.
The transpose map is an antiautomorphism of the algebra $\text{\rm Mat}_{d+1}(\F)$.
So the composition of $\tau$ and the transpose map is an automorphism of 
$\text{\rm Mat}_{d+1}(\F)$.
The result follows in view of Lemma \ref{lem:trYsigma} and the fact that a matrix
and its transpose have the same trace.
\end{proof}

We now define the map $\pi$.
Let $\Phi = (A; \{E_i\}_{i=0}^d; A^*; \{E^*_i\}_{i=0}^d)$ denote a Leonard system
on $V$ with parameter array as in \eqref{eq:parray}.

\begin{definition}    \label{def:pipis}    \samepage
\ifDRAFT {\rm def:pipis}. \fi
Define the map
\[
  \pi \; :  \;
  \begin{array}{ccc}
   \text{\rm End}(V)  & \longrightarrow &  \F   \\
       Y  &   \longmapsto &   \text{\rm tr} (Y E_0)
  \end{array}
\]
Observe that $\pi$ is $\F$-linear.
\end{definition}

\begin{lemma}    \label{lem:E0YE0}     \samepage
\ifDRAFT {\rm lem:E0YE0}. \fi
For $Y \in \text{\rm End}(V)$,
\begin{align*}
  E_0 Y E_0 &= \pi(Y) E_0.        
\end{align*}
\end{lemma}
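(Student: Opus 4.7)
The plan is to exploit the fact that $E_0$ is a rank-one idempotent, since $A$ is multiplicity-free (so each eigenspace of $A$ is one-dimensional) and $E_0$ is the projection onto a single eigenspace $V_0 = E_0 V$. From this it will follow that $E_0 Y E_0$ is automatically a scalar multiple of $E_0$, and the scalar can be read off by taking a trace.

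First, I would argue that for any $Y \in \text{\rm End}(V)$ there exists $\alpha \in \F$ with $E_0 Y E_0 = \alpha E_0$. Indeed, the image of $E_0$ equals $V_0$, which has dimension $1$; hence the image of $E_0 Y E_0$ is contained in $V_0$. On the complementary invariant subspace $\sum_{i=1}^d V_i = \text{\rm Ker}\,E_0$, the map $E_0 Y E_0$ vanishes because of the rightmost factor $E_0$. On the one-dimensional subspace $V_0$, the restriction of $E_0 Y E_0$ coincides with the restriction of $\alpha E_0$ for a unique scalar $\alpha$. Since $E_0$ and $E_0 Y E_0$ agree (up to the scalar $\alpha$) on $V_0$ and both vanish on $\text{\rm Ker}\,E_0$, we conclude $E_0 Y E_0 = \alpha E_0$.

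Next, I would pin down $\alpha$ by taking the trace. Using $E_0^2 = E_0$ and the cyclic property of the trace,
\[
  \text{\rm tr}(E_0 Y E_0) = \text{\rm tr}(Y E_0 \cdot E_0) = \text{\rm tr}(Y E_0) = \pi(Y).
\]
On the other hand, $\text{\rm tr}(\alpha E_0) = \alpha \, \text{\rm tr}(E_0) = \alpha$, since $E_0$ is a rank-one idempotent and therefore has trace $1$. Comparing gives $\alpha = \pi(Y)$, as desired.

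I do not anticipate any real obstacle: the argument is pure linear algebra, resting only on the multiplicity-freeness of $A$ (already recorded in the text) and the definition of $\pi$. The one step worth stating explicitly is the reduction $E_0 Y E_0 \in \F E_0$, which is the geometric content; the remainder is a trace computation.
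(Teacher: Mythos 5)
Your proof is correct and takes essentially the same approach as the paper: both establish $E_0 Y E_0 = \alpha E_0$ using the fact that $E_0$ has rank one (the paper phrases this as ``$E_0$ is a basis for $E_0\,\text{\rm End}(V)\, E_0$''), and then identify $\alpha = \pi(Y)$ by the same trace computation. Your argument just spells out the rank-one reduction more explicitly.
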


\begin{proof}
Abbreviate ${\cal A} = \text{\rm End}(V)$.
The primitive idempotent $E_0$ is a basis for the  subspace
$E_0 {\cal A} E_0$.
This subspace contains $E_0 Y E_0$,
so there exists $\alpha \in \F$ such that $E_0 Y E_0 = \alpha E_0$.
In this equation, take the trace of each side to get
$\text{\rm tr} (E_0 Y E_0) =\alpha \text{\rm tr}(E_0)$.
We have $\text{\rm tr}(E_0 Y E_0) = \text{\rm tr}(Y E_0) = \pi(Y)$.
Also $\text{\rm tr}(E_0) = 1$ since $E_0$ is a primitive idempotent.
By these comments, $\alpha = \pi (Y)$.
The result follows.
\end{proof}

\begin{lemma}    \label{lem:piY}    \samepage
\ifDRAFT {\rm lem:piY}. \fi
For $0 \leq i \leq d$,
\begin{align}
  \pi (E_i) &= \delta_{i,0}.            \label{eq:piEi}
\end{align}
\end{lemma}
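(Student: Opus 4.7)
The plan is to evaluate $\pi(E_i) = \text{tr}(E_i E_0)$ using the orthogonality of the primitive idempotents. Specifically, from the bullet point (ii) recalled in Section \ref{sec:LP} we have $E_i E_0 = \delta_{i,0} E_0$, so
\[
\pi(E_i) = \text{tr}(E_i E_0) = \delta_{i,0}\,\text{tr}(E_0).
\]
It then suffices to observe that $\text{tr}(E_0) = 1$, which was already noted in the proof of Lemma \ref{lem:E0YE0} and follows from the fact that $E_0$ is a rank-one projection (since $A$ is multiplicity-free, each primitive idempotent has one-dimensional image). Combining these two observations gives $\pi(E_i) = \delta_{i,0}$.

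There is no real obstacle here; the statement is essentially a direct unpacking of Definition \ref{def:pipis} together with the defining properties of primitive idempotents. The proof could be written in a single line and simply cites the orthogonality relation together with $\text{tr}(E_0)=1$.
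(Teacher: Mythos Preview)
Your proof is correct and takes essentially the same approach as the paper: both use the orthogonality $E_i E_0 = \delta_{i,0} E_0$ together with $\text{tr}(E_0)=1$ to evaluate $\pi(E_i) = \text{tr}(E_i E_0)$.
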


\begin{proof}
We have $\text{\rm tr}(E_0)= 1$.
For $1 \leq i \leq d$ we have $E_i E_0=0$.
By these comments and Definition \ref{def:pipis} we obtain \eqref{eq:piEi}.
\end{proof}

\begin{lemma}    \label{lem:piA}    \samepage
\ifDRAFT {\rm lem:piA}. \fi
We have
\begin{align*}
 \pi (I) &= 1, &
 \pi (A) &= \th_0.
\end{align*}
\end{lemma}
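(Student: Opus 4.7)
The plan is to compute both values directly from Definition \ref{def:pipis}, namely $\pi(Y) = \text{\rm tr}(Y E_0)$, using only elementary facts about the primitive idempotents $\{E_i\}_{i=0}^d$ of $A$.

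First I would handle $\pi(I)$. By definition $\pi(I) = \text{\rm tr}(E_0)$. Since $E_0$ is a primitive idempotent (a rank-one projection, as $A$ is multiplicity-free and $\dim E_0 V = 1$), its trace is $1$. This is the same fact already invoked in the proof of Lemma \ref{lem:piY}, so it is available without further work.

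Next I would handle $\pi(A)$. Using the spectral decomposition $A = \sum_{i=0}^d \th_i E_i$ together with $E_i E_0 = \delta_{i,0} E_0$, we get $A E_0 = \th_0 E_0$. Therefore
\begin{align*}
  \pi(A) \;=\; \text{\rm tr}(A E_0) \;=\; \text{\rm tr}(\th_0 E_0) \;=\; \th_0 \,\text{\rm tr}(E_0) \;=\; \th_0,
\end{align*}
where the last equality again uses $\text{\rm tr}(E_0) = 1$.

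Both identities are immediate consequences of the definitions and the properties of primitive idempotents listed just before Definition \ref{def:LS}; there is no real obstacle here, and the lemma is essentially a record of two base cases that will presumably be used to bootstrap further values of $\pi$ in the next results.
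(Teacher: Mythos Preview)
Your proof is correct and essentially the same as the paper's. The only cosmetic difference is that for $\pi(A)$ the paper applies $\pi$ term-by-term to $A = \sum_{i=0}^d \th_i E_i$ and invokes \eqref{eq:piEi}, whereas you first simplify $A E_0 = \th_0 E_0$ and then take the trace; both amount to the same one-line computation.
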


\begin{proof}
We have $\pi(I)=1$ since $\text{\rm tr}(E_0)=1$.
In $A = \sum_{i=0}^d \th_i E_i$, apply $\pi$ to each side and use \eqref{eq:piEi}
to get $\pi(A)=\th_0$.
\end{proof}

\begin{lemma}    \label{lem:piYZ}    \samepage
\ifDRAFT {\rm lem:piYZ}. \fi
For $Y \in \gen{A}$ and $Z \in \text{\rm End}(V)$,
\begin{equation*}
   \pi (Y Z) = \pi (Z Y) = \pi (Y) \pi (Z).        
\end{equation*}
\end{lemma}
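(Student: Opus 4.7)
The plan is to reduce everything to the key identity $YE_0 = \pi(Y) E_0$, valid for $Y \in \gen{A}$, and then exploit the cyclic property of the trace.

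First I would observe that since $E_0 \in \gen{A}$ (as $E_0$ is a polynomial in $A$ by \eqref{eq:Ei0}) and $Y \in \gen{A}$, the elements $Y$ and $E_0$ commute. Combined with Lemma \ref{lem:E0YE0}, this gives
\begin{align*}
  YE_0 \;=\; E_0 Y E_0 \;=\; \pi(Y) E_0,
\end{align*}
since $E_0^2 = E_0$. This reduces $YE_0$ to a scalar multiple of $E_0$ whenever $Y \in \gen{A}$, which is the engine that drives everything.

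Next I would compute the two desired equalities directly. For the second, using the cyclic property of the trace,
\begin{align*}
  \pi(YZ) \;=\; \text{\rm tr}(YZE_0) \;=\; \text{\rm tr}(Z\cdot YE_0) \;=\; \pi(Y)\,\text{\rm tr}(ZE_0) \;=\; \pi(Y)\pi(Z).
\end{align*}
For $\pi(ZY)$, I would again use the trace cyclicity together with the commutation $YE_0 = E_0 Y$ to write
\begin{align*}
  \pi(ZY) \;=\; \text{\rm tr}(ZYE_0) \;=\; \text{\rm tr}(Z\cdot YE_0) \;=\; \pi(Y)\pi(Z),
\end{align*}
which simultaneously yields $\pi(YZ) = \pi(ZY)$ and identifies the common value as $\pi(Y)\pi(Z)$.

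There is no real obstacle; this is a short computation once one notices that $Y$ commutes with $E_0$. The only mildly delicate point is making sure to invoke the hypothesis $Y \in \gen{A}$ at the correct step — without it, $YE_0$ need not be a scalar multiple of $E_0$ and the argument collapses. The proof would be a few lines, built essentially from Lemma \ref{lem:E0YE0}, the trace cyclicity noted at the start of the section, and the fact that $\gen{A}$ is commutative.
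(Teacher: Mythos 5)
Your proof is correct and follows essentially the same route as the paper: both rest on the cyclic property of the trace together with the observation that left (or right) multiplication by $E_0$ reduces any $Y \in \gen{A}$ to the scalar $\pi(Y)$ times $E_0$. The paper gets this by expanding $Y = \sum_i \alpha_i E_i$ and reading off $\alpha_0 = \pi(Y)$; you instead package it as the one-line identity $YE_0 = E_0 Y E_0 = \pi(Y)E_0$ via Lemma~\ref{lem:E0YE0} and commutativity of $\gen{A}$. One small caution: in your computation of $\pi(YZ)$, the step $\text{\rm tr}(YZE_0) = \text{\rm tr}(Z\cdot YE_0)$ uses both cyclicity and the commutation $E_0 Y = YE_0$ in a single jump (cyclicity gives $\text{\rm tr}(ZE_0Y)$, then commutation gives $\text{\rm tr}(ZYE_0)$); since you invoked both tools, the argument is sound, but it is worth spelling out that compression.
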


\begin{proof}
Write $Y = \sum_{i=0}^d \alpha_i E_i$.
We have
\[
\pi (Y Z) = \text{\rm tr}(Y Z E_0) = \text{\rm tr}(Z E_0 Y)
  = \alpha_0 \text{\rm tr} (Z E_0)  = \alpha_0 \pi (Z).
\]
Similarly $\pi (Z Y) = \alpha_0 \pi (Z)$.
The result follows since $\alpha_0 = \pi (Y)$ by Lemma \ref{lem:piY}.
\end{proof}

\begin{lemma}   \label{lem:pihom}    \samepage
\ifDRAFT {\rm lem:pihom}. \fi
The restriction of $\pi$ to $\gen{A}$ is an $\F$-algebra homomorphism $\gen{A} \to \F$.
\end{lemma}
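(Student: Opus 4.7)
The plan is to observe that everything needed has already been assembled in the preceding lemmas; only three verifications remain to promote $\pi|_{\gen{A}}$ from a mere $\F$-linear map to an $\F$-algebra homomorphism.

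First, I would note that $\pi$ is $\F$-linear on all of $\text{\rm End}(V)$ by construction in Definition \ref{def:pipis}, and in particular its restriction to $\gen{A}$ is $\F$-linear. Next, for multiplicativity, I would invoke Lemma \ref{lem:piYZ}: given any $Y, Z \in \gen{A}$, since $Y \in \gen{A}$ and $Z \in \text{\rm End}(V)$ (as $\gen{A} \subseteq \text{\rm End}(V)$), Lemma \ref{lem:piYZ} immediately yields $\pi(YZ) = \pi(Y)\pi(Z)$. Finally, for the unit condition, I would quote $\pi(I) = 1$ from Lemma \ref{lem:piA}.

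Combining these three facts gives that $\pi|_{\gen{A}} : \gen{A} \to \F$ is an $\F$-linear, unital, multiplicative map, hence an $\F$-algebra homomorphism.

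There is no real obstacle here; the lemma is essentially a repackaging of Lemma \ref{lem:piYZ} together with Lemma \ref{lem:piA}. The only mild subtlety is to make sure the $\F$ on the right-hand side is read as an $\F$-algebra (so that the statement ``homomorphism to $\F$'' makes sense), but this is standard.
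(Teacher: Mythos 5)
Your proof is correct and matches the paper's approach, which cites Lemma \ref{lem:piYZ} (the paper leaves the linearity and unit conditions implicit, while you spell them out via Definition \ref{def:pipis} and Lemma \ref{lem:piA}).
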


\begin{proof}
By Lemma \ref{lem:piYZ}.
\end{proof}

Recall the antiautomorphism $\dagger$ from Lemma \ref{lem:dagger}.

\begin{lemma}    \label{lem:dagger3}     \samepage
\ifDRAFT {\rm lem:dagger3}. \fi
The map $\pi$ makes the following diagram commute.
\[
\begin{diagram}
\node{ \text{\rm End}(V) } \arrow{e,t}{\dagger}  \arrow{s,l}{ \pi}
\node{ \text{\rm End}(V) } \arrow{s,r}{ \pi}
\\
\node{\F}  \arrow{e,b}{\text{\rm id}} \node{\F}
\end{diagram}
\]
\end{lemma}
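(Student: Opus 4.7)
The claim is that $\pi(Y^\dagger) = \pi(Y)$ for all $Y \in \text{\rm End}(V)$. The plan is to unfold the definition of $\pi$, push $\dagger$ through the product $Y E_0$, and then apply Lemma \ref{lem:trYgamma} to eliminate $\dagger$ inside the trace.

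More concretely, I would start from $\pi(Y^\dagger) = \text{\rm tr}(Y^\dagger E_0)$ by Definition \ref{def:pipis}. The key input is Lemma \ref{lem:dagger2}, which tells us that $E_0^\dagger = E_0$, so I can rewrite $Y^\dagger E_0 = Y^\dagger E_0^\dagger = (E_0 Y)^\dagger$ using the antiautomorphism property $(YZ)^\dagger = Z^\dagger Y^\dagger$. Then Lemma \ref{lem:trYgamma} gives $\text{\rm tr}\bigl((E_0 Y)^\dagger\bigr) = \text{\rm tr}(E_0 Y)$, and the cyclic property of the trace yields $\text{\rm tr}(E_0 Y) = \text{\rm tr}(Y E_0) = \pi(Y)$.

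There is no real obstacle here: the proof is a short chain of identities, and every ingredient is already established in the preceding lemmas. The only thing to be careful about is invoking $E_0^\dagger = E_0$ from Lemma \ref{lem:dagger2} (rather than trying to use $A^\dagger = A$ directly, which would require more work) so that the antiautomorphism property converts $Y^\dagger E_0$ into a single $\dagger$-image suitable for Lemma \ref{lem:trYgamma}.
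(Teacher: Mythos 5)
Your argument is correct and follows exactly the same chain of identities as the paper's own proof: unfold $\pi(Y^\dagger)=\text{\rm tr}(Y^\dagger E_0)$, use $E_0^\dagger=E_0$ from Lemma \ref{lem:dagger2} to rewrite the product as $(E_0Y)^\dagger$, apply Lemma \ref{lem:trYgamma}, and finish with the cyclic property of the trace. No differences worth noting.
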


\begin{proof}
Pick any $Y \in \text{\rm End}(V)$.
Using Lemma \ref{lem:trYgamma} we argue
\[
 \pi (Y^\dagger)
 = \text{\rm tr}(Y^\dagger E_0)
 = \text{\rm tr}(Y^\dagger E_0^\dagger)
 = \text{\rm tr}((E_0 Y)^\dagger)
 = \text{\rm tr}(E_0 Y)
 = \text{\rm tr}(Y E_0)
 = \pi (Y).
\]
The result follows.
\end{proof}

\begin{lemma}   \label{lem:piYsigma}    \samepage
\ifDRAFT {\rm lem:piYsigma}. \fi
Assume that $\Phi$ is self-dual with duality $\sigma$.
Then the following diagram commutes.
\[
\begin{diagram}
\node{ \text{\rm End}(V) } \arrow{e,t}{\sigma}  \arrow{s,l}{ \pi^*}
\node{ \text{\rm End}(V) } \arrow{s,r}{ \pi}
\\
\node{\F}  \arrow{e,b}{\text{\rm id}} \node{\F}
\end{diagram}
\]
\end{lemma}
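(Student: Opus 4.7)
The plan is to unwind the definitions and apply Lemma \ref{lem:trYsigma}. Recall that $\pi(Y) = \text{\rm tr}(Y E_0)$ and, via Definition \ref{def:star}, $\pi^*(Y) = \text{\rm tr}(Y E^*_0)$. The duality $\sigma$ is an isomorphism of Leonard systems from $\Phi$ to $\Phi^*$, so by the definition of such an isomorphism it sends $E^*_0 \mapsto E_0$; equivalently, $(E^*_0)^\sigma = E_0$.

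First I would pick an arbitrary $Y \in \text{\rm End}(V)$ and compute
\[
  \pi(Y^\sigma) = \text{\rm tr}(Y^\sigma E_0) = \text{\rm tr}\bigl(Y^\sigma (E^*_0)^\sigma\bigr)
   = \text{\rm tr}\bigl((Y E^*_0)^\sigma\bigr),
\]
using that $\sigma$ is an algebra homomorphism in the last step. Then I would invoke Lemma \ref{lem:trYsigma}, which says the trace is invariant under any automorphism of $\text{\rm End}(V)$, to conclude that $\text{\rm tr}((Y E^*_0)^\sigma) = \text{\rm tr}(Y E^*_0) = \pi^*(Y)$. Hence $\pi(Y^\sigma) = \pi^*(Y)$, which is exactly the commutativity of the diagram.

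There is no real obstacle here; the argument is essentially a two-line chain once one observes that $\sigma$ carries $E^*_0$ to $E_0$ and that Lemma \ref{lem:trYsigma} is available. The only point to double-check is the direction of the duality (i.e., that $\sigma$ sends starred primitive idempotents to unstarred ones, not the other way around), which is immediate from Definition \ref{def:LS} and the definition of an isomorphism of Leonard systems preceding it.
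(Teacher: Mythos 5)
Your proof is correct and follows essentially the same route as the paper's: both unwind $\pi(Y^\sigma)=\text{\rm tr}(Y^\sigma E_0)$, substitute $E_0=(E^*_0)^\sigma$, pull the automorphism $\sigma$ out of the product, and invoke trace-invariance under automorphisms (Lemma~\ref{lem:trYsigma}) to land on $\text{\rm tr}(Y E^*_0)=\pi^*(Y)$. Your note about checking the direction of the duality is the right thing to be careful about, and your reading of it is correct.
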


\begin{proof}
For $Z \in \text{\rm End}(V)$ we have $\text{\rm tr}(Z^\sigma) = \text{\rm tr}(Z)$.
For $Y \in \text{\rm End}(V)$
we have $Y^\sigma E_0 = Y^\sigma (E^*_0)^\sigma = (Y E^*_0)^\sigma$.
By these comments $\text{\rm tr}(Y^\sigma E_0) = \text{\rm tr}(Y E^*_0)$.
Thus $\pi (Y^\sigma) = \pi^* (Y)$.
The result follows.
\end{proof}

\begin{lemma}    {\rm \cite[Lemma 9.4]{T:survey} }
\label{lem:nuE0Es0E0}   \samepage
\ifDRAFT {\rm lem:nuE0Es0E0}. \fi
There exists $0 \neq \nu \in \F$ such that
\begin{align}
  \nu E_0 E^*_0 E_0 &=  E_0,  &
  \nu E^*_0 E_0 E^*_0 &= E^*_0.                        \label{eq:nu}
\end{align}
\end{lemma}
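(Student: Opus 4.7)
The plan is to reduce both displayed equations to a single scalar computation via Lemma \ref{lem:E0YE0}, and then verify that scalar is nonzero by exploiting the irreducible tridiagonal form of $A$.

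First I would apply Lemma \ref{lem:E0YE0} directly to get $E_0 E^*_0 E_0 = \pi(E^*_0)\,E_0$. Dually, applying Lemma \ref{lem:E0YE0} to the Leonard system $\Phi^*$ (via the convention of Definition \ref{def:star}) yields $E^*_0 E_0 E^*_0 = \pi^*(E_0)\,E^*_0$. By cyclicity of the trace,
\[
  \pi(E^*_0) = \text{\rm tr}(E^*_0 E_0) = \text{\rm tr}(E_0 E^*_0) = \pi^*(E_0),
\]
so call this common scalar $\mu$. Once I know $\mu \neq 0$, the lemma is immediate: set $\nu = \mu^{-1}$ and both equations hold simultaneously.

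The remaining task, and the only real work, is to show $\mu \neq 0$. I would fix $0 \neq u \in E_0 V$ and compute in the $\Phi$-standard basis $\{E^*_i u\}_{i=0}^d$. In this basis, $A$ is represented by the irreducible tridiagonal matrix in \eqref{eq:matrixAAs}, $E^*_0$ is represented by the rank-$1$ matrix $e_0 e_0^{\text{\rm t}}$, and the all-ones column vector $\mathbf{1}$ represents $u$ (by the remark just above the display \eqref{eq:matrixAAs}). Since $E_0$ has rank $1$ with image $\F u$, it is represented in this basis by $\mathbf{1} w^{\text{\rm t}}$, where $w$ is a left eigenvector of $A$ for $\th_0$ normalized so that $w^{\text{\rm t}} \mathbf{1} = 1$.

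The key observation is that any left eigenvector of an irreducible tridiagonal matrix has all entries nonzero. Reading the equation $w^{\text{\rm t}} A = \th_0\, w^{\text{\rm t}}$ at column $0$ gives $w_0 a_0 + w_1 c_1 = \th_0 w_0$; if $w_0 = 0$ then $w_1 = 0$ (using $c_1 \neq 0$), and iterating across successive columns forces every $w_j = 0$, contradicting $w \neq 0$. Hence $w_0 \neq 0$. Computing in this basis,
\[
  E_0 E^*_0 E_0 \;=\; \mathbf{1} w^{\text{\rm t}} \, e_0 e_0^{\text{\rm t}} \, \mathbf{1} w^{\text{\rm t}} \;=\; w_0\, \mathbf{1} w^{\text{\rm t}} \;=\; w_0 E_0,
\]
so $\mu = w_0 \neq 0$, completing the proof.

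The only subtle point I expect is the matrix representation of $E_0$: one must justify that, in the $\Phi$-standard basis, $E_0$ takes the form $\mathbf{1} w^{\text{\rm t}}$ with $w$ a left $\th_0$-eigenvector. This is immediate from the rank-$1$ factorization of a primitive idempotent together with the fact that $\mathbf{1}$ represents the $\th_0$-eigenvector $u$. Everything else is a routine matrix calculation.
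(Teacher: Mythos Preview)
Your argument is correct. The reduction via Lemma \ref{lem:E0YE0} to the single scalar $\mu=\pi(E^*_0)=\pi^*(E_0)$ is exactly right, and your verification that $\mu\neq 0$ using the irreducible tridiagonal structure of $A$ in the $\Phi$-standard basis is clean and complete. The representation of $E_0$ as $\mathbf{1}w^{\text{\rm t}}$ with $w$ a left $\th_0$-eigenvector is justified precisely as you say: $E_0$ has rank one with image spanned by $u$, and $E_0A=AE_0=\th_0E_0$ forces $w^{\text{\rm t}}$ to be a left eigenvector.

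As for comparison: the paper does not supply its own proof of this lemma; it simply cites \cite[Lemma 9.4]{T:survey} and moves on. Immediately afterward the paper observes (via Lemma \ref{lem:E0YE0}) that $\nu^{-1}=\pi(E^*_0)$, which is the same identification you start from. So your proof is a self-contained argument for a result the paper takes as known, and it meshes seamlessly with the surrounding development.
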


Note that $\nu = \nu^*$.
By \eqref{eq:nu} and Lemma \ref{lem:E0YE0},
\begin{equation}
    \nu^{-1} = \pi (E^*_0).          \label{eq:defnu}
\end{equation}

\begin{lemma}    {\rm \cite[Theorem 23.8]{T:survey} }
\label{lem:nu}    \samepage
\ifDRAFT {\rm lem:nu }. \fi
We have
\begin{equation}
  \nu = \frac{\eta_d(\th_0) \eta^*_d (\th^*_0) } {\phi_1 \phi_2 \cdots \phi_d}.   \label{eq:nu2}
\end{equation}
\end{lemma}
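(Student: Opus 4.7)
The plan is to compute $\nu^{-1}$ directly from $\nu^{-1} = \pi(E^*_0) = \tr(E^*_0 E_0)$ (see \eqref{eq:defnu} and Definition \ref{def:pipis}), working in the $\Phi^\Downarrow$-split basis. Fix a nonzero $u \in E^*_0 V$ and set $w_i = \eta_i(A) u$ for $0 \leq i \leq d$. By Lemma \ref{lem:split} applied to $\Phi^\Downarrow$ (whose eigenvalue sequence is $\{\th_{d-i}\}_{i=0}^d$, dual eigenvalue sequence is $\{\th^*_i\}_{i=0}^d$, and first split sequence is $\{\phi_i\}_{i=1}^d$), the vectors $\{w_i\}_{i=0}^d$ form a basis of $V$ on which $A$ is lower bidiagonal with diagonal $\th_{d-i}$ and subdiagonal $1$'s, while $A^*$ is upper bidiagonal with diagonal $\th^*_i$ and superdiagonal $\phi_i$. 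In particular $A^* w_0 = \th^*_0 w_0$ and $A^* w_j = \phi_j w_{j-1} + \th^*_j w_j$ for $j \geq 1$.

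Using $u = \sum_j E_j u$ and $\eta_d(\th_j) = 0$ for $j \geq 1$, one has
\[
w_d \;=\; \eta_d(A) u \;=\; \sum_{j=0}^d \eta_d(\th_j)\, E_j u \;=\; \eta_d(\th_0)\, E_0 u,
\]
so $E_0 V$ is spanned by $w_d$ and $E_0 w_0 = w_d/\eta_d(\th_0)$.

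The main step is the telescoping identity $\eta^*_d(A^*)\, w_j = \phi_1 \phi_2 \cdots \phi_j \, \eta^*_{d-j}(\th^*_0)\, w_0$. To prove it, note that the factors $(A^* - \th^*_k I)$ for $k = 1, \ldots, d$ mutually commute, so they may be applied in any order; I apply them in the order $k = j, j-1, \ldots, 1, j+1, \ldots, d$. On $w_i$ the factor $(A^* - \th^*_k I)$ acts as $\phi_i w_{i-1} + (\th^*_i - \th^*_k) w_i$; the key point is that when $k = i$ this reduces to the pure lowering $\phi_i w_{i-1}$. Hence the first $j$ factors collapse the chain $w_j \to \phi_j w_{j-1} \to \phi_{j-1} \phi_j w_{j-2} \to \cdots \to \phi_1 \cdots \phi_j w_0$, and the remaining $d-j$ factors act on the $A^*$-eigenvector $w_0$ as scalars $\th^*_0 - \th^*_k$, contributing $\prod_{k=j+1}^d(\th^*_0 - \th^*_k) = \eta^*_{d-j}(\th^*_0)$. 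Dividing by $\eta^*_d(\th^*_0)$ gives
\[
E^*_0 w_j \;=\; \frac{\phi_1 \cdots \phi_j \, \eta^*_{d-j}(\th^*_0)}{\eta^*_d(\th^*_0)}\, w_0.
\]

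To finish, in the basis $\{w_i\}_{i=0}^d$ each column of the matrix of $E_0 E^*_0$ is a scalar multiple of $E_0 w_0 = w_d/\eta_d(\th_0)$, so every nonzero entry sits in row $d$. The trace therefore equals the $(d,d)$-entry, which from the above formula is $\phi_1 \cdots \phi_d \cdot \eta^*_0(\th^*_0)/(\eta_d(\th_0)\, \eta^*_d(\th^*_0))$; since $\eta^*_0(\th^*_0) = 1$, this yields
\[
\nu^{-1} \;=\; \tr(E_0 E^*_0) \;=\; \frac{\phi_1 \phi_2 \cdots \phi_d}{\eta_d(\th_0)\, \eta^*_d(\th^*_0)},
\]
which inverts to \eqref{eq:nu2}. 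The hard part is the telescoping identity, whose proof rests on exploiting commutativity of the factors $(A^* - \th^*_k I)$ to reorder them so that each successive application is either a single-term lowering on the current basis vector or a scalar action on $w_0$ once the chain has terminated.
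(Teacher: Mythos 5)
Your proof is correct. The paper itself offers no proof of this identity, merely citing \cite[Theorem 23.8]{T:survey}, so your argument is a genuine self-contained derivation. The strategy of computing $\nu^{-1}=\operatorname{tr}(E_0 E^*_0)$ in the $\Phi^{\Downarrow}$-split basis is well chosen: in that basis $A^*$ is upper bidiagonal with superdiagonal entries $\phi_1,\dots,\phi_d$, which makes the factorization of $\eta^*_d(A^*)$ telescope cleanly when the commuting factors $(A^*-\th^*_k I)$ are applied in the order $k=j,j-1,\dots,1,j+1,\dots,d$. I verified the key telescoping identity $\eta^*_d(A^*)\,w_j = \phi_1\cdots\phi_j\,\eta^*_{d-j}(\th^*_0)\,w_0$, the computation $w_d=\eta_d(\th_0)E_0 u$, and the fact that in this basis the matrix of $E_0E^*_0$ has all nonzero entries in row $d$, so the trace reduces to the $(d,d)$-entry $\phi_1\cdots\phi_d/\bigl(\eta_d(\th_0)\eta^*_d(\th^*_0)\bigr)$. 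Inverting gives \eqref{eq:nu2}. Two small points you might make explicit: that $\operatorname{tr}(E^*_0E_0)=\operatorname{tr}(E_0E^*_0)$ by cyclicity (you silently pass from one to the other), and that \eqref{eq:Ei0} applied to $A^*$ gives $E^*_0=\eta^*_d(A^*)/\eta^*_d(\th^*_0)$, which is the step that converts the telescoping identity into the formula for $E^*_0 w_j$. Neither is a real gap.
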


\begin{definition}    \label{def:ki}    \samepage
\ifDRAFT {\rm def:ki}. \fi
For $0 \leq i \leq d$ define 
\begin{align}
   k_i &= \nu \pi (E^*_i).        \label{eq:defki}
\end{align}
\end{definition}

\begin{lemma}    \label{lem:ki0}    \samepage
\ifDRAFT {\rm lem:ki0}. \fi
We have
\begin{align}
  k_0 &= 1,     &      \sum_{i=0}^d k_i &= \nu.                     \label{eq:k0}
\end{align}
\end{lemma}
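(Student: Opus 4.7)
The plan is to derive both identities directly from the definitions already assembled, using only linearity of $\pi$ and a few earlier lemmas.

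First I would establish $k_0 = 1$. By Definition \ref{def:ki} with $i=0$, we have $k_0 = \nu \pi(E^*_0)$. But equation \eqref{eq:defnu}, which comes from combining Lemma \ref{lem:nuE0Es0E0} with Lemma \ref{lem:E0YE0}, gives $\pi(E^*_0) = \nu^{-1}$. Multiplying yields $k_0 = \nu \cdot \nu^{-1} = 1$.

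Next I would handle the sum. Summing the definition of $k_i$ over $i$ and pulling the scalar $\nu$ out, we get $\sum_{i=0}^d k_i = \nu \sum_{i=0}^d \pi(E^*_i)$. Since $\pi$ is $\F$-linear (observed right after Definition \ref{def:pipis}), this equals $\nu \, \pi\!\left(\sum_{i=0}^d E^*_i\right)$. The primitive idempotents satisfy $\sum_{i=0}^d E^*_i = I$ (property (iii) listed before \eqref{eq:Ei0}, applied to $A^*$), and Lemma \ref{lem:piA} gives $\pi(I) = 1$. Therefore $\sum_{i=0}^d k_i = \nu$.

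There is no real obstacle here; the lemma is essentially an immediate bookkeeping consequence of Definition \ref{def:ki}, the normalization \eqref{eq:defnu} of $\nu$, and the $\F$-linearity of $\pi$ together with $\pi(I) = 1$ from Lemma \ref{lem:piA}.
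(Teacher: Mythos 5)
Your proof is correct and matches the paper's own argument: $k_0 = 1$ follows from \eqref{eq:defki} and \eqref{eq:defnu}, and the sum identity follows by applying $\pi$ to $\sum_{i=0}^d E^*_i = I$ and invoking \eqref{eq:defki} together with Lemma \ref{lem:piA}. No gaps.
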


\begin{proof}
Use \eqref{eq:defnu} and \eqref{eq:defki} to get $k_0 = 1$.
In $\sum_{i=0}^d E^*_i = I$, apply $\pi$ to each side,
and simplify the result using \eqref{eq:defki} and Lemma \ref{lem:piA}.
This gives the equation on the right in \eqref{eq:k0}.
\end{proof}

In Lemma \ref{lem:ki2} the scalars $k_i$ are expressed in terms of the parameter array.

\begin{lemma}    \label{lem:E0EsiE0}    \samepage
\ifDRAFT {\rm lem:E0EsiE0}. \fi
For $0 \leq i \leq d$,
\begin{equation*}
  E_0 E^*_i E_0 = \nu^{-1} k_i E_0.               
\end{equation*}
\end{lemma}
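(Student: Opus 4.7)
The proof is essentially a one-line application of two earlier results. The plan is to apply Lemma \ref{lem:E0YE0} with $Y = E^*_i$, which immediately yields
\[
E_0 E^*_i E_0 = \pi(E^*_i)\, E_0.
\]
Then I would rewrite $\pi(E^*_i)$ using Definition \ref{def:ki}: since $k_i = \nu \pi(E^*_i)$ and $\nu \neq 0$ (Lemma \ref{lem:nuE0Es0E0}), we have $\pi(E^*_i) = \nu^{-1} k_i$. Substituting gives the desired identity $E_0 E^*_i E_0 = \nu^{-1} k_i E_0$.

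There is no real obstacle here; the lemma is a routine bookkeeping consequence of the preceding definitions, and it serves to translate the scalar $k_i$ (defined via the trace functional $\pi$) into the more geometric statement about the triple product $E_0 E^*_i E_0$. One could optionally sanity-check the case $i = 0$: the formula specializes to $E_0 E^*_0 E_0 = \nu^{-1} E_0$ (since $k_0 = 1$ by Lemma \ref{lem:ki0}), which matches the defining relation \eqref{eq:nu} for $\nu$, confirming internal consistency.
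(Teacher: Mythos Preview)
Your proof is correct and matches the paper's own proof exactly: the paper also simply invokes Lemma~\ref{lem:E0YE0} with $Y = E^*_i$ together with the definition \eqref{eq:defki} of $k_i$.
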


\begin{proof}
Use Lemma \ref{lem:E0YE0} (with $Y=E^*_i$) and \eqref{eq:defki}.
\end{proof}

\section{The map $\rho$}
\label{sec:rho}

Let $\Phi = (A; \{E_i\}_{i=0}^d; A^*; \{E^*_i\}_{i=0}^d)$ denote a Leonard system
on $V$.
In this section we introduce a certain  map $\rho : \gen{A} \to \gen{A^*}$.
We abbreviate ${\cal A} = \text{\rm End}(V)$.

\begin{lemma}     \label{lem:AcalAEs0}    \samepage
\ifDRAFT {\rm lem:AcalAEs0}. \fi
The map $\gen{A} \to {\cal A} E^*_0$, $Y \mapsto Y E^*_0$
is an $\F$-linear bijection.
\end{lemma}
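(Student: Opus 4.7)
The plan is a standard dimension count plus an injectivity argument using a split/standard basis.

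First I would compute the dimensions of both sides. On the domain, $\gen{A}$ has $\F$-basis $\{E_i\}_{i=0}^d$ (noted in Section~\ref{sec:LP}), so $\dim \gen{A} = d+1$. On the codomain, $E^*_0$ is a primitive idempotent, hence has rank one, so $E^*_0 V$ is one-dimensional. The left ideal $\mathcal{A} E^*_0$ is isomorphic as a left $\mathcal{A}$-module to $V$ via $Z E^*_0 \mapsto Z u$ for any fixed $0 \neq u \in E^*_0 V$, giving $\dim \mathcal{A} E^*_0 = d+1$. Since the map $Y \mapsto Y E^*_0$ is manifestly $\F$-linear and the two spaces have equal finite dimension, it suffices to prove injectivity.

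For injectivity, I would fix $0 \neq u \in E^*_0 V$, so that $E^*_0 u = u$. Suppose $Y \in \gen{A}$ satisfies $Y E^*_0 = 0$. Applying both sides to $u$ gives $Y u = 0$. Write $Y = \sum_{i=0}^d \alpha_i E_i$, so that $Y u = \sum_{i=0}^d \alpha_i E_i u$. The crucial input is that the vectors $\{E_i u\}_{i=0}^d$ form a basis for $V$: this is exactly the $\Phi^*$-standard basis of $V$, obtained by applying the $\Phi$-standard basis construction from Section~\ref{sec:LP} to the Leonard system $\Phi^*$ (using Definition~\ref{def:star}). In particular these vectors are linearly independent, forcing $\alpha_i = 0$ for $0 \leq i \leq d$, whence $Y = 0$.

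The argument then concludes: injectivity plus equality of dimensions yields bijectivity. There is no substantive obstacle here; the only point worth being careful about is invoking the correct dualized statement for the standard basis, since the excerpt states the $\Phi$-standard basis result with respect to $E_0 V$, and we need its $\Phi^*$ analogue with respect to $E^*_0 V$.
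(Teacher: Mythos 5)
Your proof is correct. The paper's own proof is a one-liner: it cites \cite[Corollary 5.8]{T:survey}, which states that $\{E_i E^*_0\}_{i=0}^d$ form a basis for $\mathcal{A} E^*_0$, and then notes that the map sends the basis $\{E_i\}_{i=0}^d$ of $\gen{A}$ to that basis, hence is bijective. Your route is a bit longer but essentially reproves the content of that corollary from the standard-basis fact already recalled in Section~\ref{sec:LP}: you show injectivity by applying $Y E^*_0 = 0$ to a nonzero $u \in E^*_0 V$ and using that $\{E_i u\}_{i=0}^d$ is the $\Phi^*$-standard basis of $V$, then finish with a dimension count ($\dim \gen{A} = d+1 = \dim \mathcal{A} E^*_0$, the latter since $E^*_0$ is a rank-one idempotent so $\mathcal{A} E^*_0 \cong V$ as a left module). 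What the paper's citation buys is brevity; what yours buys is self-containment, deriving the statement from the standard basis lemma already quoted rather than pulling in a separate corollary. Both arguments hinge on the same underlying linear independence of $\{E_i E^*_0\}_{i=0}^d$, so the content is the same.
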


\begin{proof}
The map is clearly $\F$-linear.
By \cite[Corollary 5.8]{T:survey} the elements $\{E_i E^*_0\}_{i=0}^d$
form a basis for ${\cal A} E^*_0$. 
The map $Y \mapsto Y E^*_0$ sends the basis $\{E_i\}_{i=0}^d$ for $\gen{A}$
to the basis $\{E_i E^*_0\}_{i=0}^d$ for ${\cal A} E^*_0$, so it is bijective.
\end{proof}

\begin{lemma}    \label{lem:rho}   \samepage
\ifDRAFT {\rm lem:rho}. \fi
There exists a unique $\F$-linear map $\rho : \gen{A} \to \gen{A^*}$
such that for $Y \in \gen{A}$,
\begin{align}
  Y E^*_0 E_0 &= Y^\rho E_0.       \label{eq:defrho}
\end{align}
\end{lemma}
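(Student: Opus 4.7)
The plan is to construct $\rho$ by inverting the bijection dual to the one in Lemma \ref{lem:AcalAEs0}. Specifically, applying Lemma \ref{lem:AcalAEs0} to $\Phi^*$ (and using the convention of Definition \ref{def:star}) yields that the $\F$-linear map
\[
  \gen{A^*} \to {\cal A} E_0, \qquad Z \mapsto Z E_0
\]
is an $\F$-linear bijection. This is the key ingredient.

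Given this, for each $Y \in \gen{A}$ the element $Y E^*_0 E_0$ lies in ${\cal A} E_0$, so by the bijection there is a unique $Z \in \gen{A^*}$ with $Z E_0 = Y E^*_0 E_0$. I would \emph{define} $Y^\rho$ to be this unique $Z$. Existence of $\rho$ satisfying \eqref{eq:defrho} is then immediate, and $\F$-linearity of $\rho$ follows because it is the composition of the $\F$-linear map $Y \mapsto Y E^*_0 E_0$ from $\gen{A}$ into ${\cal A} E_0$ with the inverse of the $\F$-linear bijection $Z \mapsto Z E_0$.

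For uniqueness, suppose $\rho_1, \rho_2 : \gen{A} \to \gen{A^*}$ both satisfy \eqref{eq:defrho}. Then for every $Y \in \gen{A}$ one has $(Y^{\rho_1} - Y^{\rho_2}) E_0 = 0$, and since $Z \mapsto Z E_0$ is injective on $\gen{A^*}$ this forces $Y^{\rho_1} = Y^{\rho_2}$. No step of this argument is an obstacle; the whole lemma is essentially a direct packaging of the dual of Lemma \ref{lem:AcalAEs0}, and the only thing to be slightly careful about is invoking that lemma with the roles of $\Phi$ and $\Phi^*$ swapped via Definition \ref{def:star}.
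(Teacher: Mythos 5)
Your proof is correct and takes essentially the same approach as the paper: both define $\rho$ by composing $Y \mapsto Y E^*_0 E_0$ with the inverse of the bijection $\gen{A^*} \to {\cal A} E_0$, $Z \mapsto Z E_0$, obtained by applying Lemma \ref{lem:AcalAEs0} to $\Phi^*$, and both get uniqueness from the injectivity of that map. Your write-up is a bit more explicit about the uniqueness step, but the underlying argument is the same.
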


\begin{proof}
Concerning existence,
consider the $\F$-linear map $g : \gen{A} \to {\cal A} E_0$, $Y \mapsto Y E^*_0 E_0$.
By Lemma \ref{lem:AcalAEs0} applied to $\Phi^*$,
the map $\mu : \gen{A^*} \to {\cal A} E_0$,  
$Y \mapsto Y E_0$ is an $\F$-linear bijection.
The composition
\[
  \rho : \gen{A} \xrightarrow{\;\;\;\; g\;\;\;\; } {\cal A} E_0 
   \xrightarrow{\;\;\;\; \mu^{-1}\;\;} \gen{A^*}
\]
satisfies \eqref{eq:defrho}.
We have shown that $\rho$ exists.
The map $\rho$ is unique by Lemma \ref{lem:AcalAEs0}.
\end{proof}

Recall the scalar $\nu$ from \eqref{eq:nu}.

\begin{lemma}    \label{lem:rhorhospre}    \samepage
\ifDRAFT {\rm lem:rhorhospre}. \fi
The maps $\rho$ and $\nu \rho^*$ are inverses.
In particular, the maps $\rho$, $\rho^*$ are bijective.
\end{lemma}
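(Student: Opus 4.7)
The plan is to chase the defining relations \eqref{eq:defrho} for $\rho$ and (by the star convention) for $\rho^*$, and reduce everything to the basic identities $\nu E_0 E^*_0 E_0 = E_0$ and $\nu E^*_0 E_0 E^*_0 = E^*_0$ from Lemma \ref{lem:nuE0Es0E0}. The key observation is that the map $\gen{A^*} \to {\cal A} E_0$, $Y \mapsto Y E_0$ is bijective by Lemma \ref{lem:AcalAEs0} applied to $\Phi^*$; together with the analogous statement for $\gen{A} \to {\cal A} E^*_0$, this lets me deduce equality of elements of $\gen{A}$ (or $\gen{A^*}$) from equality after right-multiplying by $E^*_0$ (or $E_0$).

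First I would verify that $(\nu \rho^*) \circ \rho = \text{\rm id}$ on $\gen{A}$. For $Y \in \gen{A}$, applying the defining identity \eqref{eq:defrho} for $\rho$ and then its starred analog for $\rho^*$ gives
\[
(Y^\rho)^{\rho^*} E^*_0 \;=\; Y^\rho E_0 E^*_0 \;=\; Y E^*_0 E_0 E^*_0 \;=\; \nu^{-1} Y E^*_0,
\]
where in the last step I used $E^*_0 E_0 E^*_0 = \nu^{-1} E^*_0$ from Lemma \ref{lem:nuE0Es0E0} and the fact that $Y \in \gen{A}$ commutes with the scalar $\nu^{-1}$. Bijectivity of $\gen{A} \to {\cal A} E^*_0$ (Lemma \ref{lem:AcalAEs0}) then forces $(Y^\rho)^{\rho^*} = \nu^{-1} Y$, i.e.\ $\nu (Y^\rho)^{\rho^*} = Y$.

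The reverse composition $\rho \circ (\nu \rho^*) = \text{\rm id}$ on $\gen{A^*}$ is handled by the symmetric computation: for $Z \in \gen{A^*}$,
\[
(Z^{\rho^*})^\rho E_0 \;=\; Z^{\rho^*} E^*_0 E_0 \;=\; Z E_0 E^*_0 E_0 \;=\; \nu^{-1} Z E_0,
\]
and the bijectivity of $\gen{A^*} \to {\cal A} E_0$ yields $\nu (Z^{\rho^*})^\rho = Z$. Combining the two identities shows that $\rho$ and $\nu \rho^*$ are two-sided inverses of each other, and in particular both maps are bijections.

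There is no real obstacle here; the only point that requires a moment's care is getting the order of composition straight and remembering to invoke the starred version of each earlier lemma (Lemma \ref{lem:AcalAEs0} and the second identity in Lemma \ref{lem:nuE0Es0E0}), which is legitimate by Definition \ref{def:star}.
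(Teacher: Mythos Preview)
Your proof is correct and follows essentially the same approach as the paper: the same chain $(Y^\rho)^{\rho^*} E^*_0 = Y^\rho E_0 E^*_0 = Y E^*_0 E_0 E^*_0 = \nu^{-1} Y E^*_0$, then an appeal to Lemma~\ref{lem:AcalAEs0} to strip off $E^*_0$, followed by the symmetric computation for $Z \in \gen{A^*}$. The paper's write-up is slightly terser but the argument is identical.
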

 
\begin{proof}
Pick $Y \in \gen{A}$.
Using in order \eqref{eq:defrho} for $\Phi^*$, \eqref{eq:defrho}, \eqref{eq:nu} we obtain
\[
  (Y^\rho)^{\rho^*} E^*_0
 = Y^\rho E_0 E^*_0
 = Y E^*_0 E_0 E^*_0
 = \nu^{-1} Y E^*_0.
\]
By this and Lemma \ref{lem:AcalAEs0} we get $(Y^\rho)^{\rho^*} = \nu^{-1} Y$.
Similarly, for $Z \in \gen{A^*}$ we get $(Z^{\rho^*})^\rho = \nu^{-1} Z$.
Thus the maps $\rho$ and $\nu \rho^*$ are inverses.
\end{proof}

\begin{definition}    \label{def:Ai}    \samepage
\ifDRAFT {\rm def:Ai}. \fi
For $0 \leq i \leq d$ define 
\begin{equation}
  A_i = \nu (E^*_i)^{\rho^*}.                                 \label{eq:defAiAsi}
\end{equation}
\end{definition}

\begin{lemma}    \label{lem:rhorhos}    \samepage
\ifDRAFT {\rm lem:rhorhos}. \fi
For $0 \leq i \leq d$,
$\rho$ sends $A_i \mapsto  E^*_i$ and $E_i \mapsto \nu^{-1} A^*_i$.
\end{lemma}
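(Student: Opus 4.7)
The plan is that both statements follow almost immediately from Definition \ref{def:Ai}, Lemma \ref{lem:rhorhospre}, and the $*$-convention in Definition \ref{def:star}; no new computation is required.

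For the first claim, I would start from $A_i = \nu (E^*_i)^{\rho^*}$ and apply $\rho$ to both sides. Since $E^*_i \in \gen{A^*}$, I can use Lemma \ref{lem:rhorhospre} in the form $(Z^{\rho^*})^\rho = \nu^{-1} Z$ for $Z \in \gen{A^*}$, so $((E^*_i)^{\rho^*})^\rho = \nu^{-1} E^*_i$. Then $A_i^\rho = \nu \cdot \nu^{-1} E^*_i = E^*_i$, and the two factors of $\nu$ cancel precisely because $A_i$ was defined with a factor of $\nu$ out front.

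For the second claim, I would invoke the $*$-convention of Definition \ref{def:star} applied to Definition \ref{def:Ai}. The convention swaps $E_i \leftrightarrow E^*_i$ and $\rho \leftrightarrow \rho^*$, so the dual form of $A_i = \nu (E^*_i)^{\rho^*}$ reads $A^*_i = \nu (E_i)^{\rho}$. Solving for $E_i^\rho$ gives $E_i^\rho = \nu^{-1} A^*_i$, as desired.

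The only thing to be careful about is that the starring convention is applied consistently: $\nu^* = \nu$ (noted just after Lemma \ref{lem:nuE0Es0E0}), so no $\nu$ vs.\ $\nu^*$ distinction intrudes. The hardest aspect here is purely notational bookkeeping — making sure the domain/codomain of $\rho$ versus $\rho^*$ match up in each application of Lemma \ref{lem:rhorhospre} — but there is no genuine mathematical obstacle.
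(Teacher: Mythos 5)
Your proof is correct and follows the same route as the paper: applying $\rho$ to $A_i = \nu(E^*_i)^{\rho^*}$ and invoking Lemma \ref{lem:rhorhospre} gives $A_i^\rho = E^*_i$, while dualizing Definition \ref{def:Ai} via Definition \ref{def:star} (with $\nu^*=\nu$) gives $A^*_i = \nu E_i^\rho$, hence $E_i^\rho = \nu^{-1}A^*_i$. The paper's proof is just the citation "By Lemma \ref{lem:rhorhospre} and Definition \ref{def:Ai}," and your write-up is a faithful unpacking of exactly that.
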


\begin{proof}
By Lemma \ref{lem:rhorhospre} and Definition \ref{def:Ai}.
\end{proof}

\begin{lemma}    \label{lem:rhorhos2}    \samepage
\ifDRAFT {\rm lem:rhorhos2}. \fi
For $0 \leq i \leq d$,
\begin{align}
  A_i E^*_0 E_0 &= E^*_i E_0,       &   
  E_i E^*_0 E_0 &= \nu^{-1} A^*_i E_0,      \label{eq:AiEs0E0}
\\
  A^*_i E_0 E^*_0 &= E_i E^*_0,   &
  E^*_i E_0 E^*_0 &= \nu^{-1} A_i E^*_0.    \label{eq:AsiE0Es0}
\end{align}
\end{lemma}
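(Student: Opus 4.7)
The plan is to derive all four equations directly from the defining equation \eqref{eq:defrho} of $\rho$ together with the identification of $\rho$ and $\rho^*$ on the bases $\{E_i\}$, $\{E^*_i\}$, $\{A_i\}$, $\{A^*_i\}$ given in Lemma \ref{lem:rhorhos}. This is essentially a bookkeeping exercise, so the main step is to line up the right substitutions; there is no substantive obstacle.

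First, I would dispose of the two equations in \eqref{eq:AiEs0E0}. By Lemma \ref{lem:rhorhos}, $A_i^\rho = E^*_i$, so applying \eqref{eq:defrho} to $Y = A_i \in \gen{A}$ yields $A_i E^*_0 E_0 = A_i^\rho E_0 = E^*_i E_0$. Likewise, Lemma \ref{lem:rhorhos} gives $E_i^\rho = \nu^{-1} A^*_i$, so applying \eqref{eq:defrho} to $Y = E_i \in \gen{A}$ yields $E_i E^*_0 E_0 = \nu^{-1} A^*_i E_0$.

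The two equations in \eqref{eq:AsiE0Es0} follow from the same argument applied to the dual Leonard system $\Phi^*$. Invoking Definition \ref{def:star}, the map $\rho^* : \gen{A^*} \to \gen{A}$ satisfies $Y E_0 E^*_0 = Y^{\rho^*} E^*_0$ for $Y \in \gen{A^*}$, and Lemma \ref{lem:rhorhos} applied to $\Phi^*$ gives $(A^*_i)^{\rho^*} = E_i$ and $(E^*_i)^{\rho^*} = \nu^{-1} A_i$ (the latter being just the defining relation \eqref{eq:defAiAsi} rewritten). Substituting $Y = A^*_i$ yields the first equation of \eqref{eq:AsiE0Es0}, and substituting $Y = E^*_i$ yields the second.

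All four identities thus reduce to a single invocation of \eqref{eq:defrho} (or its $\Phi^*$-analogue) after reading off $\rho$ or $\rho^*$ on the relevant basis element via Lemma \ref{lem:rhorhos}.
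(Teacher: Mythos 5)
Your proof is correct and takes essentially the same approach as the paper: both derive \eqref{eq:AiEs0E0} by combining the defining equation of $\rho$ from Lemma \ref{lem:rho} with the explicit values of $\rho$ on $A_i$ and $E_i$ from Lemma \ref{lem:rhorhos}, and then obtain \eqref{eq:AsiE0Es0} by applying the result to $\Phi^*$.
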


\begin{proof}
To get \eqref{eq:AiEs0E0}, use Lemmas \ref{lem:rho}, \ref{lem:rhorhos}.
Applying \eqref{eq:AiEs0E0} to $\Phi^*$ we obtain \eqref{eq:AsiE0Es0}.
\end{proof}

\begin{lemma}    \label{lem:selfdualAi}    \samepage
\ifDRAFT {\rm lem:selfdualAi}. \fi
Assume that $\Phi$ is self-dual with  duality $\sigma$.
Then $\sigma$ sends  $A_i \leftrightarrow A^*_i$ $(0 \leq i \leq d)$.
\end{lemma}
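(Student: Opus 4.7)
The plan is to transport the defining equation for $A_i$ through $\sigma$ and match it against the defining equation for $A^*_i$.

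First I would recall from Lemma \ref{lem:rhorhos2} that $A_i E^*_0 E_0 = E^*_i E_0$, and apply $\sigma$ to both sides. Since $\sigma$ is an $\F$-algebra isomorphism with $\sigma(E^*_0) = E_0$, $\sigma(E_0) = E^*_0$, and $\sigma(E^*_i) = E_i$, this yields
\[
  A_i^\sigma \, E_0 E^*_0 \;=\; E_i E^*_0.
\]
On the other hand, \eqref{eq:AsiE0Es0} gives $A^*_i E_0 E^*_0 = E_i E^*_0$, so
\[
  A_i^\sigma \, E_0 E^*_0 \;=\; A^*_i \, E_0 E^*_0.
\]

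Next I would upgrade this equality of products into the desired equality $A_i^\sigma = A^*_i$. Both $A_i^\sigma$ and $A^*_i$ belong to $\gen{A^*}$: the latter by Definition \ref{def:Ai}, the former because $\sigma$ maps $A \mapsto A^*$ and hence $\gen{A}$ onto $\gen{A^*}$. The defining relation of $\rho$ applied to $\Phi^*$ reads: for $Y \in \gen{A^*}$, $Y E_0 E^*_0 = Y^{\rho^*} E^*_0$. Combining this with Lemma \ref{lem:AcalAEs0} applied to $\Phi$ (which makes $Z \mapsto Z E^*_0$ injective on $\gen{A}$) shows that the map $Y \mapsto Y E_0 E^*_0$ is injective on $\gen{A^*}$. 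Thus $A_i^\sigma = A^*_i$.

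Finally, applying $\sigma$ once more and invoking $\sigma^2 = 1$ from Lemma \ref{lem:s2}, I obtain $(A^*_i)^\sigma = A_i$, giving the double-headed arrow claimed in the lemma.

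The only mildly delicate step is the injectivity of $Y \mapsto Y E_0 E^*_0$ on $\gen{A^*}$; but this is immediate once one observes it is the composition of the $\F$-linear bijection $\rho^*$ with the injection $Z \mapsto Z E^*_0$ of Lemma \ref{lem:AcalAEs0}. Everything else is a direct bookkeeping exercise with the automorphism $\sigma$.
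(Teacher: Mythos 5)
Your proof is correct and takes essentially the same approach as the paper: apply $\sigma$ to the characterization $A_i E^*_0 E_0 = E^*_i E_0$ and match it against the corresponding characterization of $A^*_i$, using the uniqueness/injectivity coming from Lemma \ref{lem:AcalAEs0} and $\rho$. The only cosmetic difference is that the paper closes by applying the argument to $\Phi^*$ whereas you invoke $\sigma^2 = 1$; both are fine.
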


\begin{proof}
By Lemmas \ref{lem:rho} and \ref{lem:rhorhos2}, 
$A_i$ is the unique element
in $\gen{A}$ such that
\begin{equation}
  A_i E^*_0 E_0 = E^*_i E_0.            \label{eq:rhorhos2aux1}
\end{equation}
Applying this to $\Phi^*$,
we find that for $0 \leq i \leq d$, $A^*_i$ is the unique element in $\gen{A^*}$
such that
\begin{equation}
  A^*_i E_0 E^*_0 =  E_i E^*_0.                   \label{eq:rhorhos2aux2}
\end{equation}
Apply $\sigma$ to \eqref{eq:rhorhos2aux1}, and compare the result with
\eqref{eq:rhorhos2aux2} to find that $\sigma$ sends $A_i \mapsto A^*_i$.
Applying this fact to $\Phi^*$,
we find that $\sigma$ sends $A^*_i \mapsto A_i$.
\end{proof}

\begin{lemma}     \label{lem:A0As0}    \samepage
\ifDRAFT {\rm lem:A0As0}. \fi
We have $A_0 = I$.
\end{lemma}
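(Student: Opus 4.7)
The plan is to compute $A_0$ directly from Definition \ref{def:Ai}, which asserts $A_0 = \nu (E^*_0)^{\rho^*}$. So everything reduces to identifying the element $(E^*_0)^{\rho^*}$ of $\gen{A}$ explicitly. I would not use Lemma \ref{lem:rhorhos2} (which only gives $A_0 E^*_0 E_0 = E^*_0 E_0$, after which one still has to cancel two factors); the cleaner route is to cancel only one factor, as follows.

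Apply Lemma \ref{lem:rho} in the dualized form (swap the roles of $\Phi$ and $\Phi^*$): for every $Y \in \gen{A^*}$, $Y E_0 E^*_0 = Y^{\rho^*} E^*_0$. Taking $Y = E^*_0$ this yields
\[
   (E^*_0)^{\rho^*} E^*_0 \;=\; E^*_0 E_0 E^*_0.
\]
The right-hand side collapses by the second equation of \eqref{eq:nu} to $\nu^{-1} E^*_0 = (\nu^{-1} I) E^*_0$.

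Now both $(E^*_0)^{\rho^*}$ and $\nu^{-1} I$ lie in $\gen{A}$ (the first because $\rho^*$ maps $\gen{A^*}$ into $\gen{A}$; the second because $I$ is in every subalgebra of $\text{\rm End}(V)$). Therefore I can invoke the injectivity half of Lemma \ref{lem:AcalAEs0}---the map $\gen{A} \to \mathcal{A} E^*_0$, $Y \mapsto Y E^*_0$, is a bijection---to cancel $E^*_0$ on the right and conclude $(E^*_0)^{\rho^*} = \nu^{-1} I$. Multiplying by $\nu$ and using the definition of $A_0$ gives $A_0 = I$.

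There is no real obstacle; the only point requiring care is verifying that both elements being equated before cancellation are genuinely in $\gen{A}$ so that Lemma \ref{lem:AcalAEs0} applies. Everything else is a one-line substitution.
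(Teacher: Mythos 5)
Your proof is correct and is essentially the paper's argument: both apply the dualized defining relation \eqref{eq:defrho} to $E^*_0$, collapse $E^*_0 E_0 E^*_0$ via \eqref{eq:nu}, and cancel $E^*_0$ using the injectivity from Lemma \ref{lem:AcalAEs0}. Your stated reason for bypassing Lemma \ref{lem:rhorhos2} is mistaken, though: the paper does not use the equation $A_0 E^*_0 E_0 = E^*_0 E_0$ but rather the right-hand equation of \eqref{eq:AsiE0Es0} with $i=0$, namely $E^*_0 E_0 E^*_0 = \nu^{-1} A_0 E^*_0$, which cancels in one step exactly as your computation does; you have in effect re-derived that special case of Lemma \ref{lem:rhorhos2} from Lemma \ref{lem:rho} and Definition \ref{def:Ai}.
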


\begin{proof}
For the equation on the right in  \eqref{eq:AsiE0Es0},
set $i=0$ and apply \eqref{eq:nu} to obtain $E^*_0 = A_0 E^*_0$.
By this and Lemma \ref{lem:AcalAEs0} we get $A_0 = I$.
\end{proof}

\begin{lemma}    \label{lem:rhoI}    \samepage
\ifDRAFT {\rm lem:rhoI}. \fi
The map
$\rho$ sends $I \mapsto E^*_0$ and $E_0 \mapsto \nu^{-1} I$.
\end{lemma}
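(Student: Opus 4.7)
The plan is to deduce both assertions as the $i=0$ specializations of Lemma~\ref{lem:rhorhos}, together with the evaluation $A_0 = I$ from Lemma~\ref{lem:A0As0}.

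Concretely, Lemma~\ref{lem:rhorhos} states that $\rho$ sends $A_i \mapsto E^*_i$ and $E_i \mapsto \nu^{-1} A^*_i$ for $0 \leq i \leq d$. Setting $i = 0$, we obtain $\rho(A_0) = E^*_0$ and $\rho(E_0) = \nu^{-1} A^*_0$. The first of these already yields $\rho(I) = E^*_0$, since $A_0 = I$ by Lemma~\ref{lem:A0As0}. The second, applying Lemma~\ref{lem:A0As0} to $\Phi^*$ (that is, using $A^*_0 = I$), becomes $\rho(E_0) = \nu^{-1} I$, which is the second assertion.

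There is no real obstacle here; the lemma is a packaging result that records the $i=0$ case of Lemma~\ref{lem:rhorhos} once the constants $A_0$ and $A^*_0$ have been identified. The only point to double-check is that Lemma~\ref{lem:A0As0} is symmetric under the $\Phi \leftrightarrow \Phi^*$ convention of Definition~\ref{def:star}, so that $A^*_0 = I$ is a legitimate consequence of $A_0 = I$; this is immediate from the proof of Lemma~\ref{lem:A0As0}, which used only the defining relations \eqref{eq:AiEs0E0}, \eqref{eq:AsiE0Es0} and the scalar $\nu = \nu^*$.

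Alternatively, one could give a direct argument straight from the definition \eqref{eq:defrho}: for $Y = I$ we have $I \cdot E^*_0 E_0 = E^*_0 E_0 = E^*_0 \cdot E_0$, and Lemma~\ref{lem:AcalAEs0} applied to $\Phi^*$ forces $I^\rho = E^*_0$. For $Y = E_0$, use $E_0 E^*_0 E_0 = \nu^{-1} E_0 = \nu^{-1} I \cdot E_0$ by \eqref{eq:nu}, and invoke the same uniqueness to get $E_0^\rho = \nu^{-1} I$. Either route is short; I would present the first one, since it advertises Lemmas~\ref{lem:rhorhos} and~\ref{lem:A0As0} as the relevant building blocks.
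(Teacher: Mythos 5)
Your proposal is correct, and the first route you outline (set $i=0$ in Lemma~\ref{lem:rhorhos} and invoke Lemma~\ref{lem:A0As0}) is precisely the proof given in the paper. The alternative direct argument from \eqref{eq:defrho} and \eqref{eq:nu} is also valid, but the paper uses the packaging approach.
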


\begin{proof}
Set $i=0$ in Lemma \ref{lem:rhorhos} and use Lemma \ref{lem:A0As0}.
\end{proof}

\begin{lemma}   \label{lem:sumAi}    \samepage
\ifDRAFT {\rm lem:sumAi}. \fi
We have
\begin{align*}
  \sum_{i=0}^d A_i &= \nu E_0.             
\end{align*}
\end{lemma}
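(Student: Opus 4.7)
The plan is to reduce the identity to a single application of the bijection $\rho^*$ together with $\F$-linearity. By Definition \ref{def:Ai} we have $A_i = \nu (E^*_i)^{\rho^*}$ for $0 \le i \le d$. Since $\rho^*$ is $\F$-linear and $\sum_{i=0}^d E^*_i = I$ (the primitive idempotents of $A^*$ resolve the identity), I would write
\[
\sum_{i=0}^d A_i \;=\; \nu \sum_{i=0}^d (E^*_i)^{\rho^*} \;=\; \nu \Bigl(\sum_{i=0}^d E^*_i\Bigr)^{\rho^*} \;=\; \nu\, I^{\rho^*}.
\]
So everything reduces to evaluating $I^{\rho^*}$.

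For this I would invoke Lemma \ref{lem:rhoI} in its dual form, i.e., applied to the Leonard system $\Phi^*$ rather than $\Phi$. Under the convention of Definition \ref{def:star}, that lemma says that $\rho^*$ sends $I \mapsto E_0$ (the image of $E^*_0$ under the star-convention). Substituting into the display above yields $\sum_{i=0}^d A_i = \nu E_0$, which is the desired identity.

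Since each of the three steps (expanding via \eqref{eq:defAiAsi}, summing the primitive idempotents, applying the dual of Lemma \ref{lem:rhoI}) is essentially immediate from results already established, there is no serious obstacle; the only point requiring care is correctly interpreting Lemma \ref{lem:rhoI} in its starred form, so that $I^{\rho^*} = E_0$ rather than $E^*_0$.
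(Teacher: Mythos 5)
Your proposal is correct and matches the paper's proof essentially step for step: both apply $\rho^*$ to the resolution of the identity $\sum_{i=0}^d E^*_i = I$, use linearity, and invoke the dual form of Lemma \ref{lem:rhoI} to evaluate $I^{\rho^*} = E_0$. The only cosmetic difference is that you cite Definition \ref{def:Ai} directly where the paper cites Lemma \ref{lem:rhorhos} applied to $\Phi^*$, but these are equivalent.
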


\begin{proof}
Apply $\rho^*$ to each side of the equation $\sum_{i=0}^d E^*_i = I$, 
and evaluate the result using Lemmas \ref{lem:rhorhos}, \ref{lem:rhoI}
applied to $\Phi^*$. 
\end{proof}

\begin{lemma}    \label{lem:AiAsi}    \samepage
\ifDRAFT {\rm lem:AiAsi}. \fi
The elements $\{A_i\}_{i=0}^d$ form a basis for the $\F$-vector space $\gen{A}$.
\end{lemma}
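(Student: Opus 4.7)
The plan is very short: use that $\rho^{*}$ is already known to be a bijective $\F$-linear map from $\gen{A^*}$ to $\gen{A}$, then transport a known basis.

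First I would recall from Lemma \ref{lem:rhorhospre} that the map $\rho^{*} : \gen{A^*} \to \gen{A}$ is an $\F$-linear bijection. Next I would recall the standard fact (stated earlier in Section \ref{sec:LP}) that the primitive idempotents $\{E^*_i\}_{i=0}^d$ form a basis for the $\F$-vector space $\gen{A^*}$. Since an $\F$-linear bijection sends a basis to a basis, the image $\{(E^*_i)^{\rho^*}\}_{i=0}^d$ is a basis for $\gen{A}$. Finally, scaling each vector by the nonzero scalar $\nu$ (nonzero by Lemma \ref{lem:nuE0Es0E0}) preserves the basis property, so by Definition \ref{def:Ai} the sequence $\{A_i\}_{i=0}^d = \{\nu(E^*_i)^{\rho^*}\}_{i=0}^d$ is a basis for $\gen{A}$.

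There is no real obstacle here; the lemma is essentially an immediate consequence of Lemma \ref{lem:rhorhospre} together with the fact that the primitive idempotents of $A^*$ form a basis of $\gen{A^*}$. The only thing to be careful about is to invoke $\rho^*$ rather than $\rho$, because $A_i$ is defined via $(E^*_i)^{\rho^*}$ and it is $\rho^*$ (not $\rho$) whose domain is $\gen{A^*}$ and whose codomain is $\gen{A}$.
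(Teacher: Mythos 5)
Your proof is correct and follows essentially the same route as the paper's: the paper also cites the bijectivity of $\rho$, $\rho^*$ (Lemma \ref{lem:rhorhospre}) together with the fact that $\{E^*_i\}_{i=0}^d$ is a basis for $\gen{A^*}$, and that $\rho$, $\rho^*$ carry $A_i$ and $E^*_i$ to one another (Lemma \ref{lem:rhorhos}). Your variant works directly with $\rho^*$ and the definition $A_i = \nu(E^*_i)^{\rho^*}$ rather than with $\rho$ sending $A_i \mapsto E^*_i$, but the two formulations are interchangeable since $\rho$ and $\nu\rho^*$ are inverses.
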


\begin{proof}
By Lemmas \ref{lem:rhorhospre}, \ref{lem:rhorhos},
and since $\{E^*_i\}_{i=0}^d$ form a basis for  the $\F$-vector space $\gen{A^*}$.
\end{proof}

Recall the antiautomorphism $\dagger$ from Lemma \ref{lem:dagger}.

\begin{lemma}       \label{lem:dagger4}   \samepage
\ifDRAFT {\rm lem:dagger4}. \fi
The antiautomorphism $\dagger$ fixes each of $A_i$, $A^*_i$ for $0 \leq i \leq d$.
\end{lemma}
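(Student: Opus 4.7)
The key observation is that $A_i \in \gen{A}$ and $A^*_i \in \gen{A^*}$, after which Lemma \ref{lem:dagger} gives the result essentially for free.

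More explicitly, the plan is as follows. By Definition \ref{def:Ai}, $A_i = \nu (E^*_i)^{\rho^*}$, and the map $\rho^*$ is obtained from $\rho$ by applying Definition \ref{def:star} to Lemma \ref{lem:rho}, so $\rho^*$ sends $\gen{A^*}$ into $\gen{A}$. Therefore $A_i \in \gen{A}$; alternatively, this is recorded in Lemma \ref{lem:AiAsi}. Dually, $A^*_i \in \gen{A^*}$.

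Now invoke Lemma \ref{lem:dagger}, whose final assertion is that the antiautomorphism $\dagger$ fixes every element of $\gen{A}$ and every element of $\gen{A^*}$. Applying this to $A_i \in \gen{A}$ gives $A_i^\dagger = A_i$, and applying it to $A^*_i \in \gen{A^*}$ gives $(A^*_i)^\dagger = A^*_i$, for all $0 \leq i \leq d$. This completes the proof.

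There is no real obstacle here; the statement is an immediate corollary of Lemma \ref{lem:dagger} once one notes the containments $A_i \in \gen{A}$ and $A^*_i \in \gen{A^*}$ coming from the construction of $\rho$, $\rho^*$ in Lemma \ref{lem:rho} and Definition \ref{def:Ai}.
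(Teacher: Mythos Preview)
Your proof is correct and follows exactly the same approach as the paper: note that $A_i \in \gen{A}$ and $A^*_i \in \gen{A^*}$, then invoke the last sentence of Lemma \ref{lem:dagger}.
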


\begin{proof}
By Lemma \ref{lem:dagger} and since $A_i \in \gen{A}$, $A^*_i \in \gen{A^*}$
for $0 \leq i \leq d$.
\end{proof}

\begin{lemma}    \label{lem::rhodagger}    \samepage
\ifDRAFT {\rm lem:rhodagger}. \fi
The map $\rho$ from Lemma \ref{lem:rho}
makes the following diagram commute.
\[
\begin{diagram}
 \node{\gen{A}} \arrow{e,t}{\rho} \arrow{s,l}{\dagger}
 \node{\gen{A^*}} \arrow{s,r}{\dagger}
\\
 \node{\gen{A}} \arrow{e,b}{\rho}
 \node{\gen{A^*}}
\end{diagram}
\]
\end{lemma}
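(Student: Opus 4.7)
The statement is essentially immediate from Lemma \ref{lem:dagger}, which tells us two relevant facts: the antiautomorphism $\dagger$ fixes every element of $\gen{A}$, and it also fixes every element of $\gen{A^*}$. Thus the restriction of $\dagger$ to $\gen{A}$ is the identity map on $\gen{A}$, and the restriction of $\dagger$ to $\gen{A^*}$ is the identity map on $\gen{A^*}$. In other words, both vertical arrows of the diagram reduce to identity maps on their respective domains.

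With this observation, the required commutativity $(\rho \circ \dagger)|_{\gen{A}} = (\dagger \circ \rho)|_{\gen{A}}$ becomes the tautology $\rho = \rho$: for any $Y \in \gen{A}$, one computes $(Y^\dagger)^\rho = Y^\rho$ (since $Y^\dagger = Y$ by Lemma \ref{lem:dagger} applied in $\gen{A}$), and $(Y^\rho)^\dagger = Y^\rho$ (since $Y^\rho \in \gen{A^*}$ and Lemma \ref{lem:dagger} applies there). Both composites therefore send $Y$ to $Y^\rho$, and the diagram commutes.

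Hence the proof consists of a single sentence invoking Lemma \ref{lem:dagger} twice, and there is no real obstacle; the content of the lemma lies entirely in packaging the fact that $\rho$ is compatible with the two pointwise-trivial actions of $\dagger$ on $\gen{A}$ and on $\gen{A^*}$. This result will presumably be useful later only in combination with other maps (such as $\sigma$ or $\rho^*$) where $\dagger$ does not act as the identity, so its role is to record compatibility formally rather than to provide genuinely new information.
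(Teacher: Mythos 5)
Your proof is correct and takes essentially the same approach as the paper: the paper cites Lemma \ref{lem:dagger2} (that $\dagger$ fixes each $E_i$ and $E^*_i$), which is just a reformulation of the fact you cite from Lemma \ref{lem:dagger} (that $\dagger$ fixes everything in $\gen{A}$ and $\gen{A^*}$). Both arguments reduce the diagram to the tautology that $\rho$ commutes with identity maps.
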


\begin{proof}
By Lemma \ref{lem:dagger2}.
\end{proof}

Recall the $\F$-linear map $\pi$ from Definition \ref{def:pipis}.

\begin{lemma}   \label{lem:rhopi}    \samepage
\ifDRAFT {\rm lem:rhopi}. \fi
For $Y \in \gen{A}$,
\begin{equation}
   \pi (Y^\rho) = \nu^{-1} \pi (Y).          \label{eq:pirho}
\end{equation}
\end{lemma}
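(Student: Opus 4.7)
The plan is to compute $\pi(Y^\rho)$ directly from the definitions and reduce it to a trace of the form $\text{tr}((Y E^*_0) E_0)$, to which the multiplicativity of $\pi$ on $\gen{A}$ (Lemma \ref{lem:piYZ}) applies.

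First, I would unpack the left side: by Definition \ref{def:pipis}, $\pi(Y^\rho) = \text{tr}(Y^\rho E_0)$. The defining relation \eqref{eq:defrho} of $\rho$ says $Y^\rho E_0 = Y E^*_0 E_0$, so $\pi(Y^\rho) = \text{tr}(Y E^*_0 E_0) = \text{tr}((Y E^*_0) E_0) = \pi(Y E^*_0)$.

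Now I would bring in the multiplicativity of $\pi$ on $\gen{A}$. Since $Y \in \gen{A}$, Lemma \ref{lem:piYZ} applied with $Z = E^*_0$ gives $\pi(Y E^*_0) = \pi(Y)\pi(E^*_0)$. Finally, by \eqref{eq:defnu} we have $\pi(E^*_0) = \nu^{-1}$, so
\[
 \pi(Y^\rho) \;=\; \pi(Y E^*_0) \;=\; \pi(Y)\,\pi(E^*_0) \;=\; \nu^{-1}\,\pi(Y),
\]
which is \eqref{eq:pirho}.

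No step is really an obstacle; the argument is a short chain of definitions. The only point that requires a moment's care is keeping track of which side of $E_0$ one is working on when invoking \eqref{eq:defrho}, and noting that the hypothesis $Y \in \gen{A}$ is precisely what licenses the use of Lemma \ref{lem:piYZ} to pull the scalar $\pi(E^*_0)$ out of $\pi(Y E^*_0)$.
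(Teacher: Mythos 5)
Your proof is correct and follows exactly the same chain as the paper: unpack $\pi(Y^\rho)$ via Definition \ref{def:pipis}, substitute $Y^\rho E_0 = Y E^*_0 E_0$ from \eqref{eq:defrho}, invoke Lemma \ref{lem:piYZ} to factor out $\pi(E^*_0)$, and finish with $\pi(E^*_0)=\nu^{-1}$ from \eqref{eq:defnu}. No differences worth noting.
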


\begin{proof}
By Definition \ref{def:pipis},
$\pi (Y^\rho) = \text{tr} (Y^\rho E_0)$.
By Lemma \ref{lem:rho}, 
$Y^\rho E_0 = Y E^*_0 E_0$.
By these comments and Definition \ref{def:pipis}, $\pi(Y^\rho) = \pi(Y E^*_0)$.
By this and Lemma \ref{lem:piYZ}, $\pi(Y^\rho) = \pi(Y) \pi (E^*_0)$.
By \eqref{eq:defnu}, $\pi(E^*_0) = \nu^{-1}$.
Thus \eqref{eq:pirho} holds.
\end{proof}

\begin{lemma}   \label{lem:AiE0}    \samepage
\ifDRAFT {\rm lem:AiE0}. \fi
For $0 \leq i \leq d$,
\begin{align}
  A_i E_0  &= k_i E_0.     \label{eq:AiE0}
\end{align}
\end{lemma}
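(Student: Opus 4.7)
The plan is to derive the identity by a short direct manipulation, combining Lemma \ref{lem:rhorhos2}, Lemma \ref{lem:nuE0Es0E0}, and Lemma \ref{lem:E0EsiE0}. The key structural remark is that $A_i$ lies in $\gen{A}$ (by Lemma \ref{lem:AiAsi}) and therefore commutes with $E_0$; this is what lets one move $E_0$ through $A_i$ in an algebraic manipulation.

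Concretely, I would start from the identity
\[
 A_i E^*_0 E_0 = E^*_i E_0
\]
supplied by Lemma \ref{lem:rhorhos2}, and multiply it on the left by $E_0$. On the right-hand side, one obtains $E_0 E^*_i E_0$, which by Lemma \ref{lem:E0EsiE0} equals $\nu^{-1} k_i E_0$. On the left-hand side, since $A_i\in\gen{A}$ commutes with $E_0$, one may rewrite $E_0 A_i E^*_0 E_0 = A_i (E_0 E^*_0 E_0) = \nu^{-1} A_i E_0$, using the first equation in Lemma \ref{lem:nuE0Es0E0}. Equating the two expressions gives $\nu^{-1} A_i E_0 = \nu^{-1} k_i E_0$, and canceling $\nu^{-1}$ yields the desired $A_i E_0 = k_i E_0$.

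There is no real obstacle: the proof is a one-line trace of definitions once the right sequence of lemmas is chosen. An alternative, slightly more conceptual route would be to observe that any $Y\in\gen{A}$ satisfies $Y E_0 = \pi(Y) E_0$ (since writing $Y=\sum_j \beta_j E_j$ gives $Y E_0 = \beta_0 E_0$ and $\pi(Y)=\beta_0$ by Lemma \ref{lem:piY}), so it suffices to check $\pi(A_i)=k_i$; but this second path needs an extra step (relating $\pi$ through $\rho^*$), so the direct computation above is preferable.
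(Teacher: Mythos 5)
Your proof is correct and is essentially the same as the paper's: both start from the first equation of Lemma \ref{lem:rhorhos2} and exploit the fact that $A_i \in \gen{A}$ (so $A_i E_0 = \alpha_i E_0$ for some scalar), pinning down the scalar via the normalization \eqref{eq:nu}. The paper identifies $\alpha_i$ by taking the trace of both sides, while you left-multiply by $E_0$ and invoke Lemma \ref{lem:E0EsiE0}; this is only a cosmetic difference, since Lemma \ref{lem:E0EsiE0} is itself a repackaging of the same trace computation.
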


\begin{proof}
Since $A_i \in \gen{A}$
there exists $\alpha_i \in \F$ such that $A_i E_0 = \alpha_i E_0$.
In the first equation of Lemma \ref{lem:rhorhos2},
take the trace of each side and
use \eqref{eq:defnu}, \eqref{eq:defki} to obtain $\alpha_i = k_i$ after
a routine computation.
The result follows.
\end{proof}

\begin{lemma}    \label{lem:piAi}    \samepage
\ifDRAFT {\rm lem:piAi}. \fi
For $0 \leq i \leq d$,
\begin{align}
  \pi (A_i) &= k_i,  &  \pi(A^*_i) &= \delta_{i,0}.            \label{eq:piAi}
\end{align}
\end{lemma}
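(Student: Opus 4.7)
The plan is to handle the two equations separately, since each follows directly from a different lemma already in hand; neither part requires any serious calculation, so the main question is just which preceding result to invoke.

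For the first equation $\pi(A_i)=k_i$, I would go straight through Lemma \ref{lem:AiE0}, which asserts $A_iE_0=k_iE_0$. Taking the trace of both sides and using the fact that $\text{\rm tr}(E_0)=1$ (since $E_0$ is a primitive idempotent, as noted in the proof of Lemma \ref{lem:E0YE0}), I obtain $\pi(A_i)=\text{\rm tr}(A_iE_0)=k_i$ directly from Definition \ref{def:pipis}.

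For the second equation $\pi(A^*_i)=\delta_{i,0}$, the cleanest approach is to route through the map $\rho$ rather than through a dual version of Lemma \ref{lem:AiE0}. Lemma \ref{lem:rhorhos} tells us that $\rho$ sends $E_i\mapsto\nu^{-1}A^*_i$, so $A^*_i=\nu E_i^{\rho}$. Applying $\pi$ and invoking Lemma \ref{lem:rhopi} (which says $\pi(Y^\rho)=\nu^{-1}\pi(Y)$ for $Y\in\gen{A}$), I get
\[
\pi(A^*_i)=\nu\,\pi(E_i^\rho)=\nu\cdot\nu^{-1}\pi(E_i)=\pi(E_i),
\]
and Lemma \ref{lem:piY} evaluates this as $\delta_{i,0}$.

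There is no real obstacle here; the lemma is essentially a corollary of the machinery developed in Sections \ref{sec:pi} and \ref{sec:rho}. The only mild subtlety is choosing between parallel routes: the first part could alternatively be proved via $\rho^*$ (using $A_i=\nu(E^*_i)^{\rho^*}$ together with Lemma \ref{lem:rhopi} applied to $\Phi^*$), but that would force us to switch between $\pi$ and $\pi^*$ unnecessarily, whereas Lemma \ref{lem:AiE0} delivers the identity immediately. Symmetrically, the second part could be proved by establishing a dual version of Lemma \ref{lem:AiE0}, but it is more economical to reuse the already-proven transfer formula \eqref{eq:pirho}.
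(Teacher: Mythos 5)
Your proof is correct and matches the paper's own argument essentially verbatim: the first equation via $A_iE_0=k_iE_0$ and $\mathrm{tr}(E_0)=1$, the second via $A^*_i=\nu E_i^\rho$, Lemma \ref{lem:rhopi}, and Lemma \ref{lem:piY}. The only difference is cosmetic — the paper cites \eqref{eq:defAiAsi} where you cite Lemma \ref{lem:rhorhos} for the identity $A^*_i=\nu E_i^\rho$, but these are the same fact.
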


\begin{proof}
In \eqref{eq:AiE0},
take the trace of each side and use $\text{tr}(E_0) = 1$ to get
the equation on the left in \eqref{eq:piAi}.
Next we obtain the equation on the right in \eqref{eq:piAi}.
By \eqref{eq:defAiAsi}, $A^*_i = \nu E_i^\rho$.
By Lemma \ref{lem:rhopi}, $\pi(E_i^\rho) = \nu^{-1} \pi (E_i)$.
By these comments,  $\pi (A^*_i) = \pi (E_i)$.
By this and Lemma \ref{lem:piY} we obtain the equation on the right in \eqref{eq:piAi}.
\end{proof}

Recall from \eqref{eq:matrixAAs}
the intersection numbers $\{c_i\}_{i=1}^d$, $\{a_i\}_{i=0}^d$, $\{b_i\}_{i=0}^{d-1}$ of $\Phi$.

\begin{lemma}   \label{lem:AAi}    \samepage
\ifDRAFT {\rm lem:AAi}. \fi
Assume that $d \geq 1$.
Then
\begin{align}
  A A_0 &= c_1 A_1 + a_0 A_0,                            \label{eq:AA0}
\\
  A A_i &= c_{i+1} A_{i+1} + a_i A_i + b_{i-1} A_{i-1}   
               &&  (1 \leq i \leq d-1),                      \label{eq:AAi}
\\
 A A_d &= a_d A_d + b_{d-1} A_{d-1}.                   \label{eq:AAd}
\end{align}
\end{lemma}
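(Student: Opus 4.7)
The plan is to show that $A A_i$, expanded in the basis $\{A_j\}_{j=0}^d$ of $\langle A\rangle$ guaranteed by Lemma~\ref{lem:AiAsi}, has exactly the stated tridiagonal coefficient pattern. Since $\langle A\rangle$ is commutative, $A A_i \in \langle A\rangle$, so we may write
\[
   A A_i = \sum_{j=0}^d \alpha_{ij} A_j \qquad (0 \leq i \leq d)
\]
for unique scalars $\alpha_{ij} \in \F$, and the task reduces to identifying these scalars with the appropriate intersection numbers.

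The key step is to pass to a setting where the intersection numbers appear naturally, namely the $\Phi$-standard basis. Right-multiply the displayed equation by $E^*_0 E_0$ and apply Lemma~\ref{lem:rhorhos2} on each side to obtain
\[
   A E^*_i E_0 = \sum_{j=0}^d \alpha_{ij} E^*_j E_0.
\]
Now pick $0 \neq u \in E_0 V$ and apply both sides to any $v \in V$ with $E_0 v = u$; using that $E^*_i E_0 v = E^*_i u$ one may read off the action from the matrix in \eqref{eq:matrixAAs}. By the tridiagonal structure of $A$ on the $\Phi$-standard basis $\{E^*_i u\}_{i=0}^d$ we have
\[
   A E^*_i u = b_{i-1} E^*_{i-1} u + a_i E^*_i u + c_{i+1} E^*_{i+1} u,
\]
with the convention $b_{-1} = c_{d+1} = 0$ and the understanding that the $i=0$ and $i=d$ cases drop the corresponding terms. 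Therefore
\[
   A E^*_i E_0 = b_{i-1} E^*_{i-1} E_0 + a_i E^*_i E_0 + c_{i+1} E^*_{i+1} E_0.
\]

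Finally, the elements $\{E^*_j E_0\}_{j=0}^d$ are linearly independent: this is Lemma~\ref{lem:AcalAEs0} applied to $\Phi^*$, which says that $Y \mapsto Y E_0$ is a bijection from $\langle A^*\rangle$ onto ${\cal A}E_0$, so the images of the basis $\{E^*_j\}_{j=0}^d$ form a basis of ${\cal A}E_0$. Comparing coefficients in the two expressions for $A E^*_i E_0$ yields $\alpha_{i,i-1} = b_{i-1}$, $\alpha_{ii} = a_i$, $\alpha_{i,i+1} = c_{i+1}$, and $\alpha_{ij} = 0$ otherwise, giving exactly \eqref{eq:AA0}--\eqref{eq:AAd}.

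There is no real obstacle here: the entire argument is the translation, via the map $\rho^*$ (equivalently Lemma~\ref{lem:rhorhos2}), of the tridiagonal action of $A$ on the $\Phi$-standard basis into a three-term recurrence for the $A_i$. The only thing one must be careful about is handling the boundary cases $i=0$ and $i=d$ cleanly, which is done by the convention $c_0 = b_d = 0$ noted just after \eqref{eq:matrixAAs}.
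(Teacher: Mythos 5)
Your proof is correct and takes essentially the same route as the paper: expand $A A_i$ in the basis $\{A_j\}_{j=0}^d$, transfer via Lemma~\ref{lem:rhorhos2} (i.e.\ via $\rho^*$) to the action of $A$ on $\{E^*_j E_0\}_{j=0}^d$, read off the tridiagonal coefficients from the $\Phi$-standard basis in \eqref{eq:matrixAAs}, and conclude by the linear independence of $\{E^*_j E_0\}_{j=0}^d$ coming from Lemma~\ref{lem:AcalAEs0} applied to $\Phi^*$. One small slip in your closing remark: the boundary conventions that actually enter are $b_{-1}=c_{d+1}=0$ and $E^*_{-1}=E^*_{d+1}=0$ (which you use correctly in the body of the argument), not $c_0=b_d=0$.
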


\begin{proof}
Pick $0 \neq u \in E_0 V$ and consider the $\Phi$-standard basis $\{E^*_i u\}_{i=0}^d$ for $V$.
By \eqref{eq:matrixAAs},
\begin{align*}
 A E^*_i u &= c_{i+1} E^*_{i+1} u + a_i E^*_i u + b_{i-1} E^*_{i-1} u
                         &&   (0 \leq i \leq d),     
\end{align*}
where $b_{-1}=0$, $c_{d+1}=0$ and $E^*_{-1}=0$, $E^*_{d+1}=0$.
By this and since $u$ is basis for $E_0 V$,
\begin{align*}
 A E^*_i E_0 &= c_{i+1} E^*_{i+1} E_0 + a_i E^*_i E_0 + b_{i-1} E^*_{i-1} E_0
                         &&   (0 \leq i \leq d).     
\end{align*}
By this and the first equation in Lemma \ref{lem:rhorhos2},
\begin{align*}
 (A A_i - c_{i+1} A_{i+1} - a_i A_i - b_{i-1} A_{i-1} ) E^*_0 E_0 &= 0  &&  (0 \leq i \leq d),
\end{align*}
where $A_{-1}=0$, $A_{d+1}=0$.
The map $\gen{A} \to {\cal A} E_0$, $Y \mapsto Y E^*_0 E_0$ is an $\F$-linear bijection,
since this map is the composition of $\rho$ and the bijection 
$\gen{A^*} \to {\cal A} E_0$,
$Y \mapsto Y E_0$ from Lemma \ref{lem:AcalAEs0}.
By these comments we obtain the result.
\end{proof}

\begin{lemma}   \label{lem:AA1}    \samepage
\ifDRAFT {\rm lem:AA1}. \fi
Assume that $d \geq 1$.
Then $A = c_1 A_1 + a_0 I$.
\end{lemma}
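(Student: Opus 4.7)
The plan is to derive this lemma directly from the two preceding results. By Lemma \ref{lem:A0As0} we have $A_0 = I$, so the left-hand side of equation \eqref{eq:AA0} in Lemma \ref{lem:AAi} becomes $A A_0 = A$. Substituting $A_0 = I$ on the right-hand side of \eqref{eq:AA0} gives $a_0 A_0 = a_0 I$, and the coefficient of $A_1$ is already $c_1$. Combining these, the equation \eqref{eq:AA0} reads $A = c_1 A_1 + a_0 I$, which is exactly the claim.

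There is no real obstacle here; this is a one-line consequence of specializing \eqref{eq:AA0} at $A_0 = I$. The only reason to state it separately is that this form of the relation will be convenient in later sections (it expresses $A_1$ as an affine transformation of $A$, up to the scalar $c_1$). Note that the hypothesis $d \geq 1$ is inherited from Lemma \ref{lem:AAi}, and it is needed to ensure that $A_1$ and the intersection number $c_1$ are defined.
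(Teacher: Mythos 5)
Your proof is correct and matches the paper's exactly: the paper also cites Lemma \ref{lem:A0As0} together with \eqref{eq:AA0}. Your additional remarks about the role of $d \geq 1$ and why the lemma is stated separately are accurate but not part of the paper's (one-line) argument.
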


\begin{proof}
By Lemma \ref{lem:A0As0} and \eqref{eq:AA0}.
\end{proof}

\begin{lemma}   \label{lem:defp}    \samepage
\ifDRAFT {\rm lem:defp}. \fi
There exist scalars $p^h_{i j}$ $(0 \leq h,i,j \leq d)$ in $\F$ such that
\begin{align}
  A_i A_j &= \sum_{h=0}^d p^h_{i j} A_h   &&   (0 \leq i,j \leq d).       \label{eq:AiAj}
\end{align}
\end{lemma}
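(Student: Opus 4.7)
The plan is to observe that this lemma is essentially a structural corollary of two facts already established: (i) each $A_i$ lies in $\langle A\rangle$, and (ii) $\langle A\rangle$ is a commutative subalgebra of $\text{End}(V)$ (generated by $A$) with $\{A_i\}_{i=0}^d$ as an $\F$-basis.

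First I would recall why $A_i \in \langle A\rangle$: by Definition \ref{def:Ai}, $A_i = \nu (E^*_i)^{\rho^*}$, and the map $\rho^*$ sends $\langle A^*\rangle$ into $\langle A\rangle$ by construction (Lemma \ref{lem:rho} applied to $\Phi^*$). Therefore $A_i A_j \in \langle A\rangle$ for all $0 \leq i,j \leq d$, since $\langle A\rangle$ is a subalgebra.

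Next I would invoke Lemma \ref{lem:AiAsi}, which tells us that $\{A_h\}_{h=0}^d$ is an $\F$-basis for $\langle A\rangle$. Consequently the product $A_i A_j$ can be expanded uniquely in this basis, yielding scalars $p^h_{ij} \in \F$ satisfying \eqref{eq:AiAj}. There is no genuine obstacle here; the only thing to verify is that $\rho^*$ indeed takes values in $\langle A\rangle$, which is immediate from its definition in Lemma \ref{lem:rho}. So the entire proof amounts to quoting Lemma \ref{lem:AiAsi} together with the fact that $\langle A\rangle$ is closed under multiplication.
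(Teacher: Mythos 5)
Your proof is correct and follows the same approach as the paper, which simply cites Lemma \ref{lem:AiAsi}: since $A_i, A_j \in \gen{A}$ and $\gen{A}$ is closed under multiplication, the product lies in $\gen{A}$ and expands in the basis $\{A_h\}_{h=0}^d$. Your additional remark verifying $A_i \in \gen{A}$ via $\rho^*$ is accurate and makes the appeal to the lemma explicit.
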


\begin{proof}
By Lemma \ref{lem:AiAsi}.
\end{proof}

For notational convenience, define
\begin{align*}
   q^h_{i j} &= (p^h_{i j})^*        &&   (0 \leq h,i,j \leq d).        
\end{align*}
Applying \eqref{eq:AiAj} to $\Phi^*$,
\begin{align}
  A^*_i A^*_j &= \sum_{h=0}^d q^h_{i j} A^*_h   &&   (0 \leq i,j \leq d).        \label{eq:AsiAsj2}
\end{align}

\begin{lemma}    \label{lem:phijphji}    \samepage
\ifDRAFT {\rm lem:phijphji}. \fi
The following hold:
\begin{itemize}
\item[\rm (i)]
$p^h_{i j} = p^h_{j i} \qquad (0 \leq h,i,j \leq d)$;
\item[\rm (ii)]
$\sum_{r=0}^d p^t_{h r} p^r_{i j} = \sum_{s=0}^d p^t_{s j} p^s_{h i}  \qquad (0 \leq h,i,j, t \leq d)$.
\end{itemize}
\end{lemma}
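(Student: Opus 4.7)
The plan is to exploit two facts: first, that $\langle A\rangle$ is a commutative subalgebra of $\mathrm{End}(V)$; second, that $\{A_i\}_{i=0}^d$ is a basis for $\langle A\rangle$ by Lemma \ref{lem:AiAsi}. Both parts of the statement will then follow by extracting coefficients with respect to this basis.

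For part (i), since $\langle A\rangle$ is generated by the single element $A$, it is commutative, so $A_iA_j = A_jA_i$. Expanding both sides using \eqref{eq:AiAj} yields
\[
   \sum_{h=0}^d p^h_{ij} A_h = \sum_{h=0}^d p^h_{ji} A_h,
\]
and comparing coefficients in the basis $\{A_h\}_{h=0}^d$ gives $p^h_{ij}=p^h_{ji}$.

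For part (ii), I would compute the triple product $A_hA_iA_j$ in two ways. Associating as $(A_hA_i)A_j$ and applying \eqref{eq:AiAj} twice gives
\[
  (A_hA_i)A_j
  = \sum_{s=0}^d p^s_{hi}\,A_sA_j
  = \sum_{t=0}^d\Bigl(\sum_{s=0}^d p^t_{sj}\,p^s_{hi}\Bigr)A_t,
\]
while associating as $A_h(A_iA_j)$ gives
\[
  A_h(A_iA_j)
  = \sum_{r=0}^d p^r_{ij}\,A_hA_r
  = \sum_{t=0}^d\Bigl(\sum_{r=0}^d p^t_{hr}\,p^r_{ij}\Bigr)A_t.
\]
Associativity of multiplication in $\mathrm{End}(V)$ equates these two expressions, and once again comparing coefficients in the basis $\{A_t\}_{t=0}^d$ yields the desired identity.

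Neither step presents a genuine obstacle; the whole argument is a coefficient-comparison exercise whose only prerequisites are the commutativity of $\langle A\rangle$ and the linear independence supplied by Lemma \ref{lem:AiAsi}. The only point requiring a moment of care is ensuring that the index bookkeeping in the double sums matches on the two sides of (ii), but no calculation of the individual $p^h_{ij}$ in terms of the parameter array is needed here.
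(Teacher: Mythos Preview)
Your proof is correct and follows essentially the same approach as the paper: part (i) uses commutativity of $\langle A\rangle$, and part (ii) uses associativity by expanding $A_h(A_iA_j)=(A_hA_i)A_j$ via \eqref{eq:AiAj} and comparing coefficients in the basis $\{A_t\}_{t=0}^d$. You have simply written out in full the details that the paper leaves implicit.
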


\begin{proof}
(i) 
Since the algebra $\gen{A}$ is commutative.

(ii)
Expand $A_h (A_i A_j) = (A_h A_i) A_j$ in two ways using \eqref{eq:AiAj},
and compare the coefficients.
\end{proof}

\begin{lemma}   \label{lem:kip0ii}    \samepage
\ifDRAFT {\rm lem:kip0ii}. \fi
We have $p^0_{i j} = \delta_{i,j} k_i$  for $0 \leq i,j \leq d$.
\end{lemma}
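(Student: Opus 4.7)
The plan is to isolate $p^0_{ij}$ from the defining relation $A_i A_j = \sum_{h=0}^d p^h_{ij} A_h$ by sandwiching both sides between $E^*_0$ on the left and $E^*_0 E_0$ on the right. Right-multiplying by $E^*_0 E_0$ and invoking the identity $A_h E^*_0 E_0 = E^*_h E_0$ from Lemma \ref{lem:rhorhos2} converts each $A_h$ into $E^*_h E_0$; then left-multiplying by $E^*_0$ kills every summand except the $h = 0$ term, since $E^*_0 E^*_h = \delta_{0,h} E^*_0$. This yields
\[
 E^*_0 A_i A_j E^*_0 E_0 \;=\; p^0_{ij}\, E^*_0 E_0.
\]

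For the left-hand side I would proceed independently. First use $A_j E^*_0 E_0 = E^*_j E_0$ from Lemma \ref{lem:rhorhos2} to replace the right half, reducing the expression to $E^*_0 A_i E^*_j E_0$. The key step is to rewrite $E^*_0 A_i$: rearranging \eqref{eq:AsiE0Es0} gives $A_i E^*_0 = \nu E^*_i E_0 E^*_0$, and applying the antiautomorphism $\dagger$ to both sides—which fixes each of $E_0$, $E^*_0$, $E^*_i$, $A_i$ by Lemmas \ref{lem:dagger2} and \ref{lem:dagger4}—produces $E^*_0 A_i = \nu E^*_0 E_0 E^*_i$. Substituting, using $E^*_i E^*_j = \delta_{i,j} E^*_i$, and then applying Lemma \ref{lem:E0EsiE0} (which gives $E_0 E^*_i E_0 = \nu^{-1} k_i E_0$) simplifies $E^*_0 A_i E^*_j E_0$ to $\delta_{i,j}\, k_i\, E^*_0 E_0$.

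Comparing the two expressions for $E^*_0 A_i A_j E^*_0 E_0$ yields $(p^0_{ij} - \delta_{i,j} k_i)\, E^*_0 E_0 = 0$, and since \eqref{eq:nu} forces $E^*_0 E_0 \neq 0$, the conclusion $p^0_{ij} = \delta_{i,j} k_i$ follows. The only mildly subtle step is spotting the correct sandwich: right-multiplication by $E^*_0 E_0$ exploits the bijection $\gen{A} \to {\cal A} E_0$ underlying the map $\rho$ (turning each $A_h$ into $E^*_h E_0$), while left-multiplication by $E^*_0$ then uses orthogonality of the primitive idempotents $E^*_h$ to project onto the $h=0$ coordinate. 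Once this setup is in place the computation is entirely routine.
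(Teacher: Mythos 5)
Your proof is correct and follows essentially the same strategy as the paper: sandwich the relation $A_i A_j = \sum_h p^h_{ij} A_h$ between idempotent products, use $A_h E^*_0 E_0 = E^*_h E_0$ together with its $\dagger$-conjugate, and exploit orthogonality of the $E^*_h$ to isolate $p^0_{ij}$. The paper multiplies on the left by $E_0 E^*_0$ rather than by $E^*_0$ and finishes with a trace computation instead of citing $E^*_0 E_0 \neq 0$, but these are cosmetic differences in bookkeeping.
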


\begin{proof}
In \eqref{eq:AiAj},  multiply each side on the left by $E_0 E^*_0$ and on the right
by $E^*_0 E_0$. 
Simplify the result using  the equation on the left in \eqref{eq:AiEs0E0} and the equation obtained from it
by applying $\dagger$.
This gives
\[
  E_0 E^*_i E^*_j E_0 = \sum_{h=0}^d p^h_{i j} E_0 E^*_0 E^*_h E_0.
\]
Thus
\[
\delta_{i,j} E_0 E^*_i E_0 = p^0_{i j} E_0 E^*_0 E_0.
\]
In this line, take the trace of each side and use Definition \ref{def:pipis} along with  \eqref{eq:defnu},
\eqref{eq:defki} to get the result.
\end{proof}

\begin{lemma}    \label{lem:khphij}    \samepage
\ifDRAFT {\rm lem:khphij}. \fi
For $0 \leq h,i,j \leq d$,
\begin{equation*}
  k_h p^h_{i j} = k_i p^i_{h j} = k_j p^j_{h i}.         
\end{equation*}
\end{lemma}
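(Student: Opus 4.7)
The plan is to compute the scalar $\pi^*(A_h A_i A_j)$ in three different ways and read off the desired identity by comparing the results. The key idea is to pass to $\pi^*$ (the map $\pi$ for the dual Leonard system $\Phi^*$) rather than to $\pi$ itself: by the right-hand formula of Lemma \ref{lem:piAi} applied to $\Phi^*$ we have $\pi^*(A_s) = \delta_{s,0}$ for $0 \leq s \leq d$, so $\pi^*$ annihilates every basis element $A_s$ of $\gen{A}$ except $A_0 = I$. This will let us extract a single structure constant from the expansion of a triple product.

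First I would expand $A_h A_i A_j$ using Lemma \ref{lem:defp} twice: first $A_i A_j = \sum_r p^r_{ij} A_r$, then $A_h A_r = \sum_s p^s_{hr} A_s$. Applying the $\F$-linear map $\pi^*$ and invoking both $\pi^*(A_s) = \delta_{s,0}$ and $p^0_{hr} = \delta_{h,r} k_h$ from Lemma \ref{lem:kip0ii}, I would obtain
\[
  \pi^*(A_h A_i A_j)
  \;=\; \sum_{r,s} p^r_{ij}\, p^s_{hr}\, \delta_{s,0}
  \;=\; \sum_r p^r_{ij}\, p^0_{hr}
  \;=\; k_h p^h_{ij}.
\]

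Finally, since $A_h, A_i, A_j$ all lie in the commutative algebra $\gen{A}$ (Lemma \ref{lem:AiAsi}), their product is invariant under reordering, so $A_h A_i A_j = A_i A_h A_j = A_j A_h A_i$. Repeating the same computation with $A_i$ (respectively $A_j$) playing the role of the leftmost factor yields
\[
  \pi^*(A_h A_i A_j) \;=\; k_i\, p^i_{hj}
  \qquad\text{and}\qquad
  \pi^*(A_h A_i A_j) \;=\; k_j\, p^j_{hi}.
\]
Equating the three expressions gives the claim. There is no genuine obstacle here; the only step requiring care is the book-keeping of which index ends up being ``extracted'' when $\pi^*$ kills the non-$A_0$ summands, and verifying that the formula $\pi^*(A_s)=\delta_{s,0}$ is indeed the correct dual incarnation of $\pi(A^*_s)=\delta_{s,0}$ under the convention of Definition \ref{def:star}.
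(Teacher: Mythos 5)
Your proof is correct and takes essentially the same approach as the paper: both arguments extract the $A_0$-coefficient of the triple product $A_h A_i A_j$ and exploit commutativity and associativity of $\gen{A}$. The packaging differs slightly --- the paper sets $t=0$ in Lemma \ref{lem:phijphji}(ii) and then substitutes $p^0_{ij}=\delta_{i,j}k_i$ from Lemma \ref{lem:kip0ii}, whereas you apply the functional $\pi^*$ directly, correctly reading $\pi^*(A_s)=\delta_{s,0}$ off Lemma \ref{lem:piAi} via the convention of Definition \ref{def:star} --- but the underlying computation is the same.
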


\begin{proof}
In view of Lemma \ref{lem:phijphji}(i),
it suffices to show that $k_h p^h_{i j} = k_j p^j_{h i}$.
To obtain this equation,
set $t=0$ in Lemma \ref{lem:phijphji}(ii),
and evaluate the result using Lemma \ref{lem:kip0ii}.
\end{proof}

\begin{lemma}   \label{lem:phij}    \samepage
\ifDRAFT {\rm lem:phij}. \fi
Assume that $d \geq 1$.
Then
\begin{align*}
 c_i &= c_1 p^i_{1,i-1}  &&  (1 \leq i \leq d),
\\
 a_i &= c_1 p^i_{1i} + a_0  && (0 \leq i \leq d),
\\
 b_i &= c_1 p^i_{1,i+1}   &&   (0 \leq i \leq d-1).
\end{align*}
\end{lemma}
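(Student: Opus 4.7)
The plan is to derive all three identities simultaneously by expanding $A_1 A_i$ in two ways and using linear independence. The key input is Lemma~\ref{lem:AA1}, which (together with $A_0 = I$ from Lemma~\ref{lem:A0As0}) gives
\[
  c_1 A_1 = A - a_0 A_0.
\]
Multiplying on the right by $A_i$, for $0 \leq i \leq d$, and applying Lemma~\ref{lem:AAi} to expand $A A_i$ (using \eqref{eq:AA0} when $i=0$ and \eqref{eq:AAd} when $i=d$, together with the conventions $A_{-1} = A_{d+1} = 0$ and $b_{-1} = c_{d+1} = 0$) yields
\[
  c_1 A_1 A_i \;=\; b_{i-1} A_{i-1} + (a_i - a_0) A_i + c_{i+1} A_{i+1}.
\]

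On the other hand, \eqref{eq:AiAj} expands $A_1 A_i = \sum_{h=0}^d p^h_{1,i} A_h$. Since $\{A_h\}_{h=0}^d$ is a basis for $\gen{A}$ by Lemma~\ref{lem:AiAsi}, equating coefficients gives $c_1 p^{i-1}_{1,i} = b_{i-1}$, $c_1 p^i_{1,i} = a_i - a_0$, and $c_1 p^{i+1}_{1,i} = c_{i+1}$ for each applicable $i$, with $p^h_{1,i} = 0$ when $|h - i| \geq 2$. Reindexing the first relation by $i \mapsto i+1$ and the third by $i \mapsto i-1$ produces exactly the three formulas stated in the lemma.

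I do not anticipate any substantive obstacle; the proof is essentially bookkeeping, and the only mild care needed is with the boundary cases. For $i = 0$, the computation above reduces to $c_1 A_1 A_0 = c_1 A_1$, consistent with $p^1_{1,0} = 1$ and $c_1 = c_1 p^1_{1,0}$, and also gives $a_0 = c_1 p^0_{1,0} + a_0$ since $p^0_{1,0} = 0$. For $i = d$ the convention $c_{d+1}=0$ means no $A_{d+1}$ term appears, so Lemma~\ref{lem:AiAsi} can still be applied cleanly to read off the coefficients. After these boundary checks the three formulas cover the full stated ranges of $i$.
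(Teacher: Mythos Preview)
Your proof is correct and follows essentially the same approach as the paper: use Lemma~\ref{lem:AA1} to write $A = c_1 A_1 + a_0 I$, expand $A A_i$ via Lemma~\ref{lem:AAi} and $A_1 A_i$ via \eqref{eq:AiAj}, and compare coefficients in the basis $\{A_h\}_{h=0}^d$. The paper's proof is the one-line ``Compare Lemma~\ref{lem:AAi} and \eqref{eq:AiAj} in light of Lemma~\ref{lem:AA1},'' and your write-up simply spells this out with explicit attention to the boundary cases.
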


\begin{proof}
Compare Lemma \ref{lem:AAi} and \eqref{eq:AiAj} in light of Lemma \ref{lem:AA1}.
\end{proof}

In view of Lemma \ref{lem:AAi} we consider the following polynomials.

\begin{definition}   {\rm \cite[Lemma 13.3]{T:survey} }
\label{def:vi}    \samepage
\ifDRAFT {\rm def:vi}. \fi
Define polynomials $\{v_i\}_{i=0}^d$ in $\F[\lambda]$
such that $v_0 =1$ and for $d \geq 1$,
\begin{align*}
  \lambda v_0 &= c_1 v_1 + a_0,  
\\
  \lambda v_i &= c_{i+1} v_{i+1} + a_i v_i + b_{i-1} v_{i-1}  &&  (1 \leq i \leq d-1).
\end{align*}
\end{definition}

\begin{lemma}    \label{lem:viA}    \samepage
\ifDRAFT {\rm lem:viA}. \fi
Referring to Definition \ref{def:vi},
the following hold for $0 \leq i \leq d$:
\begin{itemize}
\item[\rm (i)]
$v_i$ has degree $i$ and leading coefficient $(c_1 c_2 \cdots c_i)^{-1}$;
\item[\rm (ii)]
$A_i  = v_i (A)$;
\item[\rm (iii)]
$A_i = \sum_{j=0}^d v_i (\th_j) E_j$;
\item[\rm (vi)]
$E_i = \nu^{-1} \sum_{j=0}^d v^*_i (\th^*_j) A_j$.
\end{itemize}
\end{lemma}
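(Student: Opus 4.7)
The plan is to prove the four parts in order, with (iv) leveraging (iii) via the duality map $\rho$.

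For (i), I would induct on $i$. The base case $v_0 = 1$ has degree $0$ and leading coefficient $1 = ()^{-1}$ (empty product). Rewriting the recurrence of Definition \ref{def:vi} as $v_{i+1} = c_{i+1}^{-1}(\lambda - a_i) v_i - c_{i+1}^{-1} b_{i-1} v_{i-1}$, induction immediately gives $\deg v_{i+1} = i+1$ and leading coefficient $(c_1 c_2 \cdots c_{i+1})^{-1}$.

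For (ii), I would induct on $i$ using the three-term recurrence for $\{A_i\}$ established in Lemma \ref{lem:AAi}. The base $A_0 = I = v_0(A)$ is Lemma \ref{lem:A0As0}. Assuming $A_j = v_j(A)$ for $j \leq i$, the recurrence $A A_i = c_{i+1} A_{i+1} + a_i A_i + b_{i-1} A_{i-1}$ (and, at the bottom, $A A_0 = c_1 A_1 + a_0 A_0$) matches exactly Definition \ref{def:vi}, so $A_{i+1} = v_{i+1}(A)$.

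For (iii), I would use the spectral decomposition $A = \sum_{j=0}^d \theta_j E_j$ together with $E_j E_k = \delta_{j,k} E_j$. These imply $f(A) = \sum_{j=0}^d f(\theta_j) E_j$ for any $f \in \F[\lambda]$. Applying this to $f = v_i$ and using (ii) gives $A_i = v_i(A) = \sum_{j=0}^d v_i(\theta_j) E_j$.

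For (iv), the key observation is that (iii) applied to $\Phi^*$ reads $A^*_i = \sum_{j=0}^d v^*_i(\theta^*_j) E^*_j$. I would now apply the map $\rho : \gen{A} \to \gen{A^*}$ of Lemma \ref{lem:rho} to the right-hand side of the desired identity: by Lemma \ref{lem:rhorhos}, $\rho(A_j) = E^*_j$ and $\rho(E_i) = \nu^{-1} A^*_i$. Consequently
\[
\rho\!\left(\nu^{-1} \sum_{j=0}^d v^*_i(\theta^*_j) A_j\right) = \nu^{-1} \sum_{j=0}^d v^*_i(\theta^*_j) E^*_j = \nu^{-1} A^*_i = \rho(E_i).
\]
Since $\rho$ is bijective by Lemma \ref{lem:rhorhospre}, (iv) follows. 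The main (though mild) obstacle is just keeping the duality bookkeeping straight in (iv); once one recognizes that $\rho$ converts the $A_j$-expansion on the $\Phi$ side into the $E^*_j$-expansion on the $\Phi^*$ side, the argument reduces to applying (iii) to the dual system.
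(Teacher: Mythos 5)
Your proposal is correct and follows essentially the same route as the paper: (i)--(iii) are the same arguments, and for (iv) the paper applies $\rho^*$ directly to the dual of (iii) while you apply $\rho$ to the target identity and invoke bijectivity---these are two sides of the same move, since $\rho$ and $\nu\rho^*$ are inverses (Lemma \ref{lem:rhorhospre}).
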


\begin{proof}
(i)
By Definition \ref{def:vi}.

(ii)
By Lemma \ref{lem:AAi} and Definition \ref{def:vi}.

(iii)
We have  $A_i E_j = v_i (A) E_j = v_i (\th_j) E_j$  for $0 \leq i,j \leq d$.
We have $A_i = A_i I = \sum_{j=0}^d A_i E_j$.
The result follows.

(iv)
Applying (iii) to $\Phi^*$ we obtain
$A^*_i = \sum_{j=0}^d v^*_i (\th^*_j) E^*_j$.
In this equation, apply $\rho^*$ to each side.
Simplify the result using Lemma \ref{lem:rhorhos} (applied to $\Phi^*$)
to get the result.
\end{proof}

The following lemma is from \cite[Theorem 12.4]{T:survey};
we give a short proof for the sake of completeness.

\begin{lemma}    \label{lem:ki}    \samepage
\ifDRAFT {\rm lem:ki}. \fi
For $0 \leq i \leq d$,
\begin{align}
  k_i &= \frac{b_0 b_1 \cdots b_{i-1} } {c_1 c_2 \ldots c_i}.     \label{eq:kibici}
\end{align}
\end{lemma}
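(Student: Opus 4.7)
The plan is to extract a three-term recurrence for the scalars $\{k_i\}_{i=0}^d$ by applying the recurrences for $A A_i$ in Lemma \ref{lem:AAi} to the primitive idempotent $E_0$, and then collapse this to a two-term recurrence using the relation $\th_0 = c_i + a_i + b_i$.

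More specifically, I would begin by noting that $A E_0 = \th_0 E_0$ and, by Lemma \ref{lem:AiE0}, $A_i E_0 = k_i E_0$ for $0 \leq i \leq d$. The formula \eqref{eq:kibici} holds trivially for $i = 0$ by Lemma \ref{lem:ki0}; also, the case $d = 0$ is immediate, so assume $d \geq 1$. Multiplying the recurrence $A A_i = c_{i+1} A_{i+1} + a_i A_i + b_{i-1} A_{i-1}$ of Lemma \ref{lem:AAi} on the right by $E_0$ and using the two observations above yields
\[
  \th_0 k_i E_0 = c_{i+1} k_{i+1} E_0 + a_i k_i E_0 + b_{i-1} k_{i-1} E_0
\]
for $1 \leq i \leq d-1$, and a similar identity at the boundaries $i = 0$ and $i = d$. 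Substituting $\th_0 = c_i + a_i + b_i$ gives, after cancellation,
\[
  c_{i+1} k_{i+1} - b_i k_i \;=\; c_i k_i - b_{i-1} k_{i-1}.
\]

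The key observation is that this says the sequence $\{c_{i+1} k_{i+1} - b_i k_i\}_{i=0}^{d-1}$ is constant. Applying the same procedure to the boundary recurrence $A A_0 = c_1 A_1 + a_0 A_0$ gives $c_1 k_1 = \th_0 - a_0 = b_0$ (since $c_0 = 0$), so the common value of this constant is $c_1 k_1 - b_0 k_0 = 0$. Therefore $c_{i+1} k_{i+1} = b_i k_i$ for all $0 \leq i \leq d-1$, and a routine induction on $i$ delivers the claimed formula \eqref{eq:kibici}. The identity at the other boundary $i = d$ provides a consistency check but is not needed for the induction.

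There is no real obstacle here, as the argument is an elementary manipulation of previously established results; the only mild care is in handling the boundary cases to pin down that the conserved quantity $c_{i+1} k_{i+1} - b_i k_i$ is equal to zero.
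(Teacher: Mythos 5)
Your argument is correct, and it takes a route that is genuinely different from the paper's. The paper derives the two-term recurrence $c_i k_i = b_{i-1} k_{i-1}$ in one step by combining Lemma \ref{lem:khphij} (the symmetry $k_h p^h_{ij} = k_j p^j_{hi}$, with $h=1$, $j=i-1$) with Lemma \ref{lem:phij} (which expresses $c_i, b_i$ in terms of $p^i_{1,i\pm 1}$), so the whole proof runs through the structure constants $p^h_{ij}$. You instead bypass the $p^h_{ij}$ machinery entirely: you project the three-term relation $A A_i = c_{i+1} A_{i+1} + a_i A_i + b_{i-1} A_{i-1}$ of Lemma \ref{lem:AAi} onto $E_0$ using Lemma \ref{lem:AiE0}, obtaining $\th_0 k_i = c_{i+1} k_{i+1} + a_i k_i + b_{i-1} k_{i-1}$, and then collapse this to the same two-term recurrence via the identity $\th_0 = c_i + a_i + b_i$ and the boundary computation $c_1 k_1 = b_0$. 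Both arguments deliver the telescoping product; the paper's is shorter given the machinery already in place, while yours is more self-contained and makes the mechanism (a conserved quantity vanishing by the boundary condition) transparent. One small remark: your conserved-quantity step is logically equivalent to directly deducing $c_{i+1}k_{i+1} = b_i k_i$ by induction from the three-term relation, but the "conserved quantity" phrasing is a clean way to package it.
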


\begin{proof}
We have $k_0=1$ by \eqref{eq:k0}.
By Lemma \ref{lem:khphij} (with $h=1$, $j=i-1$) and Lemma \ref{lem:phij},
\begin{align*}
  c_i k_i  &= b_{i-1} k_{i-1}         &&  (1 \leq i \leq d). 
\end{align*}
The result follows.
\end{proof}

\begin{lemma}    \label{lem:kinonzero}    \samepage
\ifDRAFT {\rm lem:kinonzero}. \fi
We have $k_i \neq 0$ for $0 \leq i \leq d$.
\end{lemma}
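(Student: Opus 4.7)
The plan is to read the statement off directly from Lemma \ref{lem:ki}, combined with the nonvanishing conditions on the intersection numbers that come with an irreducible tridiagonal representation.

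First I would recall that Lemma \ref{lem:ki} gives the explicit formula
\[
   k_i = \frac{b_0 b_1 \cdots b_{i-1}}{c_1 c_2 \cdots c_i}
     \qquad (0 \leq i \leq d),
\]
where the case $i=0$ is read with the convention that empty products equal $1$, so $k_0 = 1 \neq 0$ and we can assume $i \geq 1$.

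Next I would invoke the structural description of the intersection numbers given just after \eqref{eq:matrixAAs}: the scalars $\{c_i\}_{i=1}^d$ and $\{b_i\}_{i=0}^{d-1}$ arise as sub/superdiagonal entries of an irreducible tridiagonal matrix representing $A$ on a $\Phi$-standard basis, so by the very definition of irreducible tridiagonal (as given at the start of Section \ref{sec:LP}) we have $b_{i-1} c_i \neq 0$ for $1 \leq i \leq d$. In particular $b_0, b_1, \ldots, b_{d-1}$ are all nonzero and $c_1, c_2, \ldots, c_d$ are all nonzero.

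Therefore for every $i$ with $1 \leq i \leq d$, both the numerator $b_0 b_1 \cdots b_{i-1}$ and the denominator $c_1 c_2 \cdots c_i$ in the formula for $k_i$ are nonzero, so $k_i \neq 0$. This completes the proof. There is no serious obstacle; the only thing worth flagging is that one should make explicit use of the irreducibility hypothesis that is built into the definition of a Leonard system via \eqref{eq:matrixAAs}.
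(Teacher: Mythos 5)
Your proof is correct and takes essentially the same route as the paper, which simply cites Lemma \ref{lem:ki}; you have just spelled out the implicit use of the nonvanishing condition $b_{i-1}c_i \neq 0$ from the irreducible tridiagonal structure in \eqref{eq:matrixAAs}.
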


\begin{proof}
By Lemma \ref{lem:ki}.
\end{proof}

\begin{lemma}    \label{lem:phij2}   \samepage
\ifDRAFT {\rm lem:phij2}. \fi
For $0 \leq h,i,j \leq d$ the scalar
$p^h_{i j}$ is zero (resp.\ nonzero) if one of $h,i,j$ is greater than (resp.\ equal to) the
sum of the other two.
\end{lemma}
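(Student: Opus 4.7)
The plan is to exploit the polynomial expression $A_i = v_i(A)$ from Lemma~\ref{lem:viA}(ii), where $v_i \in \F[\lambda]$ has degree $i$ with leading coefficient $(c_1 c_2 \cdots c_i)^{-1}$ by Lemma~\ref{lem:viA}(i). Since these degrees are distinct, the polynomials $\{v_h\}_{h=0}^{d}$ form an $\F$-basis for the subspace of $\F[\lambda]$ consisting of polynomials of degree at most $d$; any polynomial of degree $m \leq d$ expands uniquely as $\sum_{h=0}^{m} \beta_h v_h$, with $\beta_m$ equal to the ratio of its leading coefficient to that of $v_m$.

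First I would treat the case where $h$ is the distinguished index. If $h \leq d$ and $h > i+j$, then $i + j \leq d-1$, so $v_i v_j$ is a polynomial of degree $i+j \leq d$ and admits an expansion $v_i v_j = \sum_{r=0}^{i+j} \beta_r v_r$. Evaluating at $A$ gives $A_i A_j = \sum_{r=0}^{i+j} \beta_r A_r$; comparing with \eqref{eq:AiAj} and using that $\{A_r\}_{r=0}^{d}$ is a basis (Lemma~\ref{lem:AiAsi}) forces $p^h_{ij} = 0$. If instead $h = i+j \leq d$, the same expansion yields $p^{i+j}_{ij} = \beta_{i+j}$, which by comparing leading coefficients equals $(c_{i+1}c_{i+2}\cdots c_{i+j})/(c_1 c_2 \cdots c_j)$; this is nonzero since $c_k \neq 0$ for $1 \leq k \leq d$.

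For the remaining cases, where $i$ or $j$ plays the role of the distinguished index, I would invoke Lemma~\ref{lem:khphij}, which asserts $k_h p^h_{ij} = k_i p^i_{hj} = k_j p^j_{hi}$. Together with $k_h, k_i, k_j \neq 0$ from Lemma~\ref{lem:kinonzero}, this makes the vanishing and non-vanishing of $p^h_{ij}$ invariant under all permutations of $(h,i,j)$, so both assertions reduce to the case already handled. I do not anticipate any substantive obstacle here: the argument is essentially a degree count on polynomials in $A$, and the required ingredients (the three-term recurrence for $v_i$, the nonvanishing of the $c_k$, and the symmetry of the structure constants) are all in hand.
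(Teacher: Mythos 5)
Your proposal is correct and takes essentially the same approach as the paper. The paper likewise reduces to the case $h \geq i+j$ via Lemmas \ref{lem:khphij} and \ref{lem:kinonzero}, and then appeals to the observation that $A_0, A_1, \ldots, A_r$ and $I, A, \ldots, A^r$ span the same subspace of $\gen{A}$ for each $r$ (which is the degree-count on $v_i$ you made explicit); your computation of the leading-coefficient ratio $(c_{i+1}\cdots c_{i+j})/(c_1\cdots c_j)$ simply makes the nonvanishing of $p^{i+j}_{ij}$ fully concrete.
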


\begin{proof}
By Lemmas \ref{lem:khphij} and \ref{lem:kinonzero}, it suffices to consider the case $h \geq i+j$.
For this case the result follows from \eqref{eq:AiAj} and the observation that for $0 \leq r \leq d$ the elements
$I, A, A^2, \ldots, A^r$ and $A_0, A_1, \ldots, A_r$ form a basis for the same vector space.
\end{proof}

\begin{lemma}    \label{lem:kivith0}    \samepage
\ifDRAFT {\rm lem:kivith0}. \fi
We have  
\begin{align*}
      k_i &= v_i (\th_0)    && (0 \leq i \leq d).
\end{align*}
\end{lemma}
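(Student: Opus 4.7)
The plan is to combine two facts about $A_i$: its expansion in terms of the primitive idempotents $\{E_j\}_{j=0}^d$ of $A$, and its action on $E_0$. First I would recall from Lemma~\ref{lem:viA}(iii) the identity
\[
   A_i = \sum_{j=0}^d v_i(\th_j) E_j \qquad (0 \leq i \leq d).
\]
Next, I would apply the $\F$-linear map $\pi$ from Definition~\ref{def:pipis} to both sides. By Lemma~\ref{lem:piY} we have $\pi(E_j) = \delta_{j,0}$, so the right-hand side collapses to $v_i(\th_0)$. On the other hand, $\pi(A_i) = k_i$ by Lemma~\ref{lem:piAi}. Comparing the two evaluations gives the desired equality $k_i = v_i(\th_0)$.

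An equally short alternative would be to multiply $A_i = \sum_{j=0}^d v_i(\th_j) E_j$ on the right by $E_0$, use $E_j E_0 = \delta_{j,0} E_0$ to get $A_i E_0 = v_i(\th_0) E_0$, and then compare with $A_i E_0 = k_i E_0$ from Lemma~\ref{lem:AiE0}. Either route is a one-line deduction from already established facts, so there is no substantive obstacle; the only thing to verify is that the prerequisite lemmas have indeed been established without circular appeal to the present lemma, which is clear from the order of presentation.
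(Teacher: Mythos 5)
Your alternative argument is exactly the paper's proof: multiply $A_i = \sum_{j=0}^d v_i(\th_j) E_j$ on the right by $E_0$ to get $A_i E_0 = v_i(\th_0) E_0$, then compare with Lemma~\ref{lem:AiE0}. Your primary argument, applying $\pi$ and invoking Lemmas~\ref{lem:piY} and~\ref{lem:piAi}, is a trivial repackaging of the same idea (since $\pi(Y)=\mathrm{tr}(YE_0)$ and Lemma~\ref{lem:piAi} itself was derived from Lemma~\ref{lem:AiE0}); both are correct, and neither uses any lemma proved later, so there is no circularity.
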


\begin{proof}
For the equation in Lemma \ref{lem:viA}(iii),
multiply each side by $E_0$ to get $A_i E_0 = v_i (\th_0) E_0$.
The result follows from this and Lemma \ref{lem:AiE0}.
\end{proof}

\begin{lemma}   {\rm \cite[Theorem 23.9]{T:survey} }
\label{lem:ki2}    \samepage
\ifDRAFT {\rm lem:ki2}. \fi
For $0 \leq i \leq d$,
\begin{align}
 k_i &= \frac{\vphi_1 \vphi_2 \cdots \vphi_i} {\phi_1 \phi_2 \cdots \phi_i}
        \frac{\eta^*_d(\th^*_0)} {\tau^*_i(\th^*_i) \eta^*_{d-i}(\th^*_i) }
        &&  (0 \leq i \leq d).                                                        \label{eq:kiparam}
\end{align}
\end{lemma}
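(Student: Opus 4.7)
The plan is to combine the two formulas already available in the paper: Lemma \ref{lem:ki}, which expresses $k_i$ as a ratio of intersection numbers, and Lemma \ref{lem:intersection}, which expresses each $c_j$ and $b_j$ in terms of the parameter array. The result will fall out by telescoping.

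More explicitly, I would start from
\begin{equation*}
 k_i \;=\; \frac{b_0 b_1 \cdots b_{i-1}}{c_1 c_2 \cdots c_i}
\end{equation*}
and substitute $b_j = \vphi_{j+1}\,\tau^*_j(\th^*_j) / \tau^*_{j+1}(\th^*_{j+1})$ for $0 \le j \le i-1$ and $c_j = \phi_j\,\eta^*_{d-j}(\th^*_j) / \eta^*_{d-j+1}(\th^*_{j-1})$ for $1 \le j \le i$. The $\vphi$ factors pull out as $\vphi_1 \cdots \vphi_i$ and the $\phi$ factors pull out as $\phi_1 \cdots \phi_i$. The remaining product of $\tau^*$ ratios telescopes to $1/\tau^*_i(\th^*_i)$, using the convention $\tau^*_0 = 1$; the remaining product of $\eta^*$ ratios telescopes to $\eta^*_{d-i}(\th^*_i)/\eta^*_d(\th^*_0)$. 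Assembling the pieces yields the stated formula \eqref{eq:kiparam}.

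The only step that requires any care is verifying the telescoping indexing. For the $\eta^*$ product, the numerator of the $j$-th factor is $\eta^*_{d-j}(\th^*_j)$ and the denominator of the $(j+1)$-st factor is $\eta^*_{d-j}(\th^*_j)$, so consecutive factors cancel for $j=1,\ldots,i-1$, leaving the numerator at $j=i$ and the denominator at $j=1$. An analogous (simpler) check handles the $\tau^*$ product. There is no deeper obstacle here: the lemma is essentially a bookkeeping corollary of Lemmas \ref{lem:ki} and \ref{lem:intersection}.
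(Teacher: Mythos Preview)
Your proposal is correct and matches the paper's own proof, which simply reads ``By \eqref{eq:kibici} and Lemma \ref{lem:intersection}.'' You have spelled out the telescoping that the paper leaves implicit, and your indexing check is accurate.
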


\begin{proof}
By \eqref{eq:kibici} and Lemma \ref{lem:intersection}.
\end{proof}

The following lemma is from \cite[Theorem 16.2]{T:survey}.
We give a short proof for the sake of completeness.

\begin{lemma} 
\label{lem:AWduality}    \samepage
\ifDRAFT {\rm lem:AWduality}. \fi
Referring to Definition \ref{def:vi},
\begin{align}
    v_i (\th_j) / k_i &= v^*_j (\th^*_i) / k^*_j   &&  (0 \leq i,j \leq d).   \label{eq:AWduality}
\end{align}
\end{lemma}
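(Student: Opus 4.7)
My plan is to evaluate the element $E_0 E^*_0 E_j E^*_i E_0$ of $\text{\rm End}(V)$ in two different orders and equate the results. The point is that the bracketing $(E_0 E^*_0)(E_j E^*_i E_0)$ naturally brings out $A_i$ and hence the factor $v_i(\th_j)$, whereas the bracketing $(E_0 E^*_0 E_j)(E^*_i E_0)$ naturally brings out $A^*_j$ and hence the factor $v^*_j(\th^*_i)$. Comparing the two evaluations will produce \eqref{eq:AWduality}.

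For the first evaluation, I would start from $A_i = \sum_h v_i(\th_h) E_h$ (Lemma \ref{lem:viA}(iii)), which gives the projection $E_j A_i = v_i(\th_j) E_j$. Combined with $A_i E^*_0 E_0 = E^*_i E_0$ from Lemma \ref{lem:rhorhos2}, this yields
\[
  E_j E^*_i E_0 \;=\; E_j A_i E^*_0 E_0 \;=\; v_i(\th_j)\, E_j E^*_0 E_0.
\]
Multiplying on the left by $E_0 E^*_0$ and simplifying using Lemma \ref{lem:E0YE0} applied to $\Phi^*$ (together with Definition \ref{def:ki} for $\Phi^*$), which gives $E^*_0 E_j E^*_0 = \nu^{-1} k^*_j E^*_0$, and then using Lemma \ref{lem:nuE0Es0E0} collapses the result to $\nu^{-2} v_i(\th_j) k^*_j E_0$.

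For the second evaluation, I would apply the antiautomorphism $\dagger$ to the identity $E_j E^*_0 E_0 = \nu^{-1} A^*_j E_0$ from Lemma \ref{lem:rhorhos2}; since by Lemmas \ref{lem:dagger2} and \ref{lem:dagger4} each of $E_0$, $E^*_0$, $E_j$, $A^*_j$ is fixed by $\dagger$, the image identity is $E_0 E^*_0 E_j = \nu^{-1} E_0 A^*_j$. Combined with $A^*_j E^*_i = v^*_j(\th^*_i) E^*_i$ (the $\Phi^*$-analogue of the projection used above) and Lemma \ref{lem:E0EsiE0}, this gives
\[
  E_0 E^*_0 E_j E^*_i E_0 \;=\; \nu^{-1} E_0 A^*_j E^*_i E_0 \;=\; \nu^{-2} v^*_j(\th^*_i) k_i E_0.
\]

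Equating the two evaluations and cancelling $\nu^{-2} E_0$ gives $v_i(\th_j) k^*_j = v^*_j(\th^*_i) k_i$, from which \eqref{eq:AWduality} follows upon dividing by $k_i k^*_j$, both of which are nonzero by Lemma \ref{lem:kinonzero}. The main obstacle is really just the initial choice of what element to evaluate; once one decides to compute $E_0 E^*_0 E_j E^*_i E_0$, every subsequent manipulation is a direct application of the dictionary between $\{E_i,E^*_i\}$ and $\{A_i,A^*_i\}$ developed in Sections \ref{sec:pi} and \ref{sec:rho}.
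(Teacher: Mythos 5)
Your proof is correct and uses essentially the same technique as the paper's: evaluate a sandwich of projections in two different orders, relying on $A_i E_j = v_i(\th_j)E_j$, the identities of Lemma \ref{lem:rhorhos2}, and Lemma \ref{lem:E0EsiE0}. The paper works with $E^*_0 A_i A^*_j E_0$ while you work with $E_0 E^*_0 E_j E^*_i E_0$ (these differ only by an extra factor of $E_0 E^*_0$ and a $\nu$-normalization), and the paper obtains its second evaluation by symmetry (apply to $\Phi^*$, swap $i,j$, apply $\dagger$) where you compute it explicitly, but the underlying argument is the same.
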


\begin{proof}
We evaluate $E^*_0 A_i A^*_j E_0$ in two ways.
By the equation on the right in \eqref{eq:AiEs0E0},
\begin{equation}
  A^*_j E_0  = \nu E_j E^*_0 E_0.   \label{eq:AWaux1}
\end{equation}
By Lemma \ref{lem:viA}(iii),
\begin{equation}
  A_i E_j = v_i (\th_j) E_j.         \label{eq:AWaux2}
\end{equation}
By Lemma \ref{lem:E0EsiE0} (applied to $\Phi^*$),
\begin{align}
   E^*_0 E_j E^*_0 = \nu^{-1} k^*_j E^*_0.   \label{eq:AWaux3}
\end{align}
Evaluating  $E^*_0 A_i A^*_j E_0$ using \eqref{eq:AWaux1}--\eqref{eq:AWaux3}
we obtain
\begin{equation}
E^*_0 A_i A^*_j E_0 = v_i (\th_j) k^*_j E^*_0 E_0.            \label{eq:AWaux6}
\end{equation}
Apply this to $\Phi^*$, then interchange $i$, $j$ and apply $\dagger$ to get
\begin{equation}
E^*_0 A_i A^*_j E_0 =  v^*_j (\th^*_i) k_i E^*_0 E_0.       \label{eq:AWaux11}
\end{equation}
We have  $E_0 E^*_0 \neq 0$ by \eqref{eq:nu}.
Now comparing \eqref{eq:AWaux6} and \eqref{eq:AWaux11} 
we obtain \eqref{eq:AWduality}.
\end{proof}

\section{Boltzmann pairs and Spin Leonard pairs}
\label{sec:spinLP}

The notions of a Boltzmann pair and a spin Leonard pair were introduced by Curtin \cite{C:spinLP}.
In this section we first recall these notions,
and then prove a theorem about these topics.

\begin{lemma}    \label{lem:AsWWs}    \samepage
\ifDRAFT {\rm lem:AsWWs}. \fi
Let $A,A^*$ denote a Leonard pair on $V$.
For invertible $W \in \gen{A}$ and invertible $W^* \in \gen{A^*}$
the following are equivalent:
\begin{align}
  W A^* W^{-1} &= (W^*)^{-1} A W^*,      \label{eq:WAsWinv} 
\\
 W^{-1} A^* W &= W^* A (W^*)^{-1},       \label{eq:WinvAsW}
\\
  W^* W A^* &= A W^* W,     \label{eq:AWsW}
\\
 A^* W W^*  &= W W^* A.          \label{eq:WWsA}
\end{align}
\end{lemma}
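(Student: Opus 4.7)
The plan is to prove the cycle of implications $\eqref{eq:WAsWinv} \Leftrightarrow \eqref{eq:AWsW} \Leftrightarrow \eqref{eq:WWsA} \Leftrightarrow \eqref{eq:WinvAsW}$, using only elementary manipulations together with the antiautomorphism $\dagger$ of Lemma \ref{lem:dagger}.

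First I would observe that \eqref{eq:WAsWinv} and \eqref{eq:AWsW} are equivalent by direct multiplication: starting from \eqref{eq:WAsWinv}, multiply on the left by $W^*$ and on the right by $W$ to obtain \eqref{eq:AWsW}; since $W$ and $W^*$ are invertible, these steps are reversible. A completely analogous argument shows \eqref{eq:WinvAsW} $\Leftrightarrow$ \eqref{eq:WWsA}: multiply on the left by $W$ and on the right by $W^*$.

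It remains to bridge \eqref{eq:AWsW} and \eqref{eq:WWsA}, which look similar but have the factors $W^*W$ and $WW^*$ in opposite orders. This is exactly the kind of swap that the antiautomorphism $\dagger$ performs. Since $W \in \gen{A}$ and $W^* \in \gen{A^*}$, Lemma \ref{lem:dagger} gives $W^\dagger = W$, $(W^*)^\dagger = W^*$, $A^\dagger = A$, and $(A^*)^\dagger = A^*$. Applying $\dagger$ to both sides of \eqref{eq:AWsW} and using $(YZ)^\dagger = Z^\dagger Y^\dagger$, the left side $W^* W A^*$ becomes $A^* W W^*$ and the right side $A W^* W$ becomes $W W^* A$, which is precisely \eqref{eq:WWsA}. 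Since $\dagger$ is an involution, the converse implication is immediate.

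There is no real obstacle here; the only substantive ingredient beyond cancellation of invertible factors is the use of the antiautomorphism $\dagger$, whose existence and action on $\gen{A}$, $\gen{A^*}$ are already established in Lemma \ref{lem:dagger}. The proof would fit in a few lines.
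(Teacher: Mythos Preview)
Your proposal is correct and follows essentially the same approach as the paper: the paper also establishes \eqref{eq:WAsWinv}$\Leftrightarrow$\eqref{eq:AWsW} and \eqref{eq:WinvAsW}$\Leftrightarrow$\eqref{eq:WWsA} by elementary multiplication (which it summarizes as ``by linear algebra''), and then bridges \eqref{eq:AWsW} and \eqref{eq:WWsA} by applying the antiautomorphism $\dagger$.
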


\begin{proof}
By linear algebra \eqref{eq:AWsW} is equivalent to \eqref{eq:WAsWinv},
and \eqref{eq:WWsA} is equivalent to  \eqref{eq:WinvAsW}.
The equations \eqref{eq:AWsW}, \eqref{eq:WWsA} are equivalent
since each is obtained from the other by applying  $\dagger$.
\end{proof}

\begin{definition}   \cite[Definition1.2]{C:spinLP}
\label{def:spinLP}    \samepage
\ifDRAFT {\rm def:spinLP}. \fi
Let $A,A^*$ denote a Leonard pair on $V$.
By a {\em Boltzmann pair for $A,A^*$} we mean an ordered pair $W, W^*$
such that
\begin{itemize}
\item[\rm (i)]
$W$ is an invertible element of $\gen{A}$;
\item[\rm (ii)]
$W^*$ is an invertible element of $\gen{A^*}$;
\item[\rm (iii)]
$W$, $W^*$ satisfy the four equivalent conditions in Lemma \ref{lem:AsWWs}.
\end{itemize}
The Leonard pair $A,A^*$ is called a {\em spin Leonard pair} whenever
there exists a Boltzmann pair for $A,A^*$.
\end{definition}

For the rest of this section,
let $A,A^*$ denote a spin Leonard pair on $V$ and let $W,W^*$
denote a Boltzmann pair for $A,A^*$.

\begin{lemma}    \label{lem:AW}    \samepage
\ifDRAFT {\rm lem:AW}. \fi
We have
\begin{align*}
  A W &= W A, &
  A^* W^* &= W^* A^*.            
\end{align*}
\end{lemma}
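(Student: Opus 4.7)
The plan is very short, since the statement reduces immediately to the definition of a Boltzmann pair. By Definition \ref{def:spinLP}(i), $W$ belongs to $\gen{A}$, the subalgebra of $\text{\rm End}(V)$ generated by $A$. Since $\gen{A}$ is commutative (it is generated by a single element, so $\{A^i\}_{i=0}^{d}$ spans it and any two powers of $A$ commute), every element of $\gen{A}$ commutes with $A$. Applying this to $W \in \gen{A}$ yields $AW = WA$.

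By the analogous argument using Definition \ref{def:spinLP}(ii), $W^* \in \gen{A^*}$ commutes with $A^*$, giving $A^* W^* = W^* A^*$.

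The only possible obstacle is being completely explicit about why $\gen{A}$ is commutative, but this is essentially automatic from the definition of $\gen{A}$ as the subalgebra generated by $A$ (and was used implicitly, for instance, in the proof of Lemma \ref{lem:phijphji}(i)). So the proof amounts to a one-line invocation of Definition \ref{def:spinLP}(i),(ii) together with the commutativity of $\gen{A}$ and $\gen{A^*}$.
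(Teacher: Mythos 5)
Your proof is correct and matches the paper's (the paper's proof is the one-liner ``Since $W \in \gen{A}$ and $W^* \in \gen{A^*}$.''); you simply spell out the commutativity of $\gen{A}$, which the paper leaves implicit.
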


\begin{proof}
Since $W \in \gen{A}$ and $W^* \in \gen{A^*}$.
\end{proof}

\begin{lemma}   \label{lem:affine}    \samepage
\ifDRAFT {\rm lem:affine}. \fi
For $\alpha$, $\beta \in \F$ with $\alpha \neq 0$,
consider the Leonard pair $\alpha A + \beta I$, $\alpha A^* + \beta  I$.
Then $W,W^*$ is a Boltzmann pair for this Leonard pair.
\end{lemma}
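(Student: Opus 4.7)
The plan is essentially a direct verification using $\F$-linearity. I will rely on two observations. First, since $\alpha \neq 0$, one has $\gen{\alpha A + \beta I} = \gen{A}$, and similarly $\gen{\alpha A^* + \beta I} = \gen{A^*}$ (because $A = \alpha^{-1}(\alpha A + \beta I) - \alpha^{-1}\beta I$, and $I$ lies in both subalgebras). Consequently, the invertible element $W$ lies in $\gen{\alpha A + \beta I}$ and the invertible element $W^*$ lies in $\gen{\alpha A^* + \beta I}$, so conditions (i) and (ii) of Definition \ref{def:spinLP} are immediate for the new Leonard pair.

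Second, I will verify condition (iii) by checking \eqref{eq:WAsWinv} for the pair $\alpha A + \beta I$, $\alpha A^* + \beta I$. Using $\F$-linearity and the fact that $W$ commutes with $I$, I compute
\[
   W(\alpha A^* + \beta I)W^{-1} \;=\; \alpha\, W A^* W^{-1} + \beta I,
\]
and similarly $(W^*)^{-1}(\alpha A + \beta I)W^* = \alpha\,(W^*)^{-1} A W^* + \beta I$. Since the original Boltzmann condition $W A^* W^{-1} = (W^*)^{-1} A W^*$ holds by assumption, the two right-hand sides agree. Thus \eqref{eq:WAsWinv} holds for the transformed pair, and by Lemma \ref{lem:AsWWs} all four equivalent conditions hold.

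One small preliminary I should mention: the fact that $\alpha A + \beta I$, $\alpha A^* + \beta I$ is itself a Leonard pair on $V$ is a special case of the affine transformation construction described just after Definition \ref{def:LS} (with $\alpha^* = \alpha$ and $\beta^* = \beta$), so Definition \ref{def:spinLP} applies. There is no real obstacle here; the lemma is a routine compatibility between Boltzmann pairs and affine transformations, and the only subtle point is ensuring $W$ and $W^*$ still land in the correct generated subalgebras, which is handled by the $\alpha \neq 0$ hypothesis.
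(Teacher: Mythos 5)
Your proof is correct and it is exactly the routine verification that the paper compresses into the single line ``By Definition \ref{def:spinLP}.'' You spell out the two points that need checking -- that $\gen{\alpha A+\beta I}=\gen{A}$ so the membership conditions (i),(ii) persist, and that condition \eqref{eq:WAsWinv} is invariant under the affine substitution because $I$ commutes with everything -- which is precisely what the cited definition asks for.
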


\begin{proof}
By Definition \ref{def:spinLP}.
\end{proof}

\begin{lemma}  {\rm  \cite[Lemma 3.3]{C:spinLP} }
\label{lem:WinvWsinv}    \samepage
\ifDRAFT {\rm lem:WinvWsinv}. \fi
The following hold.
\begin{itemize}
\item[\rm (i)]
For nonzero $\alpha, \alpha^* \in \F$ the pair
$\alpha W, \alpha^* W^*$ is a Boltzmann pair for $A,A^*$.
\item[\rm (ii)]
The pair $W^{-1}, (W^*)^{-1}$ is a Boltzmann pair for $A,A^*$.
\end{itemize}
\end{lemma}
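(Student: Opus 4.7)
The plan is to verify directly the three conditions of Definition \ref{def:spinLP} in each case, using the equivalent forms of the Boltzmann relation from Lemma \ref{lem:AsWWs}.

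For part (i), I would observe that $\alpha W$ is invertible (since $\alpha \neq 0$ and $W$ is invertible) and lies in $\gen{A}$ (since $W$ does and $\gen{A}$ is closed under scalar multiplication), and similarly $\alpha^* W^*$ is an invertible element of $\gen{A^*}$. For the commutation relation \eqref{eq:WAsWinv}, the two scalar factors simply cancel: on the left we compute
\[
(\alpha W) A^* (\alpha W)^{-1} = \alpha \alpha^{-1} W A^* W^{-1} = W A^* W^{-1},
\]
and analogously on the right we get $(\alpha^* W^*)^{-1} A (\alpha^* W^*) = (W^*)^{-1} A W^*$, so \eqref{eq:WAsWinv} for $\alpha W, \alpha^* W^*$ reduces to \eqref{eq:WAsWinv} for $W, W^*$, which holds by hypothesis.

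For part (ii), the one nontrivial substep is confirming that $W^{-1} \in \gen{A}$. This follows from the fact that $\gen{A}$ has basis $\{E_i\}_{i=0}^d$ (the primitive idempotents of $A$), so every element of $\gen{A}$ has a spectral expansion $W = \sum_{i=0}^d w_i E_i$ with $w_i \in \F$; invertibility of $W$ forces each $w_i \neq 0$, and then $\sum_{i=0}^d w_i^{-1} E_i$ is an element of $\gen{A}$ that equals $W^{-1}$. The analogous statement gives $(W^*)^{-1} \in \gen{A^*}$. It remains to check the commutation condition for the pair $W^{-1}, (W^*)^{-1}$: version \eqref{eq:WAsWinv} for this pair reads
\[
W^{-1} A^* \bigl(W^{-1}\bigr)^{-1} = \bigl((W^*)^{-1}\bigr)^{-1} A (W^*)^{-1},
\]
which simplifies to $W^{-1} A^* W = W^* A (W^*)^{-1}$, and this is precisely equation \eqref{eq:WinvAsW} of Lemma \ref{lem:AsWWs}, hence holds by hypothesis.

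Neither part presents a real obstacle; the only point that requires any thought is the closure of $\gen{A}$ under inversion of invertible elements, handled by the spectral decomposition. Everything else is bookkeeping with the equivalent formulations already established in Lemma \ref{lem:AsWWs}.
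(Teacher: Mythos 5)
Your proof is correct and follows exactly the route the paper takes: part (i) is direct scalar cancellation in Definition \ref{def:spinLP}, and part (ii) amounts to observing that \eqref{eq:WAsWinv} for the inverted pair is precisely \eqref{eq:WinvAsW} for the original pair. The only detail you spell out that the paper leaves implicit is that $W^{-1}\in\gen{A}$ (and $(W^*)^{-1}\in\gen{A^*}$), which you justify correctly via the spectral decomposition over $\{E_i\}_{i=0}^d$.
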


\begin{proof}
(i)
By Definition \ref{def:spinLP}.

(ii)
Compare lines  \eqref{eq:WAsWinv} and \eqref{eq:WinvAsW}.
\end{proof} 

\begin{lemma}    \label{lem:AsA}    \samepage
\ifDRAFT {\rm lem:AsA}. \fi
The pair $W^*,W$ is a Boltzmann pair for the Leonard pair $A^*,A$.
\end{lemma}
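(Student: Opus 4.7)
The plan is to verify the three defining conditions of Definition \ref{def:spinLP} for $W^*, W$ with respect to the Leonard pair $A^*, A$. First, I would note that $A^*, A$ is indeed a Leonard pair on $V$ by the Note following Definition \ref{def:LP}, so it makes sense to ask for a Boltzmann pair associated to it.

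Conditions (i) and (ii) of Definition \ref{def:spinLP} require that the first entry of the candidate Boltzmann pair be an invertible element of $\gen{A^*}$ and the second be an invertible element of $\gen{A}$. Since $W, W^*$ is assumed a Boltzmann pair for $A, A^*$, we already have $W^*$ invertible in $\gen{A^*}$ and $W$ invertible in $\gen{A}$, so (i) and (ii) hold immediately after the role swap.

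For condition (iii), I would apply Lemma \ref{lem:AsWWs} to the Leonard pair $A^*, A$ with candidate $W^*, W$. The four equivalent conditions in that setting read (after substituting $A \leftrightarrow A^*$ and $W \leftrightarrow W^*$): $W^* A (W^*)^{-1} = W^{-1} A^* W$, and the three others obtained by the same rewriting. A direct inspection shows that these four equations are precisely the same four equations displayed in Lemma \ref{lem:AsWWs} for $A, A^*, W, W^*$, only listed in a different order; in particular the swapped version of \eqref{eq:WAsWinv} is \eqref{eq:WinvAsW}, and the swapped version of \eqref{eq:AWsW} is \eqref{eq:WWsA}. Since $W, W^*$ satisfies those four equivalent conditions by hypothesis, $W^*, W$ satisfies the swapped set, and condition (iii) holds.

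There is no real obstacle here: the lemma is a formal symmetry statement that follows because the defining relations of a Boltzmann pair, reorganized via Lemma \ref{lem:AsWWs}, are invariant under the simultaneous interchange of $A$ with $A^*$ and $W$ with $W^*$. Thus $W^*, W$ is a Boltzmann pair for $A^*, A$.
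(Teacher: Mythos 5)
Your proof is correct and follows the paper's own route: the paper's proof is simply ``By Lemma \ref{lem:AsWWs},'' and you have spelled out exactly the observation that makes that citation work, namely that the four equivalent conditions are permuted (not changed) under the simultaneous interchange $A \leftrightarrow A^*$, $W \leftrightarrow W^*$, together with the immediate observation that conditions (i), (ii) of Definition \ref{def:spinLP} carry over after the role swap.
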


\begin{proof}
By Lemma \ref{lem:AsWWs}.
\end{proof}

\begin{lemma}    \label{lem:AWWsW}    \samepage
\ifDRAFT {\rm lem:AWWsW}. \fi
We have
\begin{align}
  A W W^* W &= W W^* W A^*,
&
  A^* W W^* W &= W W^* W A,             \label{eq:AWWsW}
\\
  A W^* W W^* &= W^* W W^* A^*,
&
 A^* W^* W W^* &= W^* W W^* A.        \label{eq:AsWswWs}
\end{align}
\end{lemma}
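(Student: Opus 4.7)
The plan is to obtain each of the four identities by sliding $A$ or $A^*$ through the triple product $WW^*W$ (respectively $W^*WW^*$), using Lemma~\ref{lem:AW} together with the Boltzmann pair relations collected in Lemma~\ref{lem:AsWWs}.

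For the left equation of \eqref{eq:AWWsW}, I would first apply $AW = WA$ from Lemma~\ref{lem:AW} to rewrite
\[
A \cdot W W^* W \;=\; W \cdot A W^* W,
\]
and then invoke \eqref{eq:AWsW} in the form $A W^* W = W^* W A^*$ to push the operator across the remaining two factors, yielding $W W^* W A^*$. For the right equation of \eqref{eq:AWWsW}, I would start from \eqref{eq:WWsA}, namely $A^* W W^* = W W^* A$, multiply on the right by $W$, and then use $AW = WA$ to move the $A$ to the far end. Alternatively, since $W \in \gen{A}$ and $W^* \in \gen{A^*}$, Lemma~\ref{lem:dagger} implies that the antiautomorphism $\dagger$ fixes each of $A$, $A^*$, $W$, $W^*$; applying $\dagger$ to the first identity of \eqref{eq:AWWsW} reverses the order of factors and immediately produces the second.

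For the two identities in \eqref{eq:AsWswWs}, I would invoke Lemma~\ref{lem:AsA}: the pair $(W^*, W)$ is a Boltzmann pair for the Leonard pair $(A^*, A)$. Applying the already-established identities of \eqref{eq:AWWsW} to this swapped pair (i.e.\ interchanging the roles of starred and unstarred symbols throughout) instantly gives the two identities with $W^* W W^*$ in place of $W W^* W$. As a sanity check one can also derive \eqref{eq:AsWswWs} from itself internally via $\dagger$, matching its left and right equations to each other just as for \eqref{eq:AWWsW}.

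There is no substantive obstacle here; the whole argument is a short bookkeeping exercise that alternates between the commutation relations $AW = WA$, $A^* W^* = W^* A^*$ and the spin relations of Lemma~\ref{lem:AsWWs}, together with the two symmetries provided by Lemmas~\ref{lem:AsA} and~\ref{lem:dagger}. The only thing to watch is the order of factors when passing through $\dagger$, but since every element in sight is fixed by $\dagger$ this is purely mechanical.
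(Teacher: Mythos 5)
Your proof is correct and follows essentially the same approach as the paper, which simply cites Lemmas~\ref{lem:AsWWs} and~\ref{lem:AW}; you have filled in the routine sliding computations explicitly. The alternative derivations you mention (via $\dagger$ and via Lemma~\ref{lem:AsA}) are also valid but not needed.
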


\begin{proof}
By Lemmas \ref{lem:AsWWs}, \ref{lem:AW}.
\end{proof}

\begin{lemma}    {\rm  \cite[Lemma 5.1]{C:spinLP} }
 \label{lem:WWsWprepre}    \samepage
\ifDRAFT {\rm lem:WWsWprepre}. \fi
The following agree up to a nonzero scalar factor in $\F$:
\[
  W W^* W,   \qquad\qquad\qquad W^* W W^*.
\]
\end{lemma}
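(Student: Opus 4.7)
The plan is to show that $WW^*W$ and $W^*WW^*$ differ by an invertible central element, which must be a nonzero scalar multiple of $I$. Set $X = WW^*W$ and $Y = W^*WW^*$. Both are invertible, since $W$ and $W^*$ are invertible by Definition \ref{def:spinLP}.

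First, I would observe that by Lemma \ref{lem:AWWsW} each of $X$ and $Y$ intertwines $A$ with $A^*$ in the same way:
\[
  AX = XA^*, \qquad A^*X = XA, \qquad AY = YA^*, \qquad A^*Y = YA.
\]
Rewriting the relations for $Y$ as $A^* = Y^{-1}AY$ and $A = Y^{-1}A^*Y$ (equivalently $AY^{-1} = Y^{-1}A^*$ and $A^*Y^{-1} = Y^{-1}A$), I would then compute
\[
  A(XY^{-1}) = (AX)Y^{-1} = XA^*Y^{-1} = XY^{-1}A,
\]
and analogously
\[
  A^*(XY^{-1}) = (A^*X)Y^{-1} = XAY^{-1} = XY^{-1}A^*.
\]
Thus $XY^{-1}$ commutes with both $A$ and $A^*$.

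Now I would invoke Lemma \ref{lem:generate}, which says that $A$ and $A^*$ together generate $\text{\rm End}(V)$. Therefore $XY^{-1}$ commutes with every element of $\text{\rm End}(V)$, i.e., it lies in the center. By the remark preceding Lemma \ref{lem:generate} the center of $\text{\rm End}(V)$ consists of the scalar operators, so $XY^{-1} = \alpha I$ for some $\alpha \in \F$. Since $X$ and $Y$ are invertible, $\alpha \neq 0$, and hence $WW^*W = \alpha\, W^*WW^*$, which is the desired conclusion.

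There is no real obstacle; the only thing to be careful about is the order of factors when pushing $A$ and $A^*$ across $Y^{-1}$, but this is a direct consequence of the four equivalent forms in Lemma \ref{lem:AsWWs} applied to the Boltzmann pair $W,W^*$ (used to set up Lemma \ref{lem:AWWsW}). The key structural input is the combination of Lemma \ref{lem:AWWsW} (giving the same intertwining behavior for $X$ and $Y$) with Lemma \ref{lem:generate} (forcing commutants of $\{A,A^*\}$ to be scalar).
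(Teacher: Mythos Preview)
Your proof is correct and essentially identical to the paper's: the paper sets $Z = W W^* W (W^*)^{-1} W^{-1} (W^*)^{-1}$, which is precisely your $XY^{-1}$, then uses Lemma \ref{lem:AWWsW} to show $Z$ commutes with $A$ and $A^*$, and Lemma \ref{lem:generate} to conclude $Z$ is a scalar.
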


\begin{proof}
Set $Z =  W W^* W (W^*)^{-1} W^{-1} (W^*)^{-1}$.
By Lemma \ref{lem:AWWsW}, $Z$ commutes with $A$ and $A^*$.
By this and Lemma \ref{lem:generate},
$Z$ commutes with everything in $\text{\rm End}(V)$.
So $Z$ is contained in the center of $\text{\rm End}(V)$.
Thus there exists $\alpha \in \F$ such that $Z = \alpha I$.
The result follows.
\end{proof}

Using $W$ and $W^*$ we obtain an action of the modular group 
$\text{\rm PSL}_2(\mathbb{Z})$ on $\text{\rm End}(V)$ as a group of automorphisms.
Recall from \cite{Alperin} that $\text{PSL}_2(\mathbb{Z})$ has a presentation
by generators $\psi$, $\sigma$ and relations $\psi^3=1$, $\sigma^2=1$.
The next result is a variation on \cite[Lemma 5.2]{C:spinLP}.

\begin{lemma}    \label{lem:PSL}    \samepage
\ifDRAFT {\rm lem:PSL}. \fi
The group $\text{\rm PSL}_2(\mathbb{Z})$ acts on $\text{\rm End}(V)$
such that $\psi$ sends $Y \mapsto (W W^*)^{-1} Y W W^*$
and $\sigma$ sends $Y \mapsto (W W^* W)^{-1} Y W W^* W$
for $Y \in \text{\rm End}(V)$.
\end{lemma}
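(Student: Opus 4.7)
The plan is to exploit the given presentation of $\text{\rm PSL}_2(\mathbb{Z})$ by generators $\psi, \sigma$ and relations $\psi^3 = 1$, $\sigma^2=1$. Since $WW^*$ and $WW^*W$ are products of invertible elements, they are invertible, so conjugation by each defines an automorphism of $\text{\rm End}(V)$ by Lemma \ref{lem:SN}. In particular, $\psi^n$ is conjugation by $(WW^*)^n$ and $\sigma^n$ is conjugation by $(WW^*W)^n$. Thus it suffices to verify that $(WW^*W)^2$ and $(WW^*)^3$ lie in the center of $\text{\rm End}(V)$; once this is done, the von Dyck theorem (applied to the presentation of $\text{\rm PSL}_2(\mathbb{Z})$) yields a well-defined group homomorphism from $\text{\rm PSL}_2(\mathbb{Z})$ into the automorphism group of $\text{\rm End}(V)$ sending the generators to $\psi$ and $\sigma$.

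First I would show that $(WW^*W)^2$ is central. By Lemma \ref{lem:AWWsW} we have the ``swap'' relations $A \cdot WW^*W = WW^*W \cdot A^*$ and $A^* \cdot WW^*W = WW^*W \cdot A$. Applying these twice gives
\begin{equation*}
A (WW^*W)^2 = (WW^*W)\, A^*\, (WW^*W) = (WW^*W)^2 A,
\end{equation*}
and in the same manner $A^*(WW^*W)^2 = (WW^*W)^2 A^*$. By Lemma \ref{lem:generate} the elements $A, A^*$ generate $\text{\rm End}(V)$, so $(WW^*W)^2$ commutes with every element of $\text{\rm End}(V)$ and is therefore central.

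Next I would reduce $(WW^*)^3$ to the previous case. By Lemma \ref{lem:WWsWprepre} there exists a nonzero $\alpha \in \F$ with $WW^*W = \alpha\, W^*WW^*$, equivalently $W^*WW^* = \alpha^{-1}\, WW^*W$. Then
\begin{equation*}
(WW^*)^3 = (WW^*W)(W^*WW^*) = \alpha^{-1} (WW^*W)^2,
\end{equation*}
so $(WW^*)^3$ is a scalar multiple of $(WW^*W)^2$ and hence also central. This verifies $\sigma^2 = 1$ and $\psi^3 = 1$ as automorphisms of $\text{\rm End}(V)$, completing the proof. The only potentially delicate point is the identification $(WW^*)^3 = \alpha^{-1} (WW^*W)^2$; once one notices the factorization $(WW^*)^3 = (WW^*W)(W^*WW^*)$ and invokes Lemma \ref{lem:WWsWprepre}, everything falls into place.
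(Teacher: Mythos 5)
Your proof is correct and follows essentially the same route as the paper: identify conjugation as an automorphism via Skolem--Noether, use Lemma \ref{lem:AWWsW} to show that conjugation by $WW^*W$ swaps $A,A^*$, use Lemma \ref{lem:generate} to get $\sigma^2=1$, and use Lemma \ref{lem:WWsWprepre} together with the factorization $(WW^*)^3 = (WW^*W)(W^*WW^*)$ to handle $\psi^3$. The only cosmetic difference is that you verify $\sigma^2 = 1$ by directly showing $(WW^*W)^2$ is central, whereas the paper routes the same two facts (the swap and Lemma \ref{lem:generate}) through the self-duality statement Lemma \ref{lem:s2}.
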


\begin{proof}
The automorphism $\sigma$ swaps $A$, $A^*$ by \eqref{eq:AWWsW}.
Therefore $A,A^*$ is self-dual with duality $\sigma$.
Thus $\sigma^2 = 1$ by Lemma \ref{lem:s2}.
By Lemma \ref{lem:WWsWprepre},
there exists $\alpha \in \F$ such that
$W^* W W^* = \alpha W W^* W$.
So
\[
  (W W^*)^3 = W W^* W W^* W W^* = \alpha (W W^* W)^2.
\]
Thus $\sigma^2 = \psi^3$.
The result follows.
\end{proof}

For the rest of this section,  we identify $\sigma$ with the automorphism of $\text{\rm End}(V)$
from Lemma \ref{lem:PSL}.
We record a result from the proof of Lemma \ref{lem:PSL}.

\begin{lemma}    \label{lem:PSL2}    \samepage
\ifDRAFT {\rm lem:PSL2}. \fi
The spin Leonard pair $A,A^*$ is self-dual with duality $\sigma$.
\end{lemma}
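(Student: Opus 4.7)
The plan is to directly verify the definition of self-duality by exhibiting $\sigma$ as the required isomorphism. The statement is essentially a repackaging of an observation already made in the proof of Lemma \ref{lem:PSL}, so the proof should be short.

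First I would recall from Lemma \ref{lem:PSL} that $\sigma$ is the automorphism of $\text{\rm End}(V)$ sending $Y \mapsto (W W^* W)^{-1} Y\, W W^* W$. Next, I would apply the two equations in \eqref{eq:AWWsW}, namely $A\, W W^* W = W W^* W\, A^*$ and $A^* W W^* W = W W^* W\, A$, which immediately yield $A^\sigma = A^*$ and $(A^*)^\sigma = A$. Thus $\sigma$ is an $\F$-algebra isomorphism $\text{\rm End}(V) \to \text{\rm End}(V)$ that sends $A \mapsto A^*$ and $A^* \mapsto A$, which is exactly the notion of an isomorphism of Leonard pairs from $A, A^*$ to $A^*, A$ as defined in Section \ref{sec:LP}. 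By definition this means $A, A^*$ is self-dual and $\sigma$ is its duality.

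There is no real obstacle here. The only mild subtlety is the observation that the Skolem-Noether form of $\sigma$ from Lemma \ref{lem:PSL} is genuinely an $\F$-algebra automorphism (Lemma \ref{lem:SN}), but this is guaranteed by construction. Since the content was already established inside the proof of Lemma \ref{lem:PSL}, I would simply cite that lemma together with \eqref{eq:AWWsW} and the definition of self-dual Leonard pair.
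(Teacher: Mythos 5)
Your proposal is correct and follows exactly the paper's approach: the paper presents Lemma \ref{lem:PSL2} as a record of a fact established in the proof of Lemma \ref{lem:PSL}, where it was shown via \eqref{eq:AWWsW} that $\sigma$ swaps $A$ and $A^*$, and you reconstruct that same argument.
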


The following definition is motivated by Lemma \ref{lem:WWsWprepre}.

\begin{definition}    \label{def:balanced}    \samepage
\ifDRAFT {\rm def:balanced}. \fi
The Boltzmann pair $W,W^*$ is said to be {\em balanced} whenever
\begin{equation}
    W W^* W = W^* W W^*.                       \label{eq:WWsWpre}
\end{equation}
\end{definition}

\begin{lemma}    \label{lem:balanced}    \samepage
\ifDRAFT {\rm lem:balanced}. \fi
The following {\rm (i)--(iii)} hold.
\begin{itemize}
\item[\rm (i)]
There exists a nonzero $\alpha \in \F$ such that 
the Boltzmann pair $\alpha W, W^*$ is balanced.
\item[\rm (ii)]
There exists a nonzero $\alpha^* \in \F$ such that 
the Boltzmann pair $W, \alpha^* W^*$ is balanced.
\item[\rm (iii)]
Assume that $W,W^*$ is balanced.
Then for any nonzero $\xi \in \F$,
the Boltzmann pair $\xi W, \xi W^*$ is balanced.
\end{itemize}
\end{lemma}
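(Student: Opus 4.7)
The plan is to base everything on Lemma~\ref{lem:WWsWprepre}, which furnishes a nonzero scalar $\alpha_0 \in \F$ with $W W^* W = \alpha_0 W^* W W^*$. Throughout, I would note that $W^* W W^*$ is invertible as a product of invertible elements, so it may be cancelled from any equation of the form $\gamma\, W^* W W^* = \delta\, W^* W W^*$. I would also freely use Lemma~\ref{lem:WinvWsinv}(i) to confirm that each rescaled pair is still a Boltzmann pair; this keeps Definition~\ref{def:balanced} applicable at every step.

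For part (iii), the check is immediate: if $W W^* W = W^* W W^*$, then substituting $\xi W$ and $\xi W^*$ multiplies both sides by $\xi^3$, so the balance condition is preserved. For part (i), I would impose the balance relation on $(\alpha W, W^*)$, namely $(\alpha W) W^* (\alpha W) = W^* (\alpha W) W^*$, which simplifies to $\alpha^2 W W^* W = \alpha W^* W W^*$. Rewriting the left-hand side via $W W^* W = \alpha_0 W^* W W^*$ and cancelling $W^* W W^*$ gives the scalar equation $\alpha^2 \alpha_0 = \alpha$, whose unique nonzero solution is $\alpha = \alpha_0^{-1}$. Part (ii) is entirely parallel: imposing balance on $(W, \alpha^* W^*)$ yields $\alpha^* W W^* W = (\alpha^*)^2 W^* W W^*$, and the same manipulation produces $\alpha^* \alpha_0 = (\alpha^*)^2$, so $\alpha^* = \alpha_0$.

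There is no real obstacle here; the lemma is essentially a bookkeeping statement describing how the scalar $\alpha_0$ of Lemma~\ref{lem:WWsWprepre} transforms when one rescales either factor of the Boltzmann pair. The only point that needs even momentary attention is to confirm that $\alpha_0 \neq 0$ so that the solutions $\alpha_0^{-1}$ and $\alpha_0$ in parts (i), (ii) are themselves nonzero as required, but this is guaranteed by Lemma~\ref{lem:WWsWprepre}.
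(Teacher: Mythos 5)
Your proof is correct and follows essentially the same route as the paper: both rely on Lemma~\ref{lem:WWsWprepre} to produce the nonzero scalar linking $W W^* W$ and $W^* W W^*$, and then determine how that scalar absorbs the rescalings in parts (i)--(iii). The paper's version is terser (it simply names the $\alpha$ with $\alpha W W^* W = W^* W W^*$ and asserts balance, leaving the quadratic cancellation implicit), while yours spells out the $\alpha^2\alpha_0 = \alpha$ computation and explicitly notes via Lemma~\ref{lem:WinvWsinv}(i) that the rescaled pairs remain Boltzmann pairs, which is a useful detail the paper leaves tacit.
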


\begin{proof}
(i)
By Lemma \ref{lem:WWsWprepre} there exists a nonzero $\alpha \in \F$
such that 
$\alpha W W^* W = W^* W W^*$.
Thus the pair $\alpha W,  W^*$ is balanced.

(ii) 
Similar.

(iii)
By Definition \ref{def:balanced}.
\end{proof}

For the rest of this section, assume that $W,W^*$ is balanced.

\begin{lemma}    \label{lem:sigma}    \samepage
\ifDRAFT {\rm lem:sigma}. \fi
The automorphism $\sigma$ swaps $W$ and $W^*$.
\end{lemma}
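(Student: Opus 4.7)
The plan is to show $\sigma(W)=W^*$ directly from the balance relation $WW^*W = W^*WW^*$, and then deduce $\sigma(W^*)=W$ using the involutory property of $\sigma$.

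First, recall from Lemma \ref{lem:PSL} that $\sigma$ sends $Y \mapsto (WW^*W)^{-1} Y (WW^*W)$ for $Y \in \text{\rm End}(V)$. Both $W$ and $W^*$ are invertible (Definition \ref{def:spinLP}), so $WW^*W$ is invertible as well, and the formula makes sense. The equation $\sigma(W)=W^*$ is equivalent to
\[
  W \cdot (WW^*W) = (WW^*W) \cdot W^*,
\]
i.e.\ $W^2 W^* W = W W^* W W^*$. Now multiply the balance identity $WW^*W = W^*WW^*$ on the left by $W$; this yields exactly $W^2 W^* W = WW^*WW^*$. Hence $\sigma(W)=W^*$.

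Second, by Lemma \ref{lem:PSL} we have $\sigma^2 = 1$, so applying $\sigma$ to both sides of $\sigma(W)=W^*$ gives $W = \sigma(W^*)$. Thus $\sigma$ swaps $W$ and $W^*$, as desired.

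The argument is essentially a one-line manipulation once the balance relation is in hand, so there is no real obstacle; the only thing to be careful about is that one writes $\sigma$ using the factor $WW^*W$ (rather than $W^*WW^*$) on the correct side, and that $WW^*W$ is genuinely invertible so that the conjugation formula is well-defined. Alternatively one could verify the symmetric statement $\sigma(W^*)=W$ directly by multiplying the balance relation by $W$ on the right and again appealing to cancellation, which gives a second, independent check.
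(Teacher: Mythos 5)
Your proof is correct and takes essentially the same approach as the paper: multiply the balance relation on the left by $W$, rearrange to identify $\sigma(W)=W^*$, then apply $\sigma^2=1$ to get the other direction. The only cosmetic difference is that you spell out the conjugation formula more explicitly before canceling; the paper instead cancels $(WW^*W)^{-1}$ directly and cites Lemma~\ref{lem:s2} for $\sigma^2=1$, which is the same fact you invoke via Lemma~\ref{lem:PSL}.
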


\begin{proof}
In \eqref{eq:WWsWpre}, multiply each side on the left by $W$ to get
\[
   W W W^* W = W W^* W W^*.
\]
In this line, multiply each side on the left by $(W W^* W)^{-1}$ to find that
$\sigma$ sends $W \mapsto W^*$.
By this and Lemma \ref{lem:s2}, $\sigma$ sends $W^* \mapsto W$.
\end{proof}

Let $\{E_i\}_{i=0}^d$ denote a standard ordering of the primitive idempotents of $A$.
Define 
\begin{align}
  E^*_i &= E_i^\sigma  &&   (0 \leq i \leq d).             \label{eq:defEsi}
\end{align}
Observe that the sequence
\begin{equation}
\Phi = (A; \{E_i\}_{i=0}^d; A^*; \{E^*_i\}_{i=0}^d)     \label{eq:Phi2}
\end{equation}
is a Leonard system on $V$.

\begin{lemma}    \label{lem:Phisd}    \samepage
\ifDRAFT {\rm lem:Phisd}. \fi
The Leonard system $\Phi$ is self-dual with duality $\sigma$.
\end{lemma}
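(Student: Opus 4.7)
The goal is to exhibit $\sigma$ as an isomorphism of Leonard systems from $\Phi$ to $\Phi^* = (A^*;\{E^*_i\}_{i=0}^d;A;\{E_i\}_{i=0}^d)$. By definition this amounts to checking that $\sigma$ is an $\F$-algebra automorphism of $\text{\rm End}(V)$ (which it is, by Lemma \ref{lem:PSL}) satisfying the four conditions $A \mapsto A^*$, $A^* \mapsto A$, $E_i \mapsto E^*_i$, and $E^*_i \mapsto E_i$ for $0 \leq i \leq d$.

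The first two conditions come essentially for free: Lemma \ref{lem:PSL2} already asserts that the spin Leonard pair $A,A^*$ is self-dual with duality $\sigma$, which by definition of duality means $\sigma$ swaps $A$ and $A^*$. The third condition is literally the definition \eqref{eq:defEsi} of $E^*_i$. For the fourth condition, I would invoke Lemma \ref{lem:s2} applied to the Leonard pair $A,A^*$ with its duality $\sigma$: this gives $\sigma^2 = 1$, so applying $\sigma$ to both sides of $E^*_i = E_i^\sigma$ yields $E_i = (E^*_i)^\sigma$, i.e.\ $\sigma$ sends $E^*_i \mapsto E_i$.

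Collecting these four facts shows that $\sigma$ is an isomorphism of Leonard systems from $\Phi$ to $\Phi^*$, hence $\Phi$ is self-dual with duality $\sigma$. There is no real obstacle here: all the substantive work has already been done in Lemma \ref{lem:PSL} and Lemma \ref{lem:PSL2}, where the automorphism $\sigma$ was constructed from the balanced Boltzmann pair $W,W^*$ and shown to swap $A$ and $A^*$. The present lemma is essentially a bookkeeping step that upgrades self-duality of the Leonard \emph{pair} to self-duality of the Leonard \emph{system} by checking the primitive-idempotent orderings transform correctly, which is immediate from the definition \eqref{eq:defEsi} together with $\sigma^2 = 1$.
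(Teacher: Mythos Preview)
Your proposal is correct and matches the paper's own proof essentially step for step: the paper invokes Lemma~\ref{lem:PSL2} to get that $\sigma$ swaps $A$ and $A^*$, then uses \eqref{eq:defEsi} together with Lemma~\ref{lem:s2} to conclude that $\sigma$ swaps $E_i$ and $E^*_i$, exactly as you outline.
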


\begin{proof}
By Lemma \ref{lem:PSL2}, $\sigma$ swaps $A$ and $A^*$.
By \eqref{eq:defEsi} and Lemma \ref{lem:s2}, $\sigma$ swaps $E_i$ and $E^*_i$ for
$0 \leq i \leq d$.
The result follows.
\end{proof}

\begin{lemma}    \label{lem:EiWsW}    \samepage
\ifDRAFT {\rm lem:EiWsW}. \fi
For $0 \leq i \leq d$,
\begin{align}
  W^* W E^*_i  &= E_i W^* W, &
  W W^* E_i     &= E^*_i W W^*.        \label{eq:EiWsW}
\end{align}
\end{lemma}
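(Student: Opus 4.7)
The plan is to exploit the fact that $\sigma$ acts as conjugation by $W W^*W$ together with the commutation of $W$ with elements of $\gen{A}$ and of $W^*$ with elements of $\gen{A^*}$.

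First I would unpack the definition: by Lemma \ref{lem:PSL} we have $E^*_i = E_i^\sigma = (WW^*W)^{-1} E_i\, WW^*W$, and hence
\begin{equation*}
  WW^*W\, E^*_i = E_i\, WW^*W.
\end{equation*}
Since $E_i \in \gen{A}$ and $W \in \gen{A}$, these two elements commute; the right-hand side can therefore be rewritten as $W E_i W^*W$. Cancelling the leftmost $W$ yields
\begin{equation*}
  W^*W\, E^*_i = E_i\, W^*W,
\end{equation*}
which is the first asserted equation.

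For the second equation I would apply $\sigma$ to the first. By Lemma \ref{lem:Phisd}, $\sigma$ swaps $E_i \leftrightarrow E^*_i$ for $0 \leq i \leq d$, and by Lemma \ref{lem:sigma} it swaps $W \leftrightarrow W^*$. Applying $\sigma$ (an antiautomorphism-free automorphism, so it preserves the order of factors) to $W^*W\, E^*_i = E_i\, W^*W$ gives
\begin{equation*}
  WW^*\, E_i = E^*_i\, WW^*,
\end{equation*}
as required. Alternatively, one could derive the second equation directly by the same method, starting from $WW^*W\, E_i = E^*_i\, WW^*W$ (which follows from $\sigma^2 = 1$), using $W^*W W^* = WW^*W$ and the fact that $E^*_i$ and $W^*$ commute.

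There is no real obstacle here; the only mild subtlety is ensuring the cancellation direction is correct, and this is where the hypothesis that the Boltzmann pair is balanced enters implicitly — it is needed to identify $\sigma^2 = 1$ together with the swap of $W, W^*$ used in the second step.
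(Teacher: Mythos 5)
Your proof is correct and follows the paper's proof almost verbatim for the first identity: both start from $WW^*WE^*_i = E_i WW^*W$ (Lemma \ref{lem:PSL} plus \eqref{eq:defEsi}) and cancel a $W$ using that $E_i$ commutes with $W$. For the second identity, the paper applies the antiautomorphism $\dagger$ (which fixes $W, W^*, E_i, E^*_i$ and reverses products), while you apply the automorphism $\sigma$ (which swaps $W\leftrightarrow W^*$ and $E_i\leftrightarrow E^*_i$); both give the result in one step, so this is a cosmetic variation rather than a different route. One small inaccuracy in your closing remark: $\sigma^2=1$ holds by Lemma \ref{lem:s2} already, independent of balance; the balanced hypothesis is only needed for Lemma \ref{lem:sigma} (that $\sigma$ swaps $W$ and $W^*$), which your choice of $\sigma$ for the second step relies on, whereas the paper's use of $\dagger$ avoids it. Also, your aside "antiautomorphism-free automorphism" should just read "automorphism."
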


\begin{proof}
First we obtain the equation on the left in \eqref{eq:EiWsW}.
By Lemma \ref{lem:PSL} and \eqref{eq:defEsi},
\[
   W W^* W E^*_i  =  E_i W W^* W.
\]
In this line, multiply each side on the left by $W^{-1}$.
Simplify the result using $E_i \in \gen{A}$ and  Lemma \ref{lem:AW}
to get the equation on the left in \eqref{eq:EiWsW}.
In this equation, apply $\dagger$ to each side to get 
the equation on the right in \eqref{eq:EiWsW}.
\end{proof}

\begin{lemma}    \label{lem:WtiEi}    \samepage
\ifDRAFT {\rm lem:WtiEi}. \fi
There exist nonzero scalars $f$,  $\{\tau_i\}_{i=0}^d$ in $\F$ such that  $\tau_0=1$ and
\begin{align}
  W &= f \sum_{i=0}^d \tau_i E_i,    &
  W^* &= f \sum_{i=0}^d \tau_i E^*_i.                \label{eq:WtiEi}
\end{align}
\end{lemma}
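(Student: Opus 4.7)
The plan is to use the duality $\sigma$ from Lemma \ref{lem:PSL2}, together with the facts that $\sigma$ swaps $W \leftrightarrow W^*$ (Lemma \ref{lem:sigma}) and $E_i \leftrightarrow E^*_i$ for $0 \leq i \leq d$.

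First I would expand $W$ and $W^*$ in the natural bases. Since $W$ lies in the commutative algebra $\gen{A}$, which has $\{E_i\}_{i=0}^d$ as a basis, there exist unique scalars $w_i \in \F$ with
\[
  W = \sum_{i=0}^d w_i E_i.
\]
Because $E_i$ is a primitive idempotent and $W$ is invertible, the restriction of $W$ to $E_i V$ is multiplication by $w_i$, which forces $w_i \neq 0$ for $0 \leq i \leq d$. Similarly, write $W^* = \sum_{i=0}^d w^*_i E^*_i$ with $w^*_i \neq 0$ for $0 \leq i \leq d$.

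Next I would apply $\sigma$ to the first expansion. Since $\sigma$ is an $\F$-algebra automorphism of $\text{\rm End}(V)$ and therefore $\F$-linear, and since $E_i^\sigma = E^*_i$ by the definition \eqref{eq:defEsi}, we obtain
\[
  W^\sigma = \sum_{i=0}^d w_i E_i^\sigma = \sum_{i=0}^d w_i E^*_i.
\]
On the other hand, $W^\sigma = W^*$ by Lemma \ref{lem:sigma}. Comparing with the expansion of $W^*$ and using that $\{E^*_i\}_{i=0}^d$ is linearly independent, we conclude $w_i = w^*_i$ for $0 \leq i \leq d$.

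Finally, set $f = w_0$ and $\tau_i = w_i / f$ for $0 \leq i \leq d$. Then $f \neq 0$, $\tau_0 = 1$, each $\tau_i \neq 0$, and \eqref{eq:WtiEi} holds. There is no serious obstacle; the whole argument is a one-line coefficient comparison once the duality $\sigma$ is in hand, and the only thing to check is that invertibility of $W$ and $W^*$ yields nonvanishing of the coefficients.
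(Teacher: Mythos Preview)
Your proof is correct and follows essentially the same approach as the paper: expand $W$ in the basis $\{E_i\}_{i=0}^d$, use invertibility to get nonzero coefficients, normalize so that $\tau_0=1$, and then apply $\sigma$ (using Lemma \ref{lem:sigma} and \eqref{eq:defEsi}) to obtain the expansion of $W^*$. The only cosmetic difference is that the paper applies $\sigma$ directly to the first equation rather than writing a separate expansion for $W^*$ and comparing coefficients.
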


\begin{proof}
First we obtain the equation on the left in \eqref{eq:WtiEi}.
By $W \in \gen{A}$ and since $\{E_i\}_{i=0}^d$ is a basis for $\gen{A}$,
there exist scalars $\{\alpha_i\}_{i=0}^d$ in $\F$ such that
$W = \sum_{i=0}^d \alpha_i E_i$.
The scalars $\{\alpha_i\}_{i=0}^d$ are nonzero since $W$ is invertible.
Define $f=\alpha_0$ and $\tau_i = \alpha_0^{-1} \alpha_i$ for $0 \leq i \leq d$.
Then the equation on the left in \eqref{eq:WtiEi} holds.
By construction the scalars $f$, $\{\tau_i\}_{i=0}^d$ are all nonzero and $\tau_0 = 1$.
In the equation on the left in \eqref{eq:WtiEi},
apply $\sigma$ to each side,
and use Lemma \ref{lem:sigma} and \eqref{eq:defEsi} to get
the equation on the right in \eqref{eq:WtiEi}.
\end{proof}

\begin{note}
The scalars $\{\tau_i\}_{i=0}^d$ do not change under the 
adjustments of Lemma  \ref{lem:balanced}(iii),
but $f$ can be adjusted to have any nonzero value.
\end{note}

\begin{lemma}     \label{lem:Winv}    \samepage
\ifDRAFT {\rm lem:Winv}. \fi
We have 
\begin{align}
 W^{-1} &=  f^{-1} \sum_{i=0}^d \tau_i^{-1} E_i, &
 (W^*)^{-1} &=  f^{-1} \sum_{i=0}^d \tau_i^{-1} E^*_i.        \label{eq:WinvtiinvEi}
\end{align}
\end{lemma}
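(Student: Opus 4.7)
The plan is to verify each claimed formula by direct multiplication, using only elementary properties of the primitive idempotents and the nonvanishing of the scalars furnished by the previous lemma.

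First I would recall that by Lemma \ref{lem:WtiEi} the scalars $f$ and $\tau_0,\ldots,\tau_d$ are all nonzero, so the right-hand sides of \eqref{eq:WinvtiinvEi} are well-defined elements of $\gen{A}$ and $\gen{A^*}$ respectively. Since $\{E_i\}_{i=0}^d$ is the system of primitive idempotents of $A$, we have $E_iE_j=\delta_{i,j}E_i$ and $\sum_{i=0}^d E_i = I$. Therefore
\[
\Bigl(f\sum_{i=0}^d \tau_i E_i\Bigr)\Bigl(f^{-1}\sum_{j=0}^d \tau_j^{-1} E_j\Bigr) = \sum_{i=0}^d\sum_{j=0}^d \tau_i\tau_j^{-1}\,E_iE_j = \sum_{i=0}^d E_i = I,
\]
which by the formula for $W$ in Lemma \ref{lem:WtiEi} shows that $W$ times the proposed expression equals $I$; the same computation in the reverse order gives $I$ as well, so the first equation in \eqref{eq:WinvtiinvEi} holds.

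For the second equation I would repeat the identical calculation with $\{E^*_i\}_{i=0}^d$ in place of $\{E_i\}_{i=0}^d$, using the expression for $W^*$ from the right-hand equation of \eqref{eq:WtiEi} and the fact that $\{E^*_i\}_{i=0}^d$ is the system of primitive idempotents of $A^*$, hence also satisfies $E^*_iE^*_j=\delta_{i,j}E^*_i$ and $\sum_{i=0}^d E^*_i=I$. Alternatively, one can apply the automorphism $\sigma$ of Lemma \ref{lem:sigma} (which sends $W\leftrightarrow W^*$ and $E_i\leftrightarrow E^*_i$ by \eqref{eq:defEsi}) directly to the first equation to deduce the second.

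There is no real obstacle; the lemma is a straightforward consequence of the orthogonal idempotent decomposition in Lemma \ref{lem:WtiEi} together with the nonvanishing of $f$ and the $\tau_i$.
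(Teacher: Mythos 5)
Your proof is correct and takes essentially the same approach as the paper, which simply notes that the formulas follow from \eqref{eq:WtiEi}; you have spelled out the routine idempotent computation that the paper leaves implicit.
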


\begin{proof}
Follows from \eqref{eq:WtiEi}.
\end{proof}

Recall the map $\pi$ from Definition \ref{def:pipis}.

\begin{lemma}    \label{lem:piWf}    \samepage
\ifDRAFT {\rm lem:piWf}. \fi
We have
\begin{align}
  \pi (W) &= f,  \qquad\qquad\qquad  \pi^* (W^*) = f.              \label{eq:piWf}
\end{align}
\end{lemma}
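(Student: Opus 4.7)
The plan is to evaluate $\pi(W)$ directly from the spectral expansion of $W$ given in Lemma \ref{lem:WtiEi}, using the $\F$-linearity of $\pi$ and the values of $\pi$ on the primitive idempotents $\{E_i\}_{i=0}^d$ computed in Lemma \ref{lem:piY}.

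Specifically, by the left equation of \eqref{eq:WtiEi} we have $W = f\sum_{i=0}^d \tau_i E_i$. Applying $\pi$ to each side and using the $\F$-linearity of $\pi$ from Definition \ref{def:pipis}, together with the formula $\pi(E_i) = \delta_{i,0}$ from Lemma \ref{lem:piY}, we obtain
\[
  \pi(W) = f\sum_{i=0}^d \tau_i \pi(E_i) = f \tau_0.
\]
Since $\tau_0 = 1$ by Lemma \ref{lem:WtiEi}, this gives $\pi(W) = f$.

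For the second equation in \eqref{eq:piWf}, apply the same argument to $\Phi^*$: the right equation of \eqref{eq:WtiEi} reads $W^* = f \sum_{i=0}^d \tau_i E^*_i$, which is the analogue of the spectral expansion for $W$ relative to $\Phi^*$ (in view of Definition \ref{def:star}, the scalars $f$ and $\{\tau_i\}_{i=0}^d$ play the roles of their starred counterparts for $W^*$ regarded as an element of $\gen{A^*}$, up to the fact that the same $f$ and $\{\tau_i\}$ appear; this is consistent because $\sigma$ swaps $W \leftrightarrow W^*$ and $E_i \leftrightarrow E^*_i$). Then Lemma \ref{lem:piY} applied to $\Phi^*$ gives $\pi^*(E^*_i) = \delta_{i,0}$, so
\[
  \pi^*(W^*) = f \sum_{i=0}^d \tau_i \pi^*(E^*_i) = f \tau_0 = f.
\]

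There is no real obstacle here; the lemma is essentially an immediate bookkeeping corollary of the spectral decomposition in Lemma \ref{lem:WtiEi} combined with Lemma \ref{lem:piY}. The only minor care needed is to record that the normalization $\tau_0 = 1$ fixes the constant to be exactly $f$ rather than a nonzero multiple of it.
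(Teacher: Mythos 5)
Your proof is correct and follows exactly the paper's approach: apply $\pi$ (resp.\ $\pi^*$) to the spectral expansion in Lemma \ref{lem:WtiEi} and invoke $\pi(E_i)=\delta_{i,0}$ from Lemma \ref{lem:piY} together with the normalization $\tau_0=1$.
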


\begin{proof}
Apply $\pi$ to the equation on the left in \eqref{eq:WtiEi} and use Lemma \ref{lem:piY}
to get the equation on the left in \eqref{eq:piWf}.
The equation on the right in \eqref{eq:piWf} is similarly obtained.
\end{proof}

\begin{lemma}    \label{lem:defgamma}    \samepage
\ifDRAFT {\rm lem:defgamma}.  \fi
There exists $\gamma \in \F$ such that
\begin{equation}
  \pi(W^*) =  f \gamma,   \qquad\qquad\qquad
  \pi^*(W) = f \gamma.                                      \label{eq:fgamma}
\end{equation}
\end{lemma}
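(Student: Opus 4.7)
The plan is to define $\gamma$ directly from one of the two equations and then use the self-duality of $\Phi$ to derive the other. Specifically, define
\[
  \gamma = f^{-1} \pi(W^*),
\]
which is a well-defined element of $\F$ since $f \neq 0$ by Lemma \ref{lem:WtiEi}. Then the equation $\pi(W^*) = f\gamma$ holds by construction.

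It remains to show that $\pi^*(W) = f\gamma$, which is equivalent to $\pi^*(W) = \pi(W^*)$. Here is where I would invoke the self-duality machinery. By Lemma \ref{lem:Phisd}, the Leonard system $\Phi$ in \eqref{eq:Phi2} is self-dual, and its duality is the automorphism $\sigma$ coming from the $\text{\rm PSL}_2(\mathbb{Z})$-action of Lemma \ref{lem:PSL}. By Lemma \ref{lem:sigma}, this $\sigma$ swaps $W$ and $W^*$, so $W^\sigma = W^*$. Applying Lemma \ref{lem:piYsigma} with $Y = W$ gives
\[
  \pi^*(W) = \pi(W^\sigma) = \pi(W^*) = f\gamma,
\]
as required.

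There is no real obstacle: every ingredient has already been set up. The balancing hypothesis on $W,W^*$ is used only through Lemma \ref{lem:sigma} (which requires $W W^* W = W^* W W^*$), and the identification of $\sigma$ as a duality of the specific self-dual Leonard system $\Phi$ built from the chosen ordering $\{E_i\}_{i=0}^d$ is exactly what lets us transfer between $\pi$ and $\pi^*$. One could alternatively verify the claim by a direct computation using the expansions \eqref{eq:WtiEi} together with $\pi(E^*_i) = k_i/\nu$ and $\pi^*(E_i) = k^*_i/\nu$; then $\gamma = \nu^{-1}\sum_{i=0}^d \tau_i k_i = \nu^{-1}\sum_{i=0}^d \tau_i k^*_i$, the second equality forcing one to appeal again to self-duality (via $k_i = k^*_i$, since $\th_i = \th^*_i$ by Lemma \ref{lem:selfdualparam}). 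The route through $\sigma$ is cleaner and avoids any case-by-case calculation.
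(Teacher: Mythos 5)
Your proof is correct and follows essentially the same route as the paper: define $\gamma = f^{-1}\pi(W^*)$, then use Lemma \ref{lem:sigma} (so $W^\sigma = W^*$) together with Lemma \ref{lem:piYsigma} to conclude $\pi^*(W) = \pi(W^\sigma) = \pi(W^*) = f\gamma$. The concluding remarks about the alternative computation are accurate but not needed.
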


\begin{proof}
Define $\gamma = f^{-1} \pi(W^*)$.
Then the equation on the left in \eqref{eq:fgamma} holds.
By Lemma \ref{lem:sigma}, $W^\sigma = W^*$.
By Lemma \ref{lem:piYsigma}, $\pi(W^\sigma) = \pi^*(W)$.
By these comments we get the equation on the right in \eqref{eq:fgamma}.
\end{proof}

\begin{note}    \label{note:gamma}    \samepage
\ifDRAFT {\rm note:gamma}. \fi
The scalar $\gamma$ does not change under the adjustments
of  Lemma \ref{lem:balanced}(iii).
\end{note}

\begin{lemma}    \label{lem:piWWs2}    \samepage
\ifDRAFT {\rm lem:piWWs2}. \fi
We have
\begin{align}
\pi(W W^*) &= f^2 \gamma,   & \pi(W^* W) &=  f^2 \gamma,   \label{eq:piWWsfgamma}
\\  
\pi^*(W W^*) &= f^2 \gamma,  & \pi^*(W^* W) &= f^2 \gamma.    \label{eq:piWWsfgamma2}
\end{align}
\end{lemma}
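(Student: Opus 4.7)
The plan is to reduce everything to the multiplicative behavior of $\pi$ on products involving elements of $\gen{A}$, as given in Lemma \ref{lem:piYZ}, combined with the evaluations already obtained in Lemmas \ref{lem:piWf} and \ref{lem:defgamma}.

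For the first pair of identities \eqref{eq:piWWsfgamma}, observe that $W \in \gen{A}$ by Definition \ref{def:spinLP}(i). Hence Lemma \ref{lem:piYZ}, applied with $Y = W$ and $Z = W^*$, yields
\[
\pi(WW^*) \;=\; \pi(W^*W) \;=\; \pi(W)\,\pi(W^*).
\]
Substituting $\pi(W) = f$ from \eqref{eq:piWf} and $\pi(W^*) = f\gamma$ from \eqref{eq:fgamma} then gives $\pi(WW^*) = \pi(W^*W) = f^2\gamma$, which is \eqref{eq:piWWsfgamma}.

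For the second pair of identities \eqref{eq:piWWsfgamma2}, I would apply exactly the same reasoning to the dual system $\Phi^*$. Namely, $W^* \in \gen{A^*}$ by Definition \ref{def:spinLP}(ii), so Lemma \ref{lem:piYZ} applied to $\Phi^*$ with $Y = W^*$ and $Z = W$ gives
\[
\pi^*(W^*W) \;=\; \pi^*(WW^*) \;=\; \pi^*(W^*)\,\pi^*(W).
\]
Then $\pi^*(W^*) = f$ and $\pi^*(W) = f\gamma$ by \eqref{eq:piWf} and \eqref{eq:fgamma}, yielding $\pi^*(WW^*) = \pi^*(W^*W) = f^2\gamma$.

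There is no real obstacle here: the lemma is essentially a bookkeeping consequence of the fact that $\pi$ is multiplicative when one factor lies in $\gen{A}$ (and symmetrically for $\pi^*$). The only thing one needs to notice is that each of the four products $WW^*$, $W^*W$, viewed under either $\pi$ or $\pi^*$, has one factor in the correct subalgebra to invoke Lemma \ref{lem:piYZ}, so that no additional structure beyond Lemmas \ref{lem:piWf} and \ref{lem:defgamma} is required.
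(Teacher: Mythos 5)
Your proof is correct and takes essentially the same approach as the paper: apply Lemma \ref{lem:piYZ} to factor $\pi$ across the product (using $W \in \gen{A}$), substitute the values from Lemmas \ref{lem:piWf} and \ref{lem:defgamma}, and repeat for the dual. You spell out the dual case explicitly where the paper simply says it is ``similarly obtained,'' but the argument is identical.
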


\begin{proof}
By Lemma \ref{lem:piYZ}(i), $\pi(W W^*) = \pi(W^* W) = \pi(W) \pi(W^*)$.
By this and \eqref{eq:piWf}, \eqref{eq:fgamma},
we obtain \eqref{eq:piWWsfgamma}.
The equations \eqref{eq:piWWsfgamma2} are similarly obtained.
\end{proof}

Recall the map  $\rho$ from Lemma \ref{lem:rho}.

\begin{lemma}   \label{lem:WrhoWsrhos}    \samepage
\ifDRAFT {\rm lem:WrhoWsrhos}. \fi
We have 
\begin{align}
  W^\rho &= f^2 \gamma \, (W^*)^{-1},  &
 (W^*)^{\rho^*} &= f^2 \gamma \, W^{-1}.    \label{eq:WrhoWsrhos}
\end{align}
\end{lemma}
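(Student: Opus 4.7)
The plan is to reduce the first equation $W^\rho = f^2\gamma\,(W^*)^{-1}$ to the single identity
\[
   W^\rho W^* E_0 \;=\; f^2 \gamma \,E_0,
\]
from which the desired formula follows instantly: the map $\gen{A^*}\to \mathcal{A}E_0$, $Y\mapsto YE_0$ is an $\F$-linear bijection by Lemma \ref{lem:AcalAEs0} applied to $\Phi^*$, so the displayed equation (together with the fact that $W^\rho W^*$ and $f^2\gamma I$ both lie in $\gen{A^*}$) forces $W^\rho W^* = f^2\gamma I$, and hence $W^\rho = f^2\gamma (W^*)^{-1}$.

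To prove the displayed identity I will compute $W^*W E^*_0 E_0$ in two ways. On one hand, Lemma \ref{lem:EiWsW} at $i=0$ gives $W^*W E^*_0 = E_0 W^*W$, whence
\[
   W^*W E^*_0 E_0 \;=\; E_0\,W^*W\,E_0.
\]
Lemma \ref{lem:E0YE0} rewrites the right side as $\pi(W^*W)\,E_0$, and Lemma \ref{lem:piWWs2} evaluates $\pi(W^*W) = f^2\gamma$, giving $W^*W E^*_0 E_0 = f^2\gamma\,E_0$. On the other hand, since $W\in\gen{A}$, the defining relation $WE^*_0E_0 = W^\rho E_0$ of Lemma \ref{lem:rho} yields
\[
   W^*W E^*_0 E_0 \;=\; W^*\,W^\rho\,E_0 \;=\; W^\rho W^*\,E_0,
\]
where the last equality uses that $W^*$ and $W^\rho$ both belong to the commutative algebra $\gen{A^*}$. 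Comparing the two evaluations finishes the argument.

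For the second equation $(W^*)^{\rho^*} = f^2\gamma\,W^{-1}$, I will simply apply the first equation to the Leonard system $\Phi^*$; Definition \ref{def:star} turns $W,W^*,\rho$ into $W^*,W,\rho^*$, while $f$ and $\gamma$ are unchanged because the defining relations $\pi(W)=f=\pi^*(W^*)$ and $\pi(W^*)=f\gamma = \pi^*(W)$ from Lemmas \ref{lem:piWf} and \ref{lem:defgamma} are symmetric in $\Phi,\Phi^*$.

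No serious obstacle is expected; the only subtle points are (i) invoking Lemma \ref{lem:EiWsW} to turn the $E^*_0$ on the left of $W^*W$ into an $E_0$ on the right, which is what links the two sides of the identity across the duality, and (ii) keeping track of which commutations take place in $\gen{A}$ versus $\gen{A^*}$ so that the final injectivity argument is applied in the correct algebra.
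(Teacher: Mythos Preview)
Your proof is correct and follows essentially the same route as the paper's: both compute $W^*W E^*_0 E_0$ using Lemma~\ref{lem:EiWsW} at $i=0$, reduce via Lemma~\ref{lem:E0YE0} and Lemma~\ref{lem:piWWs2} to $f^2\gamma\,E_0$, and then invoke the injectivity of $Y\mapsto YE_0$ on $\gen{A^*}$. The only cosmetic difference is that the paper left-multiplies by $(W^*)^{-1}$ before applying injectivity, whereas you commute $W^*$ past $W^\rho$ inside $\gen{A^*}$ and apply injectivity to the product $W^\rho W^*$; these are equivalent moves.
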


\begin{proof}
First we obtain the equation on the left in \eqref{eq:WrhoWsrhos}.
In the equation on the left in \eqref{eq:EiWsW} for $i=0$,
multiply each side on the right by $E_0$ to get 
\[
    W^* W E^*_0 E_0 = E_0 W^* W E_0.
\]
By Lemma \ref{lem:E0YE0}, $E_0 W^* W E_0 = \pi(W^* W) E_0$.
By these comments and  \eqref{eq:piWWsfgamma},
\begin{align*}
 W^* W E^*_0 E_0 &= f^2 \gamma \, E_0. 
\end{align*}
In this line, multiply each side on the left by $(W^*)^{-1}$ to get
\begin{align*}
 W E^*_0 E_0 &= f^2 \gamma \, (W^*)^{-1} E_0. 
\end{align*}
By this and Lemmas \ref{lem:AcalAEs0}, \ref{lem:rho}
we get the equation on the left in \eqref{eq:WrhoWsrhos}.
The equation on the right in \eqref{eq:WrhoWsrhos} is similarly obtained.
\end{proof}

\begin{lemma}    \label{lem:gammanonzero}    \samepage
\ifDRAFT {\rm lem:gammanonzero}. \fi
We have $\gamma \neq 0$.
\end{lemma}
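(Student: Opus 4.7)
The plan is to exploit Lemma \ref{lem:WrhoWsrhos}, which gives
\[
  W^\rho = f^2 \gamma (W^*)^{-1}.
\]
First I would observe that the right-hand side, viewed as an element of $\gen{A^*}$, vanishes only if $\gamma = 0$, since $f \neq 0$ by Lemma \ref{lem:WtiEi} and $(W^*)^{-1}$ is invertible (in particular nonzero). So it suffices to show that the left-hand side $W^\rho$ is nonzero.

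For that, I would invoke Lemma \ref{lem:rhorhospre}, which states that $\rho : \gen{A} \to \gen{A^*}$ is bijective, hence injective as an $\F$-linear map. Since $W$ is invertible, it is in particular nonzero, so $W^\rho \neq 0$.

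Combining these two observations forces $\gamma \neq 0$. There is no real obstacle here: the heavy lifting has already been done in establishing the formula of Lemma \ref{lem:WrhoWsrhos} and the bijectivity of $\rho$; the present lemma is a short consequence.
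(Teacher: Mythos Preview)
Your proposal is correct and essentially identical to the paper's proof: the paper argues that $W^\rho \neq 0$ since $W \neq 0$ and $\rho$ is bijective, then uses \eqref{eq:WrhoWsrhos} to conclude $\gamma \neq 0$. Your version just spells out the nonvanishing of $f$ and $(W^*)^{-1}$ a bit more explicitly.
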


\begin{proof}
We have $W^\rho \neq 0$ since $W \neq 0$ and $\rho$ is bijective.
By this and \eqref{eq:WrhoWsrhos} we get $\gamma \neq 0$.
\end{proof}

We now give our main result in this section.
Recall the elements $\{A_i\}_{i=0}^d$ from \eqref{eq:defAiAsi}
and the scalar $\nu$ from \eqref{eq:defnu}.

\begin{theorem}    \label{thm:WtiinvAi}     \samepage
\ifDRAFT {\rm thm:WtiinvAi}. \fi
We have
\begin{align}
  W &= f \gamma \sum_{i=0}^d \tau_i^{-1} A_i,  &
  W^{-1} &= \nu^{-1} f^{-1} \gamma^{-1} \sum_{i=0}^d \tau_i A_i,                   \label{eq:WWinv}
\\
  W^* &= f \gamma \sum_{i=0}^d \tau_i^{-1} A^*_i,  &
  (W^*)^{-1} &= \nu^{-1} f^{-1} \gamma^{-1} \sum_{i=0}^d \tau_i A^*_i.                  \label{eq:WsWsinv}
\end{align}
\end{theorem}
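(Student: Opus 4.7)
\begin{proofof}{Theorem \ref{thm:WtiinvAi} (plan)}
The plan is to exploit the basis $\{A_i\}_{i=0}^d$ for $\gen{A}$ from Lemma \ref{lem:AiAsi} together with the map $\rho$, whose action on these bases is controlled by Lemma \ref{lem:rhorhos}. The key mechanism is that we already know $W$ expanded in the basis $\{E_i\}_{i=0}^d$ (Lemma \ref{lem:WtiEi}), and we know how $\rho$ transforms $W$ (Lemma \ref{lem:WrhoWsrhos}); combining these gives the $\{A_i\}$-expansion immediately.

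\textbf{Step 1 (the expansion of $W$).} Since $W \in \gen{A}$, write $W = \sum_{i=0}^d \beta_i A_i$ for scalars $\beta_i \in \F$. Apply $\rho$ to both sides: by Lemma \ref{lem:rhorhos} we have $A_i^\rho = E^*_i$, so
\[
W^\rho = \sum_{i=0}^d \beta_i E^*_i.
\]
On the other hand, Lemma \ref{lem:WrhoWsrhos} gives $W^\rho = f^2\gamma (W^*)^{-1}$, and Lemma \ref{lem:Winv} gives $(W^*)^{-1} = f^{-1}\sum_{i=0}^d \tau_i^{-1} E^*_i$. Hence
\[
W^\rho = f\gamma \sum_{i=0}^d \tau_i^{-1} E^*_i.
\]
Since $\{E^*_i\}_{i=0}^d$ is a basis for $\gen{A^*}$, comparing coefficients yields $\beta_i = f\gamma\,\tau_i^{-1}$, proving the formula for $W$.

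\textbf{Step 2 (the expansion of $W^{-1}$).} Use the companion equation $(W^*)^{\rho^*} = f^2\gamma\,W^{-1}$ from Lemma \ref{lem:WrhoWsrhos}. Expand $W^* = f\sum_{i=0}^d \tau_i E^*_i$ by Lemma \ref{lem:WtiEi}, then apply $\rho^*$ using the identity $(E^*_i)^{\rho^*} = \nu^{-1} A_i$, which is just Definition \ref{def:Ai} rewritten. This gives
\[
f^2\gamma\,W^{-1} = (W^*)^{\rho^*} = f\nu^{-1}\sum_{i=0}^d \tau_i A_i,
\]
and solving yields $W^{-1} = \nu^{-1} f^{-1}\gamma^{-1}\sum_{i=0}^d \tau_i A_i$.

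\textbf{Step 3 (the equations for $W^*$ and $(W^*)^{-1}$).} Interchange the roles of $\Phi$ and $\Phi^*$: by Lemma \ref{lem:AsA} the pair $W^*, W$ is a Boltzmann pair for the Leonard pair $A^*, A$, and by Definition \ref{def:star} the scalars $\nu, f, \gamma, \{\tau_i\}_{i=0}^d$ are unchanged (indeed $\nu=\nu^*$ and Lemmas \ref{lem:piWf}, \ref{lem:defgamma} are symmetric in $\Phi, \Phi^*$). Applying Steps 1 and 2 with $*$-starred symbols therefore produces the remaining two formulas.

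There is no real obstacle here: each step is a direct substitution once the preceding lemmas (especially Lemmas \ref{lem:rhorhos}, \ref{lem:WrhoWsrhos}, and the explicit $\{E_i\}$-expansion in Lemma \ref{lem:WtiEi}) are in hand. The only thing one must be careful about is bookkeeping the factor $\nu^{-1}$ coming from $\rho(E_i) = \nu^{-1} A^*_i$ versus $\rho(A_i) = E^*_i$, which is what produces the asymmetric factor $\nu^{-1}$ in the formulas for $W^{-1}$ and $(W^*)^{-1}$.
\end{proofof}
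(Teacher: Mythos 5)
Your proposal is correct and uses essentially the same route as the paper's own proof: the key inputs (Lemma \ref{lem:WtiEi}, Lemma \ref{lem:Winv}, Lemma \ref{lem:WrhoWsrhos}, Lemma \ref{lem:rhorhos}, and Definition \ref{def:Ai}) are identical, and the only cosmetic difference is that you introduce undetermined coefficients $\beta_i$ and apply $\rho$ to $W$, whereas the paper applies $\rho^*$ to $W^\rho$ and invokes Lemma \ref{lem:rhorhospre}; these are two phrasings of the same inversion. Your Step 3, including the remark that $f$, $\gamma$, $\nu$, $\{\tau_i\}$ are fixed under $\Phi \leftrightarrow \Phi^*$, matches the paper's ``similarly obtained.''
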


\begin{proof}
First we obtain the equation on the left in \eqref{eq:WWinv}.
By  \eqref{eq:WinvtiinvEi} and \eqref{eq:WrhoWsrhos},
\[
   W^\rho = f \gamma \sum_{i=0}^d \tau_i^{-1} E^*_i.
\]
In this line, apply $\rho^*$ to each side,
and use Lemmas \ref{lem:rhorhospre}, \ref{lem:rhorhos} to get 
the equation on the left in \eqref{eq:WWinv}.
Next we obtain the equation on the right in \eqref{eq:WWinv}.
By \eqref{eq:WrhoWsrhos},
$W^{-1} = f^{-2} \gamma^{-1} (W^*)^{\rho^*}$.
In the equation on the right in \eqref{eq:WtiEi},
apply $\rho^*$ to each side and use Lemma \ref{lem:rhorhos}
to get
$(W^*)^{\rho^*} = f \nu^{-1} \sum_{i=0}^d \tau_i A_i$.
By these comments we get the equation on the right in \eqref{eq:WWinv}.
The equations \eqref{eq:WsWsinv} are similarly obtained.
\end{proof}

We mention a lemma for later use.
Recall the scalars $\{k_i\}_{i=0}^d$  from \eqref{eq:defki}.

\begin{lemma}    \label{lem:piW}    \samepage
\ifDRAFT {\rm lem:piW}. \fi
We have
\begin{equation}
   \gamma = \nu^{-1} \sum_{i=0}^d k_i \tau_i.             \label{eq:gamma}
\end{equation}
\end{lemma}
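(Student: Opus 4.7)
The plan is to compute $\pi(W^{-1})$ in two different ways and equate the results.

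First, starting from the expression in Lemma \ref{lem:Winv},
\[
  W^{-1} = f^{-1} \sum_{i=0}^d \tau_i^{-1} E_i,
\]
I would apply the $\F$-linear map $\pi$ to each side. Using $\pi(E_i) = \delta_{i,0}$ from Lemma \ref{lem:piY}, together with $\tau_0 = 1$ from Lemma \ref{lem:WtiEi}, only the $i=0$ term survives, yielding $\pi(W^{-1}) = f^{-1}$.

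Second, starting from the right-hand equation of \eqref{eq:WWinv} in Theorem \ref{thm:WtiinvAi},
\[
  W^{-1} = \nu^{-1} f^{-1} \gamma^{-1} \sum_{i=0}^d \tau_i A_i,
\]
I would again apply $\pi$ to each side. Using $\pi(A_i) = k_i$ from Lemma \ref{lem:piAi}, I obtain
\[
  \pi(W^{-1}) = \nu^{-1} f^{-1} \gamma^{-1} \sum_{i=0}^d k_i \tau_i.
\]

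Equating the two expressions for $\pi(W^{-1})$ and cancelling the nonzero factor $f^{-1}$ gives $1 = \nu^{-1} \gamma^{-1} \sum_{i=0}^d k_i \tau_i$. Multiplying both sides by $\gamma$, which is nonzero by Lemma \ref{lem:gammanonzero}, produces the desired identity \eqref{eq:gamma}. There is no real obstacle here: everything reduces to applying $\pi$ to two parallel expansions of $W^{-1}$ (one in the basis $\{E_i\}_{i=0}^d$ of $\gen{A}$, one in the basis $\{A_i\}_{i=0}^d$ of $\gen{A}$) on which $\pi$ is already known explicitly.
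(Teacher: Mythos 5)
Your proof is correct, and there is no circularity: Theorem \ref{thm:WtiinvAi} precedes the lemma and does not depend on it. But you take a longer route than the paper. The paper simply applies $\pi$ to the expansion $W^* = f\sum_{i=0}^d \tau_i E^*_i$ (the right-hand equation of \eqref{eq:WtiEi}); by \eqref{eq:defki} we have $\pi(E^*_i) = \nu^{-1}k_i$, so $\pi(W^*) = f\nu^{-1}\sum_i k_i\tau_i$, and equating this with $\pi(W^*) = f\gamma$ from \eqref{eq:fgamma} gives \eqref{eq:gamma} in one stroke. You instead compute $\pi(W^{-1})$ two ways, once from Lemma \ref{lem:Winv} (the $\{E_i\}$ expansion) and once from Theorem \ref{thm:WtiinvAi} (the $\{A_i\}$ expansion, which already has $\gamma$ built into its coefficient). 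This works, but Theorem \ref{thm:WtiinvAi} was itself derived via Lemma \ref{lem:WrhoWsrhos}, which used the very quantity $\pi(W^*W) = f^2\gamma$; so in effect you are routing through a heavier result only to unwind it. The paper's argument is the more economical one, but nothing in your chain is wrong.
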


\begin{proof}
In the equation on the right in \eqref{eq:WtiEi}, apply $\pi$ to each side,
and use \eqref{eq:defki}, \eqref{eq:fgamma}.
\end{proof}

\begin{note}
In \cite[Theorem 1.13]{C:spinLP}, the spin Leonard pairs are classified.
By that classification, there are five families of spin Leonard pairs,
called type I--V; see \cite[Lemmas 1.7--1.11]{C:spinLP}.
For each family, 
the eigenvalue sequence and the intersection numbers are explicitly given.
In terms of \cite[Section 35]{T:survey},
type I is of $q$-Racah type,
type II is of Racah type,
type III is of Krawtchouk type,
type IV is of Bannai-Ito type with odd diameter,
and type V is of Bannai-Ito type with even diameter.
In \cite[Theorem 1.18]{C:spinLP},
for each spin Leonard pair $A,A^*$ the corresponding Boltzmann pairs are given.
We say that two Boltzmann pairs for $A,A^*$
are {\em equivalent} whenever one is obtained from the other
by the adjustments in Lemma \ref{lem:WinvWsinv}.
The Leonard pair $A, A^*$ has precisely one equivalence class of Boltzmann pairs,
unless $A, A^*$ has type V and the intersection number $a_i$ is independent of
$i$ for $0 \leq i \leq d$. 
In this case $A, A^*$ has precisely two equivalence classes of Boltzmann pairs. 
For one of the equivalence classes we have 
$\tau_i = (-1)^{\lceil i/2 \rceil}$ for $0 \leq i \leq d$,
and for the other equivalence class, we have 
$\tau_i = (-1)^{\lfloor i/2 \rfloor}$ for $0 \leq i \leq d$.
\end{note}

\section{Leonard pairs of $q$-Racah type}
\label{sec:qRacah}

In this section we consider a special case of Leonard pair,
said to have $q$-Racah type \cite{Huang}.
This is the ``most general'' type of Leonard pair.
Throughout this section the following notation is in effect.
Fix a nonzero $q \in \F$ such that $q^4 \neq 1$.
Let $\Phi = (A; \{E_i\}_{i=0}^d; A^*; \{E^*_i\}_{i=0}^d)$ denote a Leonard system on $V$
with parameter array $(\{\th_i\}_{i=0}^d; \{\th^*_i\}_{i=0}^d; \{\vphi_i\}_{i=1}^d; \{\phi_i\}_{i=1}^d)$.
To avoid trivialities we assume $d \geq 1$.

\begin{definition}    {\rm \cite[Definition 5.1]{Huang} }
\label{def:qRacah}    \samepage
\ifDRAFT {\rm def:qRacah}. \fi
We say that $\Phi$ has {\em $q$-Racah type} whenever
there exist nonzero $a$, $b \in \F$ such that
\begin{align}
 \th_i &= a q^{2i-d} + a^{-1} q^{d-2i}  && (0 \leq i \leq d),     \label{eq:qRacahthi}
\\
 \th^*_i &= b q^{2i-d} + b^{-1}q^{d-2i} &&  (0 \leq i \leq d).   \label{eq:qRacahthsi}
\end{align}
In this case the scalars $a$, $b$ are unique.
\end{definition}

\begin{lemma}     {\rm \cite[Corollary 6.4]{Huang} }
\label{lem:c}    \samepage
\ifDRAFT {\rm lem:c}. \fi
Assume that $\Phi$ has $q$-Racah type.
Let $a$, $b \in \F$ denote nonzero scalars that satisfy \eqref{eq:qRacahthi}, \eqref{eq:qRacahthsi}.
Then there exists a nonzero $c \in \F$ such that for $1 \leq i \leq d$,
\begin{align}
 \vphi_i &= 
 a^{-1}b^{-1} q^{d+1}(q^i - q^{-i})(q^{i-d-1} - q^{d-i+1})(q^{-i}-a b c q^{i-d-1})(q^{-i}-a b c^{-1}q^{i-d-1}),  \label{eq:vphii}
\\
 \phi_i &=
 a b^{-1} q^{d+1}(q^i - q^{-i})(q^{i-d-1} - q^{d-i+1})(q^{-i}-a^{-1} b c q^{i-d-1})(q^{-i}-a^{-1} b c^{-1}q^{i-d-1}). \label{eq:phii}
\end{align}
Moreover, $c$ is unique up to inverse.
\end{lemma}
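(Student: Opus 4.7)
The plan is to use the classification of parameter arrays in \cite[Theorem 1.9]{T:Leonard}: conditions (PA3), (PA4) there express each $\phi_i$ and $\vphi_i$ as explicit functions of the eigenvalues together with $\vphi_1$, so once $\{\th_i\}_{i=0}^d$ and $\{\th^*_i\}_{i=0}^d$ are fixed, the whole parameter array is pinned down by the single scalar $\vphi_1 \in \F$. Under the $q$-Racah hypotheses \eqref{eq:qRacahthi}, \eqref{eq:qRacahthsi}, the eigenvalues are parameterized by $a, b, q, d$, so the remaining freedom amounts to one extra scalar, which I will identify with the desired $c$.

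First I would introduce $c$ by matching \eqref{eq:vphii} at $i = 1$. The last two factors there expand as
\[
(q^{-1} - abc\, q^{-d})(q^{-1} - abc^{-1} q^{-d}) = q^{-2} - ab\,q^{-d-1}(c + c^{-1}) + a^2 b^2 q^{-2d},
\]
which is linear in $s := c + c^{-1}$. Thus $\vphi_1$ uniquely determines $s \in \F$, and $c$ is then a root of $x^2 - sx + 1 = 0$, giving the two candidates $c$ and $c^{-1}$. That at least one root lies in $\F$ (rather than only in a quadratic extension) is part of the classification of $q$-Racah parameter arrays over $\F$ from \cite{T:survey}.

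With $c$ in hand, I would verify \eqref{eq:vphii}, \eqref{eq:phii} for all $1 \leq i \leq d$ by substituting the $q$-Racah eigenvalues into (PA3), (PA4), evaluating the telescoping sum $\sum_{h=0}^{i-1}(\th_h - \th_{d-h})/(\th_0 - \th_d)$ in closed form, and matching against the four-factor expressions on the right of \eqref{eq:vphii}, \eqref{eq:phii}. For uniqueness, note that the right-hand sides of \eqref{eq:vphii}, \eqref{eq:phii} are manifestly invariant under $c \leftrightarrow c^{-1}$, so $c^{-1}$ also works; conversely the $i=1$ equation is a quadratic in $c$ with exactly the two roots $c, c^{-1}$.

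The main obstacle is the verification step, which is a lengthy $q$-polynomial manipulation: the telescoping sum factors as $(q^i - q^{-i})(q^{i-d-1} - q^{d-i+1})/((q - q^{-1})(q^{-d} - q^d))$ times an $(a,b)$-dependent piece, which matches the first two factors of \eqref{eq:vphii}; the remaining two factors then match by direct expansion in $q^{\pm i}, a, b, c, c^{-1}$. The hypothesis $q^4 \neq 1$ keeps all denominators nonzero throughout, and the distinctness of $\{\th_i\}_{i=0}^d$ and $\{\th^*_i\}_{i=0}^d$ (implicit in having a Leonard system) ensures the factorizations are well-defined.
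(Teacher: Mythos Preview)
The paper does not supply a proof of this lemma; it is quoted verbatim from \cite[Corollary 6.4]{Huang}, so there is no argument in the paper to compare against. Your outline is the standard route to this result and is essentially correct: once the eigenvalues are in $q$-Racah form, conditions (iii) and (iv) of \cite[Theorem 1.9]{T:Leonard} determine every $\vphi_i$ and $\phi_i$ from $\vphi_1$ alone (since $\phi_1 = \vphi_1 + (\th^*_1-\th^*_0)(\th_d-\th_0)$), and the telescoping identity
\[
\sum_{h=0}^{i-1}\frac{\th_h-\th_{d-h}}{\th_0-\th_d}
=\frac{(q^i-q^{-i})(q^{i-d-1}-q^{d-i+1})}{(q-q^{-1})(q^{-d}-q^{d})}
\]
produces the first two factors of \eqref{eq:vphii}, \eqref{eq:phii}; the remaining two factors then follow by direct expansion, as you say.

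One point deserves more care. You correctly note that the $i=1$ case determines only $s=c+c^{-1}\in\F$, and you then appeal to the $q$-Racah classification in \cite{T:survey} to conclude that the quadratic $x^2-sx+1$ splits over $\F$. This is legitimate, but be aware that it is essentially the nontrivial content of the lemma: over a field that is not algebraically closed, the existence of $c\in\F$ (rather than merely in a quadratic extension) does not follow from the parameter-array relations alone and genuinely requires the explicit solution of those relations carried out in \cite{T:survey} or \cite{Huang}. If your aim is a self-contained proof, you would need to reproduce that step; if you are content to cite it, then your argument is complete and matches the literature.
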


\begin{definition}    \label{def:Huangdata}    \samepage
\ifDRAFT {\rm def:Huangdata}. \fi
Assume that $\Phi$ has $q$-Racah type.
Let $a$, $b \in \F$ denote nonzero scalars that satisfy \eqref{eq:qRacahthi}, \eqref{eq:qRacahthsi},
and let $c \in \F$ denote a nonzero scalar that satisfies \eqref{eq:vphii}, \eqref{eq:phii}.
We call the sequence $(a,b,c,d)$ a {\em Huang data} of $\Phi$.
\end{definition}

\begin{note}    \label{note:Huang}    \samepage
\ifDRAFT {\rm note:Huang}. \fi
Assume that $\Phi$ has $q$-Racah type with Huang data $(a,b,c,d)$.
Then $(a,b,c^{-1},d)$ is a Huang data for $\Phi$, and $\Phi$ has no further
Huang data.
\end{note}

For the rest of this section, assume that
$\Phi$ has $q$-Racah type with Huang data $(a,b,c,d)$.

\begin{lemma}   {\rm \cite[Definition 7.1]{Huang} }
\label{lem:condabc}   \samepage
\ifDRAFT {\rm lem:condabc}. \fi
The following {\rm (i)--(iii)} hold:
\begin{itemize}
\item[\rm (i)]
$q^{2i} \neq 1$ for $1 \leq i \leq d$;
\item[\rm (ii)]
Neither of $a^2$, $b^2$ is among $q^{2d-2}, q^{2d-4}, \ldots, q^{2-2d}$;
\item[\rm (iii)]
None of $a b c$, $a^{-1} b c$, $a b^{-1} c$, $a b c^{-1}$ is among $q^{d-1}, q^{d-3}, \ldots, q^{1-d}$.
\end{itemize}
\end{lemma}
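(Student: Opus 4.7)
My plan is to derive all three items from three basic non-vanishing properties of the Leonard system $\Phi$: the eigenvalues $\{\th_i\}_{i=0}^d$ are mutually distinct, the dual eigenvalues $\{\th^*_i\}_{i=0}^d$ are mutually distinct, and the first and second split sequences $\{\vphi_i\}_{i=1}^d$, $\{\phi_i\}_{i=1}^d$ consist of nonzero scalars. The first two facts hold because $A, A^*$ are multiplicity-free (as noted after Definition \ref{def:LS}), while the third is Lemma \ref{lem:split} applied to $\Phi$ and to $\Phi^\Downarrow$.

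For (i) and (ii) I would compute, using \eqref{eq:qRacahthi},
\[
\th_i - \th_j \;=\; (q^{2(i-j)}-1)\bigl(a\,q^{2j-d} - a^{-1}q^{d-2i}\bigr) \qquad (0 \leq i, j \leq d).
\]
Requiring both factors to be nonzero whenever $i \neq j$, the first factor forces $q^{2k} \neq 1$ for $1 \leq k \leq d$, proving (i). The second factor then forces $a^2 \neq q^{2d-2(i+j)}$ for all distinct pairs $i, j \in \{0,1,\ldots,d\}$; as $i+j$ ranges over $\{1, 2, \ldots, 2d-1\}$ the exponent $2d - 2(i+j)$ hits each value in $\{2d-2, 2d-4, \ldots, 2-2d\}$, giving the $a^2$ half of (ii). The analogous computation with \eqref{eq:qRacahthsi} supplies the $b^2$ half.

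For (iii), I would inspect the four-factor products \eqref{eq:vphii} and \eqref{eq:phii}. In each product the first two factors, $q^i - q^{-i}$ and $q^{i-d-1} - q^{d-i+1}$, are already nonzero for $1 \leq i \leq d$ by (i). Hence $\vphi_i \neq 0$ forces $abc \neq q^{d-2i+1}$ and $abc^{-1} \neq q^{d-2i+1}$, while $\phi_i \neq 0$ forces $a^{-1}bc \neq q^{d-2i+1}$ and $a^{-1}bc^{-1} \neq q^{d-2i+1}$. As $i$ runs over $\{1, \ldots, d\}$, the scalar $q^{d-2i+1}$ runs through $\{q^{d-1}, q^{d-3}, \ldots, q^{1-d}\}$. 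This directly excludes $abc$, $abc^{-1}$, and $a^{-1}bc$ from that set; since the set is closed under inversion (its exponents are symmetric about $0$), excluding $a^{-1}bc^{-1}$ is equivalent to excluding its inverse $ab^{-1}c$, yielding the final condition of (iii).

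The argument is routine algebraic bookkeeping. The only subtleties worth attention are verifying the exact index range in (ii) by checking that every integer from $1$ to $2d-1$ really occurs as $i+j$ with $i \neq j$, and the inverse-symmetry observation in the last step that repackages the condition coming from $\phi_i$ into the stated form $ab^{-1}c$. There is no serious obstacle.
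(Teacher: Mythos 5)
Your proof is correct. The paper does not prove this lemma at all; it is stated with a citation to Huang's paper (Definition 7.1 there), so there is no in-paper argument to compare against. Your derivation fills in the content of that citation, and the route you take is the natural one: (i) and (ii) follow from the factorization $\th_i - \th_j = (q^{2(i-j)}-1)(aq^{2j-d}-a^{-1}q^{d-2i})$ together with the multiplicity-free property of $A$ (and the analogous statement for $A^*$), while (iii) follows from the nonvanishing of the split sequences $\{\vphi_i\}$, $\{\phi_i\}$ via \eqref{eq:vphii}, \eqref{eq:phii} once (i) has disposed of the first two factors in each product. The two small bookkeeping checks you flag — that $i+j$ really sweeps $\{1,\ldots,2d-1\}$ as $(i,j)$ ranges over distinct pairs in $\{0,\ldots,d\}$, and that the exponent set $\{d-1,d-3,\ldots,1-d\}$ is closed under negation so the $a^{-1}bc^{-1}$ constraint from $\phi_i$ repackages as the stated $ab^{-1}c$ constraint — both go through exactly as you say. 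No gaps.
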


\begin{lemma}    \label{lem:Huang2}    \samepage
\ifDRAFT {\rm lem:Huang2}. \fi
The Leonard system
\begin{equation}
  (-A; \{E_i\}_{i=0}^d;  - A^*; \{E^*_i\}_{i=0}^d)                 \label{eq:LS-A}
\end{equation}
has $q$-Racah type with Huang data $(-a, -b, c, d)$.
\end{lemma}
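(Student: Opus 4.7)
My plan is to verify the claim directly from the definition of $q$-Racah type (Definition~\ref{def:qRacah}) and the split-sequence formulas in Lemma~\ref{lem:c}, by computing the parameter array of the Leonard system in \eqref{eq:LS-A} and then substituting $(a,b,c)\mapsto(-a,-b,c)$ into the expressions for $\theta_i,\theta^*_i,\vphi_i,\phi_i$. First I would note that \eqref{eq:LS-A} really is a Leonard system: it is the affine transformation of $\Phi$ with $\alpha=\alpha^*=-1$ and $\beta=\beta^*=0$, as introduced just before Definition~\ref{def:star}.

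Next I would read off the eigenvalue and dual eigenvalue sequences of \eqref{eq:LS-A}. Since $-A$ acts on $E_iV$ as $-\th_i$ and $-A^*$ acts on $E^*_iV$ as $-\th^*_i$, these sequences are $\{-\th_i\}_{i=0}^d$ and $\{-\th^*_i\}_{i=0}^d$. Using $(-a)^{-1}=-a^{-1}$ and $(-b)^{-1}=-b^{-1}$, the formulas \eqref{eq:qRacahthi}, \eqref{eq:qRacahthsi} for $(a,b)$ imply
\begin{align*}
-\th_i &= (-a)q^{2i-d}+(-a)^{-1}q^{d-2i}, &
-\th^*_i &= (-b)q^{2i-d}+(-b)^{-1}q^{d-2i},
\end{align*}
so \eqref{eq:LS-A} has $q$-Racah type with first two Huang parameters $-a,-b$.

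The main step is to identify the split sequences of \eqref{eq:LS-A}. Pick $0\ne u\in E^*_0V$ and let $w_i=\tau_i(A)u$ be the corresponding $\Phi$-split basis of Lemma~\ref{lem:split}. The analogous polynomial for \eqref{eq:LS-A} is $\tau'_i(\lambda)=\prod_{j=0}^{i-1}\bigl(\lambda-(-\th_j)\bigr)$, so the associated split basis vector is $w'_i=\tau'_i(-A)u=(-1)^i w_i$. A direct computation on the recurrences of Lemma~\ref{lem:split} shows
\begin{align*}
(-A)w'_i &= -\th_i\, w'_i + w'_{i+1}, &
(-A^*)w'_i &= -\th^*_i\, w'_i + \vphi_i\, w'_{i-1},
\end{align*}
because the sign flips on subdiagonal and superdiagonal entries cancel against the sign change in the basis. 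Hence the first split sequence of \eqref{eq:LS-A} equals $\{\vphi_i\}_{i=1}^d$. Applying the same argument to $\Phi^\Downarrow$ in place of $\Phi$ shows that the second split sequence of \eqref{eq:LS-A} equals $\{\phi_i\}_{i=1}^d$.

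To finish, I substitute $(a,b,c,d)\mapsto(-a,-b,c,d)$ into \eqref{eq:vphii}, \eqref{eq:phii}: the prefactors $a^{-1}b^{-1}$ and $ab^{-1}$ are invariant (since signs cancel in pairs), the two middle $q$-factors do not involve $a,b$, and in each of the last two factors the products $abc,abc^{\pm 1}$ and $a^{-1}bc,a^{-1}bc^{\pm 1}$ are unchanged since $(-a)(-b)=ab$ and $(-a)^{-1}(-b)=a^{-1}b$. Thus the formulas of Lemma~\ref{lem:c} with parameters $(-a,-b,c,d)$ reproduce $\vphi_i$ and $\phi_i$ exactly, and $(-a,-b,c,d)$ is a Huang data for \eqref{eq:LS-A}. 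The only delicate point is the split-sequence calculation, where one must track signs carefully; everything else is bookkeeping.
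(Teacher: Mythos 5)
Your proof is correct and follows the same route as the paper's, which simply cites Definitions~\ref{def:qRacah} and~\ref{def:Huangdata}; you fill in the detail that the affine transformation $A\mapsto -A$, $A^*\mapsto -A^*$ negates the (dual) eigenvalues while leaving both split sequences unchanged, and then check that $(-a,-b,c)$ reproduces $\{\vphi_i\}$ and $\{\phi_i\}$ in \eqref{eq:vphii}, \eqref{eq:phii}. The sign bookkeeping in the split-basis step ($w'_i=(-1)^iw_i$, giving $\alpha\alpha^*=1$ so the split sequence is invariant) is exactly the point the paper leaves implicit, and you have it right.
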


\begin{proof}
Use Definitions \ref{def:qRacah}, \ref{def:Huangdata}.
\end{proof}

Recall the intersection numbers  $\{b_i\}_{i=0}^{d-1}$, $\{c_i\}_{i=1}^d$ of $\Phi$
from \eqref{eq:matrixAAs}.

\begin{lemma}   \label{lem:bici}    \samepage
\ifDRAFT {\rm lem:bici}. \fi
We have
\begin{align*}
b_i &=
 \frac{ (q^{i-d}-q^{d-i}) (b q^{i-d} - b^{-1} q^{d-i}) ( a b q^i - c q^{d-i-1})(a b q^i - c^{-1} q^{d-i-1}) }
        {a b q^{d-1} (b q^{2i-d} - b^{-1} q^{d-2i}) (b q^{2i-d+1} - b^{-1} q^{d-2i-1} ) },        
\\
c_i &= 
 \frac{ (q^i-q^{-i}) (b q^i - b^{-1} q^{-i}) (b q^i - a c q^{d-i+1}) (b q^i - a c^{-1} q^{d-i+1}) }
        {a b q^{d+1} (b q^{2i-d} - b^{-1} q^{d-2i}) (b q^{2i-d-1} - b^{-1} q^{d-2i+1}) }        
\end{align*}
for $1 \leq i \leq d-1$, and
\begin{align*}
 b_0 &= 
  \frac{(q^{-d} - q^d) (a b - c q^{d-1}) (a b - c^{-1} q^{d-1} ) }
         {a b q^{d-1} (b q^{1-d} - b^{-1} q^{d-1}) },                                          
\\
 c_d &= 
  \frac{(q^d - q^{-d}) (a c - b q^{d-1}) (a c^{-1} - b q^{d-1}) }
         {a b q^{d-1} (b q^{d-1} - b^{-1} q^{1-d}) }.                                    
\end{align*}
\end{lemma}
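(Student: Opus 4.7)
The plan is to apply Lemma \ref{lem:intersection}, which expresses $b_i$ and $c_i$ in terms of $\vphi_{i+1}$, $\phi_i$, and the dual eigenvalue polynomials $\tau^*_i$, $\eta^*_i$ evaluated at dual eigenvalues, and then substitute in the explicit formulas of Definition \ref{def:qRacah} and Lemma \ref{lem:c}. The whole proof is a reduction to a direct symbolic calculation, so the main work is to organize that calculation so that the cancellations are transparent.

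First I would observe the following key factorization. For $0 \le i,j \le d$,
\[
  \th^*_i - \th^*_j \;=\; (q^{i-j} - q^{j-i})\,(b\,q^{i+j-d} - b^{-1}\,q^{d-i-j}),
\]
which follows by a one-line manipulation from \eqref{eq:qRacahthsi}. Substituting this into
\[
  \tau^*_i(\th^*_i) = \prod_{j=0}^{i-1}(\th^*_i - \th^*_j), \qquad
  \eta^*_{d-i}(\th^*_i) = \prod_{j=i+1}^{d}(\th^*_i - \th^*_j),
\]
gives each of these as a product of two families of $q$-factors. Next I would compute the two ratios
\[
  \frac{\tau^*_i(\th^*_i)}{\tau^*_{i+1}(\th^*_{i+1})}, \qquad
  \frac{\eta^*_{d-i}(\th^*_i)}{\eta^*_{d-i+1}(\th^*_{i-1})},
\]
by writing each numerator and denominator as a product over a suitable index and then cancelling the common range of indices. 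After the dust settles, the first ratio reduces to a rational function in $q,b$ involving only four factors of the form $b\,q^{s} - b^{-1}\,q^{-s}$, and similarly for the second ratio.

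Then I would multiply by the formulas
\[
 \vphi_{i+1} = a^{-1}b^{-1}q^{d+1}(q^{i+1}-q^{-i-1})(q^{i-d}-q^{d-i})(q^{-i-1}-abcq^{i-d})(q^{-i-1}-abc^{-1}q^{i-d}),
\]
\[
 \phi_i = a\,b^{-1}q^{d+1}(q^i-q^{-i})(q^{i-d-1}-q^{d-i+1})(q^{-i}-a^{-1}bcq^{i-d-1})(q^{-i}-a^{-1}bc^{-1}q^{i-d-1}),
\]
from Lemma \ref{lem:c}, and collect the $q$-powers. Several factors in the numerators of $\vphi_{i+1}$ and $\phi_i$ cancel against factors from the ratios of $\tau^*$'s and $\eta^*$'s (these are the factors of the form $q^r - q^{-r}$ arising from $\th^*_{i+1} - \th^*_j$ with $j=0,\dots,i-1$, etc.). After this cancellation the remaining numerator has precisely the four factors appearing in the claimed formulas for $b_i$ and $c_i$, while the remaining denominator has exactly the two factors of the form $b\,q^{2i-d\pm 1} - b^{-1}\,q^{d-2i\mp 1}$ asserted in the statement. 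Finally I would treat the boundary cases $b_0$ and $c_d$ separately: here the empty products $\tau^*_0(\th^*_0)=1$ and $\eta^*_0(\th^*_d)=1$ remove one of the two denominator factors, giving the simpler expressions stated.

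The main obstacle is purely bookkeeping: keeping track of the $q$-power exponents through the telescoping. To keep errors under control I would first treat the generic range $1 \le i \le d-1$ cleanly by introducing auxiliary products $P_i = \prod_{j \ne i}(\th^*_i - \th^*_j)$ and noting that $\tau^*_i(\th^*_i)\eta^*_{d-i}(\th^*_i) = P_i$, which is visibly symmetric in the index structure and easier to evaluate in closed form; then the two ratios in Lemma \ref{lem:intersection} can be read off from adjacent $P_i$'s. The boundary cases $b_0$ and $c_d$ then follow by inspection, and using Lemma \ref{lem:condabc} to guarantee the denominators are nonzero completes the argument.
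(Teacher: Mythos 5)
Your proposal is correct and follows essentially the same route as the paper: the paper's proof is the one-line instruction ``Evaluate \eqref{eq:ci0}, \eqref{eq:bi0} using \eqref{eq:qRacahthi}--\eqref{eq:phii},'' which is exactly the substitution into Lemma \ref{lem:intersection} you describe. Your extra bookkeeping details (the factorization of $\th^*_i - \th^*_j$, telescoping the ratios, and handling the boundary cases $b_0,c_d$ via empty products) are just the natural way to carry out that computation.
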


\begin{proof}
Evaluate \eqref{eq:ci0}, \eqref{eq:bi0} using \eqref{eq:qRacahthi}--\eqref{eq:phii}.
\end{proof}

We recall some notation.
For $\alpha \in \F$,
\begin{align*}
  (\alpha; q)_n &= (1-\alpha) (1-\alpha q) \cdots (1- \alpha q^{n-1})    &  n &= 0, 1, 2, \ldots
\end{align*}
We interpret $(\alpha;q)_0 = 1$.
Recall the scalar $\nu$ be from \eqref{eq:nu}.

\begin{lemma}   \label{lem:qRacahnu1}    \samepage
\ifDRAFT {\rm lem:qRacahnu1}. \fi
We have
\begin{equation*}
 \nu = \frac{a^d b^d c^d q^{d(1-d)} (a^{-2};q^2)_d (b^{-2}; q^2)_d }
                 { (a b^{-1} c q^{1-d}; q^2)_d (a^{-1} b c q^{1-d}; q^2)_d}.
\end{equation*}
\end{lemma}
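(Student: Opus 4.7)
The plan is to apply Lemma \ref{lem:nu}, which gives
\[
  \nu = \frac{\eta_d(\th_0)\,\eta^*_d(\th^*_0)}{\phi_1\phi_2\cdots\phi_d},
\]
and then evaluate each of the three factors directly from the $q$-Racah formulas in Definition \ref{def:qRacah} and Lemma \ref{lem:c}. First I would compute
\[
  \th_0 - \th_i = (1-q^{2i})(aq^{-d} - a^{-1}q^{d-2i}) = aq^{-d}(1-q^{2i})(1 - a^{-2}q^{2d-2i}),
\]
and take the product over $i=1,\dots,d$. After reindexing, the factor $\prod_{i=1}^d(1-q^{2i})$ becomes $(q^2;q^2)_d$ and the factor $\prod_{i=1}^d(1 - a^{-2}q^{2d-2i})$ becomes $(a^{-2};q^2)_d$, giving $\eta_d(\th_0) = a^d q^{-d^2}(q^2;q^2)_d(a^{-2};q^2)_d$. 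The same computation with $b$ in place of $a$ yields the analogous expression for $\eta^*_d(\th^*_0)$.

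Next I would expand $\phi_i$ from \eqref{eq:phii} as a product of four binomial factors. Pulling the appropriate power of $q$ out of each factor converts $(q^i - q^{-i})$, $(q^{i-d-1}-q^{d-i+1})$, $(q^{-i} - a^{-1}bc\,q^{i-d-1})$, and $(q^{-i} - a^{-1}bc^{-1}q^{i-d-1})$ into products whose $q$-independent parts are $\pm(1-q^{2i})$ or $1-(\cdots)q^{2i-d-1}$. Taking the product over $i=1,\dots,d$ then yields, after reindexing, two copies of $(q^2;q^2)_d$ together with $(a^{-1}bcq^{1-d};q^2)_d$ and $(a^{-1}bc^{-1}q^{1-d};q^2)_d$, accumulated powers of $q$ summing to $-d(d+1)$, a factor $(-1)^d$ from the $(q^i-q^{-i})$ terms, and the prefactor $(ab^{-1})^d q^{d(d+1)}$ from Lemma \ref{lem:c}.

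Forming the ratio, the $(q^2;q^2)_d^2$ cancels, the powers of $q$ combine to $q^{d(1-d)}$, and the $a$, $b$ prefactors combine to $b^{2d}$. This gives an intermediate form
\[
  \nu = \frac{b^{2d} q^{d(1-d)}(a^{-2};q^2)_d(b^{-2};q^2)_d}{(-1)^d (a^{-1}bc\,q^{1-d};q^2)_d (a^{-1}bc^{-1}q^{1-d};q^2)_d}.
\]
The main (and only really delicate) step is to recognize that the denominator must be rewritten to match the asserted answer, which features $(ab^{-1}cq^{1-d};q^2)_d$ rather than $(a^{-1}bc^{-1}q^{1-d};q^2)_d$. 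For this I would use the elementary identity $1 - x = -x(1 - x^{-1})$ factor by factor, and reindex $j \mapsto d-1-j$: this yields
\[
  (a^{-1}bc^{-1}q^{1-d};q^2)_d = (-1)^d (a^{-1}bc^{-1})^d (ab^{-1}cq^{1-d};q^2)_d,
\]
since the $q$-exponents $\sum_{j=0}^{d-1}(1-d+2j)$ sum to zero. Substituting this identity cancels the $(-1)^d$ and the $b^{2d}/b^d = b^d$ combines with $a^d c^d$ to produce the claimed numerator $a^db^dc^d$, completing the derivation.
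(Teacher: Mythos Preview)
Your proposal is correct and follows exactly the approach the paper uses: the paper's proof is simply ``Evaluate \eqref{eq:nu2} using \eqref{eq:qRacahthi}--\eqref{eq:phii},'' and you have carried out that evaluation in full detail, including the final Pochhammer reflection identity needed to match the stated form.
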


\begin{proof}
Evaluate \eqref{eq:nu2} using \eqref{eq:qRacahthi}--\eqref{eq:phii}.
\end{proof}

Recall the scalars  $\{k_i\}_{i=0}^d$ from \eqref{eq:defki}.

\begin{lemma}       \label{lem:kiqRacah}    \samepage
\ifDRAFT {\rm lem:kiqRacah}. \fi
We have $k_0 = 1$, and
\[
k_i =
 \frac{q^{2 i d} (1-b^2 q^{4i-2d}) \, (b^2 q^{2-2d};q^2)_i \, (q^{-2d}; q^2)_i \, (a b c q^{1-d};q^2)_i \, (a b c^{-1} q^{1-d}; q^2)_i }
        {a^{2i} (1-b^2 q^{2i-2d}) \, (b^2 q^2; q^2)_i \, (q^2; q^2)_i \, (a^{-1} b c q^{1-d}; q^2)_i \, (a^{-1} b c^{-1} q^{1-d}; q^2)_i}
\]
for $1 \leq i \leq d-1$, and
\[
k_d = 
 \frac{q^{2 d^2} (b^2 q^{2-2d};q^2)_d \, (q^{-2d}; q^2)_d \, (a b c q^{1-d};q^2)_d \, (a b c^{-1} q^{1-d}; q^2)_d }
        {a^{2d} (b^2; q^2)_d \, (q^2; q^2)_d \,  (a^{-1} b c q^{1-d}; q^2)_d \, (a^{-1} b c^{-1} q^{1-d}; q^2)_d}.
\]
\end{lemma}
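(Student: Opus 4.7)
The strategy is to invoke Lemma~\ref{lem:ki2}, which already expresses $k_i$ in terms of the parameter array of $\Phi$, and then substitute the explicit $q$-Racah formulas from Definition~\ref{def:qRacah} and Lemma~\ref{lem:c}. The goal is to repackage the resulting products of linear factors as $q$-Pochhammer symbols.

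First I would handle the split-sequence ratio. Factoring $q^{-j}$ out of the last two linear factors in both \eqref{eq:vphii} and \eqref{eq:phii} cancels the identical $(q^j-q^{-j})(q^{j-d-1}-q^{d-j+1})$ factors and leaves
\[
\frac{\vphi_j}{\phi_j}\;=\;\frac{1}{a^{2}}\cdot\frac{(1-abcq^{2j-d-1})(1-abc^{-1}q^{2j-d-1})}{(1-a^{-1}bcq^{2j-d-1})(1-a^{-1}bc^{-1}q^{2j-d-1})}.
\]
Taking the product over $j=1,\ldots,i$ and applying the identity $\prod_{j=1}^{i}(1-\alpha q^{2j-d-1})=(\alpha q^{1-d};q^{2})_i$ yields $a^{-2i}$ times the four $q$-Pochhammer symbols appearing in the last column of the stated formula for $k_i$.

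Next I would handle the three polynomial values $\tau^{*}_i(\th^{*}_i)$, $\eta^{*}_{d-i}(\th^{*}_i)$, and $\eta^{*}_d(\th^{*}_0)$. From \eqref{eq:qRacahthsi} one derives the factorization
\[
\th^{*}_i-\th^{*}_j\;=\;(q^{i-j}-q^{j-i})\bigl(bq^{i+j-d}-b^{-1}q^{d-i-j}\bigr),
\]
and after normalizing each difference $bq^{r}-b^{-1}q^{-r}=-b^{-1}q^{-r}(1-b^{2}q^{2r})$, each of the three products splits into a $q$-power prefactor times a pair of $q$-Pochhammer symbols based in $q^{2}$. Substituting these expressions together with the output of the first step into Lemma~\ref{lem:ki2} and cancelling common factors produces the stated closed form for $1\leq i\leq d-1$. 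The boundary case $i=d$ is then obtained by noting that $\eta^{*}_{0}(\th^{*}_d)=1$ and using the identity $(1-b^{2}q^{2d})/[(1-b^{2})(b^{2}q^{2};q^{2})_d]=1/(b^{2};q^{2})_d$ to absorb the prefactor $(1-b^{2}q^{4i-2d})/(1-b^{2}q^{2i-2d})$ at $i=d$ into $(b^{2};q^{2})_d$.

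The main obstacle is purely the bookkeeping: tracking the overall power of $q$ (which must consolidate into $q^{2id}$), the overall power of $a$ (which consolidates into $a^{-2i}$), and the signs produced by the normalization of each factor $bq^{r}-b^{-1}q^{-r}$, and confirming that everything outside the $q$-Pochhammer symbols collapses to the prefactor $q^{2id}(1-b^{2}q^{4i-2d})/[a^{2i}(1-b^{2}q^{2i-2d})]$. No conceptually new input is required beyond the hypotheses already in place and the formulas of Definition~\ref{def:qRacah} and Lemma~\ref{lem:c}; the case $k_{0}=1$ is immediate from \eqref{eq:k0}.
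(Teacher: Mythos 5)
Your approach is exactly the paper's: the paper's proof is the one-line instruction "Evaluate \eqref{eq:kiparam} using \eqref{eq:qRacahthi}--\eqref{eq:phii}," and you are doing precisely that, just with the intermediate bookkeeping written out. The factorizations you record (the cancellation in $\vphi_j/\phi_j$, the identity $\th^*_i-\th^*_j=(q^{i-j}-q^{j-i})(bq^{i+j-d}-b^{-1}q^{d-i-j})$, and the conversion to $q$-Pochhammer symbols) are the standard moves in this calculation and check out.
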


\begin{proof}
Evaluate \eqref{eq:kiparam} using \eqref{eq:qRacahthi}--\eqref{eq:phii}.
\end{proof}

We now turn our attention to the self-dual case.

\begin{lemma}    \label{lem:selfdualab}    \samepage
\ifDRAFT {\rm lem:selfdualab}. \fi
The Leonard system $\Phi$ is self-dual if and only if $a=b$.
\end{lemma}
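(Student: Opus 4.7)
The plan is to use Lemma \ref{lem:selfdualparam}, which reduces self-duality of $\Phi$ to the condition $\th_i = \th^*_i$ for $0 \leq i \leq d$. Once the problem is phrased this way, everything collapses to comparing the two $q$-exponential expressions in Definition \ref{def:qRacah}.

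For the easy direction $(a = b \Rightarrow \Phi$ self-dual$)$: if $a = b$, then \eqref{eq:qRacahthi} and \eqref{eq:qRacahthsi} immediately give $\th_i = \th^*_i$ for $0 \leq i \leq d$, and Lemma \ref{lem:selfdualparam} yields self-duality.

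For the converse, assume $\Phi$ is self-dual. By Lemma \ref{lem:selfdualparam} we have $\th_i = \th^*_i$, which in view of \eqref{eq:qRacahthi} and \eqref{eq:qRacahthsi} becomes
\begin{equation*}
(a - b) q^{2i-d} + (a^{-1} - b^{-1}) q^{d-2i} = 0 \qquad (0 \leq i \leq d).
\end{equation*}
Since $a^{-1} - b^{-1} = -(a-b)/(ab)$, this can be rewritten as $(a-b)(ab\, q^{2i-d} - q^{d-2i}) = 0$ for $0 \leq i \leq d$. Suppose toward contradiction that $a \neq b$. Then $ab = q^{2d - 4i}$ for every $0 \leq i \leq d$. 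Comparing $i=0$ and $i=1$ (which is allowed since $d \geq 1$) gives $q^{2d} = q^{2d-4}$, hence $q^4 = 1$. This contradicts the standing assumption $q^4 \neq 1$, so $a = b$.

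There is no real obstacle here; the whole argument is a two-line calculation once Lemma \ref{lem:selfdualparam} is invoked. The only subtle point is making sure $d \geq 1$ is available so that one can compare two distinct values of $i$ to extract $q^4 = 1$, but this hypothesis is in force throughout Section \ref{sec:qRacah}.
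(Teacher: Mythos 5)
Your proof is correct and follows essentially the same route as the paper's, which simply cites Lemma \ref{lem:selfdualparam} together with Definition \ref{def:qRacah}. The paper leans on the stated uniqueness of $a$, $b$ in Definition \ref{def:qRacah} to conclude $a=b$ from $\th_i = \th^*_i$; you unpack exactly that uniqueness calculation, using $q^4 \neq 1$ and $d \geq 1$, which is a sound and slightly more explicit version of the same argument.
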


\begin{proof}
By Lemma \ref{lem:selfdualparam} and Definition \ref{def:qRacah}.
\end{proof}

We now turn our attention to the spin case.

\begin{lemma}    {\rm \cite[Theorem 1.13]{C:spinLP} }
\label{lem:spinLP0}    \samepage
\ifDRAFT {\rm lem:spinLP0}. \fi
The following hold.
\begin{itemize}
\item[\rm (i)]
Assume $d \geq 3$, and $A,A^*$ is a spin Leonard pair.
Then $b=a$ and $c \in \{a, a^{-1}, -a, -a^{-1} \}$.
\item[\rm (ii)]
Assume that $b=a$ and $c \in \{a, a^{-1}, -a, -a^{-1} \}$.
Then $A,A^*$ is a spin Leonard pair.
\end{itemize}
\end{lemma}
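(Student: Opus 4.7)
The plan is to prove part (i) by combining self-duality with an analysis of the Boltzmann pair relation imposed on the scalars $\tau_i$ from Section \ref{sec:spinLP}, and to prove part (ii) by an explicit construction.

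For part (i), let $W, W^*$ be a Boltzmann pair for $A, A^*$, which by Lemma \ref{lem:balanced} we may assume is balanced. Lemma \ref{lem:PSL2} asserts that $A, A^*$ is self-dual, so the associated Leonard system $\Phi$ is self-dual; by Lemma \ref{lem:selfdualab} this forces $b = a$. To constrain $c$, we apply Theorem \ref{thm:WtiinvAi} and Lemma \ref{lem:WtiEi}: there exist nonzero scalars $\{\tau_i\}_{i=0}^d$ with $\tau_0 = 1$ such that $W^* = f \sum_i \tau_i E^*_i$. I would then work in a $\Phi$-standard basis $\{E^*_i u\}$, in which $A^*$ is diagonal with entries $\theta^*_i$, $A$ is tridiagonal with intersection numbers $a_i, b_i, c_i$, and $W^*$ acts diagonally with entries $f\tau_i$. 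Conjugation by $W^*$ sends $A$ to a tridiagonal matrix whose super- and subdiagonal entries carry extra factors $\tau_{i+1}/\tau_i$ and $\tau_{i-1}/\tau_i$. Imposing the Boltzmann pair relation $(W^*)^{-1} A W^* = W A^* W^{-1}$, and exploiting that the right-hand side must also be tridiagonal in this basis, extracts three-term recursions relating $\tau_{i+1}/\tau_i$ to the $q$-Racah values of $b_i, c_i$ given in Lemma \ref{lem:bici}.

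The main obstacle is the consistency analysis of these recursions. After substituting $b = a$ and using Lemma \ref{lem:condabc} to exclude degenerate denominators, the compatibility of the recursion across $0 \leq i \leq d$ yields a polynomial identity in $c$, with coefficients in $a$ and $q$, that factors into pieces of the form $c^2 - a^{\pm 2}$; the hypothesis $d \geq 3$ is used to obtain enough independent recursion steps to rule out spurious roots (for small $d$ the extra constraints collapse). Hence $c^2 = a^2$ or $c^2 = a^{-2}$, giving $c \in \{a, -a, a^{-1}, -a^{-1}\}$.

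For part (ii), assume $b = a$ and $c$ takes one of the four listed values. I would reverse the construction by solving the recursion from part (i) to obtain explicit scalars $\{\tau_i\}_{i=0}^d$ (for instance $\tau_i = q^{-i(d-i)}$ in the case $c = a$, with sign-adjusted variants otherwise), set $W^* = \sum_i \tau_i E^*_i$, and define $W$ via the formula in Theorem \ref{thm:WtiinvAi} with $\gamma$ determined by Lemma \ref{lem:piW}. A direct verification using the $q$-Racah formulas of Lemmas \ref{lem:c} and \ref{lem:bici}, together with the expression for $k_i$ in Lemma \ref{lem:kiqRacah}, confirms the Boltzmann pair relation in the form of Definition \ref{def:spinLP}(iii), so that $A, A^*$ is a spin Leonard pair.
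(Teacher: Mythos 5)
The paper does not supply a proof of this lemma; it is quoted verbatim from Curtin \cite[Theorem 1.13]{C:spinLP}, so there is no internal argument to compare yours against. Evaluating your proposal on its own terms: one piece is sound, but the core computations are only asserted and contain a concrete error.

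The derivation of $b=a$ in part (i) is correct and uses the paper's own machinery: a spin Leonard pair is self-dual (Lemma \ref{lem:PSL2}), hence the associated self-dual Leonard system has equal eigenvalue and dual eigenvalue sequences, and for $q$-Racah type this is equivalent to $a=b$ (Lemma \ref{lem:selfdualab}). Working in a $\Phi$-standard basis and imposing the Boltzmann relation is also the right general idea.

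The constraint on $c$ is where the real content lies, and your sketch does not establish it. You claim that compatibility of the recursion extracted from $(W^*)^{-1}AW^* = WA^*W^{-1}$ yields a polynomial in $c$ factoring through pieces of the form $c^2-a^{\pm 2}$, but you do not write down the recursion, the consistency condition, or the factorization. That computation is precisely the substance of Curtin's classification, and asserting the factorization is essentially asserting the result. (Note also that $W$ is not diagonal in the $\Phi$-standard basis, so ``the right-hand side must also be tridiagonal'' needs an argument: it follows because the left-hand side is, not because $W A^* W^{-1}$ is visibly tridiagonal.)

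For part (ii), the explicit scalars you propose are wrong. The paper itself records (Lemma \ref{lem:W}(i), equation \eqref{eq:defti}) that for $a=b=c$ a balanced Boltzmann pair has $\tau_i = (-1)^i a^{-i} q^{i(d-i)}$, up to replacing $\tau_i$ by $\tau_i^{-1}$; your candidate $\tau_i = q^{-i(d-i)}$ agrees with neither, so the ``direct verification'' you describe would fail when you substitute it into the Boltzmann relation. The route you outline --- set $W^*=f\sum_i\tau_i E^*_i$, recover $W$ via Theorem \ref{thm:WtiinvAi}, and verify Definition \ref{def:spinLP}(iii) against the $q$-Racah data of Lemmas \ref{lem:c} and \ref{lem:bici} --- is the right one, but it must be carried out with the correct $\tau_i$, and the verification is a genuine identity check, not a formality.
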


\begin{note}    \label{note:Huang2}    \samepage
\ifDRAFT {\rm note:Huang2}. \fi
We have a comment about Lemma \ref{lem:spinLP0}.
Assume that $b=a$ and $c \in \{-a, -a^{-1} \}$.
By Note \ref{note:Huang} we may assume that $c=-a$.
Replacing $\Phi$ by \eqref{eq:LS-A} we may assume that $c=a$.
\end{note}

In view of Note \ref{note:Huang2}
we focus on the case $a=b=c$.

\begin{lemma}    \label{lem:qRacahbi}    \samepage
\ifDRAFT {\rm lem:qRacahbi}. \fi
Assume that $a=b=c$.
Then
\begin{align*}
b_0 &= \frac{(q^{-d}-q^d)(a^3 -  q^{d-1}) } { a (a+  q^{d-1}) },                       
\\
b_i &= 
  \frac{(q^{i-d} - q^{d-i} ) (a q^{i-d} - a^{-1} q^{d-i} ) (a^3 - q^{d-2i-1} ) }
         {a (a q^{2i-d} - a^{-1} q^{d-2i}) (a + q^{d-2i-1}) }
                                                       &&  (1 \leq i \leq d-1),             
\\
c_i &=
 \frac{a (q^i - q^{-i})(a q^i - a^{-1}q^{-i})(a^{-1} - q^{d-2i+1} ) }
        {(a q^{2i-d} - a^{-1} q^{d-2i} ) (a + q^{d-2i+1} ) }
                                                            &&  (1 \leq i \leq d-1),       
\\
c_d &= \frac{(q^{-d}-q^d)(a-q^{d-1}) }{ q^{d-1} (a + q^{1-d}) }.             
\end{align*}
\end{lemma}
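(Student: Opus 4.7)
The plan is to derive Lemma \ref{lem:qRacahbi} directly from Lemma \ref{lem:bici} by setting $b=c=a$ in the general expressions for $b_i$, $c_i$, $b_0$, $c_d$ and simplifying. No new structural input is needed; the argument reduces to a sequence of polynomial factorizations in $a$ and $q$ followed by cancellations.

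The key algebraic identity I will invoke is
\[
 \alpha q^s - \alpha^{-1} q^{-s} \;=\; \frac{(\alpha - q^{-s})(\alpha + q^{-s})}{\alpha q^{-s}},
\]
used with $\alpha = a$ and various choices of $s$. For $b_i$ with $1 \leq i \leq d-1$, the substitution $b=c=a$ turns the two rightmost numerator factors in Lemma \ref{lem:bici} into $aq^i(a-q^{d-2i-1})$ and $a^{-1}q^i(a^3-q^{d-2i-1})$, while the displayed identity rewrites the denominator factor $aq^{2i-d+1} - a^{-1}q^{d-2i-1}$ as $(a-q^{d-2i-1})(a+q^{d-2i-1})/(aq^{d-2i-1})$. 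The common factor $a-q^{d-2i-1}$ then cancels and the remaining powers of $a$ and $q$ collapse to the claimed formula. The computation for $c_i$, $1 \leq i \leq d-1$, is parallel: I would factor $bq^i - acq^{d-i+1} = a^2 q^i(a^{-1}-q^{d-2i+1})$ and $bq^i - ac^{-1}q^{d-i+1} = q^i(a-q^{d-2i+1})$, apply the displayed identity to $bq^{2i-d-1} - b^{-1}q^{d-2i+1}$, and cancel the common factor $a-q^{d-2i+1}$.

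The boundary cases $b_0$ and $c_d$ follow the same pattern; for $c_d$ one additionally needs the rewriting $1 - aq^{d-1} = -q^{d-1}(a-q^{1-d})$ to expose the cancellation against the denominator. The main (and only) obstacle is careful bookkeeping of the exponents produced by successive factorizations; there is no conceptual difficulty. The nonvanishing conditions of Lemma \ref{lem:condabc} guarantee that all denominators encountered in the process remain nonzero.
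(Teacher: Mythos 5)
Your proposal is correct and takes the same route as the paper: the paper's proof is simply ``Set $a=b=c$ in Lemma~\ref{lem:bici}.'' You have merely supplied the explicit factorizations and cancellations that the paper leaves implicit, and I checked that they are consistent.
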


\begin{proof}
Set $a=b=c$ in Lemma \ref{lem:bici}.
\end{proof}

\begin{lemma}    \label{lem:qRacahnu}    \samepage
\ifDRAFT {\rm lem:qRacahnu}. \fi
Assume that $a=b=c$.
Then
\begin{equation}
 \nu = \frac{a^{3d} (a^{-2};q^2)_d^2 } {q^{d(d-1)} (a q^{1-d};q^2)_d^2}.   \label{eq:nu3}
\end{equation}
\end{lemma}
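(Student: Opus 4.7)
The plan is to obtain the formula by direct substitution of $a=b=c$ into the general $q$-Racah expression for $\nu$ established in Lemma \ref{lem:qRacahnu1}, namely
\[
  \nu = \frac{a^d b^d c^d q^{d(1-d)} (a^{-2};q^2)_d (b^{-2}; q^2)_d}
              { (a b^{-1} c q^{1-d}; q^2)_d (a^{-1} b c q^{1-d}; q^2)_d}.
\]
First I would collect the numerator factors: with $a=b=c$, the monomial prefactor $a^d b^d c^d$ collapses to $a^{3d}$, and the two $q$-Pochhammer symbols $(a^{-2};q^2)_d$ and $(b^{-2};q^2)_d$ become identical, yielding the square $(a^{-2};q^2)_d^2$.

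Next I would simplify the denominator. The two arguments $a b^{-1} c$ and $a^{-1} b c$ both reduce to $a$ under $a=b=c$ (since $a \cdot a^{-1} \cdot a = a$ and $a^{-1} \cdot a \cdot a = a$), so each $q$-Pochhammer factor becomes $(a q^{1-d}; q^2)_d$, and the denominator is $(a q^{1-d}; q^2)_d^2$. Finally, rewriting $q^{d(1-d)}$ as $q^{-d(d-1)}$ and transferring it to the denominator produces the claimed expression \eqref{eq:nu3}.

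There is no real obstacle here; the entire argument is a bookkeeping exercise in the substitution $a=b=c$, and the appearance of two identical factors in both numerator and denominator of Lemma \ref{lem:qRacahnu1} is precisely what makes the resulting formula a clean square.
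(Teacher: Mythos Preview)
Your proof is correct and follows exactly the paper's approach: the paper's proof is the single line ``Set $a=b=c$ in Lemma \ref{lem:qRacahnu1},'' and you have simply spelled out the straightforward substitution details.
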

 
\begin{proof}
Set $a=b=c$ in Lemma \ref{lem:qRacahnu1}.
\end{proof}

\begin{lemma}       \label{lem:ki3}    \samepage
\ifDRAFT {\rm lem:ki3}. \fi
Assume that $a=b=c$.
Then $k_0 = 1$, and
\[
k_i =
 \frac{q^{2 i d} (1-a^2 q^{4i-2d}) \, (a^2 q^{2-2d};q^2)_i \, (q^{-2d}; q^2)_i \, (a^3 q^{1-d};q^2)_i  }
        {a^{2i} (1-a^2 q^{2i-2d}) \, (a^2 q^2; q^2)_i \,  (q^2; q^2)_i \,  (a^{-1} q^{1-d}; q^2)_i}
\]
for $1 \leq i \leq d-1$, and
\[
k_d = 
 \frac{q^{2 d^2} (a^2 q^{2-2d};q^2)_d \, (q^{-2d}; q^2)_d \, (a^3 q^{1-d};q^2)_d  }
        {a^{2d} (a^2; q^2)_d \, (q^2; q^2)_d  \, (a^{-1} q^{1-d}; q^2)_d}.
\]
\end{lemma}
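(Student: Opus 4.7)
The plan is to specialize the general $q$-Racah formula in Lemma \ref{lem:kiqRacah} to the case $a=b=c$ and collect terms. No new conceptual ingredient is needed; the work is entirely bookkeeping of $q$-Pochhammer symbols.

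First I would write down the four ``composite parameter'' factors appearing in the numerator and denominator of Lemma \ref{lem:kiqRacah}, namely $abc$, $abc^{-1}$, $a^{-1}bc$, $a^{-1}bc^{-1}$, and substitute $b=a$, $c=a$. This yields $abc=a^3$, $abc^{-1}=a$, $a^{-1}bc=a$, and $a^{-1}bc^{-1}=a^{-1}$. Consequently the numerator $q$-Pochhammer symbols $(abc\,q^{1-d};q^2)_i(abc^{-1}q^{1-d};q^2)_i$ become $(a^3 q^{1-d};q^2)_i(a\,q^{1-d};q^2)_i$, and the denominator symbols $(a^{-1}bc\,q^{1-d};q^2)_i(a^{-1}bc^{-1}q^{1-d};q^2)_i$ become $(a\,q^{1-d};q^2)_i(a^{-1}q^{1-d};q^2)_i$. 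The common factor $(a\,q^{1-d};q^2)_i$ cancels, leaving $(a^3 q^{1-d};q^2)_i$ in the numerator and $(a^{-1}q^{1-d};q^2)_i$ in the denominator.

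Next I would substitute $b=a$ into the remaining factors of Lemma \ref{lem:kiqRacah}. The factors $(b^2 q^{2-2d};q^2)_i$, $(q^{-2d};q^2)_i$, $(b^2 q^2;q^2)_i$, $(q^2;q^2)_i$, $1-b^2 q^{4i-2d}$, $1-b^2 q^{2i-2d}$, and the prefactor $q^{2id}/a^{2i}$ in the range $1\le i\le d-1$ (respectively $q^{2d^2}/a^{2d}$ and the $(b^2;q^2)_d$ denominator at $i=d$) become precisely those displayed in the statement once $b$ is replaced by $a$. Assembling these pieces reproduces exactly the formulas in Lemma \ref{lem:ki3}, and $k_0=1$ is inherited from \eqref{eq:k0}.

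The only potential obstacle is the risk of introducing a spurious zero or $0/0$ via the cancellation of $(a\,q^{1-d};q^2)_i$, but Lemma \ref{lem:condabc}(iii) applied with $b=a$ and $c=a$ tells us that $abc^{-1}=a$ is not among $q^{d-1},q^{d-3},\ldots,q^{1-d}$, so $(a\,q^{1-d};q^2)_i\ne 0$ for $1\le i\le d$, and the cancellation is legitimate. Hence the proof reduces to the routine substitution outlined above.
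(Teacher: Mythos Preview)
Your proposal is correct and takes essentially the same approach as the paper, which simply says to set $a=b=c$ in Lemma \ref{lem:kiqRacah}. You have merely made explicit the cancellation of $(a\,q^{1-d};q^2)_i$ and justified it via Lemma \ref{lem:condabc}(iii), which is a welcome but routine elaboration of the paper's one-line proof.
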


\begin{proof}
Set $a=b=c$ in  Lemma \ref{lem:kiqRacah}.
\end{proof}

\begin{lemma}  {\rm   \cite[Theorem 1.18]{C:spinLP} }
\label{lem:W}    \samepage
\ifDRAFT {\rm lem:W}. \fi
Assume that $a=b=c$.
Let $f$, $\{\tau_i\}_{i=0}^d$ denote nonzero scalars in $\F$ such that $\tau_0 = 1$. 
Define 
\begin{align}
   W &= f \sum_{i=0}^d  \tau_i E_i, &
   W^* &= f \sum_{i=0}^d \tau_i E^*_i.              \label{eq:defWpre}
\end{align}
\begin{itemize}
\item[\rm (i)]
Assume that $W, W^*$ is a Boltzmann pair for  $A,A^*$.
Then one of the following holds provided that $d \geq 3$:
\begin{align}
  \tau_i &= (-1)^i a^{-i} q^{i(d-i)}   &&  (0 \leq i \leq d),          \label{eq:defti}
\\
  \tau_i ^{-1} &=  (-1)^i a^{-i} q^{i(d-i)}  &&  (0 \leq i \leq d).    \label{eq:deftiinv}
\end{align}
\item[\rm (ii)]
Assume that one of \eqref{eq:defti}, \eqref{eq:deftiinv} holds.
Then  $W, W^*$ is a balanced Boltzmann pair for  $A,A^*$.
\end{itemize}
\end{lemma}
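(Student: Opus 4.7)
The plan is to reduce both parts to direct computations with the $q$-Racah formulas established above. In part (i), by Lemma \ref{lem:balanced}(i) together with the note following Lemma \ref{lem:WtiEi}, I may rescale $W$ so that $W,W^*$ is balanced without altering $\{\tau_i\}_{i=0}^d$; this grants the duality $\sigma$ from Lemmas \ref{lem:Phisd} and \ref{lem:sigma} that swaps $A \leftrightarrow A^*$ and $W \leftrightarrow W^*$. The core computation is to translate the Boltzmann condition $A W^* W = W^* W A^*$ of \eqref{eq:AWsW} into the $\Phi$-standard basis, where $A^*$ is diagonal with entries $\{\th^*_i\}$, $A$ is irreducible tridiagonal with intersection numbers from Lemma \ref{lem:qRacahbi}, and $W^*$ is diagonal with entries $\{f\tau_i\}$. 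Expanding $W$ via Theorem \ref{thm:WtiinvAi} as $f\gamma\sum_j \tau_j^{-1}A_j$ and invoking the three-term recurrence of Lemma \ref{lem:AAi}, one reads off from the off-diagonal entries a recurrence for the ratios $\tau_{i+1}/\tau_i$. Substituting the $q$-Racah intersection numbers with $a=b=c$ from Lemma \ref{lem:qRacahbi} forces each ratio to equal a specific $q$-power or its reciprocal; iterating from $\tau_0 = 1$ gives \eqref{eq:defti} or \eqref{eq:deftiinv}. The hypothesis $d \geq 3$ is needed to accumulate enough independent equations to rule out spurious solutions.

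For part (ii), given $\tau_i$ as in \eqref{eq:defti} (the case \eqref{eq:deftiinv} reduces to the former by applying Lemma \ref{lem:WinvWsinv}(ii) to the pair $W^{-1}, (W^*)^{-1}$), I would verify the Boltzmann condition \eqref{eq:AWsW} and the balancedness $WW^*W = W^*WW^*$ by direct computation in the $\Phi$-standard basis. The Boltzmann verification reduces to a family of identities relating $\tau_{i+1}/\tau_i$ to $b_i$ and $c_{i+1}$ from Lemma \ref{lem:qRacahbi}, each a routine $q$-algebra identity. For balancedness, by Lemma \ref{lem:WWsWprepre} we have $W^*WW^* = \alpha WW^*W$ for some nonzero $\alpha \in \F$; applying $\pi$ and using Lemma \ref{lem:piYZ} together with \eqref{eq:gamma} reduces $\alpha = 1$ to an explicit sum identity involving the $k_i$ from Lemma \ref{lem:ki3} and the $\tau_i$, which can be checked directly.

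The main obstacle is the algebraic bookkeeping with $q$-shifted factorials, and in particular the identification in part (i) of the correct recurrence for $\tau_{i+1}/\tau_i$ from among the many entry-by-entry comparisons available. Once that recurrence is isolated, both its solution and the reverse verification in part (ii) reduce to lengthy but routine manipulations of $q$-algebra identities.
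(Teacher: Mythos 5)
The paper gives no proof of Lemma \ref{lem:W}; it is cited directly from \cite[Theorem 1.18]{C:spinLP}, so there is no in-paper argument to compare against. Evaluating your sketch on its own merits, the balancedness step in part (ii) does not work as written. You propose to apply $\pi$ to the identity $W^*WW^* = \alpha WW^*W$ (from Lemma \ref{lem:WWsWprepre}) and factor via Lemma \ref{lem:piYZ}. But Lemma \ref{lem:piYZ} only allows pulling out elements of $\gen{A}$, and $W^* \in \gen{A^*}$, so $\pi(W^*WW^*)$ does not factor this way. Invoking \eqref{eq:gamma} is also circular, since that formula is derived in Section \ref{sec:spinLP} under the standing hypothesis that $W, W^*$ is already balanced. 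The observation you are missing is simpler: once the Boltzmann property is verified, Lemma \ref{lem:WWsWprepre} gives $W^*WW^* = \alpha WW^*W$, hence $\alpha W, W^*$ is balanced; applying Lemma \ref{lem:WtiEi} to this balanced pair and comparing coefficients with the given expansions $W = f\sum_i\tau_iE_i$, $W^* = f\sum_i\tau_iE^*_i$ (same $f$, and $\tau_0 = 1$) forces $\alpha = 1$. No sum identity is needed, and for the same reason the rescaling you invoked at the start of part (i) is automatically trivial.

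For part (i), the claim that the Boltzmann condition in the $\Phi$-standard basis yields a direct recurrence for $\tau_{i+1}/\tau_i$ is a genuine gap. In that basis $A^*$ and $W^*$ are diagonal and $A$ is tridiagonal, but $W$ is a nontrivial polynomial in $A$, so the entrywise version of $AW^*W = W^*WA^*$ reads $\tau_iW_{ij}(\th^*_j - a_i) = c_i\tau_{i-1}W_{i-1,j} + b_i\tau_{i+1}W_{i+1,j}$, which couples the unknown entries $W_{ij}$ for all $i,j$; these entries in turn depend on the $\{\tau_m\}$ via the expansion of $W$ in the $A_j$'s. Eliminating the $W_{ij}$ to isolate a closed recurrence for $\tau_{i+1}/\tau_i$, and showing it admits exactly the two branches \eqref{eq:defti}, \eqref{eq:deftiinv} when $d \geq 3$, is precisely the substance of Curtin's theorem, and the sketch waves it away as routine $q$-algebra without indicating how the $W_{ij}$ are removed from the system. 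Until that elimination step is supplied, part (i) of the argument is incomplete.
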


\begin{note}    \label{note:ti}    \samepage
\ifDRAFT {\rm note:ti}. \fi
Referring to Lemma \ref{lem:W},
if \eqref{eq:deftiinv} holds then \eqref{eq:defti} holds for the
Boltzmann pair $W^{-1}, (W^*)^{-1}$.
For this reason we will focus on \eqref{eq:defti}.
\end{note}

\begin{lemma}    \label{lem:new}    \samepage
\ifDRAFT {\rm lem:new}. \fi
Assume that $b=a$  and define $\{\tau_i\}_{i=0}^d$ by \eqref{eq:defti}.
Let $0 \neq f \in \F$
and define $W$, $W^*$ as in \eqref{eq:defWpre}.
Assume that $W,W^*$ is a Boltzmann pair for $A,A^*$.
Then $c \in \{a, a^{-1} \}$.
\end{lemma}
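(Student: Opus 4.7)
The plan is to argue by contradiction, combining the classification in Lemma \ref{lem:spinLP0}(i), the sign-change symmetry of Lemma \ref{lem:Huang2}, and the explicit form of the Boltzmann pair given by Lemma \ref{lem:W}(i). Since $W, W^*$ is a Boltzmann pair, the Leonard pair $A, A^*$ is spin, so (assuming $d \geq 3$) Lemma \ref{lem:spinLP0}(i) yields $c \in \{a, a^{-1}, -a, -a^{-1}\}$. It therefore suffices to rule out $c \in \{-a, -a^{-1}\}$. By Note \ref{note:Huang} the Huang data is only determined up to replacing $c$ by $c^{-1}$, so without loss of generality I may take $c = -a$.

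Next I pass to the Leonard system $\Phi' = (-A; \{E_i\}_{i=0}^d; -A^*; \{E^*_i\}_{i=0}^d)$. By Lemma \ref{lem:Huang2} this system has Huang data $(-a, -a, -a, d)$, so in particular it satisfies the hypothesis $a' = b' = c'$ of Lemma \ref{lem:W}. By Lemma \ref{lem:affine} (with $\alpha = -1$, $\beta = 0$) the pair $W, W^*$ is still a Boltzmann pair, now for the Leonard pair $-A, -A^*$. Crucially, since the primitive idempotents $E_i$ and $E^*_i$ are literally the same elements of $\text{\rm End}(V)$ for $\Phi$ and $\Phi'$, the expansions \eqref{eq:defWpre} of $W$ and $W^*$ use the same scalars $\{\tau_i\}_{i=0}^d$ in both settings.

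Applying Lemma \ref{lem:W}(i) to $\Phi'$ and the Boltzmann pair $W, W^*$ forces, for all $0 \leq i \leq d$, one of
\[
\tau_i = (-1)^i(-a)^{-i}q^{i(d-i)} = a^{-i}q^{i(d-i)},
\qquad
\tau_i = a^i q^{-i(d-i)}.
\]
Comparing with the given $\tau_i = (-1)^i a^{-i}q^{i(d-i)}$, the first alternative fails already at $i = 1$, where it demands $-1 = 1$. The second yields $(-1)^i = a^{2i}q^{-2i(d-i)}$ for all $i$; evaluating at $i = 1$ gives $a^4 = q^{4(d-1)}$, while $i = 2$ gives $a^4 = q^{4(d-2)}$, whence $q^4 = 1$, contradicting the standing hypothesis $q^4 \neq 1$ of Section \ref{sec:qRacah}.

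The only subtle point is verifying that the sequence $\{\tau_i\}$ carries over unchanged under the passage $\Phi \to \Phi'$; this is immediate since $\Phi'$ is built from the same primitive idempotents $\{E_i\}$, $\{E^*_i\}$, and the defining expansion \eqref{eq:defWpre} depends only on these. Everything else reduces to the two-term exponent comparison above, so I do not anticipate any real obstacle for $d \geq 3$; the very small diameter cases would require a direct verification, which I expect to be straightforward.
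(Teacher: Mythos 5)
Your argument for $d \geq 3$ is sound and follows the same route as the paper: pass from $\Phi$ with Huang data $(a,a,-a,d)$ to the Leonard system $(-A;\{E_i\};-A^*;\{E^*_i\})$ with Huang data $(-a,-a,-a,d)$ via Lemma \ref{lem:Huang2} and Lemma \ref{lem:affine}, then apply Lemma \ref{lem:W}(i) to force the $\{\tau_i\}$ into one of two parametrized forms, and compare with \eqref{eq:defti}. In fact your final step is somewhat cleaner than the paper's: you use $i=1,2$ to obtain $q^4=1$, which is immediately forbidden by the standing hypothesis of Section \ref{sec:qRacah}, whereas the paper compares $\tau_1,\tau_3$ and lands on $q^{12}=1$, which requires a further appeal to Lemma \ref{lem:condabc}. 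You also correctly recognize that since $E_i,E^*_i$ are unchanged under $\Phi\to\Phi'$, the coefficients $\tau_i$ in \eqref{eq:defWpre} carry over verbatim. One minor point worth stating explicitly: $\mathrm{Char}(\F)\neq 2$ is needed for $-1\neq 1$ at $i=1$; it follows because if $-a=a$ then $\{-a,-a^{-1}\}=\{a,a^{-1}\}$ and there would be nothing to prove.

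However, the cases $d\in\{1,2\}$ are a genuine gap, not a routine afterthought. Both Lemma \ref{lem:spinLP0}(i) and Lemma \ref{lem:W}(i) carry the explicit hypothesis $d\geq 3$, so the classification-based argument you used simply does not apply when $d\leq 2$; in particular you cannot even conclude at the outset that $c\in\{a,a^{-1},-a,-a^{-1}\}$. The paper closes this by a direct computation: represent $A,A^*,\{E_i\},\{E^*_i\},W,W^*$ as matrices with respect to a $\Phi$-split basis (using Lemma \ref{lem:split} for $A,A^*$, formula \eqref{eq:Ei0} for the idempotents, and \eqref{eq:defWpre} for $W,W^*$, all expressed in terms of $a,c,q$ via \eqref{eq:qRacahthi}--\eqref{eq:phii}); then compute both sides of the Boltzmann relation \eqref{eq:AWsW}, namely $W^*WA^*=AW^*W$, and compare the $(d,0)$-entries to extract $(a-c)(ac-1)=0$, i.e.\ $c\in\{a,a^{-1}\}$. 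That calculation, while mechanical, has to actually be carried out; calling it ``straightforward'' without doing it leaves the lemma unproved for small diameter, and small-diameter modules do occur in the later applications (e.g.\ Lemma \ref{lem:tildePhi} applies this result to irreducible $\T$-modules of arbitrary diameter $d\leq D$).
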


\begin{proof}
First assume that $d \geq 3$.
By way of contradiction assume that $c \not\in \{a, a^{-1} \}$.
By Lemma \ref{lem:spinLP0}(i) we have $c \in \{-a, - a^{-1} \}$
and $\text{\rm Char}(\F) \neq 2$.
By Note \ref{note:Huang} we may assume that $c = -a$.
Thus $\Phi$ has Huang data $(a,a,-a,d)$.
By Lemma \ref{lem:Huang2} the Leonard system \eqref{eq:LS-A}
has Huang data $(-a,-a,-a,d)$.
By Definition \ref{def:balanced} and Lemma \ref{lem:affine}
the pair $W, W^*$ is a Boltzmann pair for $-A, -A^*$.
Applying Lemma \ref{lem:W}(i) to the Leonard system \eqref{eq:LS-A}, one of the following holds:
\begin{align*}
  \tau_i &= (-1)^i (-a)^{-i} q^{i (d-i)}   &&   (0  \leq i \leq d),  
\\
  \tau_i ^{-1} &=  (-1)^i (-a)^{-i} q^{i (d-i)}   &&   (0  \leq i \leq d).  
\end{align*}
In either case, comparing the values of $\tau_1$, $\tau_3$ with \eqref{eq:defti} 
we get a contradiction by $\text{\rm Char}(\F) \neq 2$ and Lemma \ref{lem:condabc}.
We have shown the assertion for the case $d \geq 3$.
Next assume that $d \leq 2$.
Consider the matrices that represent the elements
$A$, $A^*$, $\{E_i\}_{i=0}^d$, $\{E^*_i\}_{i=0}^d$, $W$, $W^*$
with respect to the $\Phi$-split basis for $V$.
We represent these matrices in terms of $a$, $c$, $q$ using \eqref{eq:qRacahthi}--\eqref{eq:phii}.
The matrices representing $A,A^*$ are as in Lemma \ref{lem:split}.
The matrices representing $\{E_i\}_{i=0}^d$, $\{E^*_i\}_{i=0}^d$ can be
obtained using \eqref{eq:Ei0}.
The matrices representing $W, W^*$ are obtained by \eqref{eq:defWpre}.
Now compute each side of \eqref{eq:AWsW} in matrix form,
and compare the $(d,0)$-entries to get $(a-c) (a c -1)=0$.
Thus $c \in \{a, a^{-1} \}$.
\end{proof}

The following lemma is a reformulation of a special case of \cite[2.22]{GR}.

\begin{lemma}  
\label{lem:GR}    \samepage
\ifDRAFT {\rm lem:GR}. \fi
Let $\alpha$, $\beta$ denote commuting indeterminates.
Then
\begin{align}
\sum_{i=0}^{d}
 \frac{(1-\alpha q^{-4i}) \, (\alpha^{-1} q^2; q^2)_i \, (\beta^{-1}; q^2)_i \, (q^{-2d}; q^2)_i \, q^{i(2d-i+3)}  (-\beta)^i }
        {(1-\alpha q^{-2i})\, (q^2,q^2)_i \, (\beta q^2/\alpha; q^2)_i \, (q^{2d+2}/\alpha; q^2)_i  }
 = \frac{(\alpha q^{-2d}, q^2)_d }
            {(\alpha q^{-2d}/\beta, q^2)_d }.            \label{eq:GR}
\end{align}
\end{lemma}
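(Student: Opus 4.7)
The plan is to identify the sum in \eqref{eq:GR} as a terminating very-well-poised basic hypergeometric series in base $Q=q^2$, and then apply the cited identity GR~(2.22). The characteristic very-well-poised prefactor $(1-\alpha q^{-4i})/(1-\alpha q^{-2i})$ on the left side, together with the terminating factor $(q^{-2d};q^2)_i$ and the paired appearance of numerator and denominator $q^2$-Pochhammer symbols of the form $(x;q^2)_i$ and $(aQ/x;q^2)_i$, all signal a summation of type ${}_6\phi_5$ or ${}_8\phi_7$.

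The first step is to match the parameters in GR~(2.22) with $\alpha,\beta,q,d$. Identifying the terminating factor $(Q^{-n};Q)_i \leftrightarrow (q^{-2d};q^2)_i$ and its companion $(aQ^{n+1};Q)_i \leftrightarrow (q^{2d+2}/\alpha;q^2)_i$ forces $Q=q^2$, $n=d$, and $a=\alpha^{-1}$. Matching the pair $(b;Q)_i$ and $(aQ/b;Q)_i$ against $(\beta^{-1};q^2)_i$ and $(\beta q^2/\alpha;q^2)_i$ then forces $b=\beta^{-1}$ consistently. The remaining numerator factor $(\alpha^{-1}q^2;q^2)_i$, together with the prefactor, should be reassembled into the standard very-well-poised combination $(a;Q)_i(1-aQ^{2i})/(1-a)$ by means of the elementary identity $1-\alpha^{-1}q^{4i}=-\alpha^{-1}q^{4i}(1-\alpha q^{-4i})$, which produces an accumulating power of $q$ and $\alpha$ to be absorbed into the weight factor.

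Depending on whether GR~(2.22) is Jackson's ${}_6\phi_5$ sum or its ${}_8\phi_7$ generalization, the extra free parameters must be disposed of: either by taking a limit such as $c\to\infty$ (under which $(c;Q)_i/(aQ/c;Q)_i$ behaves as $(-c)^i Q^{\binom{i}{2}}$ and combines with the balancing factor $(aQ^{n+1}/(bc))^i$), or by specialising parameters so that paired factors cancel. Either route should produce the precise weight $q^{i(2d-i+3)}(-\beta)^i$ on the left. A parallel manipulation on the right, using the key reversal identity $(\alpha q^{-2d};q^2)_d=(-\alpha)^d q^{-d(d+1)}(\alpha^{-1}q^2;q^2)_d$ together with its $\beta$-analog, rewrites the stated ratio into the form produced by GR~(2.22).

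The main obstacle is therefore not conceptual but computational: correctly tracking the accumulating powers of $\alpha$, $\beta$, and $q$ that arise from the reversed-Pochhammer identities, the very-well-poised shift, and the disposal of extraneous parameters in GR~(2.22), and verifying that they reconcile exactly on the two sides. Since both sides of \eqref{eq:GR} are rational in the commuting indeterminates $\alpha$, $\beta$ and the left-hand sum is already terminating, no convergence or analytic issue intervenes.
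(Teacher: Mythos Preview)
Your strategy is correct and aligns with the paper's: both recognize the left side as a terminating very-well-poised series to which GR~(2.22) applies. The only substantive difference is in how the reparametrization is carried out. You propose to stay in base $Q=q^{2}$ and take the leading parameter to be $a=\alpha^{-1}$, absorbing the sign and power discrepancies via $1-\alpha^{-1}q^{4i}=-\alpha^{-1}q^{4i}(1-\alpha q^{-4i})$. The paper instead inverts the \emph{base}, using the identity $(x;q^{2})_{i}=(x^{-1};q^{-2})_{i}(-x)^{i}q^{i(i-1)}$ termwise to rewrite the sum in base $q^{-2}$; this puts the left side directly into the standard form of GR~(2.22) with leading parameter $\alpha q^{-2}$ and second parameter $\beta$, yielding $(\alpha q^{-2};q^{-2})_{d}/(\alpha q^{-2}/\beta;q^{-2})_{d}$, which is then converted back via $(\alpha q^{-2};q^{-2})_{d}=(\alpha q^{-2d};q^{2})_{d}$. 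The two maneuvers are equivalent for very-well-poised series, but the paper's base-inversion avoids the hedging in your outline about whether an extra parameter must be sent to $\infty$ or specialized away: no such limit is needed, and GR~(2.22) is applied exactly as stated. Your proposal would benefit from committing to one concrete substitution and carrying it through, rather than leaving the power-tracking as an acknowledged obstacle.
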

 
\begin{proof}
Using 
\[
   (x; q^2)_i = (x^{-1};q^{-2})_i (- x)^i q^{i(i-1)}.
\]
one routinely checks that the left-hand side of \eqref{eq:GR} is equal to
\[
\sum_{i=0}^d
  \frac{ (\alpha q^{-2}; q^{-2})_i \, (1- \alpha q^{-4i}) \, (\beta; q^{-2})_i \, (q^{2d}; q^{-2})_i }
         {(q^{-2}; q^{-2})_i \, (1-\alpha q^{-2i}) \, (\alpha q^{-2}/\beta; q^{-2})_i  \, (\alpha q^{-2d-2}; q^{-2})_i }
  \left( - \frac{ \alpha q^{-2d-2} } { \beta } \right)^i q^{i(1-i)}.
\]
By this and \cite[2.22]{GR}, the left-hand side of \eqref{eq:GR} is equal to
\[
    \frac{ (\alpha q^{-2}; q^{-2} )_d }
           { (\alpha q^{-2}/\beta; q^{-2})_d}.
\]
One checks that
\[
 (\alpha q^{-2}; q^{-2})_d = (\alpha q^{-2d}; q^2)_d,   \qquad\qquad
 (\alpha q^{-2}/\beta; q^{-2})_d = (\alpha q^{-2d}/\beta; q^2)_d.
\]
The result follows.
\end{proof}

\begin{lemma}   \label{lem:sumkiti}    \samepage
\ifDRAFT {\rm lem:sumkiti}. \fi
Referring to Lemma \ref{lem:W},
assume that the $\{\tau_i\}_{i=0}^d$ satisfy \eqref{eq:defti}.
Then
\begin{align}
 \sum_{i=0}^d k_i \tau_i &=      
  \frac{ (a^{-2}; q^2)_d } { (a q^{1-d}; q^2)_d }.                      \label{eq:sumkiti3}
\end{align}
Moreover the scalar $\gamma$ from Lemma \ref{lem:defgamma} satisfies
\begin{align}
\gamma &= \frac{q^{d(d-1)} (a q^{1-d};q^2)_d} {a^{3d} (a^{-2};q^2)_d }.   \label{eq:gamma3}
\end{align}
\end{lemma}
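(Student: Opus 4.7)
The plan is to evaluate the sum $\sum_{i=0}^d k_i\tau_i$ as a direct application of the basic hypergeometric identity in Lemma \ref{lem:GR}, and then to extract $\gamma$ from equation \eqref{eq:gamma}.

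First I would substitute the closed form for $k_i$ from Lemma \ref{lem:ki3} together with the formula $\tau_i = (-1)^i a^{-i} q^{i(d-i)}$, so that the general summand becomes
\[
 k_i\tau_i \;=\; \frac{(-1)^i q^{i(3d-i)}\,(1-a^2 q^{4i-2d})\,(a^2 q^{2-2d};q^2)_i\,(q^{-2d};q^2)_i\,(a^3 q^{1-d};q^2)_i}
 {a^{3i}\,(1-a^2 q^{2i-2d})\,(a^2 q^2;q^2)_i\,(q^2;q^2)_i\,(a^{-1} q^{1-d};q^2)_i}.
\]
The next step is to match this against the summand in Lemma \ref{lem:GR} by the substitution $\alpha = a^{-2}q^{2d}$ and $\beta = a^{-3} q^{d-1}$. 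Under this choice one directly checks
\[
 \alpha^{-1} q^2 = a^2 q^{2-2d},\qquad q^{2d+2}/\alpha = a^2 q^2,\qquad \beta q^2/\alpha = a^{-1} q^{1-d},\qquad \beta^{-1} = a^3 q^{1-d},
\]
so the four $q^2$-Pochhammers in the summand of Lemma \ref{lem:GR} coincide with those of $k_i\tau_i$.

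Then I would verify the remaining pre-Pochhammer factor. Using the identity $1-xq^{-m} = -xq^{-m}(1-x^{-1}q^m)$, one writes
\[
 \frac{1-\alpha q^{-4i}}{1-\alpha q^{-2i}} = q^{-2i}\,\frac{1-a^2 q^{4i-2d}}{1-a^2 q^{2i-2d}},
\]
and the remaining monomial factor in Lemma \ref{lem:GR} is $q^{i(2d-i+3)}(-\beta)^i = (-1)^i a^{-3i} q^{i(3d-i+2)}$. Combining the extra $q^{-2i}$ with $q^{i(3d-i+2)}$ produces $q^{i(3d-i)}$, which is exactly the power of $q$ appearing in $k_i\tau_i$. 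Therefore the summand in Lemma \ref{lem:GR} for these choices of $\alpha,\beta$ is identical to $k_i\tau_i$, and the right-hand side of \eqref{eq:GR} gives
\[
 \sum_{i=0}^d k_i\tau_i \;=\; \frac{(\alpha q^{-2d};q^2)_d}{(\alpha q^{-2d}/\beta;q^2)_d} \;=\; \frac{(a^{-2};q^2)_d}{(a q^{1-d};q^2)_d},
\]
establishing \eqref{eq:sumkiti3}.

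For the formula \eqref{eq:gamma3}, the route is immediate: by Lemma \ref{lem:piW} we have $\gamma = \nu^{-1}\sum_{i=0}^d k_i\tau_i$, and Lemma \ref{lem:qRacahnu} supplies $\nu = a^{3d}(a^{-2};q^2)_d^2 / \bigl(q^{d(d-1)}(aq^{1-d};q^2)_d^2\bigr)$. Substituting \eqref{eq:sumkiti3} and simplifying yields the stated value of $\gamma$. The only nonroutine step in the whole argument is the matching with Lemma \ref{lem:GR}; the rest is bookkeeping. The main obstacle is thus guessing the substitution $\alpha = a^{-2}q^{2d}$, $\beta = a^{-3}q^{d-1}$, which is forced by comparing the three nontrivial $q^2$-Pochhammer factors in $k_i\tau_i$ with those appearing in \eqref{eq:GR}.
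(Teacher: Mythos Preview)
Your proposal is correct and takes essentially the same approach as the paper: both substitute the explicit formula for $k_i$ from Lemma~\ref{lem:ki3}, match the resulting sum to Lemma~\ref{lem:GR} via the same specialization $\alpha = a^{-2}q^{2d}$, $\beta = a^{-3}q^{d-1}$, and then deduce \eqref{eq:gamma3} from \eqref{eq:gamma} together with the formula \eqref{eq:nu3} for $\nu$. The only cosmetic difference is that the paper writes out the $i=0$ and $i=d$ terms separately before checking they agree with the general summand in \eqref{eq:GR}, whereas you verify the match term-by-term for the generic $i$; both are fine since the boundary formulas in Lemma~\ref{lem:ki3} are consistent with the interior one.
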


\begin{proof}
We first show \eqref{eq:sumkiti3}.
Using the data in Lemma \ref{lem:ki3}
we find that the left-hand side of \eqref{eq:sumkiti3} is equal to the sum
\begin{equation}
\begin{array}{l}
\displaystyle
1 +
 \sum_{i=1}^{d-1}
 \frac{(1-\alpha^{-1} q^{4i}) \, (\alpha^{-1} q^2; q^2)_i \, (\beta^{-1}; q^2)_i \, (q^{-2d}; q^2)_i \, (-\beta)^i \, q^{i(2d-i+1)}}
        {(1-\alpha^{-1} q^{2i})\, (q^2,q^2)_i \, (\beta q^2/\alpha; q^2)_i \, (q^{2d+2}/\alpha; q^2)_i   }
\\
\displaystyle
\quad  +
  \frac{(\alpha^{-1} q^2; q^2)_d \, (\beta^{-1}; q^2)_d \, (q^{-2d}; q^2)_d \, (- \beta)^d \,  q^{d(d+1)} }
        {(q^2; q^2)_d \, (\beta q^2/\alpha; q^2)_d \, (q^{2d}/\alpha; q^2)_d },  
\end{array} 
            \label{eq:sum}
\end{equation}                    
where  $\alpha = a^{-2} q^{2d}$,  $\beta = a^{-3} q^{d-1}$.
One routinely checks that  the sum \eqref{eq:sum} is equal to
the the left-hand side of \eqref{eq:GR}.
Thus \eqref{eq:sumkiti3} holds by Lemma \ref{lem:GR}.
To get \eqref{eq:gamma3},
evaluate \eqref{eq:gamma} using \eqref{eq:nu3}, \eqref{eq:sumkiti3}.
\end{proof}

\section{Type II matrices and spin models}
\label{sec:spin}

We now turn our attention to type II matrices and spin models.
For the rest of this paper,
fix a finite nonempty set $X$.
Let $\Mat$ denote the $\C$-algebra consisting of the
matrices that have all entries in $\C$ and whose rows and columns
are indexed by $X$.
For $\bR \in \Mat$ and $x$, $y \in X$ the $(x,y)$-entry of $\bR$ is denoted by $\bR (x,y)$.
Let $\I$ (resp.\ $\J$) denote the identity matrix  (resp.\ all $1$'s matrix) in $\Mat$.
For $\bR, \bS \in \Mat$ let $\bR \circ \bS$ denote their Hadamard (entry-wise) product.
Let $\V$ denote the vector space over $\C$ consisting of
the column vectors whose entries are indexed by $X$.
Note that $\dim \V = |X|$.
The algebra $\Mat$ acts on $\V$ by left multiplication.
For $y \in X$ define $\widehat{y} \in \V$ that has $y$-entry $1$ and all other entries $0$.
Note that $\{\widehat{y}\}_{y \in X}$ form a basis for $\V$.
For a real number $\alpha > 0$ let $\alpha^{1/2}$ denote the positive square root of $\alpha$.

\begin{definition}     {\rm  \cite[Definition 2.1]{Jones} }
\label{def:type2}    \samepage
\ifDRAFT {\rm def:type2}. \fi
A matrix $\W \in \Mat$ is said to be {\em type II} whenever 
$\W$ is symmetric with all entries nonzero and 
\begin{align}
  \sum_{y \in X} \frac{\W (a,y)} {\W(b,y)}
 &= |X| \delta_{a,b}    &&   (a,b \in X).                \label{eq:type2}
\end{align}
\end{definition}

For a symmetric $\W \in \Mat$ with all entries nonzero,
define $\W^- \in \Mat$ by
\begin{align}
   \W^- (y,z) &= (\W(y,z) )^{-1}   &&  (y,z \in X).    \label{eq:defW-}
\end{align}
Referring to the equation \eqref{eq:type2},
for $a,b \in X$ the left-hand side is equal to the $(a,b)$-entry of $\W \W^-$.

\begin{lemma}    \label{lem:type2W-}    \samepage
\ifDRAFT {\rm lem:type2W-}. \fi
Assume $\W \in \Mat$ is symmetric with all entries nonzero.
Then the following {\rm (i)--(iii)} are equivalent:
\begin{itemize}
\item[\rm (i)]
$\W$ is type II;
\item[\rm (ii)]
$\W \W^- = |X| \,\I$;
\item[\rm (iii)]
$\W \W^- = \alpha \I$ for some $\alpha \in \C$.
\end{itemize}
\end{lemma}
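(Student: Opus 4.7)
The plan is to prove the implications (i) $\Leftrightarrow$ (ii) and (ii) $\Leftrightarrow$ (iii), exploiting the observation (made just before the lemma) that for symmetric $\W$ with nonzero entries, one has
\[
 (\W\W^-)(a,b) \;=\; \sum_{y\in X} \W(a,y)\,\W^-(y,b) \;=\; \sum_{y\in X}\frac{\W(a,y)}{\W(b,y)}
\]
for all $a,b\in X$, where the last equality uses the symmetry $\W(y,b)=\W(b,y)$ together with \eqref{eq:defW-}.

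First I would handle (i) $\Leftrightarrow$ (ii). Given the displayed identity, condition (i) (i.e.\ the type II relation \eqref{eq:type2}) asserts exactly that $(\W\W^-)(a,b)=|X|\delta_{a,b}$ for every pair $(a,b)$, which is the entry-wise form of $\W\W^-=|X|\I$. So this equivalence is immediate and requires no computation.

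Next, (ii) $\Rightarrow$ (iii) is trivial; take $\alpha=|X|$.

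The only remaining step is (iii) $\Rightarrow$ (ii), and I expect this to be the main (though still easy) observation: one must pin down the scalar $\alpha$. I would evaluate the diagonal entry $(\W\W^-)(a,a)$ directly. Using symmetry of $\W$ and \eqref{eq:defW-},
\[
 (\W\W^-)(a,a) \;=\; \sum_{y\in X}\frac{\W(a,y)}{\W(a,y)} \;=\; |X|.
\]
Hence $\alpha I$ has diagonal entry $|X|$ at every position $a\in X$, forcing $\alpha=|X|$ and giving (ii). This closes the cycle. The only subtlety is the use of symmetry in the diagonal calculation; without it, one would have $\W(a,y)/\W(y,a)$ rather than $1$ in the sum.
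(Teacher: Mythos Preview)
Your proof is correct and follows essentially the same approach as the paper: both rely on the entry formula $(\W\W^-)(a,b)=\sum_{y}\W(a,y)/\W(b,y)$ noted just before the lemma, and the key step (iii) $\Rightarrow$ (ii) (or (iii) $\Rightarrow$ (i) in the paper's cycle) amounts to reading off the diagonal entry to pin down $\alpha=|X|$. Your version makes the diagonal computation a bit more explicit than the paper's terse reference to ``the comment below \eqref{eq:defW-},'' but the content is the same.
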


\begin{proof}
(i) $\Rightarrow$ (ii)
By \eqref{eq:type2} and \eqref{eq:defW-}.

(ii) $\Rightarrow$ (iii)
Clear.

(iii) $\Rightarrow$ (i)
By the comment below \eqref{eq:defW-}.
\end{proof}

\begin{note}    \label{note:type2}    \samepage
\ifDRAFT {\rm note:type2}. \fi
Assume $\W \in \Mat$ is type II.
Then $\alpha \W$ is type II for $0 \neq \alpha \in \C$.
Moreover,  $\W^{-1}$ and $\W^-$ are type II.
\end{note}

\begin{definition}  {\rm \cite[Section 1]{N:analgebra} }
\label{def:NW}    \samepage
\ifDRAFT {\rm def:NW}. \fi
Assume $\W \in \Mat$ is type II.
For $b,c \in X$ define $\text{\bf u}_{b,c} \in \V$  by
\begin{align}
    \text{\bf u}_{b,c} = \sum_{y \in X} \frac{\W(b,y)}{\W(c,y)} \, \widehat{y}.    \label{eq:defubc}
\end{align}
Define
\[
   N(\W) = \{ B \in \Mat \,|\, 
    \text{$B$ is symmetric}, \; B \mathbf{u}_{b,c} \in \C \mathbf{u}_{b,c} \text{ for all $b,c \in X$} \}.
\]
\end{definition}

\begin{lemma}  {\rm  \cite[Theorem 6]{N:analgebra} }
\label{lem:Nomuraalg}    \samepage
\ifDRAFT {\rm lem:Nomuraalg}. \fi
Assume  $\W \in \Mat$ is type II.
Then $N(\W)$ is a commutative subalgebra of $\Mat$ that contains $\J$ and is 
closed under the Hadamard product.
\end{lemma}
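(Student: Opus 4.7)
My plan is to establish the four assertions in order of increasing difficulty: first that $N(\W)$ is a linear subspace containing $\I$ and $\J$, then closure under matrix product together with commutativity, and finally Hadamard closure (the main obstacle). The workhorse throughout is the following basis property: for each fixed $c \in X$ the family $\{\mathbf{u}_{b,c}\}_{b \in X}$ is a basis of $\V$. Indeed, the matrix $U_c$ whose $b$-th column is $\mathbf{u}_{b,c}$ equals $D_c^{-1}\W$ (by symmetry of $\W$), where $D_c$ is the diagonal matrix with $(y,y)$-entry $\W(c,y)$; since $\W$ is invertible by Lemma \ref{lem:type2W-} and $D_c$ has all nonzero diagonal entries, $U_c$ is invertible. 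Note also that $\mathbf{u}_{c,c}$ is the all-ones vector $\mathbf{1}$.

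\textbf{Subalgebra, commutativity, and $\J$.} Closure of $N(\W)$ under linear combinations is immediate, and $\I \in N(\W)$ trivially. The type II relation \eqref{eq:type2} gives
\[
\J\,\mathbf{u}_{b,c} \;=\; \Bigl(\sum_{y \in X}\mathbf{u}_{b,c}(y)\Bigr)\mathbf{1} \;=\; |X|\,\delta_{b,c}\,\mathbf{1},
\]
which is a scalar multiple of $\mathbf{u}_{b,c}$ in both cases (the case $b=c$ using $\mathbf{u}_{c,c} = \mathbf{1}$), so $\J \in N(\W)$. For the remaining structure, fix any $c_0 \in X$: every $B \in N(\W)$ is diagonal in the basis $\{\mathbf{u}_{b,c_0}\}_{b \in X}$, so any two elements of $N(\W)$ commute. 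Writing $p_B(b,c)$ for the eigenvalue of $B$ on $\mathbf{u}_{b,c}$, the product $B_1 B_2$ is symmetric (its transpose equals $B_2 B_1 = B_1 B_2$ by commutativity) and has each $\mathbf{u}_{b,c}$ as an eigenvector with eigenvalue $p_{B_1}(b,c)\,p_{B_2}(b,c)$; hence $B_1B_2 \in N(\W)$.

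\textbf{Hadamard closure.} This is the main obstacle and requires exploiting a duality between $N(\W)$ and $N(\W^-)$ going back to Nomura. Viewing $p_B$ as a matrix in $\Mat$, the defining relation $B\mathbf{u}_{b,c} = p_B(b,c)\mathbf{u}_{b,c}$ reads entrywise as
\[
\sum_{z \in X} B(y,z)\,\frac{\W(b,z)}{\W(c,z)} \;=\; p_B(b,c)\,\frac{\W(b,y)}{\W(c,y)}.
\]
My plan is to verify, by a double-sum manipulation invoking \eqref{eq:type2}, that $p_B$ is symmetric and that each vector $\mathbf{u}^{\W^-}_{b,c} = \mathbf{u}_{c,b}$ is an eigenvector of $p_B$; that is, $p_B \in N(\W^-)$. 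Since $\W^-$ is itself type II by Note \ref{note:type2}, the subalgebra property proved in the previous step applies to $N(\W^-)$, and the parallel construction yields a map $\Psi_{\W^-}\colon N(\W^-) \to N(\W)$ which, by a symmetric double-sum identity, is a nonzero scalar multiple of an inverse to $\Psi_{\W}\colon B \mapsto p_B$. Since $\Psi_\W$ converts matrix product in $N(\W)$ into Hadamard product in $N(\W^-)$ (directly from the diagonalization), the inverse $\Psi_{\W^-}$ carries matrix product in $N(\W^-)$ back to Hadamard product in $N(\W)$. As $N(\W^-)$ is closed under matrix product, $N(\W)$ is closed under Hadamard product.

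\textbf{Main obstacle.} The genuinely delicate step is the pair of double-sum identities: one to establish $p_B \in N(\W^-)$ and one to prove that $\Psi_{\W^-} \circ \Psi_\W$ is a nonzero scalar multiple of the identity. Both reduce to a careful interchange of summation and a single appeal to the type II condition, but getting the bookkeeping right (in particular tracking how the swap $b \leftrightarrow c$ enters when passing from $\W$ to $\W^-$) is where the real work lies; once these identities are secured, everything else follows formally from the basis property of the first step.
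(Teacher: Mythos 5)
The paper does not prove this lemma; it cites Nomura's result \cite[Theorem 6]{N:analgebra} and moves on, so there is no in-paper argument to compare against. Your strategy is, however, precisely Nomura's. The first two blocks of your write-up --- the linear subspace structure, $\I, \J \in N(\W)$, commutativity via simultaneous diagonalization in a fixed basis $\{\text{\bf u}_{b,c_0}\}_{b\in X}$, closure under matrix product, and the observation that $U_c = D_c^{-1}\W$ is invertible --- are complete and correct. For Hadamard closure you correctly identify the duality $B \mapsto p_B$ (essentially the map $\Psi$ the paper later introduces in Definition~\ref{def:Psi}) as the engine, but you stop exactly where the real work starts, so as written the proposal is an accurate blueprint rather than a finished proof.

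The missing computation is short, and it is worth recording to close the gap. Taking the $a$-entry of $B\,\text{\bf u}_{b,c}=p_B(b,c)\,\text{\bf u}_{b,c}$ and solving gives $p_B(b,c) = \frac{\W(c,a)}{\W(b,a)}\sum_{z} B(a,z)\,\frac{\W(b,z)}{\W(c,z)}$ for every $a\in X$. Multiplying this by $\W(d,c)/\W(a,c)$, summing over $c$, interchanging the sums, and applying \eqref{eq:type2} once (to collapse $\sum_c \W(d,c)/\W(z,c) = |X|\delta_{d,z}$) yields
\[
\sum_{c\in X} p_B(b,c)\,\frac{\W^-(a,c)}{\W^-(d,c)} \;=\; |X|\,B(a,d)\,\frac{\W^-(a,b)}{\W^-(d,b)},
\]
which establishes in one stroke both that $p_B \in N(\W^-)$ (granted its symmetry) and that $\Psi_{\W^-}\circ\Psi_\W = |X|\cdot\text{\rm id}$. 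Symmetry of $p_B$ follows by averaging the $a$-indexed expression for $p_B(b,c)$ over $a$, relabelling the two dummy summation indices, and using $B^{\sf t}=B$. With those two computations supplied, your argument closes correctly and recovers the cited theorem.
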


We have a comment.

\begin{lemma}    \label{lem:NW-}    \samepage
\ifDRAFT {\rm lem:NW-}. \fi
Assume $\W \in \Mat$ is type II. 
Then $N(\alpha \W)= N(\W)$ for $0 \neq \alpha \in \C$.
Moreover each of $N(\W^-)$, $N(\W^{-1})$ is equal to $N(\W)$.
\end{lemma}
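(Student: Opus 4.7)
The plan is to verify each claim directly from Definition \ref{def:NW} by tracking what happens to the vectors $\mathbf{u}_{b,c}$ under the three operations $\W \mapsto \alpha \W$, $\W \mapsto \W^-$, and $\W \mapsto \W^{-1}$.

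First I would handle the scalar case. Let $\alpha \in \C$ be nonzero and write $\mathbf{u}'_{b,c}$ for the vector attached to $\alpha \W$ via \eqref{eq:defubc}. For every $b,c \in X$ we have
\begin{equation*}
  \mathbf{u}'_{b,c} = \sum_{y \in X} \frac{\alpha \W(b,y)}{\alpha \W(c,y)} \widehat{y} = \sum_{y \in X} \frac{\W(b,y)}{\W(c,y)} \widehat{y} = \mathbf{u}_{b,c}.
\end{equation*}
So the defining family of vectors is identical for $\W$ and for $\alpha \W$, and $N(\alpha \W) = N(\W)$.

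Next I would treat the case of $\W^-$. The matrix $\W^-$ is symmetric with all entries nonzero, and it is type II by Note \ref{note:type2}, so $N(\W^-)$ is defined. Writing $\mathbf{u}'_{b,c}$ for the vector attached to $\W^-$, we compute
\begin{equation*}
  \mathbf{u}'_{b,c} = \sum_{y \in X} \frac{\W^-(b,y)}{\W^-(c,y)} \widehat{y} = \sum_{y \in X} \frac{\W(c,y)}{\W(b,y)} \widehat{y} = \mathbf{u}_{c,b}.
\end{equation*}
Thus the families $\{\mathbf{u}'_{b,c}\}_{b,c \in X}$ and $\{\mathbf{u}_{b,c}\}_{b,c \in X}$ coincide (merely relabelled by swapping $b \leftrightarrow c$), so the condition that $B$ be symmetric and preserve each line $\C \mathbf{u}_{b,c}$ is the same for both matrices. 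Hence $N(\W^-) = N(\W)$.

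Finally, for $\W^{-1}$ I would combine the previous two observations with Lemma \ref{lem:type2W-}. By that lemma, $\W \W^- = |X|\,\I$, so $\W$ is invertible and $\W^{-1} = |X|^{-1} \W^-$. Applying the scalar case with $\alpha = |X|^{-1}$ and then the $\W^-$ case gives
\begin{equation*}
  N(\W^{-1}) = N(|X|^{-1} \W^-) = N(\W^-) = N(\W).
\end{equation*}
No real obstacle arises: everything reduces to the observation that $\mathbf{u}_{b,c}$ is invariant under global rescaling of $\W$ and gets swapped to $\mathbf{u}_{c,b}$ under entrywise inversion, together with the identification $\W^{-1} = |X|^{-1} \W^-$ coming from Lemma \ref{lem:type2W-}.
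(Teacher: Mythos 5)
Your proposal is correct and follows essentially the same route as the paper: for $\alpha\W$ the vectors $\mathbf{u}_{b,c}$ are unchanged, for $\W^-$ they satisfy $\mathbf{u}^-_{b,c}=\mathbf{u}_{c,b}$ so the defining family is merely relabelled, and $\W^{-1}$ is handled via $\W^- = |X|\W^{-1}$ from Lemma \ref{lem:type2W-}. The only difference is cosmetic — you spell out the scalar case that the paper dismisses as clear.
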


\begin{proof}
Clearly $N(\alpha \W)=N(\W)$.
For $b,c \in X$ define $\text{\bf u}^-_{b,c} \in \V$ by
\begin{align*}
    \text{\bf u}^-_{b,c} = \sum_{y \in X} \frac{\W^-(b,y)}{\W^-(c,y)} \, \widehat{y}. 
\end{align*}
Using \eqref{eq:defW-}, \eqref{eq:defubc} we obtain
\[
 \text{\bf u}^-_{b,c} = \sum_{y \in X} \frac{\W(c,y)} {\W(b,y)} \widehat{y} = \text{\bf u}_{c,b}.
\]
By this and Definition \ref{def:NW} we find $N(\W^-) = N(\W)$.
We have $\W^- = |X| \W^{-1}$ by Lemma \ref{lem:type2W-},
so $N(\W^{-1}) = N(\W^-)$.
The result follows.
\end{proof}

\begin{definition}  {\rm  \cite[Definition 2.1]{Jones} }
\label{def:spin}    \samepage
\ifDRAFT {\rm def:spin}. \fi
A matrix $\W \in \Mat$ is called a {\em spin model} whenever
$\W$ is type II and
\begin{align}
 \sum_{y \in X} \frac{\W(a,y) \W(b,y)} {\W(c,y)}
 &= |X|^{1/2} \; \frac{\W(a,b)} {\W(a,c) \W(b,c)}   &&  (a,b,c \in X).     \label{eq:type3}
\end{align}
\end{definition}

\begin{note}
The equation \eqref{eq:type3} is often called the type III condition.
\end{note}

\begin{note}    {\rm  \cite[Section 2]{Jones} (see also \cite[Proposition 2.1]{KMW}). }
\label{note:spin}    \samepage
\ifDRAFT {\rm note:spin}. \fi
Assume $\W \in \Mat$ is a spin model.
Then $-\W$ is a spin model.
Moreover $\W^-$ is a spin model.
\end{note}

\begin{lemma}  {\rm  \cite[Lemma 8]{N:analgebra} }
\label{lem:Nomuraalg2}    \samepage
\ifDRAFT {\rm lem:Nomuraalg2}. \fi
Assume $\W \in \Mat$ is a spin model.
Then $\W \in N(\W)$.
\end{lemma}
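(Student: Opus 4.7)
The plan is to verify the two defining conditions of $N(\W)$ directly from the definition of a spin model. Symmetry of $\W$ is immediate from Definition \ref{def:type2}, so the real content is to check that each $\mathbf{u}_{b,c}$ is an eigenvector of $\W$, and the type III condition \eqref{eq:type3} is tailor-made for exactly this computation.

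Concretely, I would fix $b, c \in X$ and compute the entries of $\W \mathbf{u}_{b,c}$. For $a \in X$, using \eqref{eq:defubc},
\[
(\W \mathbf{u}_{b,c})(a) \;=\; \sum_{y \in X} \W(a,y) \, \frac{\W(b,y)}{\W(c,y)} \;=\; \sum_{y \in X} \frac{\W(a,y) \W(b,y)}{\W(c,y)}.
\]
Applying \eqref{eq:type3}, this sum equals $|X|^{1/2} \, \W(a,b) / (\W(a,c)\,\W(b,c))$. On the other hand, using the symmetry of $\W$, the $a$-entry of $\mathbf{u}_{b,c}$ is $\W(b,a)/\W(c,a) = \W(a,b)/\W(a,c)$. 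Comparing the two expressions yields
\[
\W \mathbf{u}_{b,c} \;=\; \frac{|X|^{1/2}}{\W(b,c)} \, \mathbf{u}_{b,c},
\]
which exhibits $\mathbf{u}_{b,c}$ as an eigenvector of $\W$ (note $\W(b,c) \neq 0$ since all entries of a spin model are nonzero). Combined with the symmetry of $\W$, this places $\W$ in $N(\W)$ by Definition \ref{def:NW}.

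There is essentially no obstacle here: the statement is a one-line unpacking of \eqref{eq:type3} once one recognizes that the left-hand side of the type III equation is, up to the substitution $a \leftrightarrow y$, exactly the $a$-entry of $\W \mathbf{u}_{b,c}$, and the right-hand side is proportional to the $a$-entry of $\mathbf{u}_{b,c}$ with a proportionality constant $|X|^{1/2}/\W(b,c)$ that depends only on $b,c$. Thus the proof should amount to little more than exhibiting the eigenvalue $|X|^{1/2}/\W(b,c)$.
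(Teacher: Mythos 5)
Your proof is correct, and it is the standard argument for this fact; the paper itself only cites \cite[Lemma 8]{N:analgebra} rather than proving it, and the computation you give (unpacking the type III condition \eqref{eq:type3} as the statement that $\mathbf{u}_{b,c}$ is a $\W$-eigenvector with eigenvalue $|X|^{1/2}/\W(b,c)$, using symmetry of $\W$ to rewrite the $a$-entry of $\mathbf{u}_{b,c}$) is exactly the proof in that reference.
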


\section{Hadamard matrices and spin models}

In this section we consider a certain class of type II matrices and spin models.

\begin{definition}    \label{def:Hmatrix}    \samepage
\ifDRAFT {\rm def:Hmatrix}. \fi
A matrix $\bH \in \Mat$ is called {\em Hadamard}
whenever every entry is $\pm 1$ and $\bH \bH^{\sf t} = |X| \,\I$.
\end{definition}

\begin{example}    \label{exam1}    \samepage
\ifDRAFT {\rm exam1}. \fi
The matrix
\[
 \bH = 
 \begin{pmatrix}
  1 & -1 & -1 & -1  \\
  -1 & 1 & -1 & -1  \\
 -1 & -1 & 1 & -1  \\
  -1 & -1 & -1 & 1
 \end{pmatrix}
\]
is Hadamard.
\end{example}

\begin{lemma}    \label{lem:Htype2}    \samepage
\ifDRAFT {\rm lem:Htype2}. \fi
A symmetric Hadamard matrix is type II.
\end{lemma}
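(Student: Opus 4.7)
The plan is to apply Lemma \ref{lem:type2W-}, specifically the equivalence of (i) and (ii). Let $\bH \in \Mat$ be a symmetric Hadamard matrix. Since every entry of $\bH$ is $\pm 1$, in particular every entry is nonzero, so $\bH$ meets the hypothesis of Lemma \ref{lem:type2W-}.

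Next I would observe that for $y,z \in X$, the entry $\bH(y,z) \in \{1,-1\}$ is its own multiplicative inverse, so by the definition \eqref{eq:defW-} we have $\bH^-(y,z) = \bH(y,z)$ for all $y,z \in X$, i.e.\ $\bH^- = \bH$. Combining this with symmetry $\bH = \bH^{\sf t}$ and the Hadamard condition $\bH \bH^{\sf t} = |X|\,\I$ from Definition \ref{def:Hmatrix}, I compute
\begin{equation*}
 \bH \bH^- \;=\; \bH \bH \;=\; \bH \bH^{\sf t} \;=\; |X|\,\I.
\end{equation*}
Thus condition (ii) of Lemma \ref{lem:type2W-} holds, and therefore $\bH$ is type II.

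There is no substantive obstacle here; the proof is a one-line verification once one notices that $\pm 1$ entries force $\bH^- = \bH$, which turns the Hadamard identity directly into the type II identity of Lemma \ref{lem:type2W-}(ii).
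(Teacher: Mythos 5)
Your proof is correct and is essentially the same verification the paper intends; the paper's proof simply cites Definitions \ref{def:type2} and \ref{def:Hmatrix}, while you route the same observation ($\pm 1$ entries force $\bH^- = \bH$, so the Hadamard identity becomes the type II identity) through the equivalent reformulation in Lemma \ref{lem:type2W-}. This is a cosmetic difference, not a different argument.
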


\begin{proof}
By Definitions \ref{def:type2} and \ref{def:Hmatrix}.
\end{proof}

\begin{lemma}    \label{lem:Htype}    \samepage
\ifDRAFT {\rm lem:Htype}. \fi
For $\W \in \Mat$ and $0 \neq \alpha \in \C$ the following are equivalent:
\begin{itemize}
\item[\rm (i)]
$\W$ is type II with all entries $\pm \alpha$;
\item[\rm (ii)]
there exists a symmetric Hadamard matrix $\bH$ such that $\W = \alpha \bH$.
\end{itemize}
\end{lemma}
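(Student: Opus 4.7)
The plan is to prove each direction directly from the definitions, using the key observation that when all entries of $\W$ are $\pm\alpha$, the matrix $\W^-$ (entry-wise inverse) is a scalar multiple of $\W$ itself.

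For (ii)$\Rightarrow$(i), I would argue: if $\bH$ is symmetric Hadamard then Lemma \ref{lem:Htype2} gives that $\bH$ is type II, and Note \ref{note:type2} gives that $\alpha \bH$ is type II for any nonzero $\alpha \in \C$. The entries of $\alpha \bH$ are obviously $\pm \alpha$. So (i) follows.

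For (i)$\Rightarrow$(ii), set $\bH = \alpha^{-1} \W$. Then $\bH$ is symmetric with all entries $\pm 1$, so the only task is to verify $\bH \bH^{\sf t} = |X|\, \I$. The key computation is that since every entry $\W(y,z) = \pm \alpha$ satisfies $(\W(y,z))^{-1} = \alpha^{-2} \W(y,z)$, we have $\W^- = \alpha^{-2} \W$. Combining this with the type II condition in the form $\W \W^- = |X|\, \I$ from Lemma \ref{lem:type2W-}, and with symmetry of $\W$, we get
\[
  |X|\, \I \;=\; \W \W^- \;=\; \alpha^{-2} \W \W \;=\; \alpha^{-2} \W \W^{\sf t},
\]
whence $\bH \bH^{\sf t} = \alpha^{-2} \W \W^{\sf t} = |X|\, \I$. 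Thus $\bH$ is Hadamard and $\W = \alpha \bH$.

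There is no real obstacle here; the only ingredient beyond the definitions is the identity $\W^- = \alpha^{-2} \W$, which follows from $(\pm\alpha)^{-1} = \alpha^{-2}(\pm\alpha)$. The lemma is essentially a reformulation of the type II condition under the entry restriction $\pm\alpha$, and everything reduces to Lemmas \ref{lem:type2W-} and \ref{lem:Htype2} together with Note \ref{note:type2}.
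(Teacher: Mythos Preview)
Your proof is correct and follows essentially the same approach as the paper: for (i)$\Rightarrow$(ii) the paper sets $\bH=\alpha^{-1}\W$, notes $\bH^-=\bH$ (equivalent to your $\W^-=\alpha^{-2}\W$), and applies Lemma~\ref{lem:type2W-} to $\bH$; for (ii)$\Rightarrow$(i) the paper verifies the type~II condition directly, which amounts to your appeal to Lemma~\ref{lem:Htype2} and Note~\ref{note:type2}.
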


\begin{proof}
(i) $\Rightarrow$ (ii)
Assume that $\W$ is type II with all entries $\pm \alpha$.
Define $\bH = \alpha^{-1} \W$.
Then the entries of $\bH$ are $\pm 1$.
Moreover, by Note \ref{note:type2}, $\bH$ is type II.
Applying Lemma \ref{lem:type2W-} to $\bH$ and using $\bH^- =\bH$,
we obtain $\bH^2 = |X| \I$.
The matrix $\bH$ is symmetric since it is type II.
By these comments $\bH$ is a symmetric Hadamard matrix.

(ii) $\Rightarrow$ (i)
Assume that there exists a symmetric Hadamard matrix $\bH$ such that
$\W = \alpha \bH$.
Then $\W$ is symmetric and satisfies \eqref{eq:type2}.
So $\W$ is type II.
Clearly the entries of $\W$ are $\pm \alpha$.
\end{proof}

\begin{definition}    \label{def:Htype}   \samepage
\ifDRAFT {\rm def:Htype}. \fi
A type II matrix $\W \in \Mat$ is said to have {\em Hadamard type}
whenever there exists a symmetric Hadamard matrix $\bH$ and  $0 \neq \alpha \in \C$ 
such that $\W = \alpha \bH$.
\end{definition}

We have an example of a spin model having Hadamard type.

\begin{example}     \label{example2}    \samepage
\ifDRAFT {\rm example2}. \fi
Let the matrix $\bH$ be from Example \ref{exam1}.
Then the matrix $\W  = \sqrt{-1} \, \bH$ is a spin model of Hadamard type.
\end{example}

\begin{note}   \label{noteH}    \samepage
\ifDRAFT {\rm noteH}. \fi
Spin models of Hadamard type sometimes cause technical problems,
so occasionally we will assume that a spin model under discussion does not
have Hadamard type.
\end{note}

\section{Distance-regular graphs}
\label{sec:DRG}

We now turn our attention to distance-regular graphs
\cite{BI, BCN, DKT}.
Let $\Gamma$ denote an undirected,  connected graph, without loops or multiple edges,
with vertex set $X$.
For $x,y \in X$ let $\partial(x,y)$ denote the path-length distance between $x$ and $y$.
By the {\em diameter} of $\Gamma$ we mean $D = \max \{\partial(x,y) \,|\, x,y \in X\}$.
To avoid trivialities, we always assume $D \geq 1$.
For $x \in X$  define
\begin{align*}
 \Gamma_i (x) &= \{y \in X \,|\, \partial (x,y) = i\}   &&  (0 \leq i \leq D).
\end{align*}
The graph $\Gamma$ is said to be {\em regular}
whenever $\k = |\Gamma_1 (x)|$ is independent of $x \in X$.
In this case,
$\k$ is called the {\em valency} of $\Gamma$.
The graph $\Gamma$ is said to be {\em distance-regular} whenever
for $0 \leq h, i, j \leq D$ and vertices $x,y \in X$ at distance $\partial(x,y)=h$,
the number
\begin{align*}
  \p^h_{i j} &= |\Gamma_i(x) \cap \Gamma_j(y)|
\end{align*}
is independent of $x$, $y$.
The integers $\p^h_{i j}$ are called the {\em intersection numbers} of 
$\Gamma$.

For the rest of this section, assume that $\Gamma$ is distance-regular.
Define 
\begin{align}
 \k_i &= \p^0_{i i}   &&  (0 \leq i \leq D).                             \label{eq:defki2}
\end{align}
We have $|\Gamma_i (x)| = \k_i$ for $x \in X$ and $0 \leq i \leq D$.
Observe that $\k_0=1$, and that $\Gamma$ is regular with valency $\k=\k_1$.
Moreover
\begin{equation*} 
  \sum_{i=0}^D \k_i = |X|.                        
\end{equation*}
For $0 \leq h,i,j \leq D$ the scalar
$\p^h_{i j}$ is zero (resp.\ nonzero) if one of $h,i,j$ is
greater than (resp.\ equal to) the sum of the other two.
Define
\begin{align}
 \c_i &= \p^i_{1, i-1}  && (1 \leq i \leq D),      \label{eq:defci}
\\
 \a_i &= \p^i_{1 i} && (0 \leq i \leq D),           \label{eq:defai}
\\
 \b_i &= \p^i_{1, i+1}  && (0 \leq i \leq D-1).                \label{eq:defbi}
\end{align}
For notational convenience define $\c_0 = 0$ and $\b_D = 0$.
We have $\c_i \b_{i-1} \neq 0$ for $1 \leq i \leq D$.
Moreover
\begin{align}
\a_0 &= 0, &
\b_0 &= \k, &
\c_1 &= 1, &
\c_i + \a_i + \b_i &= \k  \quad (0 \leq i \leq D).                     \label{eq:a0b0c1}
\end{align}
By \cite[Chapter 4,  (1c)]{BCN},
\begin{align*}
 \k_i &= \frac{\b_0 \b_1 \cdots \b_{i-1} }
                {\c_1 \c_2 \cdots \c_i }               &&  (0 \leq i \leq D).       
\end{align*}

For $0 \leq i \leq D$ define $\A_i \in \Mat$ that has $(x,y)$-entry $1$
if $\partial(x,y)=i$ and $0$ if $\partial(x,y) \neq i$ $(x,y \in X)$.
Note that $\A_i$ is symmetric.
The matrix $\A_i$ is called the {\em $i^\text{\rm th}$ distance-matrix} of $\Gamma$.
Note that $\A_0 = \I$, and $\A_1$ is the adjacency matrix of $\Gamma$.
Observe that
\begin{align}
  \A_i \circ \A_j &= \delta_{i,j} \A_i  \qquad (0 \leq i,j \leq D),
  \qquad\qquad
  \sum_{i=0}^D \A_i = \J,     \label{eq:Ai2}
\\
  \A_i \A_j &= \sum_{h=0}^D \p^h_{ij} \A_h   \qquad\qquad
                                             (0 \leq i,j \leq D).    \label{eq:AiAj2}
\end{align}
By the equation on the left in \eqref{eq:Ai2} the matrices
$\{\A_i\}_{i=0}^D$ are linearly independent.
Let $\M$ denote the subspace of $\Mat$ with basis $\{\A_i\}_{i=0}^D$.
We have $\p^h_{i j} = \p^h_{j i}$ for $0 \leq h,i,j \leq D$.
By this and \eqref{eq:AiAj2}, $\M$ is a commutative subalgebra of $\Mat$.
By \eqref{eq:Ai2},
$\M$ is closed under $\circ$ and contains $\J$.
We call $\M$ the {\em Bose-Mesner algebra of $\Gamma$}.

By \cite[Theorem 2.6.1(ii)]{BCN}
there exists a basis $\{\E_i\}_{i=0}^D$ for the $\C$-vector space $\M$ such that
\begin{align}
\E_0 &= |X|^{-1} \J, &
\E_i \E_j &= \delta_{i,j} \E_i \quad (0 \leq i, j \leq D), &
\sum_{i=0}^D \E_i &= \I.                                                    \label{eq:Ei}
\end{align}
We have
\begin{equation}
    \V = \sum_{i=0}^D \E_i \V   \qquad\qquad \text{(direct sum)}.   \label{eq:VsumEi}
\end{equation}
In the above sum, the summands are the common eigenspaces  for $\M$.
For $0 \leq i \leq D$ the matrix
$\E_i$ is the projection onto the eigenspace $\E_i \V$.
We call $\{\E_i\}_{i=0}^D$ the {\em primitive idempotents} of $\Gamma$.
We call $\E_0$ the {\em trivial primitive idempotent}.
The primitive idempotents are unique up to ordering of the nontrivial
primitive idempotents.

Since $\M$ is closed under $\circ$,
there exist  $\q^h_{i j} \in \C$ $(0 \leq h,i,j \leq D)$ such that
\begin{align}
  \E_i \circ \E_j &= |X|^{-1} \sum_{h=0}^D \q^h_{i j} \E_h  &&  (0 \leq i,j \leq D).   \label{eq:EicircEj}
\end{align}
The scalars $\q^h_{i j}$ are called the {\em Krein parameters} of $\Gamma$ \cite{BI}.
By \cite[Theorem 3.8]{BI} each Krein parameter is real and nonnegative.
Define
\begin{align}
   \m_i &= q^0_{i i}   &&  (0 \leq i \leq D).          \label{eq:defmi}
\end{align}
We have $\m_0 = 1$.
By \cite[Proposition 3.7]{BI},  $\m_i = \text{\rm rank} (\E_i)$ $(0 \leq i \leq D)$.
So by \eqref{eq:VsumEi},
\begin{equation*}
  \sum_{i=0}^D \m_i = |X|.
\end{equation*}

Since each of $\{\A_i\}_{i=0}^D$ and $\{\E_i\}_{i=0}^D$ is a basis for $\M$,
there exist matrices  $P$, $Q \in \text{\rm Mat}_{D+1}(\C)$ such that
\begin{align}
  \A_j &= \sum_{i=0}^D P_{i j} \E_i, &
  \E_j &= |X|^{-1} \sum_{i=0}^D Q_{i j} \A_i  &&   (0 \leq j \leq D).   \label{eq:defPQ}
\end{align}
Using \eqref{eq:defPQ} we find $P Q = |X| I$.
Using $\A_0 = \I$ and $\E_0 =|X|^{-1} \J$ we obtain
\begin{align*}
 P_{i 0} &= 1,  &  Q_{i 0} &= 1  &&  (0 \leq i \leq D).          
\end{align*}
By \cite[Lemma 2.2.1(ii)]{BCN},
\begin{align}
   \k_j &= P_{0j},   &  \m_j &= Q_{0j}   &&   (0 \leq j \leq D).           \label{eq:kjP0j}
\end{align}
Define
\begin{align}
 \th_i &= P_{i 1},  &  \th^*_i &= Q_{i 1}  && (0 \leq i \leq D).               \label{eq:defthithsi}
\end{align}
By the equation on the left in \eqref{eq:defPQ} with $j=1$,
\begin{align*}
  \A_1 &= \sum_{i=0}^D \th_i \E_i.                                 
\end{align*}
So $\th_i$ is the eigenvalue of $\A_1$ corresponding to $\E_i$  $(0 \leq i \leq D)$.
We call $\{\th_i\}_{i=0}^D$ (resp.\ $\{\th^*_i\}_{i=0}^D$) 
the {\em eigenvalue sequence} (resp. {\em dual eigenvalue sequence}) 
{\em of $\Gamma$ with respect to the ordering $\{\E_i\}_{i=0}^D$}.

\begin{definition}    \label{def:vi2}   \samepage
\ifDRAFT {\rm def:vi2}. \fi
Define polynomials $\{v_i\}_{i=0}^D$ in $\C[\lambda]$ such that
$v_0=1$ and
\begin{align*}
  \lambda v_i &= \c_{i+1} v_{i+1} + \a_i v_i + \b_{i-1} v_{i-1}   &&  (1 \leq i \leq D-1).
\end{align*}
\end{definition}

\begin{lemma}  {\rm  \cite[Lemma 3.8]{T:subconst1} }
\label{lem:vi}   \samepage
\ifDRAFT {\rm lem:vi}. \fi
The following {\rm (i)--(iv)} hold.
\begin{itemize}
\item[\rm (i)]
For $0 \leq i \leq D$ the polynomial $v_i$ has degree $i$ 
and leading coefficient $(c_1 c_2 \cdots c_i)^{-1}$. 
\item[\rm (ii)]
For $0 \leq i \leq D$,  $\A_i = v_i (\A_1)$.  
\item[\rm (iii)]
The algebra $\M$ is generated by $\A_1$.
\item[\rm (iv)]
The scalars $\{\th_i\}_{i=0}^D$ are mutually distinct.
\end{itemize}
\end{lemma}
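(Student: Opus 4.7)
The plan is to establish (i)--(iv) in succession, each following from its predecessor or directly from the defining three-term recurrence in Definition \ref{def:vi2}, combined with the basic structure of the Bose--Mesner algebra.

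For (i), I would induct on $i$. The base case $v_0 = 1$ has degree $0$ with leading coefficient $1$ (empty product). Reading the recurrence as $\c_{i+1} v_{i+1} = \lambda v_i - \a_i v_i - \b_{i-1} v_{i-1}$ and applying the induction hypothesis, the right-hand side has degree $i+1$ with leading coefficient $(\c_1 \c_2 \cdots \c_i)^{-1}$; dividing by $\c_{i+1}$ yields the claim for $v_{i+1}$. The base cases $v_1 = \lambda$ are handled using $\a_0 = 0$, $\c_1 = 1$ from \eqref{eq:a0b0c1}.

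For (ii), I would induct on $i$, using the relation
\begin{equation*}
  \A_1 \A_i = \c_{i+1} \A_{i+1} + \a_i \A_i + \b_{i-1} \A_{i-1} \qquad (0 \leq i \leq D),
\end{equation*}
which is the specialization of \eqref{eq:AiAj2} to $j = 1$, using the definitions \eqref{eq:defci}--\eqref{eq:defbi} of the intersection numbers $\c_i, \a_i, \b_i$ and the fact that $\p^h_{1,i}$ vanishes unless $h \in \{i-1, i, i+1\}$. The cases $i=0$ and $i=1$ are immediate from $\A_0 = \I = v_0(\A_1)$ and $v_1 = \lambda$, so $v_1(\A_1) = \A_1$. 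For the inductive step, solving the above relation for $\A_{i+1}$ and comparing with the recurrence for $v_{i+1}$ evaluated at $\A_1$ gives $\A_{i+1} = v_{i+1}(\A_1)$.

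Part (iii) is immediate from (ii): since $\{\A_i\}_{i=0}^D$ is a basis for $\M$ and each $\A_i$ lies in the subalgebra generated by $\A_1$, we have $\M = \gen{\A_1}$. For (iv), observe that by \eqref{eq:defPQ} with $j=1$ and \eqref{eq:defthithsi} we have $\A_1 = \sum_{i=0}^D \th_i \E_i$, where the $\{\E_i\}_{i=0}^D$ are pairwise orthogonal nonzero idempotents. Hence $\{\th_i\}_{i=0}^D$ are precisely the eigenvalues of $\A_1$ (with $\E_i \V$ the corresponding eigenspaces). By (iii), $\dim \gen{\A_1} = \dim \M = D+1$, so the minimal polynomial of $\A_1$ has degree $D+1$; therefore $\A_1$ has at least $D+1$ distinct eigenvalues, forcing the $\{\th_i\}_{i=0}^D$ to be mutually distinct. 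There is no real obstacle here; the only subtlety is the boundary handling of the recurrence at $i=0$, but this is absorbed by the conventions $\a_0 = 0$, $\c_1 = 1$.
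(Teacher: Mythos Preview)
The paper does not supply its own proof of this lemma; it simply cites \cite[Lemma~3.8]{T:subconst1}. Your argument is correct and is the standard one.

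Two minor tightenings. First, Definition~\ref{def:vi2} as written only states the recurrence for $1 \leq i \leq D-1$, so $v_1$ is not literally determined there; you are implicitly (and correctly) invoking the $i=0$ instance with the convention $\b_{-1}v_{-1}=0$, in parallel with Definition~\ref{def:vi}. Second, in part (iv) the step from ``minimal polynomial of $\A_1$ has degree $D+1$'' to ``at least $D+1$ distinct eigenvalues'' is not valid for a general matrix; it uses that $\A_1$ is diagonalizable, which follows immediately from $\A_1 = \sum_{i=0}^D \th_i \E_i$ with the $\E_i$ orthogonal idempotents summing to $\I$. With that remark in place your conclusion is sound.
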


\begin{lemma}    \label{lem:Pij}    \samepage
\ifDRAFT {\rm lem:Pij}. \fi
For the polynomials $\{v_i\}_{i=0}^D$ from Definition \ref{def:vi2},
\begin{align*}
   P_{i j} &= v_j (\th_i)   &&  (0 \leq i,j \leq D).          
\end{align*}
\end{lemma}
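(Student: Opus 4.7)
The plan is to combine Lemma \ref{lem:vi}(ii) with the spectral decomposition of $\A_1$ and the linear independence of the primitive idempotents $\{\E_i\}_{i=0}^D$.

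First I would recall the relevant ingredients. By the left equation of \eqref{eq:defPQ} together with \eqref{eq:defthithsi}, we have the spectral decomposition
\[
  \A_1 = \sum_{i=0}^D \th_i \E_i.
\]
The primitive idempotents satisfy $\E_i \E_j = \delta_{i,j} \E_i$ and $\sum_{i=0}^D \E_i = \I$ by \eqref{eq:Ei}. Consequently, for any polynomial $f \in \C[\lambda]$,
\[
  f(\A_1) = \sum_{i=0}^D f(\th_i) \E_i,
\]
since the $\E_i$ are mutually orthogonal idempotents summing to the identity.

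Next, apply this identity with $f = v_j$ for fixed $j$ with $0 \le j \le D$. By Lemma \ref{lem:vi}(ii) we have $\A_j = v_j(\A_1)$, so
\[
  \A_j = \sum_{i=0}^D v_j(\th_i) \E_i.
\]
On the other hand, the left equation of \eqref{eq:defPQ} gives
\[
  \A_j = \sum_{i=0}^D P_{ij} \E_i.
\]
Since $\{\E_i\}_{i=0}^D$ is a basis for $\M$, comparing coefficients yields $P_{ij} = v_j(\th_i)$ for $0 \le i \le D$, as desired.

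There is no real obstacle here; the proof is essentially a one-line consequence of Lemma \ref{lem:vi}(ii) combined with the spectral representation of $\A_1$. The only subtlety worth flagging is that the step $f(\A_1) = \sum_i f(\th_i) \E_i$ requires the orthogonality and completeness relations for the $\E_i$ recorded in \eqref{eq:Ei}, so I would cite those explicitly.
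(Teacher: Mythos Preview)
Your proof is correct and follows essentially the same approach as the paper: both use Lemma \ref{lem:vi}(ii) to write $\A_j = v_j(\A_1)$ and then extract the coefficient of $\E_i$ via the spectral decomposition. The only cosmetic difference is that the paper multiplies $\A_j = \sum_i P_{ij}\E_i$ on the right by a single $\E_i$ to isolate $P_{ij}\E_i = v_j(\th_i)\E_i$, whereas you compare the full expansions and invoke linear independence of $\{\E_i\}_{i=0}^D$.
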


\begin{proof}
In the equation on the left in \eqref{eq:defPQ}, multiply each side on the right $\E_i$ to get
$\A_j \E_i = P_{i j} \E_i$.
By Lemma \ref{lem:vi}(ii),
$\A_j \E_i =  v_j (\A_1) \E_i$.
Using   $\A_1 \E_i = \th_i \E_i$ one finds that
$v_j (\A_1) \E_i = v_j (\th_i) \E_i$.
By these comments we obtain $P_{i j} = v_j (\th_i)$.
\end{proof}

We recall the dual Bose-Mesner algebra.
For the rest of this section, fix $x \in X$.
For $0 \leq i \leq D$ let $\E^*_i = \E^*_i (x)$ denote the diagonal matrix in $\Mat$
that has $(y,y)$-entry $1$ if $\partial(x,y)=i$ and $0$ if $\partial (x,y) \neq i$ $(y \in X)$.
By construction,
\begin{align*}
 \E^*_i \E^*_j &= \delta_{i,j} \E^*_i \qquad (0 \leq i,j \leq D),
&
 \sum_{i=0}^D \E^*_i &= \I.
\end{align*}
Consequently $\{\E^*_i\}_{i=0}^D$ form a basis for a commutative subalgebra $\M^* = \M^*(x)$ of $\Mat$.
We call $\M^*$ the {\em dual Bose-Mesner algebra of $\Gamma$ with respect
to $x$}.
We call $\{\E^*_i\}_{i=0}^D$  the {\em dual primitive idempotents of $\Gamma$
with respect to $x$}.
For $0 \leq i \leq D$ we have
\[
  \E^*_i \V = \text{Span} \{ \widehat{y} \,|\, y \in \Gamma_i (x) \}
\]
and $\k_i = \text{rank} (\E^*_i) $.
Moreover
\begin{equation*}
   \V = \sum_{i=0}^D \E^*_i \V \qquad\qquad \text{(direct sum)}.      
\end{equation*}
In the above sum, the summands are the common eigenspaces for $\M^*$.
For $0 \leq i \leq D$ the matrix $\E^*_i$ is the projection onto the eigenspace $\E^*_i \V$.
For $0 \leq i \leq D$ let $\A^*_i = \A^*_i (x)$ denote the diagonal matrix in $\Mat$
whose $(y,y)$-entry is the $(x,y)$-entry of $|X| \E_i$ $(y \in X)$.
By \eqref{eq:Ei} and \eqref{eq:EicircEj},
\begin{align}
  \A^*_i \A^*_j = \sum_{h=0}^D \q^h_{i j} \A^*_h   \qquad\qquad  (0 \leq i,j \leq D),     \label{eq:AsiAsj}
\\
 \A^*_0 = \I, \qquad\qquad\qquad
 \sum_{i=0}^D \A^*_i = |X| \E^*_0.             \notag        
\end{align}
By \eqref{eq:defPQ} we find that for $0 \leq j \leq D$,
\begin{align}
  \E^*_j &= |X|^{-1} \sum_{i=0}^D P_{i j} \A^*_i,
&
  \A^*_j &= \sum_{i=0}^D Q_{i j} \E^*_i.         \label{eq:Esi}
\end{align}
By \eqref{eq:Esi} the matrices $\{\A^*_i\}_{i=0}^D$ form a basis for the
$\C$-vector space $\M^*$.
We call $\{\A^*_i\}_{i=0}^D$ the {\em dual distance-matrices of $\Gamma$
with respect to $x$}.
We call $\A^*_1$ the {\em dual adjacency matrix} of $\Gamma$ with respect to $x$.
By the equation on the right in \eqref{eq:Esi} with $j=1$,
\[
  \A^*_1 = \sum_{i=0}^D \th^*_i \E^*_i.
\]
So $\th^*_i$ is the eigenvalue of $\A^*_1$ corresponding to $\E^*_i$ $(0 \leq i \leq D)$.

The algebras $\M$ and $\M^*$ are related as follows.
By \cite[Lemma 3.2]{T:subconst1},
\begin{align*}
  \E^*_i \A_h \E^*_j = 0  \qquad \text{if and only if} \qquad \p^h_{i j} =0 \qquad (0 \leq h,i,j \leq D),
\\
  \E_i \A^*_h \E_j = 0  \qquad \text{if and only if} \qquad \q^h_{i j} =0 \qquad (0 \leq h,i,j \leq D).
\end{align*}

\begin{lemma}   \label{lem:AiEs0E02}    \samepage
\ifDRAFT {\rm lem:AiEs0E02}. \fi
For $0 \leq i \leq D$,
\begin{align}
  \A_i \E^*_0 \E_0 &= \E^*_i \E_0,
&
 \E_i \E^*_0 \E_0 &= |X|^{-1} \A^*_i \E_0,          \label{eq:AiEs0E02}
\\
  \A^*_i \E_0 \E^*_0 &= \E_i \E^*_0,
&
 \E^*_i \E_0 \E^*_0 &= |X|^{-1} \A_i \E^*_0.        \label{eq:AsiE0Es02}
\end{align}
\end{lemma}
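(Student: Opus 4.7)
The plan is to prove all four identities by direct entry-wise matrix computation. These are the distance-regular graph analogues of Lemma \ref{lem:rhorhos2}, with the scalar $\nu$ replaced by $|X|$. They rest on two structural facts: $\E_0 = |X|^{-1} \J$, so $\E_0$ is proportional to the all-ones matrix, and $\E^*_0$ is the diagonal matrix whose only nonzero entry is a $1$ in position $(x,x)$. From these one obtains the two ``rank-one bridges'' between $\M$ and $\M^*$: the $(y,z)$-entry of $\E^*_0 \E_0$ equals $|X|^{-1} \delta_{y,x}$ for all $y,z \in X$, and the $(y,z)$-entry of $\E_0 \E^*_0$ equals $|X|^{-1} \delta_{z,x}$.

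For the first identity $\A_i \E^*_0 \E_0 = \E^*_i \E_0$, both sides are then seen to have $(y,z)$-entry equal to $|X|^{-1}$ if $\partial(x,y) = i$ and $0$ otherwise: on the left this uses $\A_i(y,x)$, on the right the diagonal entry $\E^*_i(y,y)$. For the second identity $\E_i \E^*_0 \E_0 = |X|^{-1} \A^*_i \E_0$, both sides have $(y,z)$-entry equal to $|X|^{-1} \E_i(y,x)$; on the right-hand side one invokes the defining relation $\A^*_i(y,y) = |X| \E_i(x,y)$ together with the symmetry $\E_i(x,y) = \E_i(y,x)$. The two bottom-row identities \eqref{eq:AsiE0Es02} are obtained by an entirely analogous computation using $\E_0 \E^*_0$ in place of $\E^*_0 \E_0$, with the roles of $\M$ and $\M^*$ reversed throughout.

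The main obstacle is essentially nonexistent: this lemma is routine bookkeeping with matrix entries. The only point requiring care is the asymmetric nature of $\E_0$ and $\E^*_0$ (the former dense, the latter a diagonal rank-one projector at $x$), and invoking the definition of $\A^*_i$ at the correct moment as the bridge that turns off-diagonal entries of $\E_i$ at $(x, \cdot)$ into diagonal entries of a matrix in $\M^*$. Alternatively, one could mirror the abstract argument of Section \ref{sec:rho} by introducing an analogue of the map $\rho$ from Lemma \ref{lem:rho} in the graph setting and deducing the identities from its uniqueness, but the direct entry-wise approach is shorter.
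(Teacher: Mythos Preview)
Your proposal is correct and takes essentially the same approach as the paper, which simply says to compare the $(y,z)$-entry of each side using the definitions of $\A^*_i$ and $\E^*_i$. Your write-up is more detailed than the paper's one-line proof, but the underlying computation is identical.
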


\begin{proof}
For $y,z \in X$, compare the $(y,z)$-entry of each side using the definition of $\A^*_i$ and $\E^*_i$.
\end{proof}

\begin{lemma}   \label{lem:E0Es0E0}    \samepage
\ifDRAFT {\rm lem:E0Es0E0}. \fi
We have
\begin{align*}
  |X| \, \E_0 \E^*_0 \E_0 &= \E_0,   &
  |X| \, \E^*_0 \E_0 \E^*_0 &= \E^*_0.
\end{align*}
\end{lemma}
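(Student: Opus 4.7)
The plan is to read both identities off from Lemma~\ref{lem:AiEs0E02} by specializing to $i=0$. Recall from \eqref{eq:AiEs0E02} the identity $\E_i \E^*_0 \E_0 = |X|^{-1} \A^*_i \E_0$ and from \eqref{eq:AsiE0Es02} the identity $\E^*_i \E_0 \E^*_0 = |X|^{-1} \A_i \E^*_0$, each valid for $0 \leq i \leq D$. Setting $i=0$ in the first gives $\E_0 \E^*_0 \E_0 = |X|^{-1} \A^*_0 \E_0 = |X|^{-1} \E_0$, since $\A^*_0 = \I$. Setting $i=0$ in the second gives $\E^*_0 \E_0 \E^*_0 = |X|^{-1} \A_0 \E^*_0 = |X|^{-1} \E^*_0$, since $\A_0 = \I$. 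Multiplying each equation through by $|X|$ yields the two claims of the lemma. There is no real obstacle here; the identity $|X|\,\E_0\E^*_0\E_0 = \E_0$ is exactly the graph-theoretic counterpart of \eqref{eq:nu} with scalar $\nu = |X|$, and Lemma~\ref{lem:AiEs0E02} already packages the needed computation.

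As an alternative sanity check, one can verify both identities directly from the explicit descriptions $\E_0 = |X|^{-1} \J$ and $\E^*_0 = \widehat{x}\,\widehat{x}^{\sf t}$: writing $\J = \mathbf{1}\mathbf{1}^{\sf t}$ with $\mathbf{1} = \sum_{y\in X} \widehat{y}$, we have $\widehat{x}^{\sf t}\mathbf{1} = \mathbf{1}^{\sf t}\widehat{x} = 1$, so
\[
\E_0 \E^*_0 \E_0 = |X|^{-2}\, \mathbf{1}\mathbf{1}^{\sf t}\,\widehat{x}\widehat{x}^{\sf t}\,\mathbf{1}\mathbf{1}^{\sf t} = |X|^{-2} \J = |X|^{-1} \E_0,
\]
and similarly $\E^*_0 \E_0 \E^*_0 = |X|^{-1}\widehat{x}\widehat{x}^{\sf t} = |X|^{-1} \E^*_0$. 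Either route is elementary; the cleaner presentation simply cites Lemma~\ref{lem:AiEs0E02}.
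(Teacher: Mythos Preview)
Your proof is correct and follows essentially the same approach as the paper: setting $i=0$ in the right-hand equations of \eqref{eq:AiEs0E02} and \eqref{eq:AsiE0Es02} from Lemma~\ref{lem:AiEs0E02} and using $\A_0 = \A^*_0 = \I$. The alternative direct verification you include is a nice sanity check but not needed.
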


\begin{proof}
For the equation on the right in \eqref{eq:AiEs0E02}, \eqref{eq:AsiE0Es02} set $i=0$
and use $\A_0 = \I$, $\A^*_0 = \I$.
\end{proof}

Let $\T= \T (x)$ denote the subalgebra of $\Mat$ generated by $\M$ and $\M^*$.
We call $\T$ the {\em Terwilliger algebra of $\Gamma$ with respect to $x$} \cite{T:subconst1}.

By a {\em $\T$-module} we mean a subspace of $\V$ that is invariant
under the action of $\T$.
The $\T$-module $\V$ is said to be  {\em standard}.
A $\T$-module $U$ is said to be {\em irreducible} whenever $U \neq 0$ and
there is no $\T$-submodule of $U$ other than $0$, $U$.

\begin{lemma}  {\rm   \cite[Lemma 3.4]{T:subconst1}  }
\label{lem:decomp}   \samepage
\ifDRAFT {\rm lem:decomp}. \fi
The following hold.
\begin{itemize}
\item[\rm (i)]
The standard $\T$-module $\V$ is a direct sum of  irreducible $\T$-modules.
\item[\rm (ii)]
Each irreducible $\T$-module $U$ is a direct sum of the nonzero subspaces
among $\E_0 U, \E_1 U$, $\ldots, \E_D U$,
and a direct sum of the nonzero subspaces among
$\E^*_0 U, \E^*_1 U, \ldots, \E^*_D U$.
\end{itemize}
\end{lemma}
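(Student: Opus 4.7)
The plan is to deduce both parts from the fact that $\T$ is closed under the conjugate-transpose operation on $\Mat$, i.e., $\T$ is a $*$-subalgebra.

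First I would verify this closure property. The algebra $\M$ is generated by the real symmetric matrices $\{\A_i\}_{i=0}^D$, and the algebra $\M^*$ is generated by the real diagonal matrices $\{\E^*_i\}_{i=0}^D$; each generator equals its own conjugate-transpose. Hence the subalgebra $\T$ generated by $\M$ and $\M^*$ is closed under the map $Y \mapsto \overline{Y}^{\sf t}$.

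For part (i), equip $\V$ with the standard Hermitian inner product $\langle u, v \rangle = \overline{u}^{\sf t} v$. Let $U$ be any nonzero $\T$-submodule of $\V$, and let $U^\perp$ denote its orthogonal complement. For $Y \in \T$ the adjoint with respect to $\langle\,,\,\rangle$ is $\overline{Y}^{\sf t}$, which lies in $\T$ by the previous paragraph. Consequently, for $v \in U^\perp$ and $u \in U$, we have $\langle u, Y v \rangle = \langle \overline{Y}^{\sf t} u, v \rangle = 0$ since $\overline{Y}^{\sf t} u \in U$. Thus $U^\perp$ is a $\T$-submodule, and $\V = U \oplus U^\perp$. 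Induct on dimension (or peel off an irreducible submodule of minimal dimension and take orthogonal complements) to obtain a direct sum decomposition of $\V$ into irreducible $\T$-modules.

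For part (ii), fix an irreducible $\T$-module $U$. Since $\E_i \in \M \subseteq \T$, we have $\E_i U \subseteq U$ for $0 \leq i \leq D$. The relations in \eqref{eq:Ei}, namely $\E_i \E_j = \delta_{i,j} \E_i$ and $\sum_{i=0}^D \E_i = \I$, imply that for every $u \in U$ the expression $u = \sum_{i=0}^D \E_i u$ is the unique way to write $u$ as a sum of elements of the subspaces $\E_i U$, so $U = \bigoplus_{i=0}^D \E_i U$. Discarding the zero summands gives the claimed direct sum. The same reasoning applied to the mutually orthogonal idempotents $\{\E^*_i\}_{i=0}^D$, which lie in $\M^* \subseteq \T$ and sum to $\I$, gives the second direct sum $U = \bigoplus_{i=0}^D \E^*_i U$.

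I do not foresee a substantive obstacle: the only nontrivial ingredient is the semisimplicity argument in part (i), and this is a standard consequence of $\T$ being a $*$-subalgebra of $\Mat$; part (ii) is purely formal from the idempotent decompositions in $\M$ and $\M^*$.
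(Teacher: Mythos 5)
Your proof is correct and follows the standard argument. The paper itself gives no proof here—it simply cites \cite[Lemma 3.4]{T:subconst1}—and the argument you give (closure of $\T$ under conjugate-transpose yields semisimplicity via orthogonal complements for (i); the mutually orthogonal idempotents $\{\E_i\}$ and $\{\E^*_i\}$ summing to $\I$ give the internal direct-sum decompositions for (ii)) is essentially the one in that reference.

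One small remark on exposition: in verifying closure of $\T$ under $Y \mapsto \overline{Y}^{\sf t}$, it is worth stating explicitly that both families of generators, the $\A_i$ and the $\E^*_i$, consist of real matrices (with entries in $\{0,1\}$), so the conjugation is harmless; you gesture at this but the word ``real'' is doing the work. Otherwise the argument is complete.
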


Let $U$ denote an irreducible $\T$-module.
We say that $U$ is {\em thin} (resp.\ {\em dual thin}) whenever
$\dim \E^*_i U \leq 1$ (resp. $\dim \E_i U \leq 1$) for $0 \leq i \leq D$.
Define the {\em diameter} $d=d(U)$ and {\em dual diameter} $d^* = d^* (U)$
by
\begin{align*}
  d &= |\{ i \,|\, 0 \leq i \leq D, \; \E^*_i U \neq 0\}| - 1,
&
  d^* &= |\{ i \,|\, 0 \leq i \leq D, \; \E_i U \neq 0\}| - 1.
\end{align*}
If $U$ is thin (resp.\ dual thin) then the dimension of $U$ is $d+1$ (resp.\ $d^* + 1$).
If $U$ is both thin and dual thin,
then $d=d^*$.

We recall the primary $\T$-module \cite[Lemma 3.6]{T:subconst1}.
Define 
\[
\text{\bf 1} = \sum_{y \in X} \widehat{y}.
\]
Observe that the entries of $\text{\bf 1}$ are all $1$.
 
\begin{lemma}  {\rm  \cite[Lemma 3.6]{T:subconst1} }
\label{lem:primary}    \samepage
\ifDRAFT {\rm lem:primary}. \fi
For $0 \leq i \leq D$,
\begin{align*}
  \A_i \widehat{x} &= \E^*_i \text{\bf 1},  &
  \A^*_i \text{\bf 1} &= |X| \E_i \widehat{x}.
\end{align*}
Moreover,
$\M \widehat{x} = \M^* \text{\bf 1}$
is a thin, dual-thin, irreducible $\T$-module of  diameter $D$.
\end{lemma}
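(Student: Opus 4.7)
The plan is to verify the two displayed identities by direct entry-wise computation and then bootstrap everything else from them. For the first, $(\A_i \widehat{x})_y = \A_i(y,x)$ equals $1$ iff $\partial(x,y)=i$, matching $(\E^*_i \text{\bf 1})_y = \E^*_i(y,y)$. For the second, $\A^*_i$ is diagonal with $(y,y)$-entry $|X|\E_i(x,y)$, and since $\E_i$ is symmetric, $(\A^*_i \text{\bf 1})_y = |X|\E_i(y,x) = (|X|\E_i \widehat{x})_y$.

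Next I would set $U = \M \widehat{x}$. Using the first identity and the fact that $\{\A_i\}_{i=0}^D$ is a basis of $\M$, the vectors $\{\E^*_i \text{\bf 1}\}_{i=0}^D$ span $U$; they have pairwise disjoint nonempty supports $\Gamma_i(x)$, so they form a basis and $\dim U = D+1$. The second identity puts each $\E_i \widehat{x}$ into $\M^* \text{\bf 1}$; since $\{\E_i \widehat{x}\}_{i=0}^D$ also spans $U$ (as $\{\E_i\}_{i=0}^D$ is another basis of $\M$), we get $U \subseteq \M^* \text{\bf 1}$, and the reverse inclusion follows symmetrically from the first identity together with $\{\A^*_i\}_{i=0}^D$ being a basis of $\M^*$. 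Hence $U = \M \widehat{x} = \M^* \text{\bf 1}$, and it is both $\M$- and $\M^*$-invariant, so it is a $\T$-module. Thinness is immediate from $\E^*_i U = \C \E^*_i \text{\bf 1}$, dual-thinness from $\E_i U = \C \E_i \widehat{x}$, and in both cases there are exactly $D+1$ nonzero summands, so the diameter of $U$ is $D$.

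For irreducibility, let $U'$ be a nonzero $\T$-submodule of $U$. Since $U'$ is $\M^*$-invariant and $U$ is thin, some $\E^*_{i_0} \text{\bf 1}$ lies in $U'$. Applying $\A_1 \in \T$ and invoking the tridiagonal relation $\A_1 \A_{i_0} = \b_{i_0-1} \A_{i_0-1} + \a_{i_0} \A_{i_0} + \c_{i_0+1} \A_{i_0+1}$ together with the first identity yields
\[
\A_1 \E^*_{i_0} \text{\bf 1} = \b_{i_0-1} \E^*_{i_0-1} \text{\bf 1} + \a_{i_0} \E^*_{i_0} \text{\bf 1} + \c_{i_0+1} \E^*_{i_0+1} \text{\bf 1} \in U'.
\]
Projecting by $\E^*_{i_0 \pm 1}$ and using $\b_{i-1}\c_{i+1} \neq 0$ in the admissible range, we obtain $\E^*_{i_0 \pm 1} \text{\bf 1} \in U'$; iterating up and down the index range produces every $\E^*_j \text{\bf 1} \in U'$, so $U' = U$. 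No step here is truly hard; the only piece requiring any care is this tridiagonal walk, which is exactly what propagates a single nonzero $\M^*$-eigencomponent to the entire module.
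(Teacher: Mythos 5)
Your proof is correct. The paper itself gives no proof of this lemma — it is cited from \cite[Lemma 3.6]{T:subconst1} — and your argument reproduces the standard one from that reference: verify the two identities entrywise, use them to identify $\M\widehat{x}$ and $\M^*\text{\bf 1}$ and read off thinness and dual-thinness, then propagate a nonzero $\E^*$-component of a submodule along the tridiagonal action of $\A_1$ using $\b_{i-1}\c_i \neq 0$ to force irreducibility. The only minor gaps, both routine, are (a) that $\Gamma_i(x) \neq \emptyset$ for $0 \leq i \leq D$ (this follows from $\k_i = \b_0\cdots\b_{i-1}/(\c_1\cdots\c_i) \neq 0$, not merely from the definition of diameter), and (b) that $\E_i\widehat{x} \neq 0$ for all $i$, which you obtain implicitly from the dimension count $\dim U = D+1$ together with dual-thinness — that deduction is valid and worth stating explicitly.
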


\begin{definition}   \label{def:primary}    \samepage
\ifDRAFT {\rm def:primary}. \fi
The $\T$-module in Lemma \ref{lem:primary} is called {\em primary}.
Let $\pU$ denote the primary $\T$-module.
\end{definition}

\begin{lemma}     \label{lem:faithful0}    \samepage
\ifDRAFT {\rm lem:faithful0}. \fi
The following hold:
\begin{itemize}
\item[\rm (i)]
the map $\M \to \pU$, $\B \mapsto \B \widehat{x}$ is a $\C$-linear bijection;
\item[\rm (ii)]
the map $\M^* \to \pU$, $\B \mapsto \B \text{\bf 1}$ is a $\C$-linear bijection.
\end{itemize}
\end{lemma}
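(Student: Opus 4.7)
The plan is to prove both parts by showing that each map is $\C$-linear (which is immediate), surjective onto $\pU$ (which follows directly from Lemma \ref{lem:primary}), and injective (which reduces to exhibiting a basis of the domain whose image is linearly independent in $\V$). Since $\dim \M = D+1 = \dim \M^*$ and the primary $\T$-module $\pU$ also has dimension $D+1$ by Lemma \ref{lem:primary} (being thin of diameter $D$), injectivity and surjectivity are equivalent; either one suffices.

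For part (i), I would take the basis $\{\A_i\}_{i=0}^D$ of $\M$. By Lemma \ref{lem:primary} we have $\A_i \widehat{x} = \E^*_i \mathbf{1}$ for $0 \leq i \leq D$. The vector $\E^*_i \mathbf{1}$ is the characteristic vector of $\Gamma_i(x)$, which is nonempty for $0 \leq i \leq D$ because the diameter of $\Gamma$ is $D$. Since the sets $\Gamma_i(x)$ are pairwise disjoint, the vectors $\{\E^*_i \mathbf{1}\}_{i=0}^D$ are supported on disjoint parts of $X$ and are therefore linearly independent in $\V$. Hence the map $\B \mapsto \B\widehat{x}$ sends a basis of $\M$ to a linearly independent set, so it is injective; combined with the surjectivity observation $\M \widehat{x} = \pU$ from Lemma \ref{lem:primary}, this yields bijectivity.

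For part (ii), I would take the basis $\{\E^*_i\}_{i=0}^D$ of $\M^*$. The image of $\E^*_i$ under the map $\B \mapsto \B \mathbf{1}$ is precisely $\E^*_i \mathbf{1}$, and these are exactly the same characteristic vectors analyzed in part (i), so again they form a linearly independent set. Injectivity follows, and surjectivity is again immediate from $\M^* \mathbf{1} = \pU$ in Lemma \ref{lem:primary}.

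There is essentially no obstacle here: the content of both assertions is packaged into the two identities in Lemma \ref{lem:primary}, together with the elementary observation that the characteristic vectors of the distance classes $\Gamma_i(x)$ are linearly independent. The only point worth being careful about is noting that $\Gamma_i(x) \neq \emptyset$ for each $0 \leq i \leq D$, which is guaranteed by the definition of the diameter $D$.
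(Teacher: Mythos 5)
Your proposal is correct and takes essentially the same approach as the paper: linearity is immediate, surjectivity comes from $\M\widehat{x} = \M^*\text{\bf 1} = \pU$ in Lemma \ref{lem:primary}, and bijectivity then follows from the dimension count $\dim\M = \dim\M^* = D+1 = \dim\pU$. Your separate verification of injectivity (via linear independence of the characteristic vectors $\E^*_i\text{\bf 1}$) is a correct but redundant extra step, since you already noted that surjectivity plus equal dimensions forces bijectivity.
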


\begin{proof}
(i)
The map is $\C$-linear by construction,
and surjective by Lemma \ref{lem:primary}.
It is a bijection since $\M$, $\pU$ have the same dimension.

(ii) 
The map is $\C$-linear by construction,
and surjective by Lemma \ref{lem:primary}.
It is a bijection since $\M^*$, $\pU$ have the same dimension.
\end{proof}

\begin{corollary}     \label{cor:faithful}    \samepage
\ifDRAFT {\rm cor:faithful}. \fi
The following hold.
\begin{itemize}
\item[\rm (i)]
Let $\B \in \M$ such that $\B \pU = 0$.
Then $\B=0$.
\item[\rm (ii)]
Let $\B \in \M^*$ such that $\B \pU=0$.
Then $\B=0$.
\end{itemize}
\end{corollary}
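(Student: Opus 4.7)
The plan is to derive Corollary \ref{cor:faithful} directly from Lemma \ref{lem:faithful0}, exploiting the fact that both of the ``test vectors'' $\widehat{x}$ and $\mathbf{1}$ already lie inside the primary module $\pU$.

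For part (i), I would first observe that $\widehat{x} = \A_0 \widehat{x} \in \M \widehat{x} = \pU$, where the last equality is the description of $\pU$ in Lemma \ref{lem:primary} and Definition \ref{def:primary}. Hence the hypothesis $\B \pU = 0$ forces in particular $\B \widehat{x} = 0$. Now Lemma \ref{lem:faithful0}(i) says that the evaluation map $\M \to \pU$, $\B \mapsto \B \widehat{x}$, is a $\C$-linear bijection, so it is injective, and therefore $\B = 0$.

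For part (ii), the argument is completely symmetric on the dual side. Using $\A^*_0 = \I$, we have $\mathbf{1} = \A^*_0 \mathbf{1} \in \M^* \mathbf{1} = \pU$, again by Lemma \ref{lem:primary}. So $\B \pU = 0$ implies $\B \mathbf{1} = 0$, and then Lemma \ref{lem:faithful0}(ii) — the bijectivity of the evaluation map $\M^* \to \pU$, $\B \mapsto \B \mathbf{1}$ — yields $\B = 0$.

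There is essentially no obstacle here: the only content is the easy containments $\widehat{x}, \mathbf{1} \in \pU$, which together with the two descriptions $\pU = \M \widehat{x} = \M^* \mathbf{1}$ supplied by Lemma \ref{lem:primary} let us promote ``$\B$ kills the single vector $\widehat{x}$ (resp.\ $\mathbf{1}$)'' to ``$\B$ kills all of $\pU$,'' while Lemma \ref{lem:faithful0} already gives the converse.
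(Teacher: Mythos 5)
Your proof is correct and is essentially identical to the paper's: in both, one notes $\widehat{x}\in\pU$ (resp.\ $\mathbf{1}\in\pU$) so that $\B\pU=0$ forces $\B\widehat{x}=0$ (resp.\ $\B\mathbf{1}=0$), and then invokes the injectivity of the evaluation maps from Lemma \ref{lem:faithful0}. The final paragraph's ``promote to $\B$ kills all of $\pU$'' remark is phrased in the reverse direction of what the corollary actually requires, but it is harmless commentary and does not affect the validity of the argument.
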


\begin{proof}
(i)
We have $\B \widehat{x}=0$ since $\widehat{x} \in \pU$,
so $\B=0$ in view of Lemma \ref{lem:faithful0}(i).

(ii)
We have $\B \text{\bf 1} = 0$ since $\text{\bf 1} \in \pU$,
so $\B=0$ in view of Lemma \ref{lem:faithful0}(ii).
\end{proof}

Next we discuss general irreducible $\T$-modules.
For an irreducible $\T$-module $U$,
define the {\em endpoint} $r=r(U)$ by
\begin{align*}
  r &= \text{\rm min} \{i \,|\, 0 \leq i \leq D, \; \E^*_i U \neq 0 \}.
\end{align*}

\begin{lemma}  {\rm \cite[Lemma 3.9]{T:subconst1} }
\label{lem:Tmodule1}    \samepage
\ifDRAFT {\rm lem:Tmodule1}. \fi
Let $U$ denote an irreducible $\T$-module 
with endpoint $r$ and diameter $d$.
Then the following hold.
\begin{itemize}
\item[\rm (i)]
We have $\E^*_i U \neq 0$ if and only if $r \leq i \leq r+d$ $(0 \leq i \leq D)$.
\item[\rm (ii)]
If $U$ is thin then $U$ is dual thin.
\end{itemize}
\end{lemma}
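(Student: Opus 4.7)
The plan is to handle (i) by a ``gap-filling'' argument that exploits the tridiagonal-like action of $\A=\A_1$ on the $\M^*$-eigenspaces, and to handle (ii) by combining the resulting irreducible tridiagonal action on a thin module with the fact that each $\E_j$ is a polynomial in $\A$.

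First I would record the key incidence fact: since $\Gamma$ is distance-regular, edges from a vertex at distance $i$ from $x$ go only to vertices at distance $i-1$, $i$, or $i+1$ from $x$. In operator form this gives
\[
   \A \, \E^*_i \V \;\subseteq\; \E^*_{i-1}\V + \E^*_i \V + \E^*_{i+1}\V
   \qquad (0 \le i \le D),
\]
with the convention $\E^*_{-1} = 0 = \E^*_{D+1}$. Intersecting with the $\T$-module $U$ (which is invariant under each $\E^*_h$) yields $\A\, \E^*_i U \subseteq \E^*_{i-1} U + \E^*_i U + \E^*_{i+1} U$. For (i), let $S = \{i : \E^*_i U \ne 0\}$, so $\min S = r$ and $|S| = d+1$. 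Suppose for contradiction that $S$ is not the interval $\{r,r+1,\ldots,r+d\}$, and let $k$ be the smallest index with $r<k<\max S$ and $k\notin S$. Set $W = \sum_{h=r}^{k-1}\E^*_h U$. Then $W$ is $\M^*$-invariant, nonzero (it contains $\E^*_r U$), and proper in $U$ (it misses $\E^*_j U$ for any $j>k$ with $j\in S$). The above inclusion, together with $\E^*_{r-1}U = 0$ and $\E^*_k U = 0$, shows that $W$ is closed under $\A$ and hence under $\M$. This contradicts irreducibility of $U$, so $S = \{r,\ldots,r+d\}$.

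For (ii), assume $U$ is thin, so $\dim \E^*_{r+i} U \le 1$ for all $i$. By part~(i), $\dim U = d+1$, and we may pick a basis $\{v_i\}_{i=0}^{d}$ with $0\ne v_i \in \E^*_{r+i} U$. By the tridiagonal inclusion, there exist scalars with
\[
   \A v_i \;=\; b_{i-1} v_{i-1} + a_i v_i + c_{i+1} v_{i+1}
   \qquad (0 \le i \le d),
\]
where $b_{-1}=c_{d+1}=0$. Irreducibility of $U$ forces $c_{i+1}\ne 0$ for $0\le i\le d-1$ and $b_{i-1}\ne 0$ for $1\le i\le d$: otherwise one of the spans $\mathrm{Span}\{v_0,\ldots,v_i\}$ or $\mathrm{Span}\{v_i,\ldots,v_d\}$ would be both $\A$-invariant and $\M^*$-invariant, hence a proper nonzero $\T$-submodule. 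Thus the matrix representing $\A|_U$ is irreducible tridiagonal. Such a matrix has $d+1$ distinct eigenvalues, so $\A|_U$ is multiplicity-free.

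Finally, since $\M$ is generated by $\A$ (Lemma~\ref{lem:vi}(iii)), each primitive idempotent $\E_j$ is a polynomial in $\A$, and the restriction $\E_j|_U$ is the corresponding polynomial in $\A|_U$. Because $\E_j \E_k = \delta_{j,k}\E_j$ and $\sum_j \E_j = \I$, the restrictions $\{\E_j|_U\}$ decompose $U$ as the direct sum of the eigenspaces of $\A|_U$ corresponding to the $d+1$ eigenvalues $\{\th_j\}$ that actually appear. Each such eigenspace is one-dimensional, so $\dim \E_j U \le 1$ for every $j$, proving that $U$ is dual thin. I expect the main obstacle to be the irreducibility argument in the middle of part~(ii) — verifying that \emph{both} off-diagonal families of entries are nonzero — since one has to use the $\M^*$-invariance of the candidate spans in addition to their $\A$-invariance to manufacture the forbidden $\T$-submodule.
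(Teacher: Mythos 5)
Your proof is essentially correct and takes what is almost certainly the same route as the cited reference \cite[Lemma 3.9]{T:subconst1} (the paper itself gives no proof, only the citation): a gap-filling argument for (i), and for (ii) the passage from the thin hypothesis, to an irreducible tridiagonal matrix for $\A_1|_U$, to a bound on $\dim \E_j U$ via the fact that $\M$ is generated by $\A_1$.

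There is one wrinkle in part (ii). Your assertion that an irreducible tridiagonal matrix necessarily ``has $d+1$ distinct eigenvalues'' is not a correct statement of fact: for example $\bigl(\begin{smallmatrix}2 & 1 \\ -1 & 0\end{smallmatrix}\bigr)$ is irreducible tridiagonal yet has the single repeated eigenvalue $1$. What is true in general is the weaker statement that every eigenspace of an irreducible tridiagonal matrix has dimension at most one (the eigenvector recursion determines $v_1,\ldots,v_d$ from $v_0$). That weaker fact is already enough for you: since $\E_j U$ is precisely the $\th_j$-eigenspace of $\A_1|_U$ (because $\E_j$ is the spectral projection of $\A_1$ onto its $\th_j$-eigenspace and $U$ is $\A_1$-invariant), the bound $\dim \E_j U \le 1$ follows directly without knowing that $\A_1|_U$ is multiplicity-free. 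If you do want the stronger distinct-eigenvalue conclusion you used, you must invoke the additional fact that $\A_1$ is a real symmetric matrix, hence diagonalizable, hence its restriction to the invariant subspace $U$ is diagonalizable; combined with geometric multiplicity $\le 1$ this does give $d+1$ distinct eigenvalues. Either fix is one sentence, and with it your proof is complete.
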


Next we recall the $Q$-polynomial property.
The ordering $\{E_i\}_{i=0}^D$ is said to be {\em $Q$-polynomial} whenever
$\q^h_{ij}$ is zero (resp.\ nonzero) if one of $h,i,j$ is greater than (resp.\ equal to)
the sum of the other two $(0 \leq h,i,j \leq D)$.
The graph $\Gamma$ is said to be {\em $Q$-polynomial with respect to 
 $\{\E_i\}_{i=0}^D$} whenever the ordering $\{\E_i\}_{i=0}^D$ is  $Q$-polynomial.
For the rest of this section,
assume that $\Gamma$ is $Q$-polynomial with respect to $\{\E_i\}_{i=0}^D$.
Define
\begin{align}
 \c^*_i &= \q^i_{1, i-1}  && (1 \leq i \leq D),      \label{eq:defcsi}
\\
 \a^*_i &= \q^i_{1 i} && (0 \leq i \leq D),           \label{eq:defasi}
\\
 \b^*_i &= \q^i_{1, i+1}  && (0 \leq i \leq D-1).                \label{eq:defbsi}
\end{align}
For notational convenience define $\c^*_0 = 0$ and $\b^*_D = 0$.
We have $\c^*_i \b^*_{i-1} \neq 0$ for $1 \leq i \leq D$.
Abbreviate $\m = \m_1$.
By \cite[Proposition 3.7]{BI},
\begin{align}
\a^*_0 &= 0, &
\b^*_0 &= \m, &
\c^*_1 &= 1, &
\c^*_i + \a^*_i + \b^*_i &= \m  \quad (0 \leq i \leq D).
\end{align}
By \cite[Lemma 2.3.1(iv)]{BCN},
\begin{align*}
 \m_i &= 
 \frac{\b^*_0 \b^*_1 \cdots \b^*_{i-1} }
        {\c^*_1 \c^*_2 \cdots \c^*_i}    &&  (0 \leq i \leq D).             
\end{align*}

\begin{definition}    \label{def:vsi}   \samepage
\ifDRAFT {\rm def:vsi}. \fi
Define polynomials $\{v^*_i\}_{i=0}^D$ in $\C[\lambda]$ such that
$v^*_0=1$ and
\begin{align*}
  \lambda v^*_i &= \c^*_{i+1} v^*_{i+1} + \a^*_i v^*_i + \b^*_{i-1} v^*_{i-1}   &&  (1 \leq i \leq D-1).
\end{align*}
\end{definition}

\begin{lemma}  {\rm  \cite[Lemma 3.11]{T:subconst1} }
\label{lem:vsi}   \samepage
\ifDRAFT {\rm lem:vsi}. \fi
The following {\rm (i)--(iv)} hold.
\begin{itemize}
\item[\rm (i)]
For $0 \leq i \leq D$ the polynomial $v^*_i$ has degree $i$ and
leading coefficient $(\c^*_1 \c^*_2 \cdots \c^*_i)^{-1}$.
\item[\rm (ii)]
For $0 \leq i \leq D$,  $\A^*_i = v^*_i (\A^*_1)$.  
\item[\rm (iii)]
The algebra $\M^*$ is generated by $\A^*_1$.
\item[\rm (iv)]
The scalars $\{\th^*_i\}_{i=0}^D$ are mutually distinct.
\end{itemize}
\end{lemma}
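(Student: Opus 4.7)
The plan is to mirror the proof of Lemma \ref{lem:vi} using the formal duality between $\M$ and $\M^*$ that the $Q$-polynomial hypothesis provides. Part (i) is immediate by induction on $i$ from Definition \ref{def:vsi}: the recurrence $\c^*_{i+1} v^*_{i+1} = \lambda v^*_i - \a^*_i v^*_i - \b^*_{i-1} v^*_{i-1}$ shows that if $v^*_i$ has degree $i$ with leading coefficient $(\c^*_1 \cdots \c^*_i)^{-1}$, then $v^*_{i+1}$ has degree $i+1$ with the claimed leading coefficient; the division is legitimate because $\c^*_{i+1} \neq 0$ for $0 \leq i \leq D-1$.

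For (ii), I would first extract a three-term recurrence for $\A^*_1 \A^*_i$. By \eqref{eq:AsiAsj} one has $\A^*_1 \A^*_i = \sum_{h=0}^D \q^h_{1\,i} \A^*_h$, and the $Q$-polynomial hypothesis on $\{\E_i\}_{i=0}^D$ forces $\q^h_{1\,i}$ to vanish unless $h \in \{i-1, i, i+1\}$. Using the symmetry $\q^h_{ij} = \q^h_{ji}$ coming from \eqref{eq:EicircEj} together with the definitions \eqref{eq:defcsi}--\eqref{eq:defbsi}, the three surviving coefficients equal $\b^*_{i-1}$, $\a^*_i$, $\c^*_{i+1}$ respectively, yielding
\[
  \A^*_1 \A^*_i = \b^*_{i-1} \A^*_{i-1} + \a^*_i \A^*_i + \c^*_{i+1} \A^*_{i+1}
  \qquad (0 \leq i \leq D-1).
\]
Solving for $\A^*_{i+1}$ produces exactly the recurrence of Definition \ref{def:vsi} evaluated at $\A^*_1$, so an induction starting from $\A^*_0 = \I = v^*_0(\A^*_1)$ yields $\A^*_i = v^*_i(\A^*_1)$.

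Claim (iii) then drops out: $\{\A^*_i\}_{i=0}^D$ is a basis for $\M^*$ and each $\A^*_i$ lies in $\C[\A^*_1]$ by (ii), so $\M^* = \C[\A^*_1]$. For (iv), part (iii) forces $\I, \A^*_1, \ldots, (\A^*_1)^D$ to span the $(D+1)$-dimensional space $\M^*$ and hence to be linearly independent, so the minimal polynomial of $\A^*_1$ has degree exactly $D+1$. Since $\A^*_1$ is a diagonal matrix it is diagonalizable, so this minimal polynomial splits into $D+1$ distinct linear factors over $\C$; consequently $\A^*_1$ has $D+1$ distinct eigenvalues, which must be precisely $\{\th^*_i\}_{i=0}^D$ since each $\E^*_i \V$ is a nonzero $\th^*_i$-eigenspace. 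The only point requiring any care is the bookkeeping identification of the Krein parameters $\q^{i \pm 1}_{1\,i}$ and $\q^i_{1\,i}$ with $\b^*_{i-1}$, $\c^*_{i+1}$, $\a^*_i$; once this matches the intersection-number pattern used in Lemma \ref{lem:vi}, the rest of the argument is formally identical to its $\M$-side counterpart.
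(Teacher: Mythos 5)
The paper does not prove this lemma; it is cited as \cite[Lemma 3.11]{T:subconst1} without proof, so there is no internal argument to compare against. Your proof is correct and is the standard one: the $Q$-polynomial hypothesis turns the expansion $\A^*_1 \A^*_i = \sum_h \q^h_{1i}\A^*_h$ from \eqref{eq:AsiAsj} into a genuine three-term recurrence, and the coefficient identifications $\q^{i-1}_{1i}=\b^*_{i-1}$, $\q^i_{1i}=\a^*_i$, $\q^{i+1}_{1i}=\c^*_{i+1}$ follow directly from \eqref{eq:defcsi}--\eqref{eq:defbsi} together with the symmetry $\q^h_{ij}=\q^h_{ji}$. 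Your induction for (ii), the dimension count for (iii), and the minimal-polynomial argument for (iv) (using that $\A^*_1$ is diagonal, hence diagonalizable, so a minimal polynomial of degree $D+1$ forces $D+1$ distinct eigenvalues, which must be the $\th^*_i$ since each $\E^*_i\V$ is a nonzero eigenspace) are all sound. This is precisely the $\M^*$-dual of the argument behind Lemma \ref{lem:vi}, as one would expect.
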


\begin{lemma}    \label{lem:Qij}    \samepage
\ifDRAFT {\rm lem:Qij}. \fi
For the polynomials $\{v^*_i\}_{i=0}^D$ from Definition \ref{def:vsi},
\begin{align*}
   Q_{i j} &= v^*_j (\th^*_i)   &&  (0 \leq i,j \leq D).          
\end{align*}
\end{lemma}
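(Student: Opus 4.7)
The plan is to mimic the proof of Lemma \ref{lem:Pij} verbatim, replacing every object by its dual counterpart. The input data are perfectly parallel: Lemma \ref{lem:vsi}(ii) gives $\A^*_j = v^*_j(\A^*_1)$, the right-hand equation in \eqref{eq:Esi} gives the expansion $\A^*_j = \sum_{i=0}^D Q_{ij} \E^*_i$, and the relation $\A^*_1 \E^*_i = \th^*_i \E^*_i$ follows from $\A^*_1 = \sum_{i=0}^D \th^*_i \E^*_i$ together with the orthogonality $\E^*_i \E^*_j = \delta_{i,j} \E^*_i$.

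With these ingredients in hand, I would simply multiply both sides of $\A^*_j = \sum_{h=0}^D Q_{hj} \E^*_h$ on the right by $\E^*_i$. The orthogonality of the dual primitive idempotents collapses the sum to $\A^*_j \E^*_i = Q_{ij} \E^*_i$. On the other hand, Lemma \ref{lem:vsi}(ii) lets me rewrite $\A^*_j \E^*_i = v^*_j(\A^*_1)\E^*_i$, and then the eigenvalue equation $\A^*_1 \E^*_i = \th^*_i \E^*_i$ propagates through the polynomial $v^*_j$ to yield $v^*_j(\A^*_1)\E^*_i = v^*_j(\th^*_i)\E^*_i$. Comparing the two expressions for $\A^*_j \E^*_i$ and using $\E^*_i \neq 0$ gives $Q_{ij} = v^*_j(\th^*_i)$.

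There is really no obstacle: the argument is a formal dualization. The only thing to be slightly careful about is justifying that the ``polynomial in $\A^*_1$ times $\E^*_i$'' step works, i.e., that for any polynomial $p \in \C[\lambda]$ one has $p(\A^*_1)\E^*_i = p(\th^*_i)\E^*_i$. This is immediate by linearity and induction on degree once one knows $\A^*_1 \E^*_i = \th^*_i \E^*_i$, and it is exactly the step used in the proof of Lemma \ref{lem:Pij}. Hence the proof can be given in two or three lines, and in the actual write-up it would be natural to simply say ``Apply Lemma \ref{lem:Pij} to $\Phi^*$,'' or equivalently ``Dual to Lemma \ref{lem:Pij},'' which is how such symmetric statements are usually handled in this paper.
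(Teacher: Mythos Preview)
Your proposal is correct and is exactly the approach the paper takes: the paper's proof reads in its entirety ``Similar to the proof of Lemma \ref{lem:Pij},'' which is precisely the formal dualization you describe.
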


\begin{proof}
Similar to the proof of Lemma \ref{lem:Pij}.
\end{proof}

Let $U$ denote an irreducible $\T$-module.
Define the {\em dual endpoint} $s=s(U)$ by
\begin{align*}
  s &= \text{\rm min} \{i \,|\, 0 \leq i \leq D, \; \E_i U \neq 0 \}.
\end{align*}

\begin{lemma}  {\rm \cite[Lemma 3.9]{T:subconst1} }
\label{lem:Tmodule2}    \samepage
\ifDRAFT {\rm lem:Tmodule2}. \fi
Let $U$ denote an irreducible $T$-module
with dual endpoint $s$ and dual diameter $d^*$.
Then the following hold.
\begin{itemize}
\item[\rm (i)]
We have $\E_i U \neq 0$ if and only if $s \leq i \leq s+d^*$ $(0 \leq i \leq D)$.
\item[\rm (ii)]
If $U$ is dual thin then $U$ is thin.
\end{itemize}
\end{lemma}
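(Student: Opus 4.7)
The plan is to parallel the proofs of Lemma \ref{lem:Tmodule1}, using the $Q$-polynomial hypothesis on $\{\E_i\}_{i=0}^D$ in place of the $P$-polynomial one. The key combinatorial input is that the $Q$-polynomial property forces $\q^1_{ij}=0$ unless $|i-j|\leq 1$; combined with the equivalence $\E_i \A^*_1 \E_j = 0 \Leftrightarrow \q^1_{ij}=0$, this shows that $\A^*_1$ acts tridiagonally with respect to the decomposition $\V = \bigoplus_i \E_i \V$, and hence also on $U = \bigoplus_i \E_i U$.

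For part (i), I would argue by contradiction. Let $S = \{i : \E_i U \neq 0\}$, and suppose there exists $j \notin S$ strictly between two elements of $S$. Put $U_1 = \bigoplus_{i<j} \E_i U$ and $U_2 = \bigoplus_{i>j} \E_i U$. Both are nonzero with $U = U_1 \oplus U_2$. Each is $\M$-invariant since $\M$ preserves every $\E_i \V$. Each is also $\A^*_1$-invariant: near the gap, $\A^*_1 \E_{j-1}U \subseteq (\E_{j-2} + \E_{j-1} + \E_j)U \subseteq U_1$ (symmetrically for $U_2$), because the $\E_j U$ term vanishes by assumption. Invoking Lemma \ref{lem:vsi}(iii), $\A^*_1$-invariance implies $\M^*$-invariance, so both $U_1$ and $U_2$ are $\T$-invariant. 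This contradicts irreducibility, so $S$ is an interval; by definition of $s$ and $d^*$, $S = \{s, s+1, \ldots, s+d^*\}$.

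For part (ii), assume $U$ is dual thin. Part (i) together with dual thinness gives $\dim \E_{s+i}U = 1$ for $0 \leq i \leq d^*$; choose $0 \neq v_i \in \E_{s+i} U$, yielding a basis of $U$. Write $\A^*_1 v_i = \alpha_i v_{i-1} + \beta_i v_i + \gamma_i v_{i+1}$ (with $v_{-1} = v_{d^*+1} = 0$). I claim $\gamma_k \neq 0$ for $0 \leq k \leq d^*-1$: otherwise $\text{\rm Span}\{v_0,\ldots,v_k\}$ would be $\A^*_1$-invariant (hence $\M^*$-invariant by Lemma \ref{lem:vsi}(iii)) and trivially $\M$-invariant (each $v_i$ is an $\A_1$-eigenvector), giving a proper nonzero $\T$-submodule of $U$, contradicting irreducibility. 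Similarly $\alpha_i \neq 0$: the $\E_i \V$ are mutually orthogonal as eigenspaces of the real symmetric matrix $\A_1$, so the basis $\{v_i\}$ is orthogonal, and self-adjointness of the real diagonal matrix $\A^*_1$ yields $\alpha_i \|v_{i-1}\|^2 = \gamma_{i-1}\|v_i\|^2$. Hence the matrix of $\A^*_1$ acting on $U$ in this basis is irreducible tridiagonal.

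An irreducible tridiagonal $(d^*+1)\times(d^*+1)$ matrix with $\alpha_i \gamma_{i-1} > 0$ has $d^*+1$ distinct real eigenvalues, which here must lie among $\{\th^*_0,\ldots,\th^*_D\}$; since the $\th^*_j$-eigenspace of this restriction is $\E^*_j U$, there are exactly $d^*+1$ nonzero $\E^*_j U$, each one-dimensional, whence $d = d^*$ and $U$ is thin. The principal obstacle is establishing irreducibility (not merely tridiagonality) of the action of $\A^*_1$ on $U$: the super-diagonal nonvanishing comes from irreducibility of $U$ via the forbidden submodule, while the sub-diagonal nonvanishing follows from self-adjointness of $\A^*_1$ together with orthogonality of the $\E_i \V$.
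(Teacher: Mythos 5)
This lemma is cited from \cite[Lemma 3.9]{T:subconst1} without proof in the paper, so there is no in-paper argument to compare against; your proof is correct and reproduces the standard argument from that reference (the $Q$-polynomial version of the block-tridiagonality argument). Two small remarks. First, for the nonvanishing of $\alpha_i$ you do not actually need the self-adjointness/orthogonality argument: the same forbidden-submodule device you used for $\gamma_k$ applies directly, since $\alpha_j = 0$ would make $\text{\rm Span}\{v_j,\ldots,v_{d^*}\}$ invariant under $\A^*_1$ (hence under $\M^*$ by Lemma \ref{lem:vsi}(iii)) and under $\M$, contradicting irreducibility. If you do keep the inner-product route, note that with the Hermitian form on $\C^X$ the identity should read $\overline{\alpha_i}\,\|v_{i-1}\|^2 = \gamma_{i-1}\|v_i\|^2$; this still yields $\alpha_i\gamma_{i-1} = |\gamma_{i-1}|^2\|v_i\|^2/\|v_{i-1}\|^2 > 0$, so the conclusion stands. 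Second, the final step can be streamlined: since $\A^*_1$ is diagonalizable on $\V$ and hence on $U$, and an irreducible tridiagonal matrix has geometrically simple eigenvalues, the $d^*+1$ eigenvalues of $\A^*_1|_U$ are automatically distinct without invoking the real-Jacobi-matrix theory; the eigenspaces are the $\E^*_j U$, giving $d = d^*$ and $U$ thin.
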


\begin{lemma}   {\rm  \cite[Lemmas 3.9, 3.12]{T:subconst1} }
\label{lem:U}    \samepage
\ifDRAFT {\rm lem:U}. \fi
Let $U$ denote a thin irreducible $\T$-module with endpoint $r$,
dual endpoint $s$, and diameter $d$.
Then the sequence
\begin{equation}
    (\A_1; \{\E_{s+i}\}_{i=0}^d; \A^*_1; \{\E^*_{r+i}\}_{i=0}^d)        \label{eq:UPhi}
\end{equation}
acts on $U$ as a Leonard system,
with eigenvalue sequence $\{\th_{s+i} \}_{i=0}^d$ and dual eigenvalue sequence
$\{ \th^*_{r+i} \}_{i=0}^d$.
\end{lemma}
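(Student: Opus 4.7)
The plan is to verify the conditions of Definition \ref{def:LP} and Definition \ref{def:LS} directly, using the fact that $U$ is both thin and dual-thin (the latter by Lemma \ref{lem:Tmodule2}(ii)), so that $d = d^*$, together with the tridiagonality of the actions of $\A_1$ and $\A^*_1$.

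First I would set up the two candidate bases. By Lemmas \ref{lem:Tmodule1}(i), \ref{lem:Tmodule2}(i) and the thinness/dual-thinness, the subspaces $\E^*_{r+i} U$ $(0 \le i \le d)$ are $1$-dimensional and span $U$, and likewise $\E_{s+i} U$ $(0 \le i \le d)$ are $1$-dimensional and span $U$. Pick $0 \ne v_i \in \E^*_{r+i} U$ and $0 \ne w_i \in \E_{s+i} U$. Then $\A^*_1$ acts diagonally on $\{v_i\}$ with eigenvalues $\th^*_{r+i}$, and $\A_1$ acts diagonally on $\{w_i\}$ with eigenvalues $\th_{s+i}$. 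Thus to prove Definition \ref{def:LP}, I need to show that $\A_1$ acts on $\{v_i\}$ as an irreducible tridiagonal matrix, and $\A^*_1$ acts on $\{w_i\}$ as an irreducible tridiagonal matrix.

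Next I would establish tridiagonality. For the first basis, the paper records that $\E^*_i \A_h \E^*_j = 0$ iff $\p^h_{ij} = 0$; since $\p^1_{ij} = 0$ whenever $|i - j| > 1$, we get $\E^*_i \A_1 \E^*_j = 0$ in that case, so $\A_1 \E^*_{r+i} U \subseteq \E^*_{r+i-1} U + \E^*_{r+i} U + \E^*_{r+i+1} U$, showing tridiagonality in the basis $\{v_i\}$. The same argument with $\q^1_{ij}$ (using $Q$-polynomiality) and $\E_i \A^*_h \E_j$ shows that $\A^*_1$ acts tridiagonally on $\{w_i\}$.

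The main obstacle is irreducibility of the tridiagonal forms, i.e., that all sub- and super-diagonal entries are nonzero. Suppose, toward a contradiction, that the $(i{+}1,i)$-entry of $\A_1$ in the basis $\{v_j\}$ vanishes for some $0 \le i < d$; that is, the component of $\A_1 v_i$ in $\E^*_{r+i+1}U$ is zero. Then $W := \sum_{j=0}^{i} \E^*_{r+j} U$ is stable under $\A_1$ (by the tridiagonal shape and the vanishing) and under $\A^*_1$ (since $\A^*_1$ is diagonal in this basis). By Lemmas \ref{lem:vi}(iii) and \ref{lem:vsi}(iii) the algebras $\M$, $\M^*$ are generated by $\A_1$, $\A^*_1$, so $\T$ is generated by $\A_1$ and $\A^*_1$; hence $W$ is a $\T$-submodule of $U$. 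Since $W \ne 0$ and $W \ne U$ (as $\E^*_{r+d} U \not\subseteq W$), this contradicts irreducibility of $U$. A symmetric argument with $W = \sum_{j=i}^{d} \E^*_{r+j}U$ handles the super-diagonal, and the analogous arguments with the roles of $\A_1, \A^*_1$ and $\{\E_{s+i}\}, \{\E^*_{r+i}\}$ swapped handle the second basis.

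Finally I would conclude. The preceding steps show that $\A_1, \A^*_1$ is a Leonard pair on $U$. By construction, in the basis $\{v_i\}$ (obtained from $\{\E^*_{r+i}\}$) the matrix of $\A_1$ is irreducible tridiagonal, so the ordering $\{\E^*_{r+i}\}_{i=0}^d$ of the primitive idempotents of $\A^*_1\vert_U$ is standard in the sense of Definition \ref{def:LS}(iii). Likewise the ordering $\{\E_{s+i}\}_{i=0}^d$ is a standard ordering of the primitive idempotents of $\A_1\vert_U$. Thus \eqref{eq:UPhi} is a Leonard system on $U$, and its eigenvalue and dual eigenvalue sequences are $\{\th_{s+i}\}_{i=0}^d$ and $\{\th^*_{r+i}\}_{i=0}^d$ as claimed.
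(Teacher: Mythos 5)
The paper itself gives no proof of Lemma \ref{lem:U}; it is invoked as a citation to \cite[Lemmas 3.9, 3.12]{T:subconst1}. Your blind proof correctly reconstructs the standard argument behind that citation, and it is sound. You show tridiagonality of $\A_1$ on the basis drawn from $\{\E^*_{r+i}U\}$ (and dually for $\A^*_1$) using the vanishing of $\E^*_i\A_1\E^*_j$ for $|i-j|>1$, then show the off-diagonal entries cannot vanish because a zero would produce a proper nonzero subspace of $U$ invariant under both $\A_1$ and $\A^*_1$ and hence (since $\M=\gen{\A_1}$ and, via the $Q$-polynomial hypothesis that is in force in this section, $\M^*=\gen{\A^*_1}$) a proper $\T$-submodule, contradicting irreducibility. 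The identification of the two orderings as standard in the sense of Definition \ref{def:LS}, and of the resulting eigenvalue and dual eigenvalue sequences, is also correct.

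One small slip: to deduce that a thin irreducible $\T$-module is also dual thin, you cite Lemma \ref{lem:Tmodule2}(ii), but that lemma gives the reverse implication (dual thin $\Rightarrow$ thin). The implication you actually need (thin $\Rightarrow$ dual thin) is Lemma \ref{lem:Tmodule1}(ii). With that reference corrected, the argument goes through.
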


Consider the Leonard system \eqref{eq:UPhi} on $U$.
Any object $\omega$ associated with this Leonard system will be denoted
by $\omega(U)$.

Recall the primary $\T$-module $\pU$.
By Lemma \ref{lem:U} the sequence
\begin{equation}
   (\A_1; \{\E_i\}_{i=0}^D; \A^*_1; \{\E^*_i\}_{i=0}^D)    \label{eq:U0Phi}
\end{equation}
acts on $\pU$ as a Leonard system,
with eigenvalue sequence $\{\th_i\}_{i=0}^D$ and dual eigenvalue sequence
$\{\th^*_i\}_{i=0}^D$.

\begin{lemma}    \label{lem:nu3}    \samepage
\ifDRAFT {\rm lem:nu3}. \fi
We have
\begin{equation*}
   \nu (\pU) = |X|.            
\end{equation*}
\end{lemma}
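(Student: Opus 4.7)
The plan is to compare the defining equation of $\nu$ (from Lemma \ref{lem:nuE0Es0E0}, applied to the Leonard system \eqref{eq:U0Phi} on $\pU$) with the identity in Lemma \ref{lem:E0Es0E0}, which holds for the graph idempotents $\E_0, \E^*_0$ acting on all of $\V$.

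First I would verify that the restriction of $\E_0 \in \Mat$ to $\pU$ is precisely the primitive idempotent $E_0$ of the Leonard system \eqref{eq:U0Phi} acting on $\pU$, and similarly that the restriction of $\E^*_0$ to $\pU$ is $E^*_0$. This follows because $\E_0 \in \M \subseteq \T$ and $\E^*_0 \in \M^* \subseteq \T$, so $\pU$ is invariant under both. The restriction of $\E_0$ to $\pU$ is idempotent (since $\E_0^2 = \E_0$) and satisfies $\A_1 \E_0 = \th_0 \E_0$, so it agrees with the primitive idempotent associated with $\th_0$ in the Leonard system on $\pU$; the same argument works for $\E^*_0$.

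Next, Lemma \ref{lem:E0Es0E0} gives $|X| \, \E_0 \E^*_0 \E_0 = \E_0$ as an identity in $\Mat$. Restricting both sides to the $\T$-invariant subspace $\pU$ and using the identification from the previous step, we obtain
\[
 |X| \, E_0 E^*_0 E_0 = E_0
\]
as an identity in $\text{End}(\pU)$. Applying Lemma \ref{lem:nuE0Es0E0} to the Leonard system \eqref{eq:U0Phi} on $\pU$, the scalar $\nu(\pU)$ is the unique scalar satisfying $\nu(\pU) \, E_0 E^*_0 E_0 = E_0$ (uniqueness follows since $E_0 \neq 0$, as $E_0$ is a primitive idempotent). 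Comparing, we conclude $\nu(\pU) = |X|$.

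There is no serious obstacle here; the only subtlety is confirming that the primitive idempotents of the Leonard system on $\pU$ are indeed the restrictions of the global idempotents $\E_0, \E^*_0$, which is immediate from the definitions and the fact that $\pU$ is $\T$-invariant.
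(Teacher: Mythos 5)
Your proof is correct and takes essentially the same approach as the paper: restrict the global identity $|X|\,\E_0\E^*_0\E_0=\E_0$ to $\pU$ and compare with Lemma \ref{lem:nuE0Es0E0} applied to the Leonard system \eqref{eq:U0Phi}. The only cosmetic difference is in the final step, where you argue uniqueness of the scalar directly from $E_0\neq 0$, while the paper invokes Corollary \ref{cor:faithful}; these amount to the same thing.
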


\begin{proof}
By Lemma \ref{lem:nuE0Es0E0},
$\nu(\pU) \E_0 \E^*_0 \E_0 = \E_0$ holds on $\pU$.
Now by Lemma \ref{lem:E0Es0E0} and Corollary \ref{cor:faithful} we obtain the result.
\end{proof}

\begin{lemma}    \label{lem:AiEs0E03}    \samepage
\ifDRAFT {\rm lem:AiEs0E03}. \fi
For the Leonard system in \eqref{eq:U0Phi} on $\pU$,
consider the elements $\{A_i\}_{i=0}^D$ and
$\{A^*_i\}_{i=0}^D$ from Definition \ref{def:Ai}.
Then for $0 \leq i \leq D$, 
$\A_i$ acts on $\pU$ as  $A_i$,
and $\A^*_i$ acts on $\pU$ as $A^*_i$.
\end{lemma}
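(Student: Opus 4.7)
The plan is to exploit the characterization of $A_i$ given in Lemma~\ref{lem:rhorhos2}: namely, for the Leonard system $(\A_1;\{\E_i\}_{i=0}^D;\A^*_1;\{\E^*_i\}_{i=0}^D)$ acting on $\pU$, the element $A_i \in \langle \A_1|_{\pU}\rangle$ is \emph{uniquely} characterized by the equation $A_i E^*_0 E_0 = E^*_i E_0$, where here $E^*_0, E_0, E^*_i$ denote the operators $\E^*_0, \E_0, \E^*_i$ viewed as endomorphisms of $\pU$. I plan to verify that $\A_i|_{\pU}$ satisfies this defining property, and then invoke uniqueness.

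First I would note that, since $\pU$ is a $\T$-module, each of $\A_i$, $\A^*_i$, $\E_i$, $\E^*_i$ preserves $\pU$, so their restrictions to $\pU$ are well-defined elements of $\text{\rm End}(\pU)$. Next, Lemma~\ref{lem:vi}(iii) gives $\A_i \in \M = \gen{\A_1}$, so the restriction $\A_i|_{\pU}$ lies in the subalgebra of $\text{\rm End}(\pU)$ generated by $\A_1|_{\pU}$, which is precisely $\gen{A}$ for the Leonard system on $\pU$. Then the key identity is Lemma~\ref{lem:AiEs0E02}, which yields the global equation
\begin{equation*}
 \A_i \E^*_0 \E_0 = \E^*_i \E_0
\end{equation*}
in $\Mat$. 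Restricting this identity to the $\T$-invariant subspace $\pU$ produces exactly the defining equation of $A_i$ for the Leonard system acting on $\pU$. By the uniqueness statement in Lemma~\ref{lem:rho} (together with Lemmas~\ref{lem:AcalAEs0}, \ref{lem:rhorhos}), we conclude $\A_i|_{\pU} = A_i$.

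The argument for the dual statement is entirely symmetric: Lemma~\ref{lem:vsi}(iii) gives $\A^*_i \in \M^* = \gen{\A^*_1}$, so $\A^*_i|_{\pU} \in \gen{A^*}$ for the Leonard system on $\pU$. From the second equation of \eqref{eq:AsiE0Es02} we have $\A^*_i \E_0 \E^*_0 = \E_i \E^*_0$ in $\Mat$, which restricts to the defining equation of $A^*_i$ obtained by applying Lemma~\ref{lem:rhorhos2} to $\Phi^*$. Uniqueness then gives $\A^*_i|_{\pU} = A^*_i$.

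There is no serious obstacle here; the only point requiring minor care is to confirm that the algebra $\gen{A}$ for the Leonard system acting on $\pU$ coincides with the image of $\M$ under restriction to $\pU$ (and similarly for $\gen{A^*}$ and $\M^*$). This follows from Lemmas~\ref{lem:vi}(iii), \ref{lem:vsi}(iii), together with the faithfulness observation in Corollary~\ref{cor:faithful}, which guarantees that the restriction maps $\M \to \text{\rm End}(\pU)$ and $\M^* \to \text{\rm End}(\pU)$ are injective, so $\gen{\A_1|_{\pU}}$ and $\gen{\A^*_1|_{\pU}}$ each have dimension $D+1$ and agree with the images of $\M$ and $\M^*$ respectively.
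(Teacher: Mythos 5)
Your proof is correct and follows essentially the same approach as the paper: both use Lemma~\ref{lem:rhorhos2} together with Lemma~\ref{lem:AiEs0E02} and the faithfulness of the $\M$- and $\M^*$-actions on $\pU$. The only cosmetic difference is that you invoke the first identity of \eqref{eq:AiEs0E0} and the uniqueness from Lemma~\ref{lem:rho}, whereas the paper uses the second identity of \eqref{eq:AsiE0Es0} (with the factor $\nu(\pU)^{-1}=|X|^{-1}$ from Lemma~\ref{lem:nu3}) and then evaluates at $\widehat{x}$ via Lemma~\ref{lem:faithful0}(i).
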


\begin{proof}
By Lemma \ref{lem:rhorhos2} the following holds on $\pU$:
\begin{equation*}
   \E^*_i \E_0 \E^*_0 = \nu(\pU)^{-1} A_i \E^*_0.                 
\end{equation*}
Comparing this with the equation on the right in \eqref{eq:AsiE0Es02}, 
and using Lemma \ref{lem:nu3}, we find that
$A_i = \A_i$ on $\E^*_0 \pU$.
The subspace $\E^*_0 \pU$ is spanned by $\widehat{x}$,
so $A_i =\A_i$ on $\pU$ in view of Lemma \ref{lem:faithful0}(i).
The result for $\A^*_i$ is similarly obtained.
\end{proof}

\begin{lemma}    \label{lem:phijqhij}    \samepage
\ifDRAFT {\rm lem:phijqhij}. \fi
For $0 \leq h, i, j \leq D$,
\begin{align*}
 p^h_{i j} (\pU) &= \p^h_{i j}, &
 q^h_{i j} (\pU) &= \q^h_{i j}.
\end{align*}
\end{lemma}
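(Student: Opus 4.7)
The plan is to exploit the fact that the matrices $\A_i$ and $\A^*_i$ (the distance matrices and dual distance matrices of $\Gamma$) act on the primary $\T$-module $\pU$ exactly as the abstract elements $A_i$ and $A^*_i$ of the Leonard system on $\pU$. This is precisely the content of Lemma~\ref{lem:AiEs0E03}, so the heavy lifting has already been done.

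First I would handle the $p^h_{ij}$ claim. On the one hand, by Lemma~\ref{lem:defp} applied to the Leonard system \eqref{eq:U0Phi} on $\pU$, we have the identity $A_i A_j = \sum_h p^h_{ij}(\pU) \, A_h$ of operators on $\pU$. On the other hand, equation \eqref{eq:AiAj2} gives the global identity $\A_i \A_j = \sum_h \p^h_{ij} \, \A_h$ in $\Mat$, which of course restricts to $\pU$. Combining these via Lemma~\ref{lem:AiEs0E03}, which says $A_h$ and $\A_h$ agree as operators on $\pU$, I obtain
\[
 \sum_{h=0}^D \bigl( p^h_{ij}(\pU) - \p^h_{ij} \bigr) \A_h = 0 \qquad \text{on } \pU.
\]
The element $\sum_h ( p^h_{ij}(\pU) - \p^h_{ij}) \A_h$ lies in $\M$, so by Corollary~\ref{cor:faithful}(i) this element equals $0$ in $\Mat$. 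Linear independence of $\{\A_h\}_{h=0}^D$ then forces $p^h_{ij}(\pU) = \p^h_{ij}$ for $0 \leq h \leq D$.

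The argument for $q^h_{ij}(\pU) = \q^h_{ij}$ is entirely parallel: use \eqref{eq:AsiAsj2} applied to the Leonard system on $\pU$, compare with \eqref{eq:AsiAsj}, apply Lemma~\ref{lem:AiEs0E03} to the dual side, and finish with Corollary~\ref{cor:faithful}(ii) together with the linear independence of $\{\A^*_h\}_{h=0}^D$ (which holds by the second display in \eqref{eq:Esi}). There is no real obstacle; the only care needed is to cite Lemma~\ref{lem:AiEs0E03} for both $\A_i$ and $\A^*_i$ and to remember to invoke the faithfulness of the $\M$-action and $\M^*$-action on $\pU$ separately for the two halves of the statement.
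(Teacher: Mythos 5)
Your proof is correct and is essentially the paper's argument, just with more of the bookkeeping spelled out: the paper's own proof is a single sentence instructing the reader to compare \eqref{eq:AiAj}, \eqref{eq:AsiAsj2} with \eqref{eq:AiAj2}, \eqref{eq:AsiAsj} and invoke Lemma \ref{lem:AiEs0E03}. You make explicit the two finishing steps that the paper leaves implicit, namely that faithfulness of the $\M$-action and $\M^*$-action on $\pU$ (Corollary \ref{cor:faithful}) lets you pass from an identity on $\pU$ to an identity in $\Mat$, and that linear independence of $\{\A_h\}_{h=0}^D$ and $\{\A^*_h\}_{h=0}^D$ then gives the coefficient-wise conclusion.
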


\begin{proof}
Compare the equations \eqref{eq:AiAj}, \eqref{eq:AsiAsj2} with
 \eqref{eq:AiAj2}, \eqref{eq:AsiAsj} and use Lemma \ref{lem:AiEs0E03}.
\end{proof}

\begin{lemma}    \label{lem:ciaibi}    \samepage
\ifDRAFT {\rm lem:ciaibi}. \fi
We have 
\begin{align}
c_i (\pU) &=\c_i  & c^*_i (\pU) &=\c^*_i    && (1 \leq i \leq D),   \label{eq:cipU}  \\
a_i (\pU) &= \a_i & a^*_i (\pU) &= \a^*_i &&  (0 \leq i \leq D),    \label{eq:aipU}  \\
b_i (\pU) &= \b_i & b^*_i (\pU) &= \b^*_i && (0 \leq i \leq D-1).   \label{eq:bipU}
\end{align}
\end{lemma}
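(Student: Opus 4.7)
The plan is to reduce everything to the structure constants $p^h_{ij}(\pU)$ of the algebra $\gen{A_1}$ acting on $\pU$, which are already pinned down by Lemma \ref{lem:phijqhij}. The key observation is that Lemma \ref{lem:phij} expresses the Leonard-pair intersection numbers $c_i(\pU), a_i(\pU), b_i(\pU)$ in terms of $c_1(\pU), a_0(\pU)$ and the $p^i_{1j}(\pU)$, so once we identify $c_1(\pU) = 1$ and $a_0(\pU) = 0$ the result will fall out.

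First I would pin down $c_1(\pU)$ and $a_0(\pU)$. By Lemma \ref{lem:AA1} applied to the Leonard system \eqref{eq:U0Phi} on $\pU$, we have
\[
  \A_1 = c_1(\pU)\, A_1 + a_0(\pU)\, I
\]
as linear maps on $\pU$. By Lemma \ref{lem:AiEs0E03} the element $A_1$ acts on $\pU$ as $\A_1$ itself, so the displayed equation becomes $\A_1 = c_1(\pU)\,\A_1 + a_0(\pU)\,I$ on $\pU$. Because $\A_1$ has the $D+1$ distinct eigenvalues $\th_0,\ldots,\th_D$ on $\pU$ (Lemma \ref{lem:vi}(iv) and the fact that each $\E_i\pU\neq 0$ for the primary module), the elements $\A_1$ and $\I$ act linearly independently on $\pU$, forcing $c_1(\pU)=1$ and $a_0(\pU)=0$.

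Next I would combine this with Lemmas \ref{lem:phij} and \ref{lem:phijqhij}. Applying Lemma \ref{lem:phij} to $\pU$,
\[
  c_i(\pU) = c_1(\pU)\, p^i_{1,i-1}(\pU), \qquad
  a_i(\pU) = c_1(\pU)\, p^i_{1i}(\pU) + a_0(\pU), \qquad
  b_i(\pU) = c_1(\pU)\, p^i_{1,i+1}(\pU),
\]
and substituting $c_1(\pU)=1$, $a_0(\pU)=0$ together with $p^h_{ij}(\pU)=\p^h_{ij}$ from Lemma \ref{lem:phijqhij}, we recover exactly $\c_i$, $\a_i$, $\b_i$ from \eqref{eq:defci}--\eqref{eq:defbi}. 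This establishes the unstarred identities in \eqref{eq:cipU}--\eqref{eq:bipU}.

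For the starred identities, the same argument applies to the dual Leonard system $\Phi^*$ on $\pU$, which is $(\A^*_1; \{\E^*_i\}_{i=0}^D; \A_1; \{\E_i\}_{i=0}^D)$. Here Lemma \ref{lem:AA1} applied to $\Phi^*$, together with the second equality in Lemma \ref{lem:AiEs0E03}, gives $c^*_1(\pU)=1$ and $a^*_0(\pU)=0$; Lemma \ref{lem:phij} applied to $\Phi^*$ together with the identification $q^h_{ij}(\pU)=\q^h_{ij}$ from Lemma \ref{lem:phijqhij} then yields $c^*_i(\pU)=\c^*_i$, $a^*_i(\pU)=\a^*_i$, $b^*_i(\pU)=\b^*_i$. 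There is no real obstacle here; the proof is essentially a bookkeeping step whose only delicate point is verifying that $\A_1$ and $\I$ act linearly independently on $\pU$, which is immediate from the primary module having diameter $D \geq 1$.
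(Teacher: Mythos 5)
Your proof is correct and follows essentially the same route as the paper's: pin down $c_1(\pU)=1$, $a_0(\pU)=0$ via Lemma \ref{lem:AA1} together with Lemma \ref{lem:AiEs0E03}, then feed this into Lemma \ref{lem:phij} and the identification $p^h_{ij}(\pU)=\p^h_{ij}$ from Lemma \ref{lem:phijqhij}, and repeat dually. The only cosmetic difference is how you justify $c_1(\pU)=1$ and $a_0(\pU)=0$: you argue directly that $\A_1$ and $\I$ act linearly independently on $\pU$ (because $\A_1$ has distinct eigenvalues there), whereas the paper invokes Corollary \ref{cor:faithful}(i) to lift the relation from $\pU$ back to $\M$ and then uses linear independence of $\A_1,\I$ in $\M$; these are logically interchangeable.
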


\begin{proof}
We first obtain the equations on the left in \eqref{eq:cipU}--\eqref{eq:bipU}.
Apply Lemma \ref{lem:AA1} to the Leonard system \eqref{eq:U0Phi} on $\pU$,
and use Lemma \ref{lem:AiEs0E03} to find that
$\A_1 = c_1(\pU) \A_1 + a_0 (\pU) \I$ holds on $\pU$.
By this and Corollary \ref{cor:faithful}(i) we obtain $c_1 (\pU) = 1$ and $a_0 (\pU)=0$.
Now compare the equations in Lemma \ref{lem:phij} with \eqref{eq:defci}--\eqref{eq:defbi},
and use Lemma \ref{lem:phijqhij}.
This gives the equations on the left in \eqref{eq:cipU}--\eqref{eq:bipU}.
Next we obtain the equations on the right in \eqref{eq:cipU}--\eqref{eq:bipU}.
Apply Lemma \ref{lem:AA1} to the dual of the Leonard system \eqref{eq:U0Phi} on $\pU$,
and use Lemma \ref{lem:AiEs0E03} to find that
$\A^*_1 = c^*_1 (\pU) \A^*_1 + a^*_0 (\pU) \I$ holds on $\pU$.
By this and Corollary \ref{cor:faithful}(ii) we obtain
 $c^*_1 (\pU) = 1$ and $a^*_0 (\pU)=0$.
Now apply Lemma \ref{lem:phij} to the dual of the Leonard system \eqref{eq:U0Phi} on $\pU$ to get
\begin{align*}
c^*_i (\pU) &= q^i_{1,i-1} (\pU)  && (1 \leq i \leq D), \\
a^*_i (\pU) &= q^i_{1,i} (\pU)   && (0 \leq i \leq D), \\
b^*_i (\pU) &= q^i_{1,i+1} (\pU) && (0 \leq i \leq D-1).
\end{align*}
Now compare these equations with \eqref{eq:defcsi}--\eqref{eq:defbsi},
and use Lemma \ref{lem:phijqhij}.
This gives the equations on the right in \eqref{eq:cipU}--\eqref{eq:bipU}.
\end{proof}

\begin{lemma}    \label{lem:nuki}    \samepage
\ifDRAFT {\rm lem:nuki}. \fi
For $0 \leq i \leq D$,
\begin{align}
k_i (\pU) &= \k_i,    &   k^*_i (\pU) &= \m_i.    \label{eq:kipU}
\end{align}
\end{lemma}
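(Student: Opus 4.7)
The plan is to apply Lemma \ref{lem:ki} to the Leonard system \eqref{eq:U0Phi} acting on the primary $\T$-module $\pU$, which gives
\[
  k_i(\pU) = \frac{b_0(\pU) \, b_1(\pU) \cdots b_{i-1}(\pU)}{c_1(\pU) \, c_2(\pU) \cdots c_i(\pU)} \qquad (0 \leq i \leq D).
\]
By the equations on the left in \eqref{eq:cipU} and \eqref{eq:bipU} of Lemma \ref{lem:ciaibi}, the right-hand side equals
\[
   \frac{\b_0 \b_1 \cdots \b_{i-1}}{\c_1 \c_2 \cdots \c_i},
\]
which is precisely the formula for $\k_i$ recorded just after \eqref{eq:defbi}. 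This yields the equation on the left in \eqref{eq:kipU}.

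For the equation on the right in \eqref{eq:kipU}, apply the same argument to the dual Leonard system on $\pU$: Lemma \ref{lem:ki} applied to the dual gives
\[
  k^*_i(\pU) = \frac{b^*_0(\pU) \, b^*_1(\pU) \cdots b^*_{i-1}(\pU)}{c^*_1(\pU) \, c^*_2(\pU) \cdots c^*_i(\pU)},
\]
and the equations on the right in \eqref{eq:cipU}, \eqref{eq:bipU} of Lemma \ref{lem:ciaibi} convert this to the standard expression for $\m_i$ displayed just after \eqref{eq:defbsi}.

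There is essentially no obstacle here: all the substantive content has already been packaged into Lemma \ref{lem:ki} (which expresses the intersection numbers $k_i$ of any Leonard system in terms of its $b_j, c_j$) and into Lemma \ref{lem:ciaibi} (which identifies those intersection numbers, computed from the Leonard system on $\pU$, with the graph-theoretic $\b_j, \c_j$ and their duals). The lemma is therefore proved in two lines by combining these two results with the standard product formulas for $\k_i$ and $\m_i$ recalled in Section \ref{sec:DRG}.
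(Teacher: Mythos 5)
Your proof is correct. You reach the conclusion by a genuinely different route, though. The paper applies Lemma \ref{lem:kip0ii} (which says $k_i=p^0_{ii}$ for any Leonard system) to the Leonard system on $\pU$, then invokes Lemma \ref{lem:phijqhij} to identify $p^0_{ii}(\pU)$ with $\p^0_{ii}$, and concludes via the definitions $\k_i=\p^0_{ii}$ in \eqref{eq:defki2} and $\m_i=\q^0_{ii}$ in \eqref{eq:defmi}. You instead take the product formula $k_i=b_0\cdots b_{i-1}/(c_1\cdots c_i)$ from Lemma \ref{lem:ki}, identify the intersection numbers of the Leonard system on $\pU$ with the graph parameters $\b_j,\c_j,\b^*_j,\c^*_j$ via Lemma \ref{lem:ciaibi}, and then invoke the standard graph-theoretic product formulas for $\k_i$ and $\m_i$ quoted from Brouwer--Cohen--Neumaier. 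Both arguments are sound and about the same length. The paper's version keeps the dependency chain minimal: it bypasses Lemma \ref{lem:ciaibi} entirely and hinges on the equality of structure constants rather than of intersection numbers, so it needs one fewer intermediate identification (note Lemma \ref{lem:ciaibi} is itself deduced from Lemma \ref{lem:phijqhij}). Your version trades that economy for a more concrete computation with explicit product formulas, which may be easier to follow for a reader thinking in graph-theoretic terms; since Lemma \ref{lem:ciaibi} immediately precedes the present lemma, everything you use is already available.
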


\begin{proof}
To get the equation on the left in \eqref{eq:kipU},
apply Lemma \ref{lem:kip0ii} to the Leonard system \eqref{eq:U0Phi} on $\pU$,
and compare the result with \eqref{eq:defki2} using Lemma \ref{lem:phijqhij}.
To get the equation on the right in \eqref{eq:kipU},
apply Lemma \ref{lem:kip0ii} to the dual of the Leonard system \eqref{eq:U0Phi} on $\pU$,
and compare the result with \eqref{eq:defmi} using Lemma \ref{lem:phijqhij}.
\end{proof}

\begin{definition}    \label{def:qRacahDRG}    \samepage
\ifDRAFT {\rm def:qRacahDRG}. \fi
Let $0 \neq q \in \C$.
The $Q$-polynomial ordering $\{\E_i\}_{i=0}^D$ is said to have {\em $q$-Racah type}
whenever there exists an affine transformation of the Leonard system
\eqref{eq:U0Phi} on $\pU$, that has $q$-Racah type.
\end{definition}

\section{Formally self-dual distance-regular graphs}
\label{sec:DRGsd}

In this section we discuss a type of distance-regular graph, said to be formally self-dual.
Let $\Gamma$ denote a distance-regular graph with diameter $D \geq 1$.
Recall the distance-matrices $\{\A_i\}_{i=0}^D$ and 
primitive idempotents $\{\E_i\}_{i=0}^D$ in the Bose-Mesner algebra $\M$
of $\Gamma$.
Recall the matrices $P$, $Q \in \text{\rm Mat}_{D+1}(\C)$ from \eqref{eq:defPQ}.
Fix $x \in X$ and abbreviate $\T=\T(x)$, $\M^* = \M^*(x)$,
and
$\E^*_i = \E^*_i (x)$, $\A^*_i = \A^*_i (x)$ for $0 \leq i \leq D$.

\begin{definition}    \label{def:FSD}    \samepage
\ifDRAFT {\rm def:FSD}. \fi
The graph $\Gamma$ is said to be {\em formally self-dual with respect to $\{\E_i\}_{i=0}^D$}
whenever $P=Q$.
\end{definition}

The following fact is well-known; see \cite[Section 2.3]{BCN}.
We give a short proof for completeness.

\begin{lemma}    \label{lem:duality}    \samepage
\ifDRAFT {\rm lem:duality}. \fi
Assume that $\Gamma$ is formally self-dual with respect to
$\{\E_i\}_{i=0}^D$.
Then $\p^h_{i j} = \q^h_{i j}$ for $0 \leq h,i,j \leq D$.
\end{lemma}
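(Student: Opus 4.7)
The plan is to exploit the two linear relations between the bases $\{\A_i\}_{i=0}^D$ and $\{\E_i\}_{i=0}^D$ of $\M$ given by \eqref{eq:defPQ}, and obtain a system of linear equations that characterizes the intersection numbers $\p^h_{ij}$ in terms of $P$, together with a parallel system that characterizes $\q^h_{ij}$ in terms of $Q$. Once those two systems have the same form, the assumption $P=Q$ forces the two solution sets to coincide.

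First I would compute $\A_i \A_j$ in two ways. On one hand \eqref{eq:AiAj2} gives $\A_i \A_j = \sum_h \p^h_{ij} \A_h$. On the other hand, expanding each factor via the left equation of \eqref{eq:defPQ} and using the orthogonality of primitive idempotents from \eqref{eq:Ei}, I obtain
\[
\A_i \A_j = \Bigl(\sum_k P_{ki} \E_k\Bigr)\Bigl(\sum_\ell P_{\ell j} \E_\ell\Bigr) = \sum_k P_{ki} P_{kj} \E_k.
\]
Substituting $\A_h = \sum_k P_{kh} \E_k$ into the first expression and comparing coefficients of each $\E_k$, I conclude
\[
\sum_{h=0}^D P_{kh}\, \p^h_{ij} = P_{ki} P_{kj} \qquad (0 \le i,j,k \le D).
\]

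Next I would carry out the dual calculation. Using the definition \eqref{eq:EicircEj} of the Krein parameters together with the right equation in \eqref{eq:defPQ} and the Hadamard-orthogonality $\A_k \circ \A_\ell = \delta_{k,\ell} \A_k$ from \eqref{eq:Ai2}, the same bookkeeping yields
\[
\sum_{h=0}^D Q_{kh}\, \q^h_{ij} = Q_{ki} Q_{kj} \qquad (0 \le i,j,k \le D).
\]
So for fixed $i,j$, the vectors $(\p^h_{ij})_{h=0}^D$ and $(\q^h_{ij})_{h=0}^D$ are solutions of linear systems with coefficient matrices $P$ and $Q$ respectively, and with the same right-hand side when $P=Q$.

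Under the hypothesis $P=Q$, the two systems coincide, so it suffices to note that their coefficient matrix is invertible: this follows from $P Q = |X|\, I$, noted just after \eqref{eq:defPQ}, which gives $P^2 = |X|\, I$ in the self-dual case. Therefore $\p^h_{ij} = \q^h_{ij}$ for all $0 \le h,i,j \le D$, as required. The argument is essentially a routine change-of-basis manipulation; I do not anticipate any obstacle, since invertibility of $P$ is given and the two expansions are dual to each other under $\A_i \leftrightarrow \E_i$.
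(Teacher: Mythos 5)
Your proof is correct, and both calculations check out: comparing coefficients of $\E_k$ in the two expansions of $\A_i\A_j$ yields $\sum_h P_{kh}\,\p^h_{ij} = P_{ki}P_{kj}$, and comparing coefficients of $\A_k$ in the two expansions of $\E_i\circ\E_j$ yields $\sum_h Q_{kh}\,\q^h_{ij} = Q_{ki}Q_{kj}$; invertibility of $P$ from $PQ=|X|I$ then finishes it. This is a genuinely different route from the paper's. The paper constructs an explicit $\C$-algebra isomorphism $\sigma:\M\to\M^*$ sending $\E_i\mapsto\E^*_i$, uses $P=Q$ together with \eqref{eq:defPQ} and \eqref{eq:Esi} to show $\sigma$ also sends $\A_j\mapsto\A^*_j$, and then transports the structure-constant identity \eqref{eq:AiAj2} to \eqref{eq:AsiAsj} through $\sigma$. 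Your argument avoids introducing $\M^*$ and the isomorphism altogether and works purely inside $\M$ with $P$, $Q$ as change-of-basis matrices; it is more elementary and self-contained. The paper's approach, on the other hand, produces a reusable object (the isomorphism $\sigma$) that resurfaces later as the duality of the primary $\T$-module, so it buys structural insight at the cost of a slightly longer setup. Either proof is acceptable here.
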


\begin{proof}
Consider the $\C$-linear map $\sigma : \M \to \M^*$ that sends 
$\E_i \mapsto \E^*_i$ for $0 \leq i \leq D$.
Clearly $\sigma$ is bijective.
We have 
$\E_i \E_j = \delta_{i,j} \E_i$ and $\E^*_i \E^*_j = \delta_{i,j} \E^*_i$ for $0 \leq i,j \leq D$.
Thus  $\sigma$ is an isomorphism of $\C$-algebras.
By the equation on the left in \eqref{eq:defPQ} and the
equation on the right in \eqref{eq:Esi} with $P=Q$,
we find that $\sigma$ sends $\A_j \mapsto \A^*_j$ for $0 \leq j \leq D$.
Now we get the result by \eqref{eq:AiAj2} and  \eqref{eq:AsiAsj}. 
\end{proof}

By Lemma \ref{lem:duality}, if $\Gamma$ is formally self-dual with respect to $\{\E_i\}_{i=0}^D$,
then $\Gamma$ is $Q$-polynomial with respect to $\{\E_i\}_{i=0}^D$.

\begin{lemma}    \label{lem:Asivi}   \samepage
\ifDRAFT {\rm lem:Asivi}. \fi
Assume that $\Gamma$ is formally self-dual with respect to $\{\E_i\}_{i=0}^D$.
Let the polynomials $\{v_i\}_{i=0}^D$ (resp.\ $\{v^*_i\}_{i=0}^D$) be from Definition \ref{def:vi2}
(resp.\ Definition \ref{def:vsi}).
Then $v_i = v^*_i$ for $0 \leq i \leq D$.
\end{lemma}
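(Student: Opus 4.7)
The plan is to reduce the claim to the equality of the intersection numbers $(\c_i,\a_i,\b_i)$ with their duals $(\c^*_i,\a^*_i,\b^*_i)$, since the two polynomial families satisfy the same three-term recurrence shape (Definition \ref{def:vi2} vs.\ Definition \ref{def:vsi}) and share the initial condition $v_0 = v^*_0 = 1$.

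First I would invoke Lemma \ref{lem:duality}, which under the hypothesis $P=Q$ gives the identity $\p^h_{ij} = \q^h_{ij}$ for all $0 \leq h,i,j \leq D$. Comparing the defining formulas \eqref{eq:defci}--\eqref{eq:defbi} for $\c_i,\a_i,\b_i$ with \eqref{eq:defcsi}--\eqref{eq:defbsi} for $\c^*_i,\a^*_i,\b^*_i$, we see immediately that
\[
\c_i = \c^*_i, \qquad \a_i = \a^*_i, \qquad \b_i = \b^*_i
\]
for the appropriate ranges of $i$.

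Then I would proceed by induction on $i$. The base case $v_0 = 1 = v^*_0$ is by definition. From Lemma \ref{lem:vi}(i) applied with $i=1$ together with $\a_0 = 0 = \a^*_0$ (which follow from \eqref{eq:a0b0c1} and its dual) and $\c_1 = 1 = \c^*_1$, we also get $v_1 = v^*_1$. For the inductive step, assume $v_{i-1}=v^*_{i-1}$ and $v_i = v^*_i$ for some $1 \leq i \leq D-1$. The recurrence
\[
\lambda v_i = \c_{i+1} v_{i+1} + \a_i v_i + \b_{i-1} v_{i-1}
\]
together with the corresponding starred recurrence and the equalities of coefficients established above force $\c_{i+1} v_{i+1} = \c^*_{i+1} v^*_{i+1}$. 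Since $\c_{i+1} = \c^*_{i+1} \neq 0$, we conclude $v_{i+1} = v^*_{i+1}$, completing the induction.

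I do not expect any real obstacle: the whole argument is a matching of coefficients plus a one-line induction. The only mild subtlety is being careful about the endpoints of the recurrence ($i=0$ and the fact that Definition \ref{def:vi2} only writes the recurrence for $1 \leq i \leq D-1$), but this is handled by the base-case observation using $\a_0 = 0$ and $\c_1 = 1$.
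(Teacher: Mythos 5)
Your proof is correct and follows essentially the same route as the paper's: invoke Lemma \ref{lem:duality} to get $\p^h_{ij}=\q^h_{ij}$, deduce $\c_i=\c^*_i$, $\a_i=\a^*_i$, $\b_i=\b^*_i$ from \eqref{eq:defci}--\eqref{eq:defbi} and \eqref{eq:defcsi}--\eqref{eq:defbsi}, and conclude from the defining recurrences. The paper compresses your explicit induction into the remark ``by Definitions \ref{def:vi2}, \ref{def:vsi}''; your appeal to Lemma \ref{lem:vi}(i) for the $v_1$ step is slightly roundabout (the $i=0$ instance of the recurrence gives $v_1=\lambda$ directly), but this is a presentational quibble, not a gap.
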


\begin{proof}
Recall the intersection numbers $\{ \c_i \}_{i=1}^D$, $\{ \a_i\}_{i=0}^D$, $\{\b_i \}_{i=0}^{D-1}$
and the Krein parameters $\{ \c^*_i \}_{i=1}^D$, $\{ \a^*_i\}_{i=0}^D$, $\{\b^*_i \}_{i=0}^{D-1}$
of $\Gamma$.
By Lemma \ref{lem:duality},
$\p^h_{i j} = \q^h_{i j}$ for $0 \leq h,i,j \leq D$.
By \eqref{eq:defci}--\eqref{eq:defbi} and \eqref{eq:defcsi}--\eqref{eq:defbsi}
we see that
$\c_i = \c^*_i$ $(1 \leq i \leq D)$,
$\a_i = \a^*_i$ $(0 \leq i \leq D)$,
$\b_i = \b^*_i$ $(0 \leq i \leq D-1)$.
Now $v_i = v^*_i$ $(0 \leq i \leq D)$ by Definitions \ref{def:vi2}, \ref{def:vsi}.
\end{proof}

The formally self-dual condition is characterized as follows.
Recall the eigenvalue sequence $\{\th_i\}_{i=0}^D$ and  dual eigenvalue sequence
$\{\th^*_i\}_{i=0}^D$ of $\Gamma$ with respect to $\{\E_i\}_{i=0}^D$.

\begin{proposition}    \label{prop:DRGsd}    \samepage
\ifDRAFT {\rm prop:DRGsd}. \fi
The following {\rm (i)--(iii)} are equivalent:
\begin{itemize}
\item[\rm (i)]
$\Gamma$ is formally self-dual with respect to $\{\E_i\}_{i=0}^D$;
\item[\rm (ii)]
$\Gamma$ is $Q$-polynomial with respect to $\{\E_i\}_{i=0}^D$
and $\th_i = \th^*_i$ for $0 \leq i \leq D$;
\item[\rm (iii)]
the sequence \eqref{eq:U0Phi} acts on the primary $\T$-module $\pU$ as a self-dual
Leonard system.
\end{itemize}
\end{proposition}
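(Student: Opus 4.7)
The plan is to establish the cyclic chain $(\text{i})\Rightarrow(\text{ii})\Rightarrow(\text{iii})\Rightarrow(\text{i})$. The first two implications are short consequences of earlier results; the last implication carries the main content.

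For $(\text{i})\Rightarrow(\text{ii})$, I would read off $\th_i = \th^*_i$ from the first columns of $P$ and $Q$ via \eqref{eq:defthithsi}, and then invoke Lemma \ref{lem:duality} to obtain $\p^h_{ij} = \q^h_{ij}$ for all $h,i,j$. Since the intersection numbers $\p^h_{ij}$ satisfy the triangle-inequality vanishing/nonvanishing rule recalled just after \eqref{eq:defki2}, the same rule then passes to the Krein parameters, which is exactly the $Q$-polynomial property for $\{\E_i\}_{i=0}^D$.

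For $(\text{ii})\Rightarrow(\text{iii})$, under the $Q$-polynomial assumption I would apply Lemma \ref{lem:U} to the primary $\T$-module $\pU$ (which is thin, dual thin, of diameter $D$ and endpoint $0$, dual endpoint $0$, by Lemma \ref{lem:primary}) to conclude that \eqref{eq:U0Phi} acts on $\pU$ as a Leonard system with eigenvalue sequence $\{\th_i\}_{i=0}^D$ and dual eigenvalue sequence $\{\th^*_i\}_{i=0}^D$. Since these sequences agree by hypothesis, Lemma \ref{lem:selfdualparam} delivers self-duality.

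For $(\text{iii})\Rightarrow(\text{i})$, let $\sigma$ denote the duality of the self-dual Leonard system \eqref{eq:U0Phi} on $\pU$. Then $\sigma$ is an automorphism of $\text{\rm End}(\pU)$ that exchanges $\A_1 \leftrightarrow \A^*_1$ and $\E_i \leftrightarrow \E^*_i$ for $0 \leq i \leq D$. Combining Lemma \ref{lem:AiEs0E03}, which identifies the elements $A_i, A^*_i$ of Definition \ref{def:Ai} attached to this Leonard system with the actions of $\A_i, \A^*_i$ on $\pU$, together with Lemma \ref{lem:selfdualAi}, I conclude that $\sigma$ also exchanges $\A_i \leftrightarrow \A^*_i$ as endomorphisms of $\pU$. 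Applying $\sigma$ to the identity $\A_j = \sum_{i=0}^D P_{ij} \E_i$ read on $\pU$ then yields $\A^*_j = \sum_{i=0}^D P_{ij} \E^*_i$ on $\pU$; since $\A^*_j$ and each $\E^*_i$ belong to $\M^*$ and the $\M^*$-action on $\pU$ is faithful by Corollary \ref{cor:faithful}(ii), this lifts to an identity in $\M^*$. Comparing with $\A^*_j = \sum_{i=0}^D Q_{ij} \E^*_i$ from \eqref{eq:Esi} and using the linear independence of $\{\E^*_i\}_{i=0}^D$ then forces $P_{ij} = Q_{ij}$.

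The main obstacle is the bookkeeping in the last implication, where the duality $\sigma$ initially only lives as an automorphism of $\text{\rm End}(\pU)$ and must be transported back to an identity of matrices in $\Mat$; the key enabling fact is the faithfulness of the $\M^*$-action on $\pU$. A small subtlety is that Lemmas \ref{lem:AiEs0E03} and \ref{lem:selfdualAi} appear in Section 9 under a standing $Q$-polynomial hypothesis, but their proofs only use the Leonard system structure on $\pU$ that is already provided by hypothesis (iii), so they remain applicable here without circularity.
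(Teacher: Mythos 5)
Your proof is correct and follows essentially the same route as the paper: (i)\,$\Rightarrow$\,(ii) via Lemma \ref{lem:duality} and the triangle rule, (ii)\,$\Rightarrow$\,(iii) via Lemma \ref{lem:selfdualparam}, and (iii)\,$\Rightarrow$\,(i) by applying the duality $\sigma$ to \eqref{eq:defPQ} using Lemmas \ref{lem:selfdualAi} and \ref{lem:AiEs0E03} and comparing with \eqref{eq:Esi}. Your explicit invocation of Corollary \ref{cor:faithful}(ii) to lift the identity from $\text{\rm End}(\pU)$ back to $\M^*$ makes precise a step the paper leaves implicit, but it is the same argument.
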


\begin{proof}
(i) $\Rightarrow$ (ii)
As we saw above Lemma \ref{lem:Asivi}, $\Gamma$ is $Q$-polynomial with respect to
$\{\E_i\}_{i=0}^D$.
By Definition \ref{def:FSD} we have $P=Q$.
By this and \eqref{eq:defthithsi} we obtain $\th_i =\th^*_i$ $(0 \leq i \leq D)$.

(ii) $\Rightarrow$ (iii)
Let $\Phi$ denote the Leonard system in \eqref{eq:U0Phi} on $\pU$.
Then $\Phi$ has eigenvalue sequence $\{\th_i\}_{i=0}^D$
and dual eigenvalue sequence $\{\th^*_i\}_{i=0}^D$.
We assume $\th_i = \th^*_i$ for $0 \leq i \leq D$,
so  $\Phi$ is self-dual by Lemma \ref{lem:selfdualparam}.

(iii) $\Rightarrow$ (i)
We show that $P=Q$.
Let $\sigma$ denote the duality of the Leonard system in \eqref{eq:U0Phi}.
Then, on $\pU$,  $\sigma$ swaps $\E_i$ and $\E^*_i$ for $0 \leq i \leq D$.
By Lemmas \ref{lem:selfdualAi} and \ref{lem:AiEs0E03},
on $\pU$, $\sigma$ swaps $\A_i$ and $\A^*_i$ for $0 \leq i \leq D$.
Now comparing \eqref{eq:defPQ} and \eqref{eq:Esi}, we obtain $P=Q$.
\end{proof}

\section{From a spin model to spin Leonard pairs}
\label{sec:spindrg}

Let $\Gamma$ denote a distance-regular graph with vertex set $X$ and
diameter $D \geq 1$.
In this section we recall what it means for a spin model $\W$ to be afforded by $\Gamma$.
For such $\W$ and all $x \in X$,
we construct a spin Leonard pair on each irreducible $\T(x)$-module.

Recall the Bose-Mesner algebra $\M$ of $\Gamma$.
Let $\W$ denote a spin model in $\Mat$.
Recall the algebra $N(\W)$ from Definition \ref{def:NW} and Lemma \ref{lem:Nomuraalg}.

\begin{definition}    \label{def:afford}   \samepage
\ifDRAFT {\rm def:afford}. \fi
We say that {\em $\Gamma$ affords $\W$} whenever
$\W \in \M \subseteq N(\W)$.
\end{definition}

\begin{lemma}    \label{lem:afford}    \samepage
\ifDRAFT {\rm lem:afford}. \fi
Assume that $\W$ is afforded by $\Gamma$.
Then each of the spin models  $-\W$, $\W^-$ is afforded by $\Gamma$.
\end{lemma}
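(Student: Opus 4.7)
The plan is to verify the two affording conditions (membership in $\M$ and containment $\M \subseteq N(\cdot)$) separately for $-\W$ and for $\W^-$, and in each case appeal to already established facts.

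First I would recall from Note \ref{note:spin} that both $-\W$ and $\W^-$ are spin models, so only the affording conditions in Definition \ref{def:afford} remain to be checked. For $-\W$, membership $-\W \in \M$ is immediate since $\W \in \M$ and $\M$ is a $\C$-vector space; the inclusion $\M \subseteq N(-\W)$ follows from Lemma \ref{lem:NW-}, which gives $N(-\W) = N(\W)$, and our hypothesis $\M \subseteq N(\W)$.

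For $\W^-$, the containment $\M \subseteq N(\W^-)$ again reduces to $N(\W^-) = N(\W)$ via Lemma \ref{lem:NW-}, so what remains is to show $\W^- \in \M$. The key observation is that Lemma \ref{lem:type2W-} gives $\W^- = |X|\,\W^{-1}$ (using that $\W$, being a spin model, is type II). So it suffices to show $\W^{-1} \in \M$. Since $\W \in \M$ and $\W$ is invertible, the Cayley--Hamilton theorem implies that $\W^{-1}$ lies in the subalgebra $\C[\W]$ generated by $\W$ (the minimal polynomial of $\W$ has nonzero constant term, so $\W^{-1}$ can be expressed as a polynomial in $\W$). Hence $\W^{-1} \in \M$, and consequently $\W^- \in \M$.

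There is no substantive obstacle here; the entire argument is an assembly of Lemma \ref{lem:type2W-}, Lemma \ref{lem:NW-}, and Note \ref{note:spin}, together with the elementary fact that in a finite-dimensional commutative algebra, an invertible element has its inverse in the subalgebra it generates. The only point deserving any care is making explicit why $\W^{-1}$ lies in $\M$ rather than merely in $\Mat$, and the Cayley--Hamilton argument handles this cleanly.
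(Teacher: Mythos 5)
Your proof is correct and follows essentially the same route as the paper: the cases $-\W$ and $\W^-$ are each handled by combining $N(-\W)=N(\W^-)=N(\W)$ from Lemma \ref{lem:NW-} with membership in $\M$, the latter reduced via Lemma \ref{lem:type2W-} and the standard fact that an invertible matrix's inverse is a polynomial in it (the paper states this directly; you phrase it via Cayley--Hamilton). No gaps.
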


\begin{proof}
By Definition \ref{def:afford}, $\W \in \M \subseteq N(\W)$.
We have $-\W \in \M$, and $N(-\W) = N(\W)$ by Lemma \ref{lem:NW-}.
Thus $-\W \in \M \subseteq N(-\W)$, and so $-\W$ is afforded by $\Gamma$.
We have $\W^{-1} \in \M$, since
for any invertible $B \in \Mat$ the element $B^{-1}$ is a polynomial in $B$.
By this and Lemma \ref{lem:type2W-}(ii), $\W^- \in \M$.
By Lemma \ref{lem:NW-}, $N(\W^-) = N(\W)$.
Thus $\W^- \in \M \subseteq N(\W^-)$,
and so $\W^-$ is afforded by $\Gamma$.
\end{proof}

For the rest of this section,
assume that $\W$ is afforded by $\Gamma$.

\begin{definition}   {\rm \cite[Section 3.1]{JMN} }
 \label{def:Psi}   \samepage
\ifDRAFT {\rm def:Psi}. \fi
We define a map $\Psi : N(\W) \to \Mat$ as follows.
For $\B \in N(\W)$ and $b,c \in X$ the $(b,c)$-entry of $\Psi(\B)$
is the eigenvalue of $\B$ for the eigenvector $\text{\bf u}_{b,c}$
from Definition \ref{def:NW}.
So
\begin{align*}
  \B \text{\bf u}_{b,c} &= \Psi(\B)(b,c) \text{\bf u}_{b,c}.
\end{align*}
Note that the map $\Psi$ is $\C$-linear.
\end{definition}

Recall the distance-matrices $\{\A_i\}_{i=0}^D$ of $\Gamma$.

\begin{lemma}  {\rm  \cite[Lemma 5.1]{CN:hyper} }
\label{lem:MPsi} \samepage
\ifDRAFT {\rm lem:MPsi}. \fi
There exists an ordering $\{\E_i\}_{i=1}^D$ of the nontrivial primitive
idempotents of $\Gamma$ such that
\begin{align*}
  \Psi (\A_i) &= |X| \E_i   &&  (0 \leq i \leq D).           
\end{align*}
Moreover $\Gamma$ is formally self-dual with respect to $\{\E_i\}_{i=0}^D$.
\end{lemma}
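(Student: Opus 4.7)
The plan is to exploit the homomorphism property of $\Psi$: comparing the two ways of computing $\B_1\B_2\mathbf{u}_{b,c}$ for $\B_1, \B_2 \in N(\W)$ yields $\Psi(\B_1\B_2) = \Psi(\B_1) \circ \Psi(\B_2)$, so $\Psi$ converts ordinary multiplication into Hadamard multiplication. Short direct computations give $\Psi(\I) = \J$, $\Psi(\J) = |X|\I$ (using \eqref{eq:type2}), and $\Psi(\W) = |X|^{1/2}\W^-$ (using \eqref{eq:type3} together with the symmetry of $\W$); since $\W \in \M$ and $\W^- = |X|\W^{-1} \in \M$, each of these images lies in $\M$.

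Next, since $\Gamma$ affords $\W$, each $\mathbf{u}_{b,c}$ is a common $\M$-eigenvector. I would fix a preliminary ordering $\{\E_i\}_{i=0}^D$ of the primitive idempotents with $\E_0 = \J/|X|$ and define $\sigma : X \times X \to \{0,\ldots,D\}$ by $\mathbf{u}_{b,c} \in \E_{\sigma(b,c)}\V$. Then $\Psi(\B)(b,c) = \beta_{\sigma(b,c)}$ for $\B = \sum_i\beta_i\E_i \in \M$, so each $\Psi(\E_j)$ is a nonzero $\{0,1\}$-matrix and $\{\Psi(\E_j)\}_{j=0}^D$ partitions $\J$ into Hadamard-orthogonal pieces. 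The first key step is to show $\Psi(\E_j) \in \M$ for each $j$, equivalently that $\sigma(b,c)$ depends only on $\partial(b,c)$. I would derive this by rewriting $\Psi(\W) = |X|^{1/2}\W^-$ as the eigenvalue relation $\W\mathbf{u}_{b,c} = (|X|^{1/2}/w_{\partial(b,c)})\mathbf{u}_{b,c}$, where $w_k$ is the common value of $\W \in \M$ on distance-$k$ pairs, and then invoking distinctness of the $w_k$'s (a structural property of spin models afforded by $\Gamma$) to conclude that $\sigma$ is a function of $\partial$.

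Once $\Psi(\E_j) \in \M$, each $\Psi(\E_j)$ coincides with a single distance matrix $\A_{\tau(j)}$ for some permutation $\tau$ with $\tau(0) = 0$, because the $D+1$ nonzero Hadamard-orthogonal pieces of $\J$ must match the $D+1$ distance matrices. Reindexing the nontrivial idempotents via $\E_{\tau^{-1}(i)} \mapsto \E_i$ achieves $\Psi(\E_i) = \A_i$ and $\mathbf{u}_{b,c} \in \E_{\partial(b,c)}\V$; consequently $\Psi(\A_i)(b,c) = P_{\partial(b,c),i}$, so $\Psi(\A_i) = \sum_k P_{ki}\A_k$ in this new ordering. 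The remaining assertion $\Psi(\A_i) = |X|\E_i = \sum_k Q_{ki}\A_k$ is equivalent to $P = Q$, the formal self-duality, and this is the main obstacle. To derive $P = Q$ I would combine the linear formula $\Psi(\sum_i\beta_i\E_i) = \sum_i\beta_i\A_i$ (just established) with the identities $\Psi(\W) = |X|^{1/2}\W^-$ and (analogously) $\Psi(\W^-) = |X|^{1/2}\W$: writing $\W = \sum_j w_j\A_j = \sum_i t_i\E_i$ and matching gives $t_i = |X|^{1/2}/w_i$, and iterating the homomorphism property together with the type III condition at sufficiently many entries pins down all $P_{ki} = Q_{ki}$. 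The formal self-duality with respect to the chosen $\{\E_i\}_{i=0}^D$ follows immediately.
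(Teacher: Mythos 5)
The paper gives no proof of this lemma; it is cited directly from \cite[Lemma 5.1]{CN:hyper}, so there is no internal argument to compare against and your proposal has to stand on its own. Your set-up is sound: the homomorphism property $\Psi(\B_1\B_2)=\Psi(\B_1)\circ\Psi(\B_2)$, and the identities $\Psi(\I)=\J$, $\Psi(\J)=|X|\,\I$, $\Psi(\W)=|X|^{1/2}\,\W^-$, are correct consequences of the type II and type III conditions. But two steps have genuine gaps.

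First, the step showing that $\sigma(b,c)$ depends only on $\partial(b,c)$ (equivalently $\Psi(\E_j)\in\M$) is made to rest on the claim that the Hadamard-side values $w_k$ of $\W$ (the scalars with $\W=\sum_k w_k\A_k$) are pairwise distinct, which you label ``a structural property of spin models afforded by $\Gamma$.'' This is not proved, and it does not hold in general: for a spin model of Hadamard type (Definition \ref{def:Htype}) the entries of $\W$ take only the two values $\pm\alpha$, so the $w_k$ cannot all be distinct once $D\geq 2$. The present lemma is stated without excluding Hadamard type (contrast Note \ref{noteH} and Proposition \ref{prop:C2}, where the paper is explicit when such an exclusion is needed), so this assertion must be either proved under an explicit extra hypothesis or avoided.

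Second, the derivation of $P=Q$ is only gestured at; ``iterating the homomorphism property together with the type III condition at sufficiently many entries pins down all $P_{ki}=Q_{ki}$'' is a plan, not an argument. The missing ingredient is the full Jaeger--Matsumoto--Nomura duality: for a symmetric type II matrix $\W$, the map $\Psi$ is a bijection of $N(\W)$ onto itself satisfying $\Psi^2(\B)=|X|\,\B^{\sf t}$ for all $\B\in N(\W)$. Granting that, once you know $\Psi(\E_j)=\A_{\tau(j)}$ for some permutation $\tau$ with $\tau(0)=0$, applying $\Psi$ again gives $\Psi(\A_{\tau(j)})=\Psi^2(\E_j)=|X|\,\E_j$, hence $\Psi(\A_i)=|X|\,\E_{\tau^{-1}(i)}$. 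After reindexing the nontrivial idempotents by $\tau$, apply $\Psi$ to $\A_j=\sum_i P_{ij}\E_i$ to obtain $|X|\,\E_j=\sum_i P_{ij}\A_i$, and compare with $|X|\,\E_j=\sum_i Q_{ij}\A_i$ to get $P=Q$ in one line. Without $\Psi^2=|X|\cdot(\text{transpose})$, or some equivalent input from \cite{JMN}, your sketch does not close.
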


For the rest of this section, fix the ordering of the primitive idempotents
from Lemma \ref{lem:MPsi}.
Since $\{\E_i\}_{i=0}^D$ is a basis for $\M$ and $\W$ is an invertible
element in $\M$,
there exist nonzero scalars $f$, $\{\tau_i\}_{i=0}^D$ in $\C$ such that
$\tau_0 = 1$ and
\begin{equation}
   \W = f \sum_{i=0}^D \tau_i \E_i.             \label{eq:defWfti}
\end{equation}

\begin{lemma}    \label{lem:W-2}    \samepage
\ifDRAFT {\rm lem:W-2}. \fi
We have
\begin{align*}
  \W^{-1} &= f^{-1} \sum_{i=0}^D \tau_i^{-1} \E_i,     &
  \W^-  &= |X| f^{-1} \sum_{i=0}^D \tau_i^{-1} \E_i.
\end{align*}
\end{lemma}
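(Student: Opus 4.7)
The plan is to verify each of the two stated identities directly using basic properties of the primitive idempotents together with Lemma \ref{lem:type2W-}. The two identities decouple cleanly, so I would handle them in turn.

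First, I would establish the formula for $\W^{-1}$. The primitive idempotents $\{\E_i\}_{i=0}^D$ satisfy $\E_i \E_j = \delta_{i,j} \E_i$ and $\sum_{i=0}^D \E_i = \I$ by \eqref{eq:Ei}. Using \eqref{eq:defWfti}, I would compute
\[
\left(f \sum_{i=0}^D \tau_i \E_i \right) \left(f^{-1} \sum_{j=0}^D \tau_j^{-1} \E_j\right)
= \sum_{i,j=0}^D \tau_i \tau_j^{-1} \E_i \E_j = \sum_{i=0}^D \E_i = \I,
\]
which gives the first claim. Note that the scalars $\tau_i$ are nonzero (as assumed just above \eqref{eq:defWfti}), so the inverse makes sense.

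Next I would derive the formula for $\W^-$. Since $\W$ is a spin model, it is in particular type II by Definition \ref{def:spin}. Therefore Lemma \ref{lem:type2W-} (the equivalence (i)$\Leftrightarrow$(ii)) yields $\W \W^- = |X| \, \I$, whence $\W^- = |X| \,\W^{-1}$. Substituting the formula for $\W^{-1}$ just obtained gives
\[
\W^- = |X| f^{-1} \sum_{i=0}^D \tau_i^{-1} \E_i,
\]
which is the second claim.

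There is no real obstacle here; the only things to check are that the invertibility of $\W$ (needed to write $\tau_i \neq 0$) is already built into the hypothesis that $\W$ is a spin model, and that the $\E_i$ are genuine orthogonal idempotents summing to $\I$, both of which are available. The lemma is essentially a direct consequence of \eqref{eq:defWfti} and Lemma \ref{lem:type2W-}.
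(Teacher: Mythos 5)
Your proof is correct and takes essentially the same route as the paper: the first identity is a direct computation from \eqref{eq:defWfti} and the orthogonality of the $\E_i$, and the second follows from the first via Lemma \ref{lem:type2W-}(ii). The paper's own proof is just a terser version of exactly this argument.
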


\begin{proof}
The first equation follows from \eqref{eq:defWfti}.
The second equation follows from the first equation and Lemma \ref{lem:type2W-}(ii).
\end{proof}

For the rest of this section fix $x \in X$.
Abbreviate $\T=\T(x)$ and $\A^*_i = \A^*_i (x)$, $\E^*_i = \E^*_i(x)$
for $0 \leq i \leq D$.
Define
\begin{equation}    \label{def:Ws}
   \W^* = f \sum_{i=0}^D \tau_i \E^*_i.                
\end{equation}
Note that $\W^*$ is invertible.

\begin{lemma}    {\rm  \cite[Theorem 5.3, Corollary 5.4]{CW} }
\label{lem:balenced2} \samepage
\ifDRAFT {\rm lem:balanced2}. \fi
We have
\begin{align}
  \A_1 \W^* \W &= \W^* \W \A^*_1,          \label{eq:A1WsW=WsWAs1}
\\
   \W \W^* \W &= \W^* \W \W^*.              \label{eq:balanced2}
\end{align}
\end{lemma}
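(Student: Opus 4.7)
The strategy is to prove both identities by direct entry-wise computation, leveraging the type II/III conditions on the spin model $\W$ together with the structural information from Lemma \ref{lem:MPsi}.

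The key preliminary step is to establish the identification
\[
  f \tau_{\partial(b,c)} = |X|^{1/2} \W(b,c)^{-1} \qquad (b,c \in X).
\]
I would prove this by computing the eigenvalue of $\W$ on $\text{\bf u}_{b,c}$ in two ways. On one hand, by Lemma \ref{lem:Nomuraalg2} we have $\W \in N(\W)$, and a direct application of the type III condition \eqref{eq:type3} together with the symmetry of $\W$ yields $\Psi(\W)(b,c) = |X|^{1/2} \W(b,c)^{-1}$. On the other hand, Lemma \ref{lem:MPsi} asserts that $\Gamma$ is formally self-dual with respect to $\{\E_i\}_{i=0}^D$; combined with $PQ = |X|\I$ this forces $Q^2 = |X|\I$. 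Using $\Psi(\A_j) = |X| \E_j$, the linearity of $\Psi$, and the expansion $\E_i = |X|^{-1} \sum_j Q_{j,i} \A_j$, one then computes $\Psi(\E_i)(b,c) = |X|^{-1}(Q^2)_{\partial(b,c),i} = \delta_{\partial(b,c), i}$, so that $\W = f \sum_i \tau_i \E_i$ acts on $\text{\bf u}_{b,c}$ with eigenvalue $f \tau_{\partial(b,c)}$. Equating the two expressions yields the identification.

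Given this identification, \eqref{eq:balanced2} follows by a short computation. Since $\W^*$ is diagonal with $(w,w)$-entry $f\tau_{\partial(x,w)} = |X|^{1/2}\W(x,w)^{-1}$, the $(y,z)$-entry of $\W^* \W \W^*$ is $|X|\W(y,z)/(\W(x,y)\W(x,z))$, while the $(y,z)$-entry of $\W \W^* \W$ is $|X|^{1/2}\sum_w \W(y,w)\W(w,z)/\W(x,w)$. These expressions match exactly by the type III condition \eqref{eq:type3} applied to the triple $(y,z,x)$, using the symmetry of $\W$.

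For \eqref{eq:A1WsW=WsWAs1}, the $(y,z)$-entry of the left-hand side becomes $|X|^{1/2}\sum_{w:\partial(y,w)=1}\W(w,z)/\W(x,w)$. By symmetry of $\W$ the summand equals $\text{\bf u}_{z,x}(w)$, so the sum is $(\A_1 \text{\bf u}_{z,x})(y)$. Applying Lemma \ref{lem:MPsi} gives $\A_1 \text{\bf u}_{z,x} = |X|\E_1(z,x)\text{\bf u}_{z,x}$, and using $|X|\E_1(x,z) = Q_{\partial(x,z),1} = \th^*_{\partial(x,z)}$ one recovers the $(y,z)$-entry of the right-hand side, namely $|X|^{1/2}\W(x,y)^{-1}\W(y,z)\th^*_{\partial(x,z)}$. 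The main obstacle is the key identification itself, which fuses the afforded structure, the formal self-duality (giving $Q^2 = |X|\I$), and the type III condition; once it is in hand, both equations reduce to brief manipulations with type III and the eigenvector characterization of the Nomura algebra.
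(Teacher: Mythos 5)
Your proof is correct, and it reads as a valid self-contained derivation of a result the paper chooses to cite (to \cite[Theorem 5.3, Corollary 5.4]{CW}) rather than prove. The key identification you establish, $f\tau_{\partial(b,c)} = |X|^{1/2}\W(b,c)^{-1}$, is precisely the first equation of \eqref{eq:WWs} in Lemma~\ref{lem:Cur}, which the paper also cites (to \cite{C:thin}) and, notably, places \emph{after} Lemma~\ref{lem:balenced2}; your argument therefore effectively re-derives Lemma~\ref{lem:Cur} first. Each step checks out: the evaluation $\Psi(\W)(b,c) = |X|^{1/2}\W(b,c)^{-1}$ via $\W \in N(\W)$ (Lemma~\ref{lem:Nomuraalg2}) and the type III condition; the evaluation $\Psi(\E_i)(b,c) = \delta_{\partial(b,c),i}$ via $\Psi(\A_j) = |X|\E_j$, the expansion of $\E_i$ in the $\A_j$-basis, and $Q^2 = |X|\I$ from $P=Q$ and $PQ = |X|\I$; the reduction of \eqref{eq:balanced2} to the type III condition for the triple $(y,z,x)$ with symmetry of $\W$; and the reduction of \eqref{eq:A1WsW=WsWAs1} to $\Psi(\A_1) = |X|\E_1$ together with $\A^*_1(z,z) = |X|\E_1(x,z) = \th^*_{\partial(x,z)}$. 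This route would in fact let the section be made logically self-contained, deriving both cited formulas directly from Lemma~\ref{lem:MPsi} and the spin model axioms, at the cost of a slightly longer exposition than simply importing the results from \cite{CW} and \cite{C:thin}.
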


\begin{lemma}   {\rm  \cite[Lemma 2.7]{C:thin} }
\label{lem:Cur}    \samepage
\ifDRAFT {\rm lem:Cur}. \fi
We have
\begin{align}
\W &= |X|^{1/2} f^{-1} \sum_{i=0}^D \tau_i^{-1} \A_i,  &
\W^{-1} &= |X|^{-3/2} f \sum_{i=0}^D \tau_i \A_i,                              \label{eq:WWs}
\\
\W^* &= |X|^{1/2} f^{-1} \sum_{i=0}^D \tau_i^{-1} \A^*_i, &
(\W^*)^{-1} &= |X|^{-3/2} f \sum_{i=0}^D \tau_i \A^*_i.                  \label{eq:WWs2}
\end{align}
\end{lemma}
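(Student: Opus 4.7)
The plan is to extract the identities from Theorem \ref{thm:WtiinvAi} applied to the primary $\T$-module $\pU$, then lift them to $\V$ using Corollary \ref{cor:faithful}. On $\pU$, the pair $\W, \W^*$ acts as a balanced Boltzmann pair for the Leonard pair $\A_1, \A^*_1$: indeed $\W \in \gen{\A_1}$ by Lemma \ref{lem:vi}(iii) and $\W^* \in \gen{\A^*_1}$ by its dual, the commutation relation \eqref{eq:AWsW} is \eqref{eq:A1WsW=WsWAs1}, and the balanced condition \eqref{eq:WWsWpre} is \eqref{eq:balanced2}. Comparing \eqref{eq:defWfti} and \eqref{def:Ws} with Lemma \ref{lem:WtiEi}, the scalars $f, \{\tau_i\}_{i=0}^D$ are the same. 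Theorem \ref{thm:WtiinvAi}, together with $\nu(\pU) = |X|$ (Lemma \ref{lem:nu3}) and Lemma \ref{lem:AiEs0E03}, now yields that on $\pU$
\[
  \W = f\gamma\sum_{i=0}^D \tau_i^{-1}\A_i,
  \qquad\qquad
  \W^{-1} = |X|^{-1} f^{-1}\gamma^{-1}\sum_{i=0}^D \tau_i\A_i,
\]
where $\gamma$ is the scalar from Lemma \ref{lem:defgamma}.

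The crux is to evaluate $\gamma$; I claim $\gamma = |X|^{1/2}/f^2$. By Lemma \ref{lem:piW} combined with Lemmas \ref{lem:nu3} and \ref{lem:nuki}, on $\pU$ we have $\gamma = |X|^{-1}\sum_{i=0}^D \k_i \tau_i$. To compute this sum I would specialize the type III identity \eqref{eq:type3} at $a=b=c=x$, which collapses to
\[
  \sum_{y \in X} \W(x,y) = \frac{|X|^{1/2}}{\W(x,x)}.
\]
The left side equals $(\W\text{\bf 1})_x = f$, since $\text{\bf 1} = \E_0 \text{\bf 1}$ forces $\W \text{\bf 1} = f\tau_0 \text{\bf 1} = f \text{\bf 1}$. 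For the right side, each $\E_i$ has all diagonal entries equal to $\m_i/|X|$, so $\W(x,x) = f\sum_i \tau_i \m_i/|X|$. Combining gives $\sum_i \tau_i \m_i = |X|^{3/2}/f^2$, and formal self-duality (Lemma \ref{lem:MPsi}) supplies $\k_i = \m_i$, so $\sum_i \tau_i \k_i = |X|^{3/2}/f^2$, whence $\gamma = |X|^{1/2}/f^2$.

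Substituting this value of $\gamma$ into the two identities above gives \eqref{eq:WWs} on $\pU$. Since both sides of each identity belong to $\M$, Corollary \ref{cor:faithful}(i) lifts them to all of $\V$. For \eqref{eq:WWs2}, the parallel argument applies Theorem \ref{thm:WtiinvAi} to the dual Boltzmann pair $\W^*, \W$ for $\A^*_1, \A_1$ (Lemma \ref{lem:AsA}): the relevant scalar $\gamma^* = |X|^{-1}\sum_i \m_i \tau_i$ equals $\gamma$ by formal self-duality, giving $\W^* = |X|^{1/2} f^{-1}\sum_i \tau_i^{-1}\A^*_i$ on $\pU$ and so on $\V$ by Corollary \ref{cor:faithful}(ii). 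The only substantive obstacle is the evaluation of $\sum_i \k_i \tau_i$, where the type III (spin) hypothesis on $\W$ enters decisively.
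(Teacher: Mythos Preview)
The paper cites this result from \cite{C:thin} without supplying its own proof, so there is no in-text argument to compare against directly. Your approach is sound and in fact mirrors the argument the paper later gives for the closely related Lemma~\ref{lem:type2pre} in Section~\ref{sec:spinLPtospinmodel}: apply Theorem~\ref{thm:WtiinvAi} on the primary module $\pU$, then lift via Corollary~\ref{cor:faithful}. Your computation of $\gamma$ via the type~III identity at $a=b=c=x$ is the right move here; note that the paper's Lemma~\ref{lem:f} (which records $f^{-2} = |X|^{-3/2}\sum_i \k_i\tau_i$) is proved \emph{after} Lemma~\ref{lem:Cur} and in fact invokes it, so you correctly avoid circularity by going to the spin model axiom directly rather than quoting \eqref{eq:f-2}.

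There is one small gap worth patching. To combine Theorem~\ref{thm:WtiinvAi} with Lemma~\ref{lem:AiEs0E03}, you need the abstract dual idempotents $E^*_i := E_i^\sigma$ of Section~\ref{sec:spinLP} (where $\sigma$ is the duality produced from the Boltzmann pair via Lemma~\ref{lem:PSL}) to coincide on $\pU$ with the graph-theoretic $\E^*_i$. This holds but is not automatic: by Proposition~\ref{prop:DRGsd} the Leonard system $(\A_1;\{\E_i\};\A^*_1;\{\E^*_i\})$ on $\pU$ is self-dual, and by uniqueness of the duality of a Leonard pair that self-duality must equal $\sigma$, whence $\E_i^\sigma = \E^*_i$. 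The paper makes exactly this verification in the proof of Lemma~\ref{lem:type2pre}. Finally, for \eqref{eq:WWs2} you can shortcut the dual-pair detour: Theorem~\ref{thm:WtiinvAi} already contains the $\W^*$ identities \eqref{eq:WsWsinv}, so the same $\gamma$, Lemma~\ref{lem:AiEs0E03} for $\A^*_i$, and Corollary~\ref{cor:faithful}(ii) finish immediately.
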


\begin{lemma}    \label{lem:WsW-0}    \samepage
\ifDRAFT {\rm lem:WsW-0}. \fi
For $y \in X$,
\begin{align*}
  \W^* (y,y) &= \frac{ |X|^{1/2} } {\W (x,y)} .                       
\end{align*}
\end{lemma}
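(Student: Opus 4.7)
The plan is to compute $\W^*(y,y)$ directly using the two different expressions for $\W^*$ that are already available, and then relate the answer to $\W(x,y)$ through the matrix $\W^-$.

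First I would start from the expression $\W^* = |X|^{1/2} f^{-1} \sum_{i=0}^D \tau_i^{-1} \A^*_i$ provided by Lemma \ref{lem:Cur}. Taking the $(y,y)$-entry and using the fact that $\A^*_i$ is diagonal with $(y,y)$-entry equal to the $(x,y)$-entry of $|X| \E_i$, I obtain
\[
  \W^*(y,y) = |X|^{3/2} f^{-1} \sum_{i=0}^D \tau_i^{-1} \E_i(x,y).
\]

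Next I would recognize the sum as an entry of $\W^{-1}$. Indeed, by Lemma \ref{lem:W-2}, $\W^{-1} = f^{-1} \sum_{i=0}^D \tau_i^{-1} \E_i$, so $\W^{-1}(x,y) = f^{-1} \sum_{i=0}^D \tau_i^{-1} \E_i(x,y)$. Hence
\[
  \W^*(y,y) = |X|^{3/2} \, \W^{-1}(x,y).
\]

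Finally I would convert $\W^{-1}$ back to $\W$ via $\W^-$. By Lemma \ref{lem:type2W-} applied to the spin model $\W$, we have $\W \W^- = |X| \I$, so $\W^{-1} = |X|^{-1} \W^-$. Combined with the defining relation $\W^-(x,y) = 1/\W(x,y)$ from \eqref{eq:defW-}, this gives $\W^{-1}(x,y) = |X|^{-1}/\W(x,y)$, and substituting yields the desired equality $\W^*(y,y) = |X|^{1/2}/\W(x,y)$. There is essentially no obstacle here: the lemma is a direct bookkeeping consequence of the two basis expansions of $\W^*$ together with the type II identity for $\W$.
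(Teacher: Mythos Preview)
Your proof is correct, but it takes a longer detour than necessary. The paper's argument is more direct: set $i=\partial(x,y)$; from the defining formula $\W^*=f\sum_j\tau_j\E^*_j$ one reads off $\W^*(y,y)=f\tau_i$ immediately (since $\E^*_j$ is the diagonal indicator of $\Gamma_j(x)$), and from the $\A_j$-expansion $\W=|X|^{1/2}f^{-1}\sum_j\tau_j^{-1}\A_j$ in Lemma~\ref{lem:Cur} one reads off $\W(x,y)=|X|^{1/2}f^{-1}\tau_i^{-1}$; multiplying gives the result. Your route instead uses the $\A^*_j$-expansion of $\W^*$ and the $\E_j$-expansion of $\W^{-1}$, which forces you to pass through the type~II identity $\W^-=|X|\W^{-1}$ to get back to $\W$. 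Both work, but the paper's pairing of expansions avoids that extra step.
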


\begin{proof}
Let $i = \partial (x,y)$.
By \eqref{def:Ws} we obtain $\W^*(y,y) = f \tau_i$.
By the equation on the left in \eqref{eq:WWs} we obtain
$\W(x,y ) = |X|^{1/2} f^{-1} \tau_i^{-1}$.
The result follows.
\end{proof}

\begin{lemma}    \label{lem:f}    \samepage
\ifDRAFT {\rm lem:f}. \fi
The scalar $f$ satisfies
\begin{equation}
    f^{-2} = |X|^{-3/2} \sum_{i=0}^D \k_i \tau_i.                 \label{eq:f-2}
\end{equation}
\end{lemma}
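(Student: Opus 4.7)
The plan is to compute $\text{tr}(\W)$ in two different ways, using the two expressions for $\W$ available to us, and equate the results.

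First I would use the expression $\W = f \sum_{i=0}^D \tau_i \E_i$ from \eqref{eq:defWfti}. Since each $\E_i$ is an idempotent, $\text{tr}(\E_i) = \text{rank}(\E_i) = \m_i$, so
\[
  \text{tr}(\W) = f \sum_{i=0}^D \tau_i \m_i.
\]
Now, by Lemma \ref{lem:MPsi} the ordering $\{\E_i\}_{i=0}^D$ makes $\Gamma$ formally self-dual. By Definition \ref{def:FSD} this means $P = Q$, and comparing with \eqref{eq:kjP0j} we get $\m_i = \k_i$ for $0 \leq i \leq D$. Hence
\[
  \text{tr}(\W) = f \sum_{i=0}^D \tau_i \k_i.
\]

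Next I would use the expression $\W = |X|^{1/2} f^{-1} \sum_{i=0}^D \tau_i^{-1} \A_i$ from the left equation in \eqref{eq:WWs}. Since $\A_0 = \I$ has trace $|X|$ while each $\A_i$ with $i \geq 1$ has zero diagonal (no vertex is at positive distance from itself), we obtain $\text{tr}(\A_i) = |X| \delta_{i,0}$. Combined with $\tau_0 = 1$ this yields
\[
  \text{tr}(\W) = |X|^{1/2} f^{-1} \cdot |X| = |X|^{3/2} f^{-1}.
\]

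Finally, equating the two expressions for $\text{tr}(\W)$ gives
\[
  f \sum_{i=0}^D \tau_i \k_i = |X|^{3/2} f^{-1},
\]
and dividing through by $|X|^{3/2} f$ produces the desired formula \eqref{eq:f-2}. There is no real obstacle here; the only subtle point is ensuring the identification $\m_i = \k_i$, which is available precisely because the specific ordering of primitive idempotents chosen in Lemma \ref{lem:MPsi} is the one making $\Gamma$ formally self-dual.
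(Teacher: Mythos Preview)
Your proof is correct and takes a genuinely different route from the paper. The paper works on the primary $\T$-module $\pU$: it observes that $\W,\W^*$ act there as a balanced Boltzmann pair (via Lemma~\ref{lem:balenced2}), invokes Theorem~\ref{thm:WtiinvAi} together with Lemma~\ref{lem:piW} to obtain $\W = f\gamma \sum_{i=0}^D \tau_i^{-1} \A_i$ with $\gamma = |X|^{-1}\sum_{i=0}^D \k_i \tau_i$ (using Lemmas~\ref{lem:nu3}, \ref{lem:AiEs0E03}, \ref{lem:nuki} to translate the abstract Leonard-system quantities into graph data), and then compares this with \eqref{eq:WWs}. Your argument bypasses the abstract Leonard-pair machinery entirely: you simply compute $\text{tr}(\W)$ from the two available expansions of $\W$ and use formal self-duality only through the elementary consequence $\m_i = \k_i$. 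Your approach is shorter and more self-contained; the paper's approach has the advantage of illustrating that the identity \eqref{eq:f-2} is really an instance of the general relation \eqref{eq:gamma} between $\gamma$, $\nu$, and the $k_i$ for spin Leonard pairs, which is thematically relevant since the converse direction (Section~\ref{sec:spinLPtospinmodel}) will need to run the same reasoning in reverse.
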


\begin{proof}
Recall the primary $\T$-module $\pU$.
By Lemma \ref{lem:U} the sequence
\[
 (\A_1; \{\E_i\}_{i=0}^D; \A^*_1; \{\E^*_i\}_{i=0}^D) 
\]
acts on $\pU$ as a Leonard system.
By Theorem \ref{thm:WtiinvAi} and Lemmas \ref{lem:piW}, \ref{lem:nu3},
\begin{align*}
   \W &= f \gamma \sum_{i=0}^D \tau_i^{-1} \A_i,  &
   \gamma &= |X|^{-1} \sum_{i=0}^D \k_i \tau_i.
\end{align*}
Comparing this with the equation on the left in \eqref{eq:WWs}, we get the result.
\end{proof}

We now describe the irreducible $\T$-modules.
The following result is essentially due to Curtin \cite[Theorem 8.3]{C:thin}.

\begin{proposition}     \label{prop:C2}    \samepage
\ifDRAFT {\rm prop:C2}. \fi
Every irreducible $\T$-module is thin, provided that
$\W$ does not have Hadamard type.
\end{proposition}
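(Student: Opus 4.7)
The plan is to reduce Proposition \ref{prop:C2} to Curtin's thinness theorem \cite[Theorem 8.3]{C:thin}. The hypothesis that $\Gamma$ affords $\W$ is Definition \ref{def:afford}, which matches Curtin's setup. The dual element $\W^* \in \M^*$ from \eqref{def:Ws}, together with the relations \eqref{eq:A1WsW=WsWAs1}, \eqref{eq:balanced2} of Lemma \ref{lem:balenced2} and the dual expansions of Lemma \ref{lem:Cur}, supply exactly the structural data used in \cite{C:thin}. It remains to (i) spell out how these inputs yield thinness of an irreducible $\T$-module, and (ii) identify the exceptional case in Curtin's theorem with the Hadamard case excluded here.

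For step (i), let $U$ denote an irreducible $\T$-module. By Lemma \ref{lem:MPsi} the graph $\Gamma$ is formally self-dual with respect to $\{\E_i\}_{i=0}^D$, so by Proposition \ref{prop:DRGsd} we have $\th_i = \th^*_i$ for $0 \leq i \leq D$. The relation \eqref{eq:A1WsW=WsWAs1} shows that the invertible operator $\W^*\W$ intertwines $\A^*_1$ and $\A_1$ on $U$; since $\th_i = \th^*_i$, it follows that $\W^*\W$ restricts to a linear bijection $\E^*_i U \to \E_i U$ for each $i$, and in particular $\dim \E^*_i U = \dim \E_i U$. By Lemmas \ref{lem:Tmodule1}(ii) and \ref{lem:Tmodule2}(ii) this makes thinness and dual thinness equivalent on $U$. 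Curtin's argument then combines the balanced relation \eqref{eq:balanced2} with the two expansions of $\W$, namely \eqref{eq:defWfti} in primitive idempotents and \eqref{eq:WWs} in distance matrices, and the corresponding pair for $\W^*$; restricting these to $U$ produces enough linear constraints on the action of $\A_1, \A^*_1$ to force $\dim \E^*_i U \leq 1$ for all $i$, unless the scalars $\{\tau_i\}_{i=0}^D$ degenerate.

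For step (ii), the degeneracy in Curtin's identity occurs precisely when the $\{\tau_i\}_{i=0}^D$ collapse to values in $\{+1, -1\}$ up to a common factor. By \eqref{eq:WWs} all entries of $\W$ then share a common absolute value, and by Lemma \ref{lem:Htype} together with Definition \ref{def:Htype} this is equivalent to $\W$ being of Hadamard type. Thus excluding Hadamard type, as in the hypothesis of the proposition, rules out exactly the exceptional case of \cite[Theorem 8.3]{C:thin}, and every irreducible $\T$-module is thin.

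The main obstacle is cleanly identifying Curtin's degenerate case with our Definition \ref{def:Htype}; this amounts to tracing how the scalars $\tau_i$, which carry all the spectral information of $\W$ via \eqref{eq:defWfti} and \eqref{eq:WWs}, determine whether $\W$ is a scalar multiple of a symmetric Hadamard matrix. The argument is routine once the normalization conventions are matched, and then the conclusion is immediate from \cite[Theorem 8.3]{C:thin}.
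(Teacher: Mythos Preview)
Your overall strategy---reducing to Curtin's thinness theorem and identifying the exceptional case with the Hadamard case---matches the paper's approach, but step (ii) contains a genuine gap. You assert that ``the degeneracy in Curtin's identity occurs precisely when the $\{\tau_i\}_{i=0}^D$ collapse to values in $\{+1,-1\}$ up to a common factor,'' i.e.\ that Curtin's exceptional case coincides with Hadamard type. This is not correct as stated. Curtin's hypothesis in \cite[Theorem~3.8]{C:thin} is that $\tau_i \not\in \{\tau_0,-\tau_0\}$ for each $1 \leq i \leq D$ individually; its negation is that \emph{some} $\tau_i$ equals $\pm\tau_0$, which is strictly weaker than the Hadamard condition that \emph{all} $\tau_i \in \{\pm\tau_0\}$. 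So ``not Hadamard'' does not by itself place you inside Curtin's hypothesis, and the reduction is not immediate.

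The paper closes this gap in two steps. First, it uses the constraints on the $\tau_i$ coming from \cite[Corollaries~4.6, 4.7]{CN:someformula} to show that if $\W$ is not of Hadamard type then the triples $\tau_{i-1},\tau_i,\tau_{i+1}$ are mutually distinct for $1 \leq i \leq D-1$; the point is that a single coincidence among consecutive $\tau$'s propagates, via those corollaries, to force all $\tau_i \in \{\pm\tau_0\}$. Second, the paper re-examines Curtin's proof (specifically \cite[Lemmas~3.4, 3.7]{C:thin}) and observes that these lemmas in fact only require the mutual-distinctness-of-consecutive-triples condition, not the full hypothesis $\tau_i \neq \pm\tau_0$. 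Your proposal would need both of these ingredients spelled out; in particular you never invoke \cite{CN:someformula}, which is what makes the bridge from ``not Hadamard'' to a usable hypothesis on the $\tau_i$.
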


\begin{proof}
We refer to the equation on the left in \eqref{eq:WWs}.
We claim that
\begin{equation}
\text{ $\tau_{i-1}$, $\tau_i$, $\tau_{i+1}$ are mutually distinct for $1 \leq i \leq D-1$}.  \label{eq:aux}
\end{equation} 
By way of contradiction, assume that there exists an integer $j$ $(1 \leq j \leq D-1)$
such that $\tau_{j-1}$, $\tau_j$, $\tau_{j+1}$ are not mutually distinct.
Then either 
(i) $\tau_j \in \{\tau_{j-1}, \tau_{j+1}\}$; or (ii) $\tau_{j-1} = \tau_{j+1}$.
For the moment assume (i).
Then by \cite[Corollary 4.6]{CN:someformula} we get $\tau_i \in \{\tau_0, -\tau_0\}$ 
for $0 \leq i \leq D$,
contradicting Lemma \ref{lem:Htype}.
Next assume (ii).
Then $\tau_1 \in \{\tau_0, - \tau_0\}$ by \cite[Corollary 4.7]{CN:someformula}.
By this and \cite[Corollary 4.6]{CN:someformula} we get $\tau_i \in \{\tau_0, - \tau_0\}$ 
for $0 \leq i \leq D$,
contradicting Lemma \ref{lem:Htype}.
The claim is proved.
In \cite[Theorem 3.8]{C:thin} Curtin showed that every irreducible $\T$-module is thin under the
condition that 
\begin{equation}
  \tau_i \not\in \{\tau_0, - \tau_0\} \; \text{ for }  1 \leq i \leq D.     \label{eq:aux2}
\end{equation}
Analyzing the proof of this theorem,
we find that this theorem is implied by Lemmas 3.4 and 3.7 in \cite{C:thin}.
Analyzing the proof of these lemmas, we find that these lemmas still hold
if we replace the condition \eqref{eq:aux2} with \eqref{eq:aux}.
Thus the conclusion of \cite[Theorem 3.8]{C:thin} holds under the assumption \eqref{eq:aux}.
The result follows.
\end{proof}

Let $U$ denote a thin irreducible $\T$-module with endpoint $r$, dual endpoint $s$,
and diameter $d$.
By Lemmas \ref{lem:U} the sequence
\begin{equation}
   (\A_1; \{\E_{s+i}\}_{i=0}^d; \A^*_1; \{\E^*_{r+i}\}_{i=0}^d)                \label{eq:LS5}
\end{equation}
acts on $U$ as a Leonard system.
By  \cite[Lemma 4.4, Theorems 4.1, 5.5]{CN:hyper} 
we have $r=s$, and the Leonard system \eqref{eq:LS5} on $U$ is self-dual.

\begin{lemma}    \label{lem:spinLP3}    \samepage
\ifDRAFT {\rm lem:spinLP3}. \fi
The pair $\A_1, \A^*_1$ acts on $U$ as a spin Leonard pair,
and $\W, \W^*$ acts on $U$ as a balanced Boltzmann pair for this
Leonard pair.
\end{lemma}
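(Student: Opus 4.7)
The plan is to verify directly, using the definitions and the algebraic identities already established, that when $\A_1, \A^*_1, \W, \W^*$ are restricted to $U$ they satisfy all parts of Definition \ref{def:spinLP} together with the balanced condition of Definition \ref{def:balanced}. Throughout I will write $A, A^*, W, W^*$ for the restrictions of $\A_1, \A^*_1, \W, \W^*$ to $U$. Since $\W \in \M \subseteq \T$ and $\W^* \in \M^* \subseteq \T$, the subspace $U$ is invariant under each of these matrices, so the restrictions are well-defined linear maps $U \to U$.

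First I would establish the algebra-membership conditions (i), (ii) of Definition \ref{def:spinLP}. By Lemma \ref{lem:U} applied to the Leonard system \eqref{eq:LS5} on $U$, the primitive idempotents of $A$ are $\{\E_{s+i}|_U\}_{i=0}^d$ and those of $A^*$ are $\{\E^*_{r+i}|_U\}_{i=0}^d$, so $\gen{A}$ is spanned by $\{\E_{s+i}|_U\}_{i=0}^d$ and $\gen{A^*}$ is spanned by $\{\E^*_{r+i}|_U\}_{i=0}^d$. From \eqref{eq:defWfti} and \eqref{def:Ws}, and using that $\E_j|_U = 0$ (resp.\ $\E^*_j|_U = 0$) for $j$ outside the range $[s, s+d]$ (resp.\ $[r, r+d]$), we get
\begin{align*}
W &= f \sum_{i=0}^d \tau_{s+i}\, \E_{s+i}\big|_U, &
W^* &= f \sum_{i=0}^d \tau_{r+i}\, \E^*_{r+i}\big|_U.
\end{align*}
Since $f \neq 0$ and each $\tau_j \neq 0$, this exhibits $W \in \gen{A}$ and $W^* \in \gen{A^*}$ as invertible elements.

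Next I would verify condition (iii) of Definition \ref{def:spinLP}. By Lemma \ref{lem:balenced2} the relation $\A_1 \W^* \W = \W^* \W \A^*_1$ holds on all of $\V$, so a fortiori on $U$, giving $A W^* W = W^* W A^*$. This is precisely \eqref{eq:AWsW} of Lemma \ref{lem:AsWWs}, and hence all four equivalent conditions there hold on $U$. Together with (i), (ii) this shows $W, W^*$ is a Boltzmann pair for $A, A^*$, which by Definition \ref{def:spinLP} makes $A, A^*$ a spin Leonard pair.

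Finally, to establish that the Boltzmann pair $W, W^*$ is balanced, I would restrict the identity $\W \W^* \W = \W^* \W \W^*$ from Lemma \ref{lem:balenced2} to $U$, yielding $W W^* W = W^* W W^*$, which is exactly the balanced condition \eqref{eq:WWsWpre}. There is essentially no obstacle in this argument: all the heavy lifting (the identities \eqref{eq:A1WsW=WsWAs1} and \eqref{eq:balanced2}, and the Leonard system structure on $U$) has already been recorded in earlier results. The only point requiring a little care is the observation that the $\tau_j$ appearing as eigenvalues of $W, W^*$ on $U$ are exactly the ones indexed by $j \in [r, r+d]$, so both restrictions remain invertible on $U$.
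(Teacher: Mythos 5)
Your proof is correct and takes essentially the same approach as the paper: both restrict the global identities $\A_1 \W^* \W = \W^* \W \A^*_1$ and $\W \W^* \W = \W^* \W \W^*$ from Lemma \ref{lem:balenced2} to $U$ and invoke Definitions \ref{def:spinLP} and \ref{def:balanced}. The only difference is that you explicitly verify the membership and invertibility conditions (i), (ii) of Definition \ref{def:spinLP} by exhibiting $W, W^*$ as invertible combinations of the primitive idempotents of $A, A^*$ on $U$, whereas the paper's proof leaves these routine checks implicit.
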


\begin{proof}
By Definition \ref{def:spinLP} and \eqref{eq:A1WsW=WsWAs1}
we see that the pair $\A_1, \A^*_1$ acts on $U$ as a spin Leonard pair,
and the pair $\W, \W^*$ acts on $U$ as a Boltzmann pair for this Leonard pair.
This Boltzmann pair is balanced by Definition \ref{def:balanced} and \eqref{eq:balanced2}.
\end{proof}

\section{From a spin model to spin Leonard pairs; the $q$-Racah case}
\label{sec:spinqRacah}

Let $\Gamma$ denote a distance-regular graph with vertex set $X$,
diameter $D \geq 3$, and valency at least $3$.
Recall the distance-matrices $\{\A_i\}_{i=0}^D$ 
in the Bose-Mesner algebra $\M$ of $\Gamma$.
Let $\W$ denote a spin model afforded by $\Gamma$.
Fix an ordering $\{\E_i\}_{i=0}^D$ of the primitive idempotents of $\Gamma$ 
from Lemma \ref{lem:MPsi}.
Note that $\Gamma$ is formally self-dual with respect to $\{\E_i\}_{i=0}^D$.
Let $\{\th_i\}_{i=0}^D$ denote the eigenvalue sequence of $\Gamma$
with respect to the ordering $\{\E_i\}_{i=0}^D$.
As we saw below Lemma \ref{lem:MPsi},
there exist nonzero scalars $f$, $\{\tau_i\}_{i=0}^D$ in $\C$ such that $\tau_0=1$ and
\[
  \W = f \sum_{i=0}^D \tau_i \E_i.
\]
Fix $x \in X$.
Abbreviate $\T=\T(x)$ and $\A^*_i = \A^*_i(x)$, $\E^*_i = \E^*_i(x)$ for
$0 \leq i \leq D$.
Define
\[
  \W^* = f \sum_{i=0}^D \tau_i \E^*_i.
\]
Recall the primary $\T$-module $\pU$.
By Proposition \ref{prop:DRGsd}  the sequence
\begin{equation*}
 (\A_1; \{\E_i\}_{i=0}^D; \A^*_1; \{E^*_i \}_{i=0}^D)            
\end{equation*}
acts on $\pU$ as a self-dual Leonard system.
This Leonard system has  eigenvalue sequence  $\{\th_i\}_{i=0}^D$.
Fix $0 \neq q \in \C$,
and assume that the ordering $\{\E_i\}_{i=0}^D$ has $q$-Racah type.

\begin{lemma}    \label{lem:alphabeta}    \samepage
\ifDRAFT {\rm lem:alphabeta}. \fi
There exist scalars $a$, $\alpha$, $\beta$ in $\C$ with $a$, $\alpha$ nonzero
that satisfy the following {\rm (i), (ii)}.
\begin{itemize}
\item[\rm (i)]
The sequence 
\begin{equation}
 (\A; \{\E_i\}_{i=0}^D; \A^*; \{\E^*_i\}_{i=0}^D)    \label{eq:LSonpU}
\end{equation}
acts on $\pU$ as a Leonard system of $q$-Racah type,
where
\begin{align}
  \A_1 &= \alpha \A + \beta \I, &
  \A^*_1 &= \alpha \A^* + \beta \I.                  \label{eq:defAAs0}
\end{align}
\item[\rm (ii)]
The Leonard system \eqref{eq:LSonpU} on $\pU$ has Huang data $(a,a,a,D)$.
\end{itemize}
\end{lemma}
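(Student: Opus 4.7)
The plan is to construct $(a,\alpha,\beta)$ in two stages: first use the $q$-Racah hypothesis on $\{\E_i\}_{i=0}^D$ together with formal self-duality to establish (i) and to force the first two Huang parameters of the transformed system to coincide; then use the spin structure on $\pU$ to pin down the third Huang parameter, up to a possible sign flip.

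For the first stage, Definition \ref{def:qRacahDRG} provides scalars $\alpha_0,\beta_0,\alpha_0^*,\beta_0^*\in\C$ with $\alpha_0\alpha_0^*\neq 0$ such that the affine transformation $(\alpha_0\A_1+\beta_0\I;\{\E_i\};\alpha_0^*\A^*_1+\beta_0^*\I;\{\E^*_i\})$ of the Leonard system \eqref{eq:U0Phi} on $\pU$ has $q$-Racah type. In particular there is a nonzero $a'\in\C$ with
\[
 \alpha_0\th_i+\beta_0 = a'q^{2i-D}+(a')^{-1}q^{D-2i} \qquad (0\leq i\leq D).
\]
By Lemma \ref{lem:MPsi} and Proposition \ref{prop:DRGsd}, $\Gamma$ is formally self-dual with respect to $\{\E_i\}_{i=0}^D$, so $\th_i=\th^*_i$, and the same identity is satisfied by $\th^*_i$. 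Setting $\alpha=\alpha_0^{-1}$ and $\beta=-\beta_0\alpha_0^{-1}$, and defining $\A,\A^*$ via \eqref{eq:defAAs0}, the Leonard system \eqref{eq:LSonpU} on $\pU$ is of $q$-Racah type with matching first two Huang parameters, i.e.\ with Huang data of the form $(a',a',c,D)$ for some nonzero $c\in\C$.

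For the second stage, Lemma \ref{lem:spinLP3} applied to $\pU$ shows that $\A_1,\A^*_1$ acts on $\pU$ as a spin Leonard pair with $\W,\W^*$ as a balanced Boltzmann pair; since Boltzmann pair relations are preserved by affine transformations (Lemma \ref{lem:affine}, applied in both directions), $\A,\A^*$ is also a spin Leonard pair on $\pU$. Lemma \ref{lem:spinLP0}(i), available because $D\geq 3$, then forces $c\in\{a',(a')^{-1},-a',-(a')^{-1}\}$. Note \ref{note:Huang} reduces this to $c\in\{a',-a'\}$. If $c=a'$, take $a:=a'$ and we are done. If $c=-a'$, Lemma \ref{lem:Huang2} shows that negating both $\A$ and $\A^*$ converts the Huang data to $(-a',-a',-a',D)$; I implement this negation by replacing $\alpha$ with $-\alpha$ while keeping $\beta$ unchanged, after which $a:=-a'$ yields Huang data $(a,a,a,D)$.

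The main subtlety is ensuring that a single pair $(\alpha,\beta)$ suffices for both $\A_1$ and $\A^*_1$ in \eqref{eq:defAAs0}; this is precisely where formal self-duality $\th_i=\th^*_i$ is essential, since it forces the $q$-Racah equations for $\A_1$ and $\A^*_1$ to coincide. The residual sign ambiguity in $c$ is then routinely handled by the symmetry recorded in Lemma \ref{lem:Huang2}.
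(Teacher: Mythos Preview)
Your proof is correct and follows essentially the same route as the paper's: invoke the $q$-Racah hypothesis to get an affine transform of \eqref{eq:U0Phi} with $q$-Racah type, use formal self-duality to force the first two Huang parameters to agree, then apply Lemmas \ref{lem:spinLP3}, \ref{lem:affine}, \ref{lem:spinLP0}(i), Note \ref{note:Huang}, and Lemma \ref{lem:Huang2} to pin down the third. You are in fact more careful than the paper on one point: Definition \ref{def:qRacahDRG} only gives an affine transformation with possibly distinct coefficients $(\alpha_0,\beta_0)$ and $(\alpha_0^*,\beta_0^*)$, and you explicitly use $\th_i=\th^*_i$ to justify replacing $(\alpha_0^*,\beta_0^*)$ by $(\alpha_0,\beta_0)$, whereas the paper's proof simply asserts the existence of a common $(\alpha,\beta)$.
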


\begin{proof}
By Definition \ref{def:qRacahDRG} there exists $\alpha$, $\beta \in \C$ with $\alpha \neq 0$
such that the sequence \eqref{eq:LSonpU} acts on $\pU$ as a Leonard system of $q$-Racah type, 
where $\A$, $\A^*$ are from \eqref{eq:defAAs0}.
This Leonard system is self-dual.
By Definition \ref{def:qRacah} there exists $0 \neq a \in \C$ such that
\begin{align}
 \th_i &= \alpha (a q^{2i-D} + a^{-1} q^{D-2i}) + \beta   &&  (0 \leq i \leq D).   \label{eq:thi2}
\end{align}
By \eqref{eq:defAAs0} and \eqref{eq:thi2} this Leonard system has eigenvalue 
sequence $\{a q^{2i-D} + a^{-1} q^{D-2i}\}_{i=0}^D$.
By Lemmas \ref{lem:affine}, \ref{lem:spinLP3} 
the pair $\A, \A^*$ acts on $\pU$ as a spin Leonard pair.
By these comments and Lemmas \ref{lem:selfdualab}, \ref{lem:spinLP0}(i)
the Leonard system $\eqref{eq:LSonpU}$ has Huang data $(a,a,c,D)$ with 
$c \in \{a,  a^{-1}, -a, - a^{-1} \}$.
By Note \ref{note:Huang} we may assume $c \in \{a, -a\}$.
Replacing $\A, \A^*$ with $-\A, - \A^*$ and replacing $\alpha$ with $- \alpha$,
we may assume by Lemma \ref{lem:Huang2} that $c = a$.
The result follows.
\end{proof}

For the rest of this section, let the scalars $a, \alpha,\beta$ be as in
Lemma \ref{lem:alphabeta}.

\begin{lemma}    {\rm \cite[Corollaries 6.5, 6.7]{CW} }
\label{lem:aq}    \samepage
\ifDRAFT {\rm lem:aq}. \fi
We have
\begin{align}
  q^{2i} &\neq 1 && (1 \leq i \leq D),                \label{eq:restq}
\\
  a^2 q^{2i} &\neq 1  &&  (1-D \leq i \leq D-1),              \label{eq:resta}
\\
 a^3 q^{2i-D-1} &\neq 1  &&  (1 \leq i \leq D).               \label{eq:resta2}
\end{align}
\end{lemma}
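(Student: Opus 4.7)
The plan is to reduce all three inequalities to a single application of Lemma \ref{lem:condabc}. By Lemma \ref{lem:alphabeta}(ii), the Leonard system \eqref{eq:LSonpU} acting on the primary module $\pU$ has $q$-Racah type with Huang data $(a,a,a,D)$. Thus the general restrictions from Lemma \ref{lem:condabc}, applied with $b=c=a$ and $d=D$, are in force, and each of \eqref{eq:restq}, \eqref{eq:resta}, \eqref{eq:resta2} will follow by a simple reindexing.

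First I would read off Lemma \ref{lem:condabc}(i) verbatim: $q^{2i} \neq 1$ for $1 \leq i \leq D$. This is exactly \eqref{eq:restq}. Next I would unpack Lemma \ref{lem:condabc}(ii), which says that $a^2$ avoids the list $q^{2D-2}, q^{2D-4}, \ldots, q^{2-2D}$; writing this list as $\{q^{-2i} : 1-D \leq i \leq D-1\}$, the condition $a^2 \neq q^{-2i}$ rearranges to $a^2 q^{2i} \neq 1$, which is \eqref{eq:resta}.

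Finally, I would specialize Lemma \ref{lem:condabc}(iii) to the case $b=c=a$. The four scalars $abc$, $a^{-1}bc$, $ab^{-1}c$, $abc^{-1}$ collapse to $a^3,\, a,\, a,\, a^3$, respectively. In particular, $a^3$ is not among $q^{D-1}, q^{D-3}, \ldots, q^{1-D}$; writing this list as $\{q^{D+1-2i} : 1 \leq i \leq D\}$, the condition $a^3 \neq q^{D+1-2i}$ rearranges to $a^3 q^{2i-D-1} \neq 1$, which is \eqref{eq:resta2}.

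Because the entire argument is a mechanical unpacking of restrictions already built into the notion of a $q$-Racah parameter array, there is no real obstacle. The only thing to verify is that Lemma \ref{lem:alphabeta} genuinely certifies $(a,a,a,D)$ as a Huang data for the Leonard system on $\pU$, so that Lemma \ref{lem:condabc} is applicable; this is precisely the content of part (ii) of Lemma \ref{lem:alphabeta}. The hypotheses $D \geq 3$ and valency at least $3$, imposed in this section, are used earlier (to secure Lemma \ref{lem:alphabeta} via Lemma \ref{lem:spinLP0}) but do not enter the present argument directly.
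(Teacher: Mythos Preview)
Your proof is correct and follows exactly the paper's approach: the paper's one-line proof reads ``Apply Lemma \ref{lem:condabc} to the Leonard system \eqref{eq:LSonpU} on $\pU$,'' and you have simply written out the reindexing in detail. One small slip: with $b=c=a$ the scalar $abc^{-1}$ equals $a\cdot a\cdot a^{-1}=a$, not $a^3$; but since $abc=a^3$ already appears in the list, this does not affect your derivation of \eqref{eq:resta2}.
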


\begin{proof}
Apply Lemma \ref{lem:condabc} to the Leonard system \eqref{eq:LSonpU} on $\pU$.
\end{proof}

\begin{lemma}    \label{lem:ti}    \samepage
\ifDRAFT {\rm lem:ti}. \fi
One of the following holds:
\begin{align}
  \tau_i &= (-1)^i a^{-i} q^{i (D-i)}   &&  (0 \leq i \leq D),                \label{eq:ti}
\\
  \tau_i^{-1} &= (-1)^i a^{-i} q^{i (D-i)}   &&  (0 \leq i \leq D).          \label{eq:tiinv}
\end{align}
\end{lemma}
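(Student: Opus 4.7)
The plan is to deduce Lemma \ref{lem:ti} as a direct application of Lemma \ref{lem:W}(i) to the Leonard system \eqref{eq:LSonpU} acting on the primary $\T$-module $\pU$. By Lemma \ref{lem:alphabeta}(ii) this Leonard system has Huang data $(a,a,a,D)$, so the hypothesis $a=b=c$ of Lemma \ref{lem:W} is satisfied with $d=D$; since the standing assumption is $D \geq 3$, the dimension hypothesis of Lemma \ref{lem:W}(i) also holds.

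Next one must verify that $\W,\W^*$ acts on $\pU$ as a Boltzmann pair for $\A,\A^*$ (as opposed to $\A_1,\A^*_1$) and that this action has the specific form prescribed by \eqref{eq:defWpre}. Lemma \ref{lem:spinLP3} gives that $\A_1,\A^*_1$ acts on $\pU$ as a spin Leonard pair with balanced Boltzmann pair $\W,\W^*$. Combining \eqref{eq:defAAs0} with Lemma \ref{lem:affine}, applied via the inverse affine transformation
\[
(\A_1,\A^*_1) \longmapsto \bigl(\alpha^{-1}(\A_1-\beta\I),\,\alpha^{-1}(\A^*_1-\beta\I)\bigr) = (\A,\A^*),
\]
the pair $\W,\W^*$ remains a Boltzmann pair when restricted to $\pU$ and viewed with respect to $\A,\A^*$. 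Moreover the expansions $\W = f\sum_{i=0}^D \tau_i \E_i$ and $\W^* = f\sum_{i=0}^D \tau_i \E^*_i$ are identities in $\Mat$, and so descend to the same expansions on $\pU$; this matches the form required by \eqref{eq:defWpre} in Lemma \ref{lem:W}.

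With these hypotheses verified, Lemma \ref{lem:W}(i) yields that one of \eqref{eq:defti}, \eqref{eq:deftiinv} holds, with $d$ replaced by $D$ and with the scalar $a$ from Lemma \ref{lem:alphabeta}(ii); this is exactly the dichotomy asserted in Lemma \ref{lem:ti}. There is really no obstacle here, since the lemma is a packaging of Lemma \ref{lem:W}(i) inside the distance-regular graph framework. The only points requiring care are (i) matching the $\A,\A^*$ version of the Leonard system to the $\A_1,\A^*_1$ version via the affine transformation of \eqref{eq:defAAs0}, and (ii) checking that the $\tau_i$ produced by the $\M$-expansion of $\W$ coincide with those describing the action of $\W$ on $\pU$ in terms of the $\E_i|_\pU$; the latter is immediate because every $\E_i$ acts nontrivially on $\pU$ (Lemma \ref{lem:primary}).
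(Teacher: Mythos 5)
Your proposal is correct and follows the same route as the paper: the paper's proof reads simply ``Apply Lemma \ref{lem:W}(i) to the Leonard system \eqref{eq:LSonpU} on $\pU$, and use Lemma \ref{lem:alphabeta}.'' Your write-up supplies precisely the verifications the paper leaves implicit — that $\W,\W^*$ descends to a Boltzmann pair for the affinely normalized pair $\A,\A^*$ via Lemma \ref{lem:affine}, that the coefficients $\tau_i$ in the $\M$-expansion of $\W$ survive restriction to $\pU$, and that $D\ge 3$ covers the dimension hypothesis — so the two arguments coincide.
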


\begin{proof}
Apply Lemma \ref{lem:W}(i) to the Leonard system  \eqref{eq:LSonpU} on $\pU$,
and use Lemma \ref{lem:alphabeta}.
\end{proof}

\begin{note}    \label{note:tiinv}    \samepage
\ifDRAFT {\rm note:tiinv}. \fi
Replacing $\W$ by $\W^-$ if necessary,
and using Lemmas \ref{lem:afford}, \ref{lem:W-2},
we may assume that \eqref{eq:ti} holds.
We adopt this assumption for the rest of the section.
\end{note}

We mention a lemma for later use.
For the moment, let $D$, $d$, $r$ denote arbitrary nonnegative integers
such that $r+d \leq D$,
and let $a$, $q$ denote arbitrary nonzero scalars in $\C$.

\begin{lemma}    \label{lem:tr+i}    \samepage
\ifDRAFT {\rm lem:tr+i}. \fi
With the above notation, define  $\widetilde{a} = a q^{2r+d-D}$.
Then the following hold.
\begin{itemize}
\item[\rm (i)]
Define
\begin{align*}
   \th_i &= a q^{2i-D} + a^{-1} q^{D-2i}    && (0 \leq i \leq D).
\end{align*}
Then
\begin{align*}
  \th_{r+i} &= \widetilde{a} q^{2i-d} + (\widetilde{a})^{-1} q^{d-2i}  &&  (0 \leq i \leq d).
\end{align*}
\item[\rm (ii)]
Define
\begin{align*}
   \tau_i &= (-1)^i a^{-i} q^{i (D-i)}    &&  (0 \leq i \leq D).
\end{align*}
Then
\begin{align}
   \tau_{r+i} &= \tau_r \widetilde{\tau}_i    &&   (0 \leq i \leq d),     \label{eq:tr+i}
\end{align}
where
\begin{align}
  \widetilde{\tau}_i &= (-1)^i ( \widetilde{a})^{-i} q^{i(d-i)}  &&  (0 \leq i \leq d).    \label{eq:deftti}
\end{align}
\end{itemize}
\end{lemma}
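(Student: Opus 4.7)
The plan is to verify both parts by direct manipulation of the exponents of $q$, using the definition $\widetilde{a} = a q^{2r+d-D}$.

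For part (i), I would substitute the formula for $\th_{r+i}$ and rewrite the exponents so that $a$ gets absorbed into $\widetilde{a}$. Specifically, I would compute
\begin{align*}
   \widetilde{a} q^{2i-d} &= a q^{2r+d-D} \cdot q^{2i-d} = a q^{2(r+i)-D},
\\
   \widetilde{a}^{-1} q^{d-2i} &= a^{-1} q^{-(2r+d-D)} \cdot q^{d-2i} = a^{-1} q^{D-2(r+i)},
\end{align*}
and then sum the two to recover $a q^{2(r+i)-D} + a^{-1} q^{D-2(r+i)} = \th_{r+i}$.

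For part (ii), I would substitute the definitions of $\tau_r$ and $\widetilde{\tau}_i$ into the right-hand side of \eqref{eq:tr+i}, collect the powers of $-1$, $a$, and $q$, and check that the result equals $\tau_{r+i}$. Explicitly, $\tau_r \widetilde{\tau}_i$ has sign $(-1)^{r+i}$ and its $a$-part is $a^{-r} \cdot a^{-i} q^{-i(2r+d-D)} = a^{-(r+i)} q^{-i(2r+d-D)}$ (here I have pulled the $q$-factor out of $\widetilde{a}^{-i}$). The total power of $q$ is then
\begin{equation*}
   r(D-r) - i(2r+d-D) + i(d-i) = rD - r^2 + iD - 2ri - i^2 = (r+i)(D-(r+i)),
\end{equation*}
so $\tau_r \widetilde{\tau}_i = (-1)^{r+i} a^{-(r+i)} q^{(r+i)(D-r-i)} = \tau_{r+i}$, as desired.

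There is no real obstacle here; both claims are bookkeeping identities about shifted exponents. The only step that requires a touch of care is the $q$-exponent calculation in part (ii), where one must correctly extract $q^{-i(2r+d-D)}$ from $\widetilde{a}^{-i}$ before adding the exponent $i(d-i)$ coming from $\widetilde{\tau}_i$ and the exponent $r(D-r)$ coming from $\tau_r$; the cancellation $-id + id = 0$ then produces the factored form $(r+i)(D-r-i)$. No hypothesis beyond the definition of $\widetilde{a}$ is used, so the lemma is essentially a reparametrization statement.
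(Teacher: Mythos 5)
Your proof is correct and matches the paper's approach exactly; the paper simply says ``Routine verification'' and your exponent bookkeeping fills in the details that the authors omitted.
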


\begin{proof}
Routine verification.
\end{proof}

We return our attention to $\Gamma$.
Let $U$ denote a thin irreducible $\T$-module, with endpoint $r$ and diameter $d$.
Note  that $U$ has dual endpoint $r$ by the sentence below \eqref{eq:LS5}.
Consider the sequence
\[
   \Phi = ( \A_1; \{\E_{r+i}\}_{i=0}^d; \A^*_1; \{\E^*_{r+i} \}_{i=0}^d).
\]

\begin{lemma}    \label{lem:Phi}    \samepage
\ifDRAFT {\rm lem:Phi}. \fi
The sequence $\Phi$ acts on $U$ as a self-dual Leonard system,
with eigenvalue sequence  $\{\th_{r+i} \}_{i=0}^d$.
\end{lemma}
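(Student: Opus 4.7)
The plan is to assemble three ingredients that are already in place: the general description of how a Leonard system sits on a thin irreducible $\T$-module (Lemma \ref{lem:U}), the formal self-duality of $\Gamma$ with respect to $\{\E_i\}_{i=0}^D$ (Lemma \ref{lem:MPsi}), and the characterization of self-dual Leonard systems by matching eigenvalue sequences (Lemma \ref{lem:selfdualparam}).

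First I would recall from the paragraph between Proposition \ref{prop:C2} and Lemma \ref{lem:spinLP3} that under our hypotheses the endpoint and dual endpoint of $U$ coincide, so if we let $r$ denote the common value then Lemma \ref{lem:U} asserts that the sequence $(\A_1;\{\E_{r+i}\}_{i=0}^d;\A^*_1;\{\E^*_{r+i}\}_{i=0}^d)$, which is exactly $\Phi$, acts on $U$ as a Leonard system, with eigenvalue sequence $\{\th_{r+i}\}_{i=0}^d$ and dual eigenvalue sequence $\{\th^*_{r+i}\}_{i=0}^d$. This at once yields the Leonard-system claim and the stated eigenvalue sequence.

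Next, to obtain self-duality I would invoke Lemma \ref{lem:MPsi}, which tells us $\Gamma$ is formally self-dual with respect to $\{\E_i\}_{i=0}^D$, and then Proposition \ref{prop:DRGsd} to conclude $\th_i=\th^*_i$ for $0\leq i\leq D$. Restricting to $i\in\{r,r+1,\dots,r+d\}$ gives that the eigenvalue sequence and dual eigenvalue sequence of $\Phi$ coincide, so Lemma \ref{lem:selfdualparam} applied to $\Phi$ on $U$ yields self-duality.

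No step is genuinely an obstacle here: the lemma is a bookkeeping statement that transfers the self-dual Leonard system structure from the ordering data on $\Gamma$ (formal self-duality) to each thin irreducible $\T$-module, with the identification $r=s$ provided by the earlier citation to \cite{CN:hyper}. The proof is essentially a three-line chain of references, and the only care required is to track that the subscript shift by $r$ preserves the equality $\th_{r+i}=\th^*_{r+i}$ for $0\leq i\leq d$, which is immediate from $\th_j=\th^*_j$ on the full range $0\leq j\leq D$.
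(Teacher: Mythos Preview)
Your proposal is correct and essentially matches the paper's approach. The paper's proof is a single line citing Lemma~\ref{lem:U}, with self-duality already recorded in the context (the sentence below \eqref{eq:LS5}, via \cite{CN:hyper}); you make the self-duality step explicit by routing through formal self-duality (Lemma~\ref{lem:MPsi}, Proposition~\ref{prop:DRGsd}) and the eigenvalue criterion (Lemma~\ref{lem:selfdualparam}), which is the same content unpacked rather than cited.
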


\begin{proof}
Apply Lemma \ref{lem:U} to the Leonard system $\Phi$ on $U$.
\end{proof}

\begin{lemma}    \label{lem:WonU}    \samepage
\ifDRAFT {\rm lem:WonU}. \fi
On $U$ we have
\begin{align}
 \W &= f \tau_r \sum_{i=0}^d \widetilde{\tau}_i   \E_{r+i}, &
 \W^* &= f \tau_r \sum_{i=0}^d \widetilde{\tau}_i \E^*_{r+i},    \label{eq:WonU}
\end{align}
where $\{ \widetilde{\tau}_i\}_{i=0}^d$ are from \eqref{eq:deftti}.
\end{lemma}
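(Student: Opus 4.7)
The plan is to restrict the two formulas
\[
\W = f \sum_{i=0}^D \tau_i \E_i, \qquad \W^* = f \sum_{i=0}^D \tau_i \E^*_i
\]
to the module $U$ and then to reindex using Lemma \ref{lem:tr+i}(ii). The idea is that most of the terms vanish on $U$ for support reasons, and what remains is exactly a shifted copy of the original expression.

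First I would recall that $U$ is a thin irreducible $\T$-module with endpoint $r$ and diameter $d$, and that its dual endpoint also equals $r$ by the sentence below \eqref{eq:LS5}. Consequently, by Lemma \ref{lem:Tmodule1}(i) we have $\E^*_j U \neq 0$ if and only if $r \leq j \leq r+d$, and by Lemma \ref{lem:Tmodule2}(i) we have $\E_j U \neq 0$ if and only if $r \leq j \leq r+d$. Therefore $\E_j$ and $\E^*_j$ both act as zero on $U$ whenever $j < r$ or $j > r+d$. Restricting the two displayed sums to $U$ thus gives
\[
\W\big|_U = f \sum_{i=0}^d \tau_{r+i}\, \E_{r+i}\big|_U, \qquad \W^*\big|_U = f \sum_{i=0}^d \tau_{r+i}\, \E^*_{r+i}\big|_U.
\]

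Second, by Note \ref{note:tiinv} we are in the case $\tau_i = (-1)^i a^{-i} q^{i(D-i)}$ for $0 \leq i \leq D$. Applying Lemma \ref{lem:tr+i}(ii) with this $a$, $q$ and with the given $r$, $d$, $D$, we obtain $\tau_{r+i} = \tau_r \widetilde{\tau}_i$ for $0 \leq i \leq d$, where $\widetilde{\tau}_i = (-1)^i \widetilde{a}^{-i} q^{i(d-i)}$ and $\widetilde{a} = a q^{2r+d-D}$. Substituting this into the two restricted sums immediately yields \eqref{eq:WonU}.

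There is no real obstacle here; the statement is essentially a bookkeeping translation between the global indexing (indices $0,1,\dots,D$ with parameter $a$) and the local indexing on $U$ (indices $0,1,\dots,d$ with parameter $\widetilde a$). The only subtlety worth flagging is the identification of the dual endpoint with $r$, which relies on \cite[Lemma 4.4, Theorems 4.1, 5.5]{CN:hyper} cited just before Lemma \ref{lem:spinLP3}; once this is in hand, the index ranges line up correctly and the reindexing identity from Lemma \ref{lem:tr+i}(ii) does the rest.
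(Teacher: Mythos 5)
Your proof is correct and takes essentially the same route as the paper: restrict $\W = f\sum_{i=0}^D \tau_i \E_i$ and $\W^*= f\sum_{i=0}^D\tau_i\E^*_i$ to $U$ (where only indices $r,\dots,r+d$ survive), then apply the reindexing identity $\tau_{r+i}=\tau_r\widetilde\tau_i$ from Lemma \ref{lem:tr+i}(ii). You spell out the support argument via Lemmas \ref{lem:Tmodule1}(i) and \ref{lem:Tmodule2}(i), which the paper leaves implicit under the phrase ``by construction,'' but the underlying argument is identical.
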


\begin{proof}
By construction, on $U$ we have
\begin{align*}
 \W &= f \sum_{i=0}^d \tau_{r+i} \E_{r+i},  &
 \W^* &= f \sum_{i=0}^d \tau_{r+i} \E^*_{r+i}.
\end{align*}
By this and \eqref{eq:tr+i} we get the result.
\end{proof}

Our next goal is to obtain the intersection numbers of the Leonard system $\Phi$ on $U$.

\begin{lemma}     \label{lem:tildePhi}     \samepage
\ifDRAFT {\rm lem:tildePhi}. \fi
The sequence 
\begin{equation}
    (\A; \{\E_{r+i} \}_{i=0}^d; \A^*; \{\E^*_{r+i}\}_{i=0}^d)              \label{eq:tildePhi}
\end{equation}
acts on $U$ as a Leonard system of $q$-Racah type with Huang data
\begin{equation*}
   (a q^{2r+d-D}, a q^{2r+d-D}, a q^{2r+d-D}, d).                   
\end{equation*}
\end{lemma}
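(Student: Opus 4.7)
The plan is to verify the three ingredients contained in the phrase ``Huang data $(\widetilde{a},\widetilde{a},\widetilde{a},d)$'': the Leonard system structure on $U$, the equality of the $a$- and $b$-parameters with $\widetilde{a}$, and the equality of the $c$-parameter with $\widetilde{a}$ (up to inverse, via Note \ref{note:Huang}).

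First I would observe that, by \eqref{eq:defAAs0}, the sequence \eqref{eq:tildePhi} is an affine transformation of the Leonard system $\Phi$ from Lemma \ref{lem:Phi}. Hence it acts on $U$ as a Leonard system; it inherits self-duality from $\Phi$ (the same scalars $\alpha,\beta$ are used for both $\A_1$ and $\A^*_1$); and its eigenvalue sequence is $\{(\th_{r+i}-\beta)/\alpha\}_{i=0}^d$. Lemma \ref{lem:alphabeta}(ii) gives $\th_i = \alpha(aq^{2i-D}+a^{-1}q^{D-2i})+\beta$, so Lemma \ref{lem:tr+i}(i) yields
\[
 (\th_{r+i}-\beta)/\alpha \;=\; \widetilde{a}\, q^{2i-d}+\widetilde{a}^{-1} q^{d-2i} \qquad (0 \leq i \leq d).
\]
This shows the Leonard system on $U$ is of $q$-Racah type with $a$-parameter $\widetilde{a}$. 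Self-duality together with Lemma \ref{lem:selfdualab} then forces the $b$-parameter also to equal $\widetilde{a}$.

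Next I would pin down the $c$-parameter via Lemma \ref{lem:new}. By Lemma \ref{lem:spinLP3} the pair $\A_1,\A^*_1$ acts on $U$ as a spin Leonard pair with balanced Boltzmann pair $\W,\W^*$. Applying Lemma \ref{lem:affine} to the affine transformation \eqref{eq:defAAs0}, the pair $\A,\A^*$ acts on $U$ as a spin Leonard pair with the same $\W,\W^*$ as a balanced Boltzmann pair. By Lemma \ref{lem:WonU} the actions of $\W$, $\W^*$ on $U$ are $f\tau_r \sum_{i=0}^d \widetilde{\tau}_i \E_{r+i}$ and $f\tau_r \sum_{i=0}^d \widetilde{\tau}_i \E^*_{r+i}$, where $\widetilde{\tau}_i = (-1)^i \widetilde{a}^{-i} q^{i(d-i)}$. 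Rescaling by $(f\tau_r)^{-1}$, which preserves the Boltzmann-pair property by Lemma \ref{lem:WinvWsinv}(i) and balancedness by Lemma \ref{lem:balanced}(iii), the $\tau$-sequence on $U$ takes precisely the form \eqref{eq:defti} with $\widetilde{a}$ in place of $a$ and $d$ in place of $D$. Lemma \ref{lem:new} applied to the Leonard system \eqref{eq:tildePhi} on $U$ therefore gives $c \in \{\widetilde{a},\widetilde{a}^{-1}\}$, and Note \ref{note:Huang} permits us to take $c = \widetilde{a}$.

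The main obstacle is correctly assembling the affine-transformation and rescaling steps so that the hypotheses of Lemma \ref{lem:new} are met exactly as stated on $U$: one needs the $b=a$ hypothesis on $U$ (secured above via self-duality and the formal self-duality of $\Gamma$), the explicit form \eqref{eq:defti} of the $\tau$-values on $U$ (secured via Lemma \ref{lem:tr+i}(ii) together with Note \ref{note:tiinv}), and that the rescaled pair remain a balanced Boltzmann pair. A secondary but noteworthy point is that Lemma \ref{lem:new} is valid for all $d$, including the small cases $d \leq 2$ handled separately in its proof, so no restriction on the diameter of $U$ arises.
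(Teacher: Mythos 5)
Your proposal is correct and follows essentially the same path as the paper's proof: first identify the eigenvalue and dual eigenvalue sequences via Lemma~\ref{lem:tr+i}(i) and self-duality to get Huang data $(\widetilde{a},\widetilde{a},\widetilde{c},d)$, then pin down $\widetilde{c}$ using Lemmas~\ref{lem:affine}, \ref{lem:spinLP3}, \ref{lem:WonU}, and \ref{lem:new}, and finally invoke Note~\ref{note:Huang}. The only minor inefficiency is the rescaling by $(f\tau_r)^{-1}$: Lemma~\ref{lem:new} already permits an arbitrary nonzero leading scalar, so the form delivered directly by Lemma~\ref{lem:WonU} (with ``$f$'' taken to be $f\tau_r$ and ``$\tau_i$'' taken to be $\widetilde{\tau}_i$) already satisfies its hypotheses without rescaling.
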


\begin{proof}
By Lemma \ref{lem:Phi} and \eqref{eq:defAAs0}
the sequence \eqref{eq:tildePhi} acts on $U$
as a self-dual Leonard system.
For $0 \leq i \leq D$ define
\begin{align*}
  \widetilde{\th}_i &= a q^{2i-D} + a^{-1} q^{D-2i},
\end{align*}
and observe by \eqref{eq:thi2} that
$\th_i = \alpha \widetilde{\th}_i + \beta$.
Now by \eqref{eq:defAAs0} and Lemma \ref{lem:Phi} 
the Leonard system \eqref{eq:tildePhi} on $U$
has eigenvalue sequence and dual eigenvalue sequence $\{\widetilde{\th}_{r+i}\}_{i=0}^d$.
By Lemma \ref{lem:tr+i}(i)
 the Leonard system \eqref{eq:tildePhi} on $U$ has $q$-Racah type
with Huang data $(\widetilde{a}, \widetilde{a}, \widetilde{c},d)$ for some nonzero
$\widetilde{c} \in \C$,
where $\widetilde{a}$ is from Lemma \ref{lem:tr+i}.
By Lemmas \ref{lem:affine}, \ref{lem:spinLP3} the pair $\A, \A^*$ acts on $U$
as a spin Leonard pair, and $\W, \W^*$ acts on $U$
as a Boltzmann pair for this Leonard pair.
By this and Lemmas \ref{lem:new}, \ref{lem:WonU} we obtain
$\widetilde{c} \in \{ \widetilde{a}, (\widetilde{a})^{-1} \}$.
By this and Note \ref{note:Huang} the sequence
$(\widetilde{a}, \widetilde{a}, \widetilde{a}, d)$ is a Huang data for $\Phi$ on $U$.
The result follows.
\end{proof}

Recall the intersection numbers $\{c_i (U)\}_{i=1}^d$,
 $\{b_i(U)\}_{i=0}^{d-1}$ for the Leonard system $\Phi$ on $U$.

\begin{lemma}  {\rm \cite[Theorem 8.5]{CW} }
\label{lem:ciUbiU}    \samepage
\ifDRAFT {\rm lem:ciUbiU}. \fi
For the Leonard system $\Phi$ on $U$
the intersection numbers  satisfy
\begin{align}
b_i(U) &=
 \frac{ \alpha (q^{i-d} - q^{d-i}) (a q^{2r+i -D} - a^{-1} q^{D-2r-i})(a^3-q^{3D-2d-6r-2i-1}) }
        {a q^{D-d-2r} (a q^{2r+2i-D} - a^{-1} q^{D-2r-2i} ) (a + q^{D-2r-2i-1} ) },          \label{eq:biU}
\\
c_i(U) &=
 \frac{ \alpha a (q^{i}- q^{-i})(a q^{d+2r+i-D} - a^{-1} q^{D-d-2r-i})
                  (a^{-1} - q^{2d-D+2r-2i+1}) }
        {q^{d-D+2r} (a q^{2r+2i-D} - a^{-1} q^{D-2r-2i}) (a + q^{D-2r-2i+1} ) }      \label{eq:ciU}
\end{align}
for $1 \leq i \leq d-1$ and
\begin{align}
b_0(U) &=
  \frac{ \alpha (q^{-d} - q^d) ( a^3 - q^{3D-2d-6r-1} ) }
         {a q^{D-d-2r} (a + q^{D-2r-1}) },          \label{eq:b0U}
\\
c_d(U) &=
 \frac{ \alpha  (q^{-d} - q^d)(a - q^{D-2r-1}) }
        { q^{d-1} (a + q^{D-2d-2r+1}) }.                        \label{eq:cdU}
\end{align}
\end{lemma}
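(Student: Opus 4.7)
The plan is to reduce the computation to Lemma \ref{lem:qRacahbi} via two transformations: first pass from $\Phi$ to the Leonard system obtained by replacing $\A_1, \A^*_1$ with $\A, \A^*$, and then apply the $q$-Racah intersection number formulas with parameters specialized to a Huang data attached to $U$.

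First I would invoke Lemma \ref{lem:tildePhi}, which tells us that the sequence
\[
   \widetilde\Phi = (\A; \{\E_{r+i}\}_{i=0}^d; \A^*; \{\E^*_{r+i}\}_{i=0}^d)
\]
acts on $U$ as a self-dual Leonard system of $q$-Racah type with Huang data $(\widetilde a,\widetilde a,\widetilde a,d)$, where $\widetilde a = a q^{2r+d-D}$. Applying Lemma \ref{lem:qRacahbi} directly, with $a$ replaced by $\widetilde a$ and the diameter replaced by $d$, gives explicit formulas for the intersection numbers $\widetilde b_i, \widetilde c_i$ of $\widetilde\Phi$ on $U$.

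Next I would transfer these intersection numbers to $\Phi$ via the affine relationship $\A_1 = \alpha \A + \beta \I$, $\A^*_1 = \alpha \A^* + \beta \I$ from \eqref{eq:defAAs0}. For any nonzero vector $u \in \E_{r} U$, the vectors $\{\E^*_{r+i} u\}_{i=0}^d$ form both a $\Phi$-standard and a $\widetilde\Phi$-standard basis (since $\E_r$ is the primitive idempotent for the zeroth eigenvalue of either of $\A_1$ or $\A$ on $U$); with respect to this common basis, the matrix representing $\A_1$ is $\alpha$ times the matrix representing $\A$ plus $\beta \I$. Consequently the sub- and super-diagonal entries scale by $\alpha$, yielding
\begin{align*}
 b_i(U) &= \alpha \, \widetilde b_i   && (0 \leq i \leq d-1),  \\
 c_i(U) &= \alpha \, \widetilde c_i   && (1 \leq i \leq d).
\end{align*}

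Finally, I would substitute $\widetilde a = a q^{2r+d-D}$ into the formulas from Lemma \ref{lem:qRacahbi} and multiply by $\alpha$, then simplify using identities such as $\widetilde a^3 - q^{d-1} = q^{3(2r+d-D)}(a^3 - q^{3D-2d-6r-1})$ and $\widetilde a + q^{d-1} = q^{d+2r-D}(a + q^{D-2r-1})$ to rewrite each factor in terms of $a$, $q$, $r$, $d$, $D$. After clearing common powers of $q$ between numerator and denominator, the resulting expressions should match \eqref{eq:biU}--\eqref{eq:cdU} verbatim. The main obstacle is purely bookkeeping: keeping track of the shifted exponents and the cancellation of powers of $q$ in each of the four cases (interior $b_i$, interior $c_i$, boundary $b_0$, boundary $c_d$); nothing conceptual is involved beyond the substitution.
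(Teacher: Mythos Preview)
Your proposal is correct and follows essentially the same route as the paper: invoke Lemma \ref{lem:tildePhi} to get the Huang data $(\widetilde a,\widetilde a,\widetilde a,d)$ for $\widetilde\Phi$ on $U$, apply Lemma \ref{lem:qRacahbi} with $\widetilde a = a q^{2r+d-D}$, and then rescale by $\alpha$ via the affine relation \eqref{eq:defAAs0}. You have simply supplied more detail (the shared standard basis and the explicit exponent bookkeeping) than the paper's terse proof.
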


\begin{proof}
For the Leonard system \eqref{eq:tildePhi} on $U$,
compute the intersection numbers using Lemmas \ref{lem:qRacahbi} and \ref{lem:tildePhi}.
Adjust these intersection numbers using \eqref{eq:defAAs0} to get the result.
\end{proof}

Recall the intersection numbers 
$\{\c_i\}_{i=1}^D$,  $\{\b_i\}_{i=0}^{D-1}$ of $\Gamma$.

\begin{lemma}   {\rm \cite[Theorem 1.1]{CN:someformula} }
\label{lem:cibi}    \samepage
\ifDRAFT {\rm lem:cibi}. \fi
The intersection numbers of $\Gamma$ satisfy
\begin{align}
\b_0 &=
  \frac{ \alpha (q^{-D} - q^D)(a^3 - q^{D-1}) }
         {a (a + q^{D-1}) },                              \label{eq:b0}
\\
\b_i &=
  \frac{ \alpha (q^{i-D} - q^{D-i})(a q^{i-D} - a^{-1} q^{D-i}) (a^3 - q^{D-2i-1}) }
         {a (a q^{2i-D} - a^{-1} q^{D-2i}) (a + q^{D-2i-1}) }    && (1 \leq i \leq D-1),   \label{eq:bi}
\\
\c_i &=
 \frac{\alpha a (q^{i} - q^{-i})(a q^i - a^{-1} q^{-i}) (a^{-1} - q^{D-2i+1} ) }
        { (a q^{2i-D} - a^{-1} q^{D-2i}) (a + q^{D-2i+1} ) } &&  (1 \leq i \leq D-1),    \label{eq:ci}
\\
\c_D &=
 \frac{ \alpha (q^{-D} - q^D) (a - q^{D-1}) }
        {q^{D-1} (a + q^{1-D}) }.                           \label{eq:cd}
\end{align}
\end{lemma}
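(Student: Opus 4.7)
The plan is to derive the formulas of Lemma \ref{lem:cibi} by specializing Lemma \ref{lem:ciUbiU} to the primary $\T$-module $\pU$. Indeed, by Lemma \ref{lem:primary}, $\pU$ is a thin, dual-thin irreducible $\T$-module with endpoint $r=0$ and diameter $d=D$, so we are in the situation of Lemma \ref{lem:ciUbiU}. Setting $r=0$ and $d=D$ in \eqref{eq:biU}--\eqref{eq:cdU} reduces each of those expressions to the right-hand side of the corresponding formula in \eqref{eq:b0}--\eqref{eq:cd}, but with $b_i(\pU)$ and $c_i(\pU)$ in place of $\b_i$ and $\c_i$.

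To complete the identification, I would invoke Lemma \ref{lem:ciaibi} applied to the Leonard system $(\A_1; \{\E_i\}_{i=0}^D; \A^*_1; \{\E^*_i\}_{i=0}^D)$ acting on $\pU$. That lemma asserts $c_i(\pU) = \c_i$ for $1 \le i \le D$ and $b_i(\pU) = \b_i$ for $0 \le i \le D-1$, which together with the specialization above gives \eqref{eq:b0}--\eqref{eq:cd} exactly.

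As a sanity check, one could instead derive the formulas from the ground up by combining Lemma \ref{lem:alphabeta} with Lemma \ref{lem:qRacahbi}. Lemma \ref{lem:alphabeta} provides that the sequence \eqref{eq:LSonpU} is a Leonard system of $q$-Racah type on $\pU$ with Huang data $(a,a,a,D)$, so its intersection numbers $c_i$, $b_i$ are given by Lemma \ref{lem:qRacahbi} with $a=b=c$. Since the matrix representing $A$ in a standard basis is tridiagonal with sub- and super-diagonal entries $c_i$, $b_i$ (as in \eqref{eq:matrixAAs}), the affine change $\A_1 = \alpha A + \beta I$, $\A^*_1 = \alpha A^* + \beta I$ scales $c_i$ and $b_i$ by the common factor $\alpha$; this is where the factor $\alpha$ in \eqref{eq:b0}--\eqref{eq:cd} enters.

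The only step that requires any attention is the bookkeeping of powers of $q$ in the specialization $r=0$, $d=D$; everything else is immediate. In particular, no new ideas are needed beyond those already deployed in Lemmas \ref{lem:alphabeta}, \ref{lem:ciUbiU}, and \ref{lem:ciaibi}, and there is no genuine obstacle — the statement of Lemma \ref{lem:cibi} is essentially the $r=0$, $d=D$ specialization of Lemma \ref{lem:ciUbiU}, translated from $(c_i(\pU), b_i(\pU))$ to $(\c_i, \b_i)$ via Lemma \ref{lem:ciaibi}.
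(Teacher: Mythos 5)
Your proof matches the paper's exactly: the paper proves Lemma~\ref{lem:cibi} by applying Lemma~\ref{lem:ciUbiU} with $U=\pU$ (so $r=0$, $d=D$) and then using Lemma~\ref{lem:ciaibi} to identify $c_i(\pU)=\c_i$, $b_i(\pU)=\b_i$. Your ``sanity check'' route is simply the derivation of Lemma~\ref{lem:ciUbiU} unrolled, so it adds nothing new, but the main argument is correct and identical in structure.
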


\begin{proof}
Apply Lemma \ref{lem:ciUbiU} with $U=\pU$ and use Lemma \ref{lem:ciaibi}.
\end{proof}

\begin{lemma}      {\rm \cite[Theorem 6.4, 6.8]{CW} }
\label{lem:thi}    \samepage
\ifDRAFT {\rm lem:thi}. \fi
We have
\begin{align}
 \alpha &= 
  \frac{(a q^{2-D} - a^{-1} q^{D-2}) ( a + q^{D-1}) }
         {q^{D-1} (q^{-1} - q)(a q - a^{-1} q^{-1}) (a - q^{1-D}) },        \label{eq:alpha}
\\
 \beta &=
   \frac{q (a+a^{-1}) (a + q^{-D-1}) (a q^{2-D}- a^{-1} q^{D-2}) }
            { (q-q^{-1}) (a - q^{1-D}) (a q - a^{-1} q^{-1}) }.                \label{eq:beta}
\end{align}
\end{lemma}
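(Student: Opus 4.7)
The plan is to pin down $\alpha$ and $\beta$ separately by imposing two identities that hold universally: $\c_1 = 1$ (from \eqref{eq:a0b0c1}) and $\th_0 = \k = \b_0$. The latter holds because $\E_0 = |X|^{-1} \J$ is the trivial primitive idempotent, so $\A_1 \E_0 = \k \E_0$ forces $\th_0$ to be the valency. Since the intersection number formulas in Lemma \ref{lem:cibi} involve only $\alpha$ (and not $\beta$), while the eigenvalue formula from \eqref{eq:thi2} does involve $\beta$, these two equations decouple cleanly into one for $\alpha$ and then one for $\beta$.

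First I would solve for $\alpha$ by substituting $i = 1$ into \eqref{eq:ci} and using $\c_1 = 1$. This yields
\[
\alpha = \frac{(aq^{2-D} - a^{-1}q^{D-2})(a+q^{D-1})}{a(q-q^{-1})(aq-a^{-1}q^{-1})(a^{-1}-q^{D-1})},
\]
and the identities $a(a^{-1}-q^{D-1}) = -q^{D-1}(a-q^{1-D})$ together with $q-q^{-1} = -(q^{-1}-q)$ convert this into the form \eqref{eq:alpha}.

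Next I would determine $\beta$ from the identity $\th_0 = \b_0$, where $\th_0 = \alpha(aq^{-D}+a^{-1}q^D) + \beta$ by Lemma \ref{lem:alphabeta}(i) and Definition \ref{def:qRacah}, while $\b_0$ is given by \eqref{eq:b0}. Placing the difference of the two expressions for $\b_0 - \beta$ over the common denominator $a(a+q^{D-1})$, the numerator
\[
(q^{-D} - q^D)(a^3 - q^{D-1}) - a(a+q^{D-1})(aq^{-D}+a^{-1}q^D)
\]
expands to $-q^{-1}(1+a^2) - aq^D(1+a^2) = -(1+a^2)(q^{-1}+aq^D)$, so
\[
\beta = -\alpha \cdot \frac{(1+a^2)(q^{-1}+aq^D)}{a(a+q^{D-1})}.
\]
Substituting the expression for $\alpha$ cancels the factor $(a+q^{D-1})$; then the identities $q(aq^D+q^{-1}) = q^{D+1}(a+q^{-D-1})$ and $1+a^2 = a(a+a^{-1})$, together with the sign flip from replacing $(q^{-1}-q)$ by $-(q-q^{-1})$, recover \eqref{eq:beta}.

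The main obstacle is purely algebraic bookkeeping: each step is elementary, but one must carefully track signs and regroup factors to match the stated forms exactly. One could alternatively derive $\beta$ from $\a_0 = 0$ via the corresponding formula extracted from Lemma \ref{lem:qRacahbi}, but the route through $\th_0 = \k$ is the most direct since $\b_0$ is already packaged in Lemma \ref{lem:cibi}.
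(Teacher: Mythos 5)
Your proof is correct and follows exactly the paper's own route: first $\alpha$ from $\c_1 = 1$ substituted into \eqref{eq:ci}, then $\beta$ from $\th_0 = \b_0$ together with \eqref{eq:thi2}, \eqref{eq:b0}, and the already-derived \eqref{eq:alpha}. The only cosmetic difference is that you justify $\th_0 = \k = \b_0$ directly via the eigenvector $\text{\bf 1}$ for $\E_0$, whereas the paper cites $\b_0 = \k = P_{01} = \th_0$ through \eqref{eq:kjP0j} and \eqref{eq:defthithsi}; these are the same fact.
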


\begin{proof}
By \eqref{eq:a0b0c1}, $\c_1=1$.
By this and \eqref{eq:ci} we obtain \eqref{eq:alpha}.
By \eqref{eq:a0b0c1}, \eqref{eq:kjP0j}, \eqref{eq:defthithsi} we have $\b_0 = \th_0$.
By this and \eqref{eq:thi2}, \eqref{eq:b0}, \eqref{eq:alpha} we obtain 
\eqref{eq:beta}.
\end{proof}

\begin{lemma}    \label{lem:X}    \samepage
\ifDRAFT {\rm lem:X}. \fi
We have
\begin{align*}
  |X| &= \frac{a^{3D}  (a^{-2}; q^2)_D^2 }
                  {q^{D(D-1)} (a q^{1-D};q^2)_D^2}.
\end{align*}
\end{lemma}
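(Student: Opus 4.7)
The plan is to combine two earlier results: the formula for $\nu$ on the primary $\T$-module (Lemma \ref{lem:nu3}) with the $q$-Racah evaluation of $\nu$ in the self-dual case (Lemma \ref{lem:qRacahnu}).

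First I would note that by Lemma \ref{lem:alphabeta}, the sequence
\[
(\A; \{\E_i\}_{i=0}^D; \A^*; \{\E^*_i\}_{i=0}^D)
\]
acts on $\pU$ as a Leonard system of $q$-Racah type with Huang data $(a,a,a,D)$. The key observation is that the scalar $\nu$ attached to a Leonard system depends only on $E_0$ and $E^*_0$ via the defining equation \eqref{eq:nu}, so passing from $(\A_1,\A^*_1)$ to $(\A,\A^*)$ via the affine transformation in \eqref{eq:defAAs0} does not change $\nu$.

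Then, applying Lemma \ref{lem:qRacahnu} (with $d$ replaced by $D$) to the $q$-Racah Leonard system on $\pU$, we obtain
\[
\nu(\pU) = \frac{a^{3D} (a^{-2};q^2)_D^2}{q^{D(D-1)} (aq^{1-D};q^2)_D^2}.
\]
Finally, by Lemma \ref{lem:nu3} we have $\nu(\pU) = |X|$, and the result follows immediately.

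There is no real obstacle here; the only point worth checking carefully is that the affine transformation \eqref{eq:defAAs0} relating $(\A_1,\A^*_1)$ to the $q$-Racah pair $(\A,\A^*)$ preserves the scalar $\nu$, which is immediate from its definition in terms of $E_0$ and $E^*_0$.
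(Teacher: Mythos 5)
Your proposal is correct and takes essentially the same route as the paper, whose proof is simply ``By Lemmas \ref{lem:qRacahnu} and \ref{lem:nu3}.'' You have merely made explicit the intermediate ingredients—namely Lemma \ref{lem:alphabeta} identifying the Huang data $(a,a,a,D)$ for the Leonard system \eqref{eq:LSonpU} on $\pU$, and the observation that $\nu$ is unchanged under the affine transformation \eqref{eq:defAAs0} because it is defined via $E_0$ and $E^*_0$, which the affine transformation fixes—details the paper leaves implicit.
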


\begin{proof}
By Lemmas \ref{lem:qRacahnu} and \ref{lem:nu3}.
\end{proof}
 
Recall the scalars $\{\k_i\}_{i=0}^D$ from \eqref{eq:defki2}.

\begin{lemma}    \label{lem:ki4}    \samepage
\ifDRAFT {\rm lem:ki4}. \fi
We have $\k_0 = 1$, and
\[
\k_i =
 \frac{q^{2 i D} (1-a^2 q^{4i-2D}) \, (a^2 q^{2-2D};q^2)_i \, (q^{-2D}; q^2)_i \, (a^3 q^{1-D};q^2)_i  }
        {a^{2i} (1-a^2 q^{2i-2D}) \, (a^2 q^2; q^2)_i \,  (q^2; q^2)_i \,  (a^{-1} q^{1-D}; q^2)_i}
\]
for $1 \leq i \leq D-1$, and
\[
\k_D = 
 \frac{q^{2 D^2} (a^2 q^{2-2D};q^2)_D \, (q^{-2D}; q^2)_D \, (a^3 q^{1-D};q^2)_D  }
        {a^{2D} (a^2; q^2)_D \, (q^2; q^2)_D  \, (a^{-1} q^{1-D}; q^2)_D}.
\]
\end{lemma}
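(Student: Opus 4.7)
The plan is to reduce Lemma \ref{lem:ki4} directly to Lemma \ref{lem:ki3} by passing through the primary $\T$-module $\pU$. Specifically, by Lemma \ref{lem:nuki}, the graph valencies satisfy $\k_i = k_i(\pU)$, where $k_i(\pU)$ denotes the scalar from Definition \ref{def:ki} applied to the Leonard system \eqref{eq:U0Phi} on $\pU$. So it suffices to compute $k_i(\pU)$ and verify it matches the formula in the statement.

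The first step is to observe that $k_i$ is unchanged when we pass from the Leonard system \eqref{eq:U0Phi} on $\pU$ to the affinely transformed Leonard system \eqref{eq:LSonpU} on $\pU$ from Lemma \ref{lem:alphabeta}. Indeed, the affine transformation $\A_1 = \alpha \A + \beta \I$, $\A^*_1 = \alpha \A^* + \beta \I$ leaves all primitive idempotents $\{\E_i\}_{i=0}^D$ and $\{\E^*_i\}_{i=0}^D$ unchanged (it only rescales and shifts the eigenvalues, not the eigenspaces). Since $\nu$ is determined by $\E_0, \E^*_0$ via \eqref{eq:nu}, and $k_i = \nu \pi(\E^*_i) = \nu\,\mathrm{tr}(\E^*_i \E_0)$ by Definition \ref{def:ki} and Definition \ref{def:pipis}, the value of $k_i(\pU)$ is identical for the two Leonard systems.

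Next, by Lemma \ref{lem:alphabeta}(ii) the Leonard system \eqref{eq:LSonpU} on $\pU$ has $q$-Racah type with Huang data $(a,a,a,D)$, i.e.\ the $a=b=c$ case of Lemma \ref{lem:ki3} with diameter $D$ in place of $d$. Applying Lemma \ref{lem:ki3} to this Leonard system then yields exactly the displayed expressions for $\k_i$ ($1 \leq i \leq D-1$) and $\k_D$, while $\k_0 = 1$ follows from \eqref{eq:k0} (equivalently, from $k_0 = 1$ in Lemma \ref{lem:ki3}).

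There is no real obstacle here; the content of the lemma is already contained in Lemma \ref{lem:ki3} once we identify the Huang data. The only subtle point to record is the affine invariance of $k_i$ (so that applying Lemma \ref{lem:ki3} to \eqref{eq:LSonpU} on $\pU$ yields the same $k_i$ as for \eqref{eq:U0Phi} on $\pU$); everything else is a direct quotation of Lemmas \ref{lem:nuki}, \ref{lem:alphabeta}(ii), and \ref{lem:ki3}.
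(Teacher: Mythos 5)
Your proposal is correct and takes essentially the same route as the paper, which proves this lemma simply by citing Lemmas \ref{lem:nuki} and \ref{lem:ki3}. The only difference is that you make explicit the affine-invariance of $k_i$ (needed to pass from the Leonard system \eqref{eq:U0Phi} on $\pU$, to which Lemma \ref{lem:nuki} refers, to the affine-transformed Leonard system \eqref{eq:LSonpU}, which is the one of $q$-Racah type with Huang data $(a,a,a,D)$ to which Lemma \ref{lem:ki3} applies); the paper leaves this step implicit, so your clarification is a small but genuine improvement.
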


\begin{proof}
By Lemmas \ref{lem:ki3} and \ref{lem:nuki}.
\end{proof}

\begin{lemma}    \label{lem:sumkiti4}    \samepage
\ifDRAFT {\rm lem:sumkiti4}. \fi
We have
\begin{align*}
 \sum_{i=0}^D \k_i \tau_i &=      
  \frac{ (a^{-2}; q^2)_D } { (a q^{1-D}; q^2)_D }.                     
\end{align*}
\end{lemma}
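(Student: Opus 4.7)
The plan is to reduce this identity to a direct application of Lemma \ref{lem:sumkiti}, by transporting all the data onto the primary $\T$-module $\pU$ where a Leonard system of $q$-Racah type with the right Huang data already lives.

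First I would identify the relevant Leonard system. By Lemma \ref{lem:alphabeta}, the sequence \eqref{eq:LSonpU} acts on $\pU$ as a self-dual Leonard system of $q$-Racah type with Huang data $(a,a,a,D)$. In particular, this Leonard system is exactly of the $a=b=c$ form required as the hypothesis of Lemma \ref{lem:sumkiti} (applied with $d$ replaced by $D$).

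Next I would match up the parameters. The scalars $\{\tau_i\}_{i=0}^D$ were extracted from the expansion $\W = f \sum_{i=0}^D \tau_i \E_i$, which by construction coincides with the expansion of the element $W$ in Lemma \ref{lem:WtiEi} relative to the primitive idempotents $\{E_i\}_{i=0}^d$ of the Leonard system on $\pU$ — because the affine shift in \eqref{eq:defAAs0} does not alter primitive idempotents. By Note \ref{note:tiinv} we may assume $\tau_i = (-1)^i a^{-i} q^{i(D-i)}$ for $0 \leq i \leq D$, which is precisely \eqref{eq:defti} with $d$ replaced by $D$. Hence the hypotheses of Lemma \ref{lem:sumkiti} are met verbatim for the Leonard system \eqref{eq:LSonpU} on $\pU$, and that lemma yields
\[
\sum_{i=0}^D k_i(\pU)\, \tau_i \;=\; \frac{(a^{-2}; q^2)_D}{(a q^{1-D}; q^2)_D}.
\]

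Finally I would translate the left-hand side back to the graph. By Lemma \ref{lem:nuki} we have $k_i(\pU) = \k_i$ for $0 \leq i \leq D$, so the displayed equality becomes the desired identity. I do not anticipate any genuine obstacle; the only subtle point is verifying that the $\tau_i$ appearing in \eqref{eq:defWfti} are literally the same scalars as those in Lemma \ref{lem:WtiEi} for the Leonard system on $\pU$, which is immediate because both expansions are in the same basis $\{\E_i\}_{i=0}^D$ and have the same normalization $\tau_0 = 1$.
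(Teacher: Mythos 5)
Your proposal is correct and follows essentially the same route as the paper: the paper's proof is simply ``By Lemmas \ref{lem:sumkiti} and \ref{lem:nuki},'' and you have expanded exactly that — identify the Leonard system \eqref{eq:LSonpU} on $\pU$ with Huang data $(a,a,a,D)$ via Lemma \ref{lem:alphabeta}, check that the $\tau_i$ satisfy \eqref{eq:defti} (via Note \ref{note:tiinv}), apply Lemma \ref{lem:sumkiti}, and translate $k_i(\pU)$ to $\k_i$ by Lemma \ref{lem:nuki}. The only discrepancy is cosmetic: the paper leaves the verification of the hypotheses of Lemma \ref{lem:sumkiti} implicit, while you spell it out.
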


\begin{proof}
By Lemmas \ref{lem:sumkiti} and \ref{lem:nuki}.
\end{proof}

We have been discussing the $q$-Racah case.
For a discussion of the non $q$-Racah cases, see \cite{CN:someformula}.

\section{From spin Leonard pairs to a spin model}
\label{sec:spinLPtospinmodel}

In Sections \ref{sec:spindrg}, \ref{sec:spinqRacah} we considered a distance-regular
graph $\Gamma$ that affords a spin model $\W$.
We used $\W$ to construct a family of spin Leonard pairs.
We now reverse the logical direction,
and obtain a spin model afforded by $\Gamma$ from some spin Leonard pairs.
In this section we work with general $\Gamma$,
and in the next section we consider the $q$-Racah case.
For this section our main results are Theorems \ref{thm:main0} and \ref{thm:main}.

Let $\Gamma$ denote a distance-regular graph with vertex set $X$
and diameter $D \geq 1$.
Recall the distance-matrices $\{\A_i\}_{i=0}^D$ and
the primitive idempotents $\{\E_i\}_{i=0}^D$ in the Bose-Mesner algebra $\M$ of $\Gamma$.
Assume that $\Gamma$ is formally self-dual with respect to $\{\E_i\}_{i=0}^D$.
For $x \in X$, consider the dual Bose-Mesner algebra $\M^*(x)$
and the Terwilliger algebra $\T(x)$.
For $0 \leq i \leq D$ we have the elements $\E^*_i(x)$ and $\A^*_i(x)$ from
the paragraph below Lemma \ref{lem:Pij}.
Let $f$, $\{\tau_i\}_{i=0}^D$ denote nonzero scalars in $\C$ such that $\tau_0 = 1$.
Define
\begin{equation*}
   \W = f \sum_{i=0}^D \tau_i \E_i.                        
\end{equation*}
For $x \in X$ define
\begin{equation}
   \W^*(x) = f \sum_{i=0}^D \tau_i \E^*_i(x).            \label{eq:defWs}
\end{equation}
Note that $\W$ and $\W^*(x)$ are invertible.
For the rest of this section, we make the following assumption.

\begin{assumption}       \label{assump0}    \samepage
\ifDRAFT {\rm assump0}. \fi
Assume that for all  $x \in X$ and  all irreducible $\T(x)$-modules $U$,
\begin{itemize}
\item[\rm (i)]
$U$ is thin;
\item[\rm (ii)]
$U$ has the same endpoint and dual-endpoint;
\item[\rm (iii)]
the pair $\A_1, \A^*_1(x)$ acts on $U$ as a spin Leonard pair,
and  $\W, \W^*(x)$ acts on $U$ as a balanced Boltzmann pair for this spin Leonard pair;
\item[\rm (iv)]
$f$ satisfies \eqref{eq:f-2}.
\end{itemize}
\end{assumption}

Under Assumption \ref{assump0} we show that $\W$ is a spin model afforded by $\Gamma$.
From now until the beginning of Theorem \ref{thm:main},
fix $x \in X$ and abbreviate $\W^* = \W^*(x)$,
$\T = \T(x)$ and $\E^*_i = \E^*_i (x)$, $\A^*_i = \A^*_i (x)$ for $0 \leq i \ \leq D$.

\begin{lemma}    \label{lem:type2pre}    \samepage
\ifDRAFT {\rm lem:type2pre}. \fi
We have 
\begin{align}
  \W &= |X|^{1/2} f^{-1}  \sum_{i=0}^D \tau_i^{-1} \A_i,  &
  \W^{-1} &= |X|^{-3/2} f  \sum_{i=0}^D \tau_i \A_i.                                \label{eq:WWinv2}
\end{align}
\end{lemma}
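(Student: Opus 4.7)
The plan is to reduce both identities to the action on the primary $\T$-module $\pU$, and then lift them to $\Mat$ using the faithfulness of that action. Since $\W \in \M$ by construction and $\W^{-1}$ is a polynomial in $\W$ (hence also in $\M$), while the right-hand sides lie in $\M$ by virtue of being $\C$-linear combinations of the distance-matrices, equality as matrices will follow once we show equality when both sides act on $\pU$; this is a standard application of Corollary~\ref{cor:faithful}(i).

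First I would invoke Assumption~\ref{assump0}(iii): the pair $\A_1,\A^*_1$ acts on $\pU$ as a spin Leonard pair, and $\W,\W^*$ acts on $\pU$ as a balanced Boltzmann pair for it. By the very definitions \eqref{eq:defWfti} and \eqref{eq:defWs}, the scalars $f$ and $\{\tau_i\}_{i=0}^{D}$ coincide with the data of Lemma~\ref{lem:WtiEi} applied to the Leonard system \eqref{eq:U0Phi} on $\pU$. Hence Theorem~\ref{thm:WtiinvAi} (together with Lemma~\ref{lem:AiEs0E03} identifying $A_i(\pU)$ with the action of $\A_i$) gives, on $\pU$,
\begin{align*}
\W &= f\,\gamma(\pU) \sum_{i=0}^{D} \tau_i^{-1}\A_i,
&
\W^{-1} &= \nu(\pU)^{-1} f^{-1} \gamma(\pU)^{-1} \sum_{i=0}^{D} \tau_i \A_i.
\end{align*}

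Next I would evaluate the prefactors. By Lemma~\ref{lem:nu3}, $\nu(\pU)=|X|$. Applying Lemma~\ref{lem:piW} to the Leonard system on $\pU$ and using Lemma~\ref{lem:nuki} to identify $k_i(\pU)$ with $\k_i$, we get $\gamma(\pU)=|X|^{-1}\sum_{i=0}^{D}\k_i\tau_i$. Now Assumption~\ref{assump0}(iv), which says $f^{-2}=|X|^{-3/2}\sum_{i=0}^{D}\k_i\tau_i$, collapses this to $\gamma(\pU)=|X|^{1/2}f^{-2}$. A short substitution then yields
\[
f\,\gamma(\pU) = |X|^{1/2} f^{-1},
\qquad
\nu(\pU)^{-1}f^{-1}\gamma(\pU)^{-1}=|X|^{-3/2} f,
\]
matching the coefficients in \eqref{eq:WWinv2}. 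Therefore both identities in \eqref{eq:WWinv2} hold as equalities of linear maps on $\pU$, and applying Corollary~\ref{cor:faithful}(i) to the difference of the two sides of each identity promotes them to identities in $\Mat$. The only place that requires care is checking that Assumption~\ref{assump0}(iv) is precisely the normalization needed to convert $\gamma(\pU)$ into the stated scalars; beyond that the argument is mechanical.
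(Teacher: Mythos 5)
Your overall strategy --- compute on the primary $\T$-module $\pU$ using the spin Leonard pair machinery from Section~\ref{sec:spinLP}, then lift to $\Mat$ via Corollary~\ref{cor:faithful} --- is exactly the paper's, and your evaluation of the prefactors (using Lemmas~\ref{lem:piW}, \ref{lem:nu3}, \ref{lem:nuki} together with Assumption~\ref{assump0}(iv) to show $\gamma(\pU)=|X|^{1/2}f^{-2}$) is correct. However, you gloss over a step that the paper treats explicitly and that is not automatic from the definitions. Theorem~\ref{thm:WtiinvAi} and Lemma~\ref{lem:WtiEi} live in the Section~\ref{sec:spinLP} framework, where the dual idempotents are \emph{defined} by $E^*_i = E_i^\sigma$ as in \eqref{eq:defEsi}, with $\sigma$ the automorphism produced from the balanced Boltzmann pair in Lemma~\ref{lem:PSL}; the elements $A_i$ in Theorem~\ref{thm:WtiinvAi} are then built from $\rho$ for that Leonard system. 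To identify these $A_i$ with the action of the distance-matrices $\A_i$ on $\pU$ via Lemma~\ref{lem:AiEs0E03}, which uses the Leonard system \eqref{eq:U0Phi} with the \emph{geometrically defined} $\E^*_i$, you must first check that the two Leonard systems coincide on $\pU$, i.e.\ that $\E^*_i$ restricted to $\pU$ equals $\E_i^\sigma$ restricted to $\pU$. This does not follow from \eqref{eq:defWfti}, \eqref{eq:defWs} alone: one needs that the sequence \eqref{eq:U0Phi} acts on $\pU$ as a self-dual Leonard system (Proposition~\ref{prop:DRGsd}, using formal self-duality), and that its duality must coincide with the $\sigma$ of Lemmas~\ref{lem:PSL}, \ref{lem:PSL2} by uniqueness of the duality (a consequence of Lemma~\ref{lem:generate}). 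Once that identification is supplied, the rest of your argument is correct and matches the paper.
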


\begin{proof}
We first show that \eqref{eq:WWinv2} holds on the primary $\T$-module $\pU$.
By Proposition \ref{prop:DRGsd}(i), (iii)
the sequence
\begin{equation}
    (\A_1; \{\E_i\}_{i=0}^D; \A^*_1; \{\E^*_i\}_{i=0}^D)                \label{eq:LS6}
\end{equation}
acts on $\pU$ as a self-dual Leonard system.
Let $\sigma : \text{\rm End}(\pU) \to \text{\rm End}(\pU)$ denote the duality of this Leonard system.
So $\sigma$ swaps the action of $\A_1$, $\A^*_1$ on $\pU$,
and swaps the action of $\E_i$, $\E^*_i$ on $\pU$ for $0 \leq i \leq D$.
By Assumption \ref{assump0}(iii), $\A_1, \A^*_1$ acts on $\pU$ as a spin Leonard pair,
and $\W, \W^*$ acts on $\pU$ as a balanced Boltzmann pair for this Leonard pair.
In Lemmas \ref{lem:PSL}, \ref{lem:PSL2} we used this Boltzmann pair to obtain a duality
for this Leonard pair,
which must coincide with the above duality $\sigma$ by the uniqueness of the duality.
By this and \eqref{eq:defEsi}, \eqref{eq:Phi2} we find that the results below \eqref{eq:Phi2}
in Section \ref{sec:spinLP} apply to the Leonard system \eqref{eq:LS6} on $\pU$.
Apply Theorem \ref{thm:WtiinvAi}, Lemma \ref{lem:piW} and \eqref{eq:f-2} to this Leonard system,
and use Lemmas \ref{lem:nu3}, \ref{lem:AiEs0E03},  \ref{lem:nuki}
to show that \eqref{eq:WWinv2} holds on $\pU$.
The result follows in view of Corollary \ref{cor:faithful}.
\end{proof}

\begin{lemma}    \label{lem:type2}    \samepage
\ifDRAFT {\rm lem:type2}. \fi
The matrix $\W$ is type II.
\end{lemma}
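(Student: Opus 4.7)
The plan is to verify the three requirements of Definition \ref{def:type2} (symmetry, nonzero entries, and the relation \eqref{eq:type2}) by reading off the structure of $\W$ from the two presentations available to us: the spectral form $\W = f \sum_{i=0}^D \tau_i \E_i$ and the distance-matrix expansion \eqref{eq:WWinv2} from Lemma \ref{lem:type2pre}. The advantage of having both presentations simultaneously is that symmetry and nonzero-entry are immediate from one, while the type II identity falls out of comparing the two.

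First I would check symmetry. Each primitive idempotent $\E_i$ of $\Gamma$ is a real symmetric matrix (this is a standard fact from the Bose-Mesner theory, a consequence of $\Gamma$ being distance-regular), so the spectral form of $\W$ exhibits it as a complex linear combination of symmetric matrices, hence symmetric. Next, nonzero entries. From the left-hand equation of \eqref{eq:WWinv2}, for $y,z \in X$ with $\partial(y,z) = i$ the $(y,z)$-entry of $\W$ equals $|X|^{1/2} f^{-1} \tau_i^{-1}$, which is nonzero because $f$ and $\tau_i$ are nonzero by hypothesis.

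The main step is verifying \eqref{eq:type2}, or equivalently by Lemma \ref{lem:type2W-}, that $\W \W^- = |X|\,\I$. Using $\W(y,z) = |X|^{1/2} f^{-1} \tau_i^{-1}$ for $\partial(y,z) = i$, the Hadamard inverse $\W^-$ defined in \eqref{eq:defW-} satisfies $\W^-(y,z) = |X|^{-1/2} f \tau_i$ on the same pair, so
\begin{equation*}
\W^- = |X|^{-1/2} f \sum_{i=0}^D \tau_i \A_i.
\end{equation*}
Comparing this with the right-hand equation of \eqref{eq:WWinv2} gives $\W^- = |X| \W^{-1}$, and hence $\W \W^- = |X|\,\I$. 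By Lemma \ref{lem:type2W-}, $\W$ is type II.

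I do not expect any real obstacle here: the work was done in Lemma \ref{lem:type2pre}, which produced both presentations of $\W$ from Assumption \ref{assump0} via the spin Leonard pair theory on the primary $\T$-module. Once those two expansions are in hand, the present lemma is essentially a bookkeeping comparison of coefficients on the basis $\{\A_i\}_{i=0}^D$.
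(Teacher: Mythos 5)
Your proposal is correct and follows essentially the same route as the paper: symmetry from the spectral form, nonzero entries from the left equation of \eqref{eq:WWinv2}, and the type II identity by computing $\W^-$ and matching it against the right equation of \eqref{eq:WWinv2} to conclude $\W\W^- = |X|\,\I$ via Lemma \ref{lem:type2W-}. The only difference is that you spell out why each $\E_i$ is symmetric, which the paper compresses to ``by construction.''
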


\begin{proof}
We invoke Lemma \ref{lem:type2W-}.
By construction $\W$ is symmetric.
Each entry of $\W$ is nonzero by the equation on the left in \eqref{eq:WWinv2}.
By that equation and \eqref{eq:defW-} we obtain
\begin{equation}
   \W^- = |X|^{-1/2} f  \sum_{i=0}^D \tau_i \A_i.                      \label{eq:W-}
\end{equation}
Comparing \eqref{eq:W-} with the equation on the right in \eqref{eq:WWinv2},
we obtain 
$\W^{-} = |X| \W^{-1}$.
Therefore $\W \W^- = |X| I$.
The result follows by Lemma \ref{lem:type2W-}.
\end{proof}

\begin{lemma}   \label{lem:WWsW} \samepage
\ifDRAFT {\rm lem:WWsW}. \fi
We have
\begin{equation} 
  \W \W^* \W = \W^* \W \W^*.             \label{eq:WWsW}
\end{equation}
\end{lemma}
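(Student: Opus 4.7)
The plan is to reduce the matrix identity $\W\W^*\W = \W^*\W\W^*$ to a statement on each irreducible $\T$-module, where it becomes essentially a restatement of Assumption \ref{assump0}(iii).

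First I would observe that $\W \in \M \subseteq \T$ and $\W^* \in \M^* \subseteq \T$, so both $\W\W^*\W$ and $\W^*\W\W^*$ lie in $\T$. In particular each preserves any $\T$-submodule of $\V$. By Lemma \ref{lem:decomp}(i), $\V$ is a direct sum of irreducible $\T$-modules, so two elements of $\T$ agree on $\V$ as matrices if and only if they agree on every irreducible $\T$-submodule that appears in such a decomposition. Therefore it is enough to prove
\[
  \W\W^*\W\bigr|_U = \W^*\W\W^*\bigr|_U
\]
for each irreducible $\T$-module $U$.

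Next, for an arbitrary irreducible $\T$-module $U$, Assumption \ref{assump0}(iii) tells me that $\W, \W^*$ act on $U$ as a balanced Boltzmann pair for the spin Leonard pair $\A_1, \A^*_1$. Unwinding this via Definition \ref{def:balanced}, the word ``balanced'' is defined by precisely the equation $WW^*W = W^*WW^*$, so on $U$ we have $\W\W^*\W = \W^*\W\W^*$. Combining this with the reduction above yields the desired identity in $\Mat$.

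I do not expect any serious obstacle here: the lemma is essentially a bookkeeping consequence of the module decomposition together with the ``balanced'' clause built into Assumption \ref{assump0}. The only mild point to check is that both sides genuinely lie in $\T$ (so that the decomposition argument applies), which is immediate from $\W \in \M$ and $\W^* \in \M^*$.
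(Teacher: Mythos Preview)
Your proof is correct and follows essentially the same approach as the paper: invoke Assumption \ref{assump0}(iii) to get the balanced Boltzmann identity on each irreducible $\T$-module, then use Lemma \ref{lem:decomp}(i) to conclude globally. The extra observation that $\W\W^*\W, \W^*\W\W^* \in \T$ is a harmless elaboration of what the paper leaves implicit.
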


\begin{proof}
By Assumption \ref{assump0}(iii) the pair $\W, \W^*$ acts on each irreducible $\T$-module
as a balanced Boltzmann pair.
The result follows in view of Lemma \ref{lem:decomp}(i).
\end{proof}

\begin{lemma}    \label{lem:WsW-}    \samepage
\ifDRAFT {\rm lem:WsW-}. \fi
For $y \in X$,
\begin{align*}
  \W^* (y,y) &= \frac{ |X|^{1/2} } {\W (x,y)} .                       
\end{align*}
\end{lemma}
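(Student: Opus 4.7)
The plan is to read off the $(y,y)$-entry of $\W^*$ and the $(x,y)$-entry of $\W$ directly from their respective expansions, and observe that the two scalars are reciprocals of one another up to the factor $|X|^{1/2}$. Fix $y \in X$ and set $i = \partial(x,y)$.

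First I would extract $\W^*(y,y)$. By the definition \eqref{eq:defWs}, $\W^* = f \sum_{j=0}^D \tau_j \E^*_j$. Since $\E^*_j$ is the diagonal matrix with $(y,y)$-entry $1$ if $\partial(x,y) = j$ and $0$ otherwise, only the term $j = i$ contributes to the $(y,y)$-entry, giving
\[
   \W^*(y,y) = f \tau_i.
\]

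Next I would extract $\W(x,y)$. By Lemma \ref{lem:type2pre},
\[
  \W = |X|^{1/2} f^{-1} \sum_{j=0}^D \tau_j^{-1} \A_j.
\]
The distance matrix $\A_j$ has $(x,y)$-entry $1$ if $\partial(x,y) = j$ and $0$ otherwise, so only $j = i$ contributes and
\[
  \W(x,y) = |X|^{1/2} f^{-1} \tau_i^{-1}.
\]
Multiplying the two displayed expressions yields $\W^*(y,y) \, \W(x,y) = |X|^{1/2}$, which is exactly the claim after dividing by $\W(x,y)$ (nonzero by Lemma \ref{lem:type2}).

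There is no real obstacle here: the lemma is essentially a bookkeeping consequence of Lemma \ref{lem:type2pre}, which did the nontrivial work of rewriting $\W$ in the $\{\A_i\}$-basis via the Boltzmann/spin Leonard pair structure on the primary $\T$-module. The argument is the exact analogue of Lemma \ref{lem:WsW-0}, only now the expansion of $\W$ in terms of distance matrices comes from Assumption \ref{assump0} rather than from the hypothesis that $\W$ is already a spin model afforded by $\Gamma$.
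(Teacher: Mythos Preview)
Your proof is correct and follows essentially the same route as the paper's: set $i=\partial(x,y)$, read off $\W^*(y,y)=f\tau_i$ from \eqref{eq:defWs}, read off $\W(x,y)=|X|^{1/2}f^{-1}\tau_i^{-1}$ from the expansion in Lemma~\ref{lem:type2pre} (i.e.\ \eqref{eq:WWinv2}), and combine. Your closing remark correctly identifies this as the analogue of Lemma~\ref{lem:WsW-0}.
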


\begin{proof}
Let $i = \partial (x,y)$.
By \eqref{eq:defWs} we obtain $\W^*(y,y) = f \tau_i$.
By the equation on the left in \eqref{eq:WWinv2} we obtain
$\W(x,y ) = |X|^{1/2} f^{-1} \tau_i^{-1}$.
The result follows.
\end{proof}

\begin{lemma}   \label{lem:type3}     \samepage
\ifDRAFT {\rm lem:type3}. \fi
For $a,b \in X$,
\[
\sum_{y \in X} \frac{\W(a,y) \W(b,y) } { \W(x,y) }
 = |X|^{1/2} \, \frac{ \W(a,b) } { \W(a,x) \W(b,x) } .      
\]
\end{lemma}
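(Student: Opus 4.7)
The plan is to exploit the equation $\W \W^* \W = \W^* \W \W^*$ from Lemma \ref{lem:WWsW} by comparing the $(a,b)$-entries of both sides, using that $\W^*$ is a diagonal matrix (since $\W^* \in \M^*$) whose diagonal entries are described by Lemma \ref{lem:WsW-}.

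First, I would compute the $(a,b)$-entry of $\W \W^* \W$. Since $\W^*$ is diagonal, inserting $\W^*(y,y) = |X|^{1/2}/\W(x,y)$ from Lemma \ref{lem:WsW-} and using the symmetry of $\W$ gives
\[
(\W \W^* \W)(a,b) = \sum_{y \in X} \W(a,y) \W^*(y,y) \W(y,b) = |X|^{1/2} \sum_{y \in X} \frac{\W(a,y) \W(b,y)}{\W(x,y)}.
\]

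Next, I would compute the $(a,b)$-entry of $\W^* \W \W^*$. Since $\W^*$ is diagonal, only the $(a,a)$ and $(b,b)$ diagonal entries contribute, yielding
\[
(\W^* \W \W^*)(a,b) = \W^*(a,a)\,\W(a,b)\,\W^*(b,b) = \frac{|X|^{1/2}}{\W(x,a)} \cdot \W(a,b) \cdot \frac{|X|^{1/2}}{\W(x,b)} = \frac{|X|\,\W(a,b)}{\W(x,a)\W(x,b)}.
\]

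Finally, equating the two expressions via \eqref{eq:WWsW}, dividing both sides by $|X|^{1/2}$, and using the symmetry $\W(x,a) = \W(a,x)$ and $\W(x,b)=\W(b,x)$ yields the desired identity. There is no real obstacle here; the work has already been done in establishing Lemmas \ref{lem:WWsW} and \ref{lem:WsW-}, and the proof of Lemma \ref{lem:type3} is simply a direct matrix-entry calculation. The only thing to be careful about is tracking the factor of $|X|^{1/2}$ correctly and using the symmetry of $\W$ to rewrite $\W(x,y)\W(y,b)$ as $\W(b,y)\W(x,y)$ in the middle sum.
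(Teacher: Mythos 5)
Your proof is correct and matches the paper's approach exactly: the paper also compares the $(a,b)$-entries of both sides of $\W\W^*\W = \W^*\W\W^*$ from Lemma \ref{lem:WWsW}, using Lemma \ref{lem:WsW-} for the diagonal entries of $\W^*$ and the symmetry of $\W$ to get the two displayed expressions, then equates them. Nothing to add.
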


\begin{proof}
For each side of \eqref{eq:WWsW},
compute the $(a,b)$-entry and evaluate the result using Lemma \ref{lem:WsW-}.
Here are some details:
\begin{align*}
  (\W \W^* \W)(a,b)
 &= \sum_{y \in X} \W(a,y) \W^*(y,y) \W(y,b) 
\\
 &= |X|^{1/2}  \,\sum_{y \in X} \frac{\W(a,y) \W(b,y)}{\W(x,y)},
\end{align*}
and
\begin{align*}
 (\W^* \W \W^*)(a,b)
 &= \W^*(a,a) \W(a,b) \W^*(b,b)
\\
 &= |X| \, \frac{\W(a,b)}{\W(a,x) \W(b,x)}.
\end{align*}
\end{proof}

\begin{theorem}    \label{thm:main0}    \samepage
\ifDRAFT {\rm thm:main0}. \fi
The matrix $\W$ is a spin model.
\end{theorem}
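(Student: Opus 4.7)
The plan is to simply combine Lemmas \ref{lem:type2} and \ref{lem:type3}, exploiting the fact that Assumption \ref{assump0} is imposed uniformly over all base points $x \in X$. By Definition \ref{def:spin}, $\W$ is a spin model precisely when $\W$ is type II and the type III condition \eqref{eq:type3} holds for every triple $a, b, c \in X$. The first condition is immediate from Lemma \ref{lem:type2}, so the only task is to verify \eqref{eq:type3}.

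For the type III condition, I would fix an arbitrary triple $a, b, c \in X$ and argue as follows. Lemma \ref{lem:type3} establishes
\[
 \sum_{y \in X} \frac{\W(a,y) \W(b,y)}{\W(x,y)}
   = |X|^{1/2} \, \frac{\W(a,b)}{\W(a,x) \W(b,x)}
\]
under the standing convention (in effect throughout this section from just before Lemma \ref{lem:type2pre}) that $x$ is a fixed but arbitrary vertex of $X$. Crucially, Assumption \ref{assump0} is quantified over every $x \in X$, and the definition of $\W$ in \eqref{eq:defWs} depends on $x$ only through $\W^*(x)$, not through $\W$ itself. Consequently, nothing in the derivation of Lemma \ref{lem:type3} relied on a particular choice of $x$; we may therefore re-run the argument with the base vertex $x$ replaced by $c$.

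Carrying this out, I would apply Lemma \ref{lem:type3} with $x$ set equal to $c$. The resulting identity is precisely \eqref{eq:type3} for the triple $(a,b,c)$. Since $a, b, c$ were arbitrary, \eqref{eq:type3} holds in full generality, and $\W$ satisfies Definition \ref{def:spin}. There is no real obstacle here: all the substantive work has been done in the preceding lemmas, and the only thing to verify is that the hypotheses Lemma \ref{lem:type3} requires really are available for every base vertex, which follows from the ``for all $x \in X$'' quantifier in Assumption \ref{assump0}. The proof is thus a one-paragraph reduction to the two preceding lemmas.
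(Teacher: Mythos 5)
Your proof is correct and takes essentially the same approach as the paper's, which simply invokes Lemmas \ref{lem:type2} and \ref{lem:type3}. The only difference is that you make explicit the point the paper leaves implicit: that Lemma \ref{lem:type3} delivers the type III identity only for the triple $(a,b,x)$ with $x$ the fixed base vertex, and that the uniform quantification over $x$ in Assumption \ref{assump0} is what licenses replaying that lemma with $x=c$ for an arbitrary $c \in X$.
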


\begin{proof}
The matrix $\W$ satisfies the requirements of Definition \ref{def:spin}
by Lemmas \ref{lem:type2} and \ref{lem:type3}.
\end{proof}

Next we show that the spin model $\W$ is afforded by $\Gamma$.

\begin{lemma}    \label{lem:WWsbEi}   \samepage
\ifDRAFT {\rm lem:WWsbEi}. \fi
For $0 \leq i \leq D$, 
\begin{equation}
  \W \W^* \E_i = \E^*_i \W \W^*.            \label{eq:WWsEiEsiWWs}
\end{equation}
\end{lemma}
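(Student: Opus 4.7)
The plan is to establish \eqref{eq:WWsEiEsiWWs} one irreducible $\T$-module at a time, and then assemble the pieces using the decomposition of $\V$ into irreducible $\T$-modules (Lemma \ref{lem:decomp}(i)). So let $U$ denote an arbitrary irreducible $\T$-module, with endpoint $r$ and diameter $d$. By Assumption \ref{assump0}(i), $U$ is thin, so by Lemma \ref{lem:Tmodule1}(ii) it is also dual thin; by Assumption \ref{assump0}(ii) its dual endpoint is also $r$, so its dual diameter is $d$. Hence by Lemmas \ref{lem:Tmodule1}(i), \ref{lem:Tmodule2}(i) we have $\E_i U = 0$ and $\E^*_i U = 0$ whenever $i \notin [r, r+d]$, and for such $i$ both sides of \eqref{eq:WWsEiEsiWWs} vanish on $U$.

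For $i = r+j$ with $0 \leq j \leq d$, I would invoke Lemma \ref{lem:U} to recognize that the sequence $(\A_1; \{\E_{r+k}\}_{k=0}^d; \A^*_1; \{\E^*_{r+k}\}_{k=0}^d)$ acts on $U$ as a Leonard system. By Assumption \ref{assump0}(iii), the pair $\W,\W^*$ acts on $U$ as a balanced Boltzmann pair for the spin Leonard pair $\A_1,\A^*_1$ on $U$. Hence the abstract theory of Section \ref{sec:spinLP} applies: in particular, the equation on the right in Lemma \ref{lem:EiWsW}, applied to this Leonard system and Boltzmann pair on $U$, yields $\W \W^* \E_{r+j} = \E^*_{r+j} \W \W^*$ on $U$, which is exactly \eqref{eq:WWsEiEsiWWs} for $i = r+j$ restricted to $U$.

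Combining the two cases, \eqref{eq:WWsEiEsiWWs} holds on every irreducible $\T$-submodule of $\V$, and therefore on $\V$ itself by Lemma \ref{lem:decomp}(i). The main potential obstacle is simply verifying that the hypotheses of Lemma \ref{lem:EiWsW} are met on each $U$ with the correct indexing; this is immediate once one unpacks the identification of $\{E_i\}_{i=0}^d$ (in the abstract setting) with $\{\E_{r+k}\}_{k=0}^d$ (in the module setting) and recalls that balancedness of the Boltzmann pair is explicitly part of Assumption \ref{assump0}(iii). No new calculation beyond this bookkeeping should be required.
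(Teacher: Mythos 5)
Your proposal is correct and follows essentially the same route as the paper's proof: decompose $\V$ into irreducible $\T$-modules via Lemma \ref{lem:decomp}(i), identify the relevant Leonard system and balanced Boltzmann pair on each $U$ via Lemma \ref{lem:U} and Assumption \ref{assump0}, then apply Lemma \ref{lem:EiWsW}. Your explicit handling of the indices $i \notin [r, r+d]$ is a minor clarification that the paper leaves implicit.
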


\begin{proof}
Let $U$ denote an irreducible $\T$-module with endpoint $r$ and diameter $d$.
By Assumption \ref{assump0}(i), (ii) we see that $U$ is thin with dual endpoint $r$.
By these comments and Lemma \ref{lem:U} the sequence
\[
   (\A_1; \{\E_{r+j}\}_{j=0}^d; \A^*_1; \{\E^*_{r+j} \}_{j=0}^d)
\]
acts on $U$ as a Leonard system.
By Assumption \ref{assump0}(iii) the pair $\W, \W^*$ acts $U$ as a balanced Boltzmann pair
for the Leonard pair $\A_1, \A^*_1$ on $U$.
By these comments and Lemma \ref{lem:EiWsW},
\begin{align*}
    \W \W^* \E_{r+j} &= \E^*_{r+j} \W \W^*   
\end{align*}
holds on $U$ for $0 \leq j \leq d$.
Thus \eqref{eq:WWsEiEsiWWs} holds on each irreducible $\T$-module.
Now \eqref{eq:WWsEiEsiWWs} holds by Lemma \ref{lem:decomp}(i).
\end{proof}

\begin{theorem}    \label{thm:main}    \samepage
\ifDRAFT {\rm thm:main}. \fi
The spin model $\W$ is afforded by $\Gamma$.
\end{theorem}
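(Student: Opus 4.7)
The plan is to verify the two inclusions in Definition \ref{def:afford} separately. That $\W \in \M$ is immediate from its defining expansion $\W = f\sum_{i=0}^{D}\tau_i\E_i$. The work is to show $\M \subseteq N(\W)$, and my strategy is to reduce this to a single computation with the eigenvectors $\mathbf{u}_{b,c}$.

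First I would observe that, because $\M$ is generated by $\A_1$ (Lemma \ref{lem:vi}(iii)) and $N(\W)$ is an algebra containing $\J$ (Lemma \ref{lem:Nomuraalg}), it suffices to check that $\A_1$ is symmetric (which it is) and satisfies $\A_1 \mathbf{u}_{b,c} \in \C\mathbf{u}_{b,c}$ for all $b,c \in X$. Next I would identify $\mathbf{u}_{b,c}$ inside $\Mat$: using that $\W$ is symmetric and Lemma \ref{lem:WsW-}, a routine entry-by-entry check gives
\begin{equation*}
  \mathbf{u}_{b,c} = |X|^{-1/2}\,\W^*(c)\,\W\,\widehat{b}.
\end{equation*}

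The key algebraic input is Lemma \ref{lem:WWsbEi}, applied with $x=c$, which says $\W\W^*(c)\E_i = \E^*_i(c)\W\W^*(c)$ for $0 \leq i \leq D$. All matrices involved are symmetric, so taking transposes yields the dual relation
\begin{equation*}
  \E_i\,\W^*(c)\,\W = \W^*(c)\,\W\,\E^*_i(c) \qquad (0 \leq i \leq D).
\end{equation*}
Now I would invoke formal self-duality: by Definition \ref{def:FSD} we have $P = Q$, so combining $\A_1 = \sum_i P_{i1}\E_i$ (from \eqref{eq:defPQ}) with $\A^*_1(c) = \sum_i Q_{i1}\E^*_i(c)$ (from \eqref{eq:Esi}) gives
\begin{equation*}
  \A_1\,\W^*(c)\,\W = \W^*(c)\,\W\,\A^*_1(c).
\end{equation*}

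Finally, from the definition of $\A^*_1(c)$ as the diagonal matrix with $(y,y)$-entry $|X|\E_1(c,y)$, I would compute $\A^*_1(c)\widehat{b} = |X|\,\E_1(c,b)\,\widehat{b}$. Substituting,
\begin{equation*}
  \A_1\,\mathbf{u}_{b,c} = |X|^{-1/2}\A_1\,\W^*(c)\,\W\,\widehat{b} = |X|^{-1/2}\W^*(c)\,\W\,\A^*_1(c)\widehat{b} = |X|\,\E_1(c,b)\,\mathbf{u}_{b,c},
\end{equation*}
which is the required eigenvector relation. Thus $\A_1 \in N(\W)$, hence $\M \subseteq N(\W)$, completing the proof that $\Gamma$ affords $\W$. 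There is no real obstacle here once Lemma \ref{lem:WWsbEi} is in hand; the only subtlety to flag is the transposition step, which needs symmetry of $\W$ and of each $\E_i$ (the latter because $\E_i$ is a polynomial in the symmetric matrix $\A_1$), and the explicit use of $P=Q$ to convert the $\{\E_i\}$-relation into an $\A_1$-versus-$\A^*_1(c)$ relation.
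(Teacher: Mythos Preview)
Your proof is correct. Both arguments hinge on Lemma \ref{lem:WWsbEi}, but the routes diverge after that. The paper writes $\mathbf{u}_{b,c} = (\W\W^*(b))^{-1}\W\W^*(c)\,\mathbf{1}$ and applies Lemma \ref{lem:WWsbEi} twice in its original form (once with base vertex $c$, once with base vertex $b$) to trap $\mathbf{u}_{b,c}$ inside $\E_i\V$ for $i=\partial(b,c)$; no eigenvalue is computed and $P=Q$ is not invoked explicitly. You instead write $\mathbf{u}_{b,c} = |X|^{-1/2}\W^*(c)\W\,\widehat{b}$, transpose the lemma to obtain $\E_i\W^*(c)\W = \W^*(c)\W\E^*_i(c)$, and then sum against $P_{i1}=Q_{i1}$ to produce the intertwining $\A_1\W^*(c)\W = \W^*(c)\W\A^*_1(c)$, from which the eigenvector relation for $\A_1$ follows with the explicit eigenvalue $|X|\,\E_1(c,b)$. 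Your approach buys an explicit eigenvalue and reduces the containment $\M\subseteq N(\W)$ to the single generator $\A_1$; the paper's approach is a bit slicker in that it avoids the transposition step and the appeal to $P=Q$, and identifies the eigenspace directly. Incidentally, once you have the transposed relation $\E_i\W^*(c)\W = \W^*(c)\W\E^*_i(c)$, applying it to $\widehat{b}$ and using $\E^*_i(c)\widehat{b}=\delta_{i,\partial(b,c)}\widehat{b}$ already gives $\mathbf{u}_{b,c}\in\E_{\partial(b,c)}\V$ without the detour through $P=Q$; so your transposition idea actually leads to the paper's conclusion in one more line.
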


\begin{proof}
By Definition \ref{def:afford} we must show that $\W \in \M \subseteq N(\W)$.
By construction $\W \in \M$.
To obtain $\M \subseteq N(\W)$, we show that for all $b,c \in X$ the vector $\text{\bf u}_{b,c}$ 
is contained in $\E_i \V$, where $i = \partial (b,c)$.
Using Definition \ref{def:NW} and Lemma \ref{lem:WsW-},
\[
  \text{\bf u}_{b,c} = (\W^*(b))^{-1} \W^* (c) \text{\bf 1}
  = (\W \W^*(b))^{-1} \W \W^*(c) \text{\bf 1}.
\]
Using Lemma \ref{lem:WWsbEi} with $i=0$,
\[
 \W \W^*(c) \text{\bf 1} \in \W \W^*(c) \E_0 \V 
 = \E^*_0 (c) \W \W^*(c)  \V =  \E^*_0 (c)  \V
  = \text{\rm Span} \{\widehat{c} \} \subseteq \E^*_i(b) \V.
\]
By the above comments and Lemma \ref{lem:WWsbEi},
\[
  \text{\bf u}_{b,c} \in (\W \W^*(b))^{-1} \E^*_i (b)  \V 
  = \E_i  (\W \W^*(b))^{-1} \V = \E_i \V.
\] 
We have shown that $\text{\bf u}_{b,c}$ is contained in $\E_i \V$
and is therefore a common eigenvector for $\M$.
So $\M \subseteq N(\W)$ in view of Definition \ref{def:NW}.
The result follows.
\end{proof}

\section{From spin Leonard pairs to a spin model; the $q$-Racah case}
\label{sec:construction}

In this section we start with a suitable distance-regular graph $\Gamma$
of $q$-Racah type, and obtain a spin model afforded by $\Gamma$.
The result is given in Theorem \ref{thm:main3}.

Let $\Gamma$ denote a distance-regular graph with vertex set $X$,
diameter $D \geq 1$,
and intersection numbers  $\{\c_i\}_{i=1}^D$, $\{\a_i\}_{i=0}^D$, $\{\b_i\}_{i=0}^{D-1}$.
Recall the distance-matrices  $\{\A_i\}_{i=0}^D$
and primitive idempotents $\{\E_i\}_{i=0}^D$ in the Bose-Mesner algebra $\M$ of $\Gamma$. 
Throughout this section we make the following assumptions.

\begin{assumption}    \label{assump}    \samepage
\ifDRAFT {\rm assump}. \fi
Assume that $\Gamma$ is formally self-dual with respect to $\{\E_i\}_{i=0}^D$,
with eigenvalue sequence  $\{\th_i\}_{i=0}^D$.
Assume that there exist nonzero scalars $a$, $q \in \C$
that satisfy \eqref{eq:thi2}--\eqref{eq:resta2} and
 \eqref{eq:b0}--\eqref{eq:cd} with
$\alpha$, $\beta$ from \eqref{eq:alpha}, \eqref{eq:beta}.
Assume that for all $x \in X$ and all irreducible $\T(x)$-modules $U$,
\begin{itemize}
\item[\rm (i)]
$U$ is thin;
\item[\rm (ii)]
$U$ has the same endpoint and dual-endpoint;
\item[\rm (iii)]
the intersection numbers $\{c_i(U)\}_{i=1}^d$, $\{b_i(U)\}_{i=0}^{d-1}$ satisfy
\eqref{eq:biU}--\eqref{eq:cdU}.
\end{itemize}
\end{assumption}

Under Assumption \ref{assump} we construct a spin model $\W$
that is afforded by $\Gamma$.

\begin{definition}     \label{def:main3}    \samepage
\ifDRAFT {\rm def:main3}. \fi
Define scalars $\{\tau_i\}_{i=0}^D$ in $\C$ by
\begin{align}
   \tau_i &=  (-1)^i a^{-i} q^{i(D-i)}   &&  (0 \leq i \leq D).         \label{eq:ti3}
\end{align}
Define $f \in \C$ such that
\begin{equation}
  f^2 = \frac{ |X|^{3/2} (a q^{1-D}; q^2)_D } 
                 { (a^{-2}; q^2)_D }.                 \label{eq:f}
\end{equation}
Note that $f$, $\{\tau_i\}_{i=0}^D$ are nonzero.
Define
\begin{equation*}
  \W = f \sum_{i=0}^D \tau_i E_i.                        
\end{equation*}
\end{definition}

We are going to show that $\W$ is a spin model afforded by $\Gamma$.
Fix $x \in X$ and abbreviate $\T = \T(x)$
and $\E^*_i = \E^*_i (x)$, $\A^*_i = \A^*_i (x)$ for $0 \leq i \leq D$.
Recall the scalars $\alpha$, $\beta$ from \eqref{eq:alpha}, \eqref{eq:beta}.
Define $\A \in \M$ and $\A^* \in \M^*$ such that
\begin{align}
 \A_1 &= \alpha \A + \beta \I,  &
 \A^*_1 &= \alpha \A^* + \beta \I.           \label{eq:defAAs}
\end{align}
Let $U$ denote an irreducible $\T$-module with endpoint $r$ and diameter $d$.
By Lemma \ref{lem:U} and since $U$ is thin, each of the sequences
\begin{align*}
  \Phi &=  (\A_1; \{\E_{r+i}\}_{i=0}^d; \A^*_1 ; \{\E^*_{r+i}\}_{i=0}^d),
\\
 \widetilde{\Phi} &=  (\A; \{\E_{r+i}\}_{i=0}^d; \A^* ; \{\E^*_{r+i}\}_{i=0}^d)
\end{align*}
acts on $U$ as a Leonard system.
By construction these Leonard systems are self-dual.

\begin{lemma}    \label{lem:tPhi}     \samepage
\ifDRAFT {\rm lem:tPhi}. \fi
The Leonard system $\widetilde{\Phi}$ on $U$ has $q$-Racah type with Huang
data 
\begin{equation}
   ( a q^{2r+d-D}, a q^{2r+d-D}, a q^{2r+d-D},  d).      \label{eq:Huangdata}
\end{equation}
\end{lemma}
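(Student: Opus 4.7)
The plan is to identify the three parameters of the Huang data one at a time. First I would check that $\widetilde{\Phi}$ acts on $U$ as a self-dual Leonard system. Since $\Gamma$ is formally self-dual with respect to $\{\E_i\}_{i=0}^D$, Proposition \ref{prop:DRGsd} gives $\th_i = \th^*_i$ for $0 \leq i \leq D$. By Lemma \ref{lem:U} the Leonard system $\Phi$ on $U$ has eigenvalue sequence and dual eigenvalue sequence both equal to $\{\th_{r+i}\}_{i=0}^d$, so Lemma \ref{lem:selfdualparam} makes $\Phi$ on $U$ self-dual. Because $\widetilde{\Phi}$ is an affine transformation of $\Phi$ via \eqref{eq:defAAs} with a common $(\alpha,\beta)$ in the two slots, $\widetilde{\Phi}$ on $U$ is self-dual as well.

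Next I would extract the first two Huang parameters from the eigenvalues. By \eqref{eq:defAAs} the eigenvalue sequence of $\widetilde{\Phi}$ on $U$ is $\{(\th_{r+i}-\beta)/\alpha\}_{i=0}^d$, which by \eqref{eq:thi2} equals $\{a q^{2(r+i)-D} + a^{-1} q^{D-2(r+i)}\}_{i=0}^d$. Setting $\widetilde{a} = a q^{2r+d-D}$ and applying Lemma \ref{lem:tr+i}(i), this sequence becomes $\{\widetilde{a}\, q^{2i-d} + \widetilde{a}^{-1} q^{d-2i}\}_{i=0}^d$. Hence by Definition \ref{def:qRacah}, $\widetilde{\Phi}$ on $U$ has $q$-Racah type with first parameter $\widetilde{a}$, and by the self-duality just established together with Lemma \ref{lem:selfdualab} the second parameter is also $\widetilde{a}$. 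Lemma \ref{lem:c} then furnishes a nonzero $\widetilde{c} \in \C$, unique up to inversion by Note \ref{note:Huang}, such that $\widetilde{\Phi}$ on $U$ has Huang data $(\widetilde{a}, \widetilde{a}, \widetilde{c}, d)$.

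Finally I would pin down $\widetilde{c}$ using the intersection numbers. By Assumption \ref{assump}(iii), the intersection numbers of $\Phi$ on $U$ are given by \eqref{eq:biU}--\eqref{eq:cdU}, and since $\widetilde{\Phi}$ is obtained from $\Phi$ by the common affine transformation \eqref{eq:defAAs}, we have $\widetilde{c}_i(U) = c_i(U)/\alpha$ and $\widetilde{b}_i(U) = b_i(U)/\alpha$. On the other hand, Lemma \ref{lem:bici} specialized to $a = b = \widetilde{a}$ with parameter $\widetilde{c}$ expresses $\widetilde{b}_i(U)$ and $\widetilde{c}_i(U)$ directly in terms of $\widetilde{c}$. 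The expressions \eqref{eq:biU}--\eqref{eq:cdU} divided by $\alpha$ are precisely what one obtains from Lemma \ref{lem:qRacahbi} (the case $a = b = c = \widetilde{a}$), so the equality of the two formulas (for any single one of $\widetilde{b}_0$, $\widetilde{c}_d$, or an interior $\widetilde{b}_i$ in degree $\geq 1$) forces $\widetilde{c}$ to satisfy a polynomial whose only roots in view of the uniqueness in Lemma \ref{lem:c} are $\widetilde{a}$ and $\widetilde{a}^{-1}$. By Note \ref{note:Huang} we may replace $\widetilde{c}$ by $\widetilde{c}^{-1}$ if necessary and arrange $\widetilde{c} = \widetilde{a}$, giving the stated Huang data \eqref{eq:Huangdata}.

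\textbf{Main obstacle.} The serious point is the intersection-number comparison: one must verify that the formulas \eqref{eq:biU}--\eqref{eq:cdU}, after dividing by $\alpha$, really are the specialization of Lemma \ref{lem:bici} with $(\widetilde{a}, \widetilde{a}, \widetilde{a}, d)$, i.e.\ that they reflect the $a = b = c$ case rather than some other relation. This is the derivation of Lemma \ref{lem:ciUbiU} in Section 12 run in reverse, and the non-degeneracy restrictions \eqref{eq:restq}--\eqref{eq:resta2} (inherited via Lemma \ref{lem:aq} for the primary module and transferred to $\widetilde{a}$ in the appropriate range) are needed to ensure the comparison is not vacuous and that $\widetilde{c}$ is determined up to the single ambiguity $\widetilde{c} \leftrightarrow \widetilde{c}^{-1}$ allowed by Note \ref{note:Huang}.
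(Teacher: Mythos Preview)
Your approach is correct and follows essentially the same line as the paper's proof: both identify the eigenvalue sequence of $\widetilde{\Phi}$ on $U$ as $\{\widetilde{a}\,q^{2i-d}+\widetilde{a}^{-1}q^{d-2i}\}_{i=0}^d$ with $\widetilde{a}=aq^{2r+d-D}$, and both pin down the third Huang parameter by matching the intersection numbers from Assumption~\ref{assump}(iii) (scaled by $\alpha^{-1}$) against the $a=b=c=\widetilde{a}$ formulas of Lemma~\ref{lem:qRacahbi}.

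The only organisational difference is that the paper short-circuits your ``solve for $\widetilde{c}$'' step: rather than first establishing that $\widetilde{\Phi}$ has some Huang data $(\widetilde{a},\widetilde{a},\widetilde{c},d)$ and then arguing $\widetilde{c}\in\{\widetilde{a},\widetilde{a}^{-1}\}$ via a polynomial comparison, the paper invokes \cite[Lemma~7.3]{Huang} to exhibit outright a Leonard system $\Psi$ with Huang data $(\widetilde{a},\widetilde{a},\widetilde{a},d)$, observes that $\Psi$ and $\widetilde{\Phi}$ share eigenvalue sequence and intersection numbers, and then applies Lemmas~\ref{lem:isomorphic} and~\ref{lem:parray} to conclude they are isomorphic. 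This packages your uniqueness argument into a single appeal to the classification and avoids the need to discuss non-degeneracy conditions for the comparison.
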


\begin{proof}
By \cite[Lemma 7.3]{Huang} there exists a
Leonard system $\Psi$ of $q$-Racah type with Huang data \eqref{eq:Huangdata}.
By Lemma \ref{lem:selfdualab} this Leonard system is self-dual.
The eigenvalue sequence and intersection numbers of $\Psi$ are 
obtained using Definition \ref{def:Huangdata} and Lemma \ref{lem:qRacahbi}.
The eigenvalue sequence and intersection numbers of $\Phi$ on $U$
are given in Assumption \ref{assump}.
Adjusting these using \eqref{eq:defAAs} we obtain the eigenvalue sequence
and intersection numbers of $\widetilde{\Phi}$ on $U$.
These match the eigenvalue sequence and intersection numbers of $\Psi$.
By Lemmas \ref{lem:isomorphic}, \ref{lem:parray} the Leonard system $\Psi$
is isomorphic to the Leonard system $\widetilde{\Phi}$ on $U$.
The result follows.
\end{proof} 

Define
\begin{align*}
   \W^* &= f \sum_{i=0}^D \tau_i \E^*_i.
\end{align*}

\begin{lemma}    \label{lem:WWs3}    \samepage
\ifDRAFT {\rm lem:WWs3}. \fi
On $U$ we have
\begin{align}
  \W &= f \tau_r \sum_{i=0}^d \widetilde{\tau}_i \E_{r+i},   &
  \W^* &= f \tau_r \sum_{i=0}^d \widetilde{\tau}_i  \E^*_{r+i},                  \label{eq:WWsaux}
\end{align}
where $\{\widetilde{\tau}_i\}_{i=0}^d$ are from \eqref{eq:deftti}.
\end{lemma}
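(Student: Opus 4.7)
The plan is to restrict the given expressions for $\W$ and $\W^*$ to the module $U$ by zeroing out the terms that act trivially, and then re-index using the identity already supplied in Lemma \ref{lem:tr+i}(ii).

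First I would handle $\W$. By construction $\W = f \sum_{j=0}^D \tau_j \E_j$ in $\M$. Since $U$ is thin (Assumption \ref{assump}(i)), has dual endpoint equal to its endpoint $r$ (Assumption \ref{assump}(ii)), and has dual diameter equal to its diameter $d$, Lemma \ref{lem:Tmodule2} gives $\E_j U = 0$ unless $r \le j \le r+d$. Hence on $U$,
\[
  \W = f \sum_{i=0}^d \tau_{r+i} \E_{r+i}.
\]
Now I would invoke Lemma \ref{lem:tr+i}(ii): with $D$, $d$, $r$, $a$, $q$ as in our setup and $\tau_i = (-1)^i a^{-i} q^{i(D-i)}$ from \eqref{eq:ti3}, equation \eqref{eq:tr+i} yields $\tau_{r+i} = \tau_r \widetilde{\tau}_i$ for $0 \le i \le d$, where $\widetilde{\tau}_i$ is as in \eqref{eq:deftti}. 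Substituting gives the first equation in \eqref{eq:WWsaux}.

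For $\W^*$ the argument is entirely parallel. On $U$, Lemma \ref{lem:Tmodule1} gives $\E^*_j U = 0$ unless $r \le j \le r+d$, so
\[
  \W^* = f \sum_{j=0}^D \tau_j \E^*_j = f \sum_{i=0}^d \tau_{r+i} \E^*_{r+i},
\]
and the same identity $\tau_{r+i} = \tau_r \widetilde{\tau}_i$ from Lemma \ref{lem:tr+i}(ii) produces the second equation in \eqref{eq:WWsaux}.

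There is no real obstacle here: the lemma is essentially bookkeeping, combining the support restriction of the $\E_j$ and $\E^*_j$ on a thin irreducible $\T$-module with the factorization $\tau_{r+i} = \tau_r \widetilde{\tau}_i$ already established in the preceding section.
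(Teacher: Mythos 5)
Your proof is correct and follows the same route as the paper: the paper's proof simply says ``Similar to the proof of Lemma \ref{lem:WonU},'' which restricts $\W$ and $\W^*$ to $U$ using the fact that only $\E_{r+i}$, $\E^*_{r+i}$ for $0\le i\le d$ act nontrivially, then applies the reindexing identity \eqref{eq:tr+i}. You have simply spelled out the support restriction (via Lemmas \ref{lem:Tmodule1}, \ref{lem:Tmodule2} together with Assumption \ref{assump}(i),(ii)) that the paper leaves implicit with ``by construction.''
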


\begin{proof}
Similar to the proof of Lemma \ref{lem:WonU}.
\end{proof}

\begin{lemma}    \label{lem:spinLP}    \samepage
\ifDRAFT {\rm lem:spinLP}. \fi
The pair $\A_1, \A^*_1$ acts on $U$ as a spin Leonard pair,
and the pair $\W, \W^*$ acts on $U$ as a balanced Boltzmann pair of this
Leonard pair.
\end{lemma}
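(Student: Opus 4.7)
The plan is to reduce the claim to Lemma \ref{lem:W}(ii) applied to the Leonard system $\widetilde{\Phi}$ on $U$, and then transfer the resulting balanced Boltzmann pair from the Leonard pair $\A,\A^*$ to $\A_1,\A^*_1$ via the affine relation \eqref{eq:defAAs}. The main observation is that the scalars $\{\widetilde{\tau}_i\}_{i=0}^d$ of \eqref{eq:deftti} are tailored precisely so as to match the form prescribed in \eqref{eq:defti} for the Leonard system $\widetilde{\Phi}$.

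First I would invoke Lemma \ref{lem:tPhi}, which gives that $\widetilde{\Phi}$ acts on $U$ as a self-dual Leonard system of $q$-Racah type with Huang data $(\widetilde{a},\widetilde{a},\widetilde{a},d)$, where $\widetilde{a} = a q^{2r+d-D}$. Note that $\widetilde{\tau}_i = (-1)^i \widetilde{a}^{-i} q^{i(d-i)}$, which is exactly the form required in \eqref{eq:defti} (with $a$ replaced by $\widetilde{a}$ and $D$ replaced by $d$), and moreover $\widetilde{\tau}_0 = 1$. Since $f\tau_r$ is a nonzero scalar in $\C$, applying Lemma \ref{lem:W}(ii) to $\widetilde{\Phi}$ on $U$ (with $f\tau_r$ playing the role of the scalar denoted $f$ there, and $\widetilde{\tau}_i$ playing the role of $\tau_i$) yields that the pair
\[
  f\tau_r \sum_{i=0}^d \widetilde{\tau}_i \E_{r+i}, \qquad f\tau_r \sum_{i=0}^d \widetilde{\tau}_i \E^*_{r+i}
\]
is a balanced Boltzmann pair for the spin Leonard pair $\A,\A^*$ on $U$. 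By Lemma \ref{lem:WWs3} these two operators are exactly the restrictions of $\W$ and $\W^*$ to $U$.

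To conclude, I would use \eqref{eq:defAAs} together with Lemma \ref{lem:affine}: since $\A_1 = \alpha \A + \beta \I$ and $\A^*_1 = \alpha \A^* + \beta \I$ with $\alpha \ne 0$, any Boltzmann pair for $\A,\A^*$ is also a Boltzmann pair for $\A_1,\A^*_1$ (note that $\gen{\A} = \gen{\A_1}$ and $\gen{\A^*} = \gen{\A^*_1}$, so the membership conditions in Definition \ref{def:spinLP} carry over). The balanced condition $\W\W^*\W = \W^*\W\W^*$ in Definition \ref{def:balanced} depends only on $\W$ and $\W^*$ themselves, not on the underlying Leonard pair, so it persists under this transfer. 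This gives that $\A_1,\A^*_1$ acts on $U$ as a spin Leonard pair with balanced Boltzmann pair $\W,\W^*$.

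I do not foresee a genuine obstacle here: the argument is essentially a packaging exercise combining Lemmas \ref{lem:tPhi}, \ref{lem:WWs3}, \ref{lem:W}(ii), and \ref{lem:affine}. The only delicate point that one must be careful about is that the third coordinate of the Huang data of $\widetilde{\Phi}$ is $\widetilde{a}$ rather than $\pm \widetilde{a}^{\pm 1}$, because Lemma \ref{lem:W}(ii) is stated only for the case $a=b=c$; but this has already been established in Lemma \ref{lem:tPhi}, so no further work is needed.
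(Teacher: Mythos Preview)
Your proposal is correct and follows essentially the same route as the paper: invoke Lemma \ref{lem:tPhi} for the Huang data, apply Lemma \ref{lem:W}(ii) with the $\widetilde{\tau}_i$ from \eqref{eq:deftti}, identify the resulting pair with $\W,\W^*$ via Lemma \ref{lem:WWs3}, and transfer to $\A_1,\A^*_1$ by Lemma \ref{lem:affine}. The paper additionally cites Lemma \ref{lem:spinLP0}(ii) to state separately that $\A,\A^*$ is a spin Leonard pair, but as you implicitly observe this already follows from the existence of a Boltzmann pair via Lemma \ref{lem:W}(ii).
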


\begin{proof}
By Lemmas \ref{lem:spinLP0}(ii), \ref{lem:tPhi} the pair $\A, \A^*$ acts on $U$
as a spin Leonard pair.
By \eqref{eq:deftti} and  Lemmas \ref{lem:W}(ii), \ref{lem:WWs3}
the pair $\W,\W^*$ acts on $U$ as a balanced Boltzmann pair for the
Leonard pair $\A,\A^*$ on $U$.
The result follows in view of Lemma \ref{lem:affine}.
\end{proof}

Recall the integers $\{\k_i\}_{i=0}^D$ from  \eqref{eq:defki2}.

\begin{lemma}    \label{lem:f2}    \samepage
\ifDRAFT {\rm lem:f2}. \fi
The scalar $f$ from Definition \ref{def:main3} satisfies
\[
  f^{-2} = |X|^{-3/2} \sum_{i=0}^D \k_i \tau_i.
\]
\end{lemma}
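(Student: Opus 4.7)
The plan is to reduce the claim to Lemma~\ref{lem:sumkiti} by applying that identity to the Leonard system on the primary $\T$-module, and then to identify the abstract intersection numbers $k_i$ with the graph intersection numbers $\k_i$ via Lemma~\ref{lem:nuki}.

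First I would specialize the setup of Section~\ref{sec:construction} to the case where $U$ is the primary $\T$-module $\pU$; here the endpoint is $r=0$ and the diameter is $d=D$. By Lemma~\ref{lem:tPhi}, the Leonard system $\widetilde{\Phi}$ acts on $\pU$ as a Leonard system of $q$-Racah type with Huang data $(a,a,a,D)$. Since $\{\tau_i\}_{i=0}^{D}$ in \eqref{eq:ti3} have exactly the form \eqref{eq:defti} (with $d$ replaced by $D$), Lemma~\ref{lem:sumkiti} applies directly to $\widetilde{\Phi}$ on $\pU$ and yields
\[
  \sum_{i=0}^{D} k_i(\pU)\,\tau_i
  \;=\; \frac{(a^{-2};q^2)_D}{(a q^{1-D};q^2)_D}.
\]

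Next I would replace $k_i(\pU)$ by $\k_i$. This is immediate from Lemma~\ref{lem:nuki}, which is applicable because $\Gamma$ is formally self-dual with respect to $\{\E_i\}_{i=0}^{D}$ (hence $Q$-polynomial), as observed just after Lemma~\ref{lem:duality}. Substituting $k_i(\pU)=\k_i$ gives
\[
  \sum_{i=0}^{D} \k_i\,\tau_i
  \;=\; \frac{(a^{-2};q^2)_D}{(a q^{1-D};q^2)_D}.
\]

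Finally I would divide by $|X|^{3/2}$ and compare with the definition of $f$: from \eqref{eq:f} one reads off
\[
  f^{-2} \;=\; \frac{(a^{-2};q^2)_D}{|X|^{3/2}\,(a q^{1-D};q^2)_D},
\]
which matches the right-hand side of the desired formula. There is no genuine obstacle here; the only point to check is that the hypotheses of Lemma~\ref{lem:sumkiti} really are met on $\pU$, i.e.\ that the $\{\tau_i\}$ defined in \eqref{eq:ti3} coincide with the sequence \eqref{eq:defti} for the Huang data produced by Lemma~\ref{lem:tPhi}, and this coincidence is direct from $r=0$, $d=D$.
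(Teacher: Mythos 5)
Your proof is correct and takes essentially the same route as the paper: both apply Lemma~\ref{lem:sumkiti} to the Leonard system of $q$-Racah type on the primary module $\pU$ (with Huang data $(a,a,a,D)$ from Lemma~\ref{lem:tPhi}) and then compare with \eqref{eq:f}. The only difference is that you make explicit the appeal to Lemma~\ref{lem:nuki} for $k_i(\pU)=\k_i$, which the paper leaves implicit.
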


\begin{proof}
By Lemma \ref{lem:tPhi} the sequence
\begin{equation*}
  (\A; \{\E_i\}_{i=0}^D; \A^*; \{\E^*_i\}_{i=0}^D)  
\end{equation*}
acts on the primary $\T$-module $\pU$
as a Leonard system of $q$-Racah type with Huang data $(a,a,a,D)$.
Applying Lemma \ref{lem:sumkiti} to this Leonard system we obtain
\begin{equation}
 \sum_{i=0}^D \k_i \tau_i 
  = \frac{ (a^{-2}; q^2)_D }
            { (a q^{1-D}; q^2 )_D }.                   \label{eq:f2aux2}
\end{equation}
The result follows from \eqref{eq:f} and \eqref{eq:f2aux2}.
\end{proof}

\begin{theorem}    \label{thm:main3}    \samepage
\ifDRAFT {\rm thm:main3}. \fi
The matrix $\W$ in Definition \ref{def:main3} is a spin model
afforded by $\Gamma$.
\end{theorem}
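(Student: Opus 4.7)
\begin{proofof}{Theorem \ref{thm:main3} (proof plan)}
The plan is to reduce Theorem \ref{thm:main3} to the general results Theorem \ref{thm:main0} and Theorem \ref{thm:main}, by verifying the four conditions of Assumption \ref{assump0} in the present $q$-Racah setting. All of the hard work has already been done in the preceding lemmas; what remains is to assemble these pieces in the correct order.

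First, I observe that conditions (i) and (ii) of Assumption \ref{assump0} are built into Assumption \ref{assump}: every irreducible $\T(x)$-module $U$ is thin with matching endpoint and dual-endpoint. For condition (iii), I invoke Lemma \ref{lem:spinLP}, which states exactly that for each such $U$ the pair $\A_1, \A^*_1$ acts on $U$ as a spin Leonard pair and $\W,\W^*$ acts on $U$ as a balanced Boltzmann pair. Recall how Lemma \ref{lem:spinLP} was obtained: Lemma \ref{lem:tPhi} shows that the Leonard system $\widetilde\Phi$ on $U$ has $q$-Racah type with Huang data $(aq^{2r+d-D}, aq^{2r+d-D}, aq^{2r+d-D}, d)$; Lemma \ref{lem:spinLP0}(ii) then converts this into a spin Leonard pair $\A,\A^*$ on $U$; the formula for $\W,\W^*$ in Lemma \ref{lem:WWs3} matches the form required by Lemma \ref{lem:W}(ii) with $\widetilde\tau_i$ as in \eqref{eq:deftti}, so $\W,\W^*$ is a balanced Boltzmann pair for $\A,\A^*$; finally Lemma \ref{lem:affine} transfers the Boltzmann pair property to the affine transformation $\A_1 = \alpha\A + \beta\I$, $\A^*_1 = \alpha\A^* + \beta\I$.

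For condition (iv), I invoke Lemma \ref{lem:f2}, which states exactly that the specific $f$ introduced in Definition \ref{def:main3} satisfies $f^{-2} = |X|^{-3/2}\sum_{i=0}^D \k_i\tau_i$, i.e.\ \eqref{eq:f-2}. This follows by applying Lemma \ref{lem:sumkiti} to the primary $\T$-module $\pU$ (where the Leonard system has $q$-Racah Huang data $(a,a,a,D)$), giving the closed form $\sum_{i=0}^D \k_i\tau_i = (a^{-2};q^2)_D / (aq^{1-D};q^2)_D$, which is compatible with \eqref{eq:f} in Definition \ref{def:main3}.

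With Assumption \ref{assump0}(i)--(iv) verified, Theorem \ref{thm:main0} yields that $\W$ is a spin model, and Theorem \ref{thm:main} yields that $\W$ is afforded by $\Gamma$. There is no genuine obstacle remaining at this stage; the conceptual work resided in the classification of spin Leonard pairs (Lemma \ref{lem:spinLP0}), the explicit description of balanced Boltzmann pairs in the $q$-Racah case (Lemma \ref{lem:W}), and the non-trivial basic hypergeometric identity (Lemma \ref{lem:GR}) that produced the evaluation of $\sum_i \k_i \tau_i$. The proof of Theorem \ref{thm:main3} is therefore a short assembly: it records that Definition \ref{def:main3} has been engineered precisely so that Assumption \ref{assump0} becomes a consequence of Assumption \ref{assump}.
\end{proofof}
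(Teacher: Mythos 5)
Your proof is correct and follows exactly the same route as the paper: verify Assumption \ref{assump0} (conditions (i)--(ii) from Assumption \ref{assump}, (iii) from Lemma \ref{lem:spinLP}, (iv) from Lemma \ref{lem:f2}), then invoke Theorems \ref{thm:main0} and \ref{thm:main}. The paper's version is more terse but identical in substance; your extra recapitulation of how Lemmas \ref{lem:spinLP} and \ref{lem:f2} were established is accurate but not strictly part of the proof of this theorem.
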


\begin{proof}
For $x \in X$ let $U$ denote an irreducible $\T(x)$-module.
By Lemmas \ref{lem:affine} and \ref{lem:spinLP}
the pair $\A_1, \A^*_1(x)$ acts on $U$ as a spin Leonard pair,
and $\W, \W^*(x)$ acts on $U$ as a balanced Boltzmann pair for this Leonard pair.
By Lemma \ref{lem:f2} the scalar $f$ satisfies \eqref{eq:f-2}.
Thus Assumption \ref{assump0} is satisfied.
Now $\W$ is a spin model by Theorem \ref{thm:main0}.
This spin model is afforded by $\Gamma$ by Theorem \ref{thm:main}.
\end{proof}

\begin{note} 
The spin model $\W$ in Theorem \ref{thm:main3} does not have Hadamard type,
since $\tau_1 \neq \pm \tau_0$ by 
\eqref{eq:resta} and \eqref{eq:ti3}.
\end{note}

\section{Examples}
\label{sec:example}

The article \cite[Section 9]{CN:someformula}  includes a list of 
the known spin models that are contained 
in the Bose-Mesner algebra of a distance-regular graph $\Gamma$.
In this section we display the listed spin models
for which $\Gamma$ has $q$-Racah type.
Each displayed example satisfies the requirements of Assumption \ref{assump}.
For each displayed example we describe $\Gamma$,
and give the parameters $q$, $a$ that satisfy \eqref{eq:thi2}--\eqref{eq:resta2},
 \eqref{eq:b0}--\eqref{eq:cd}.
We now display the examples.
The presentation of the display is explained in Example 0.
\begin{itemize}

\item[(0)]
Name of the graph $\Gamma$ [Reference to definition].
\begin{itemize}
\item
diameter $D$,
intersection numbers $\{\b_0, \ldots, \b_{D-1}; \c_1, \ldots, \c_D\}$,
number of vertices $|X|$.
\item
parameters $q$, $a$.
\end{itemize}

\item[\rm (1)]
Complete graph $K_n$ $(n \geq 2)$ \cite[Appendix A1]{BCN}.

\begin{itemize}
\item
$D = 1$, 
$\{n-1; 1\}$,
$|X|=n$.

\item
$q$ is any nonzero complex number such that $q^4 \neq 1$,
and $a$ is any complex number such that  $a \neq 1$ and $a+a^{-1}=n-2$.
\end{itemize}

\item[\rm (2)]
$4$-cycle $C_4$.

\begin{itemize}
\item
$D=2$,  
$\{2,1; 1,2\}$,
$|X|=4$.

\item
$q$ is any nonzero complex number such that $q^4 \neq 1$,
and $a$ is any complex number such that  $a^2=-1$.
\end{itemize}

\item[(3)]
Higman-Sims graph \cite{HS}.
\begin{itemize}
\item
$D=2$, 
$\{22,21; 1, 6\}$,
$|X|=100$.

\item
$q$ is any complex number such that $q^2+ q^{-2}=-3$,
and $a = - q^{-3}$.
\end{itemize}

\item[(4)]
Hadamard graph $H_m$ $(m \geq 2)$ \cite{Ito}.
\begin{itemize}
\item
$D=4$,  
$\{4m, 4m-1, 2m, 1; 1, 2m, 4m-1,4m\}$,
$|X|=16m$.

\item
$q$ is any complex number such that $q^4+q^{-4}=4m-2$,
and $a$ is any complex number such that  $a^2=-1$.

\end{itemize}

\item[(5)]
Double cover of the Higman-Sims graph \cite[Appendix A.5]{BCN}.
\begin{itemize}
\item
$D=5$,
$\{22,21,16,6,1;1,6,16,21,22\}$,
$|X|=200$.

\item
$q$ is any complex number such that $q^2+q^{-2}=3$,
and $a$ is any complex number such that $a^2= -1$.
\end{itemize}

\item[(6)]
Odd cycle $C_{2m+1}$  $(m \geq 2)$.

\begin{itemize}
\item
$D=m$,
$\{2,1, \ldots, 1; 1,1, \ldots,1\}$,
$|X|=2m+1$.

\item
$q \in \C$ is any primitive $2m+1$ root of unity,
and $a= - q^m$.  
\end{itemize}

\item[(7)]
Even cycle $C_{2m}$ $(m \geq 3)$.

\begin{itemize}
\item
$D=m$,
$\{2,1,\ldots,1; 1,\ldots,1,2\}$,
$|X|=2m$.

\item
$q \in \C$ is any primitive $4m$ root of unity,
and $a$ is any complex number such that $a^2= -1$.
\end{itemize}

\end{itemize}

By Theorem \ref{thm:main3},
for each of the above Examples (1)--(7),
the matrix $\W$ from Definition \ref{def:main3} is a spin model afforded by $\Gamma$.

\bigskip

{

\small

}

\bigskip\bigskip\noindent
Kazumasa Nomura\\
Tokyo Medical and Dental University\\
Kohnodai, Ichikawa, 272-0827, Japan\\
Email: knomura@pop11.odn.ne.jp

\bigskip\noindent
Paul Terwilliger\\
Department of Mathematics\\
University of Wisconsin\\
480 Lincoln Drive\\ 
Madison, WI 53706, USA\\
Email: terwilli@math.wisc.edu

\bigskip\noindent
{\bf Keywords.}
Leonard pair, spin model, distance-regular graph, Bose-Mesner algebra
\\
\noindent
{\bf 2010 Mathematics Subject Classification.} 05E30, 15A21

\end{document}